\numberwithin{equation}{section}
\newtheorem{theorem}{Theorem}[section]
\newtheorem{lemma}[theorem]{Lemma}
\newtheorem{proposition}[theorem]{Proposition}
\newtheorem{corollary}[theorem]{Corollary}
\newtheorem*{claim}{Claim}
\newtheorem*{claima}{Claim A}
\newtheorem*{claimb}{Claim B}
\newtheorem*{LNtheorem*}{Luzin--Novikov Theorem}
\newtheorem*{SRtheorem*}{Saint Raymond Theorem}
\newtheorem{theorem**}{Theorem}[subsection]
\newtheorem{lemma**}[theorem**]{Lemma}
\newtheorem{proposition**}[theorem**]{Proposition}
\theoremstyle{definition}
\newtheorem{definition}[theorem]{Definition}
\newtheorem{notation}[theorem]{Notation}
\newtheorem{remark}[theorem]{Remark}
\newtheorem{definition**}[theorem**]{Definition}
\newtheorem{notation**}[theorem**]{Notation}
\newtheorem{remark**}[theorem**]{Remark}
\newtheorem{fact**}[theorem**]{Fact}
\def\cC{\mathcal C}
\def\cH{\mathcal H}
\def\cN{\mathcal N}
\def\cP{\mathcal P}
\def\cQ{\mathcal Q}
\def\cS{\mathcal S}
\def\cX{\mathcal X}
\def\cY{\mathcal Y}
\def\cZ{\mathcal Z}
\def\fR{\mathbin{\mathfrak R}}
\def\bbA{\mathbb A}
\def\bbJ{\mathbb J}
\newcommand{\al}{\alpha}
\newcommand{\be}{\beta}
\newcommand{\ga}{\gamma}
\newcommand{\Ga}{\Gamma}
\newcommand{\de}{\delta}
\newcommand{\De}{\Delta}
\newcommand{\ep}{\varepsilon}
\newcommand{\ph}{\varphi}
\newcommand{\io}{\iota}
\newcommand{\la}{\lambda}
\newcommand{\om}{\omega}
\newcommand{\Om}{\Omega}
\newcommand{\bPi}{\boldsymbol\Pi}
\newcommand{\si}{\sigma}
\newcommand{\Si}{\Sigma}
\newcommand{\bSi}{\boldsymbol\Sigma}
\newcommand{\Th}{\Theta}
\def\Seqm{\mathcal S^*}
\def\Seq{\mathcal S}
\def\w{\,{}^\wedge}
\def\R{\mathbin{\mathfrak R}}
\def\id{\operatorname{id}}
\def\LIM{\operatorname{LIM}}
\def\graph{\operatorname{graph}}
\newcommand{\cl}{\overline}
\newcommand{\su}{\subset}
\newcommand{\sm}{\setminus}
\newcommand{\wh}{\widehat}
\newcommand{\wt}{\widetilde}
\newcommand{\set}{;\ }
\newcommand{\hPsi}{\wh{\Psi}}
\newcommand{\hpsi}{\wh{\psi}}
\newcommand{\tpsi}{\widetilde{\psi}}
\newcommand{\Rleft}{\overset{\scriptstyle\leftarrow}{R}}
\newcommand{\rleft}{\overset{\scriptstyle\leftarrow}{r}}
\newcommand{\rright}{\overset{\scriptstyle\rightarrow}{r}}
\newcommand{\fleft}{\overset{\scriptstyle\leftarrow}{f}}
\newcommand{\fright}{\overset{\scriptstyle\rightarrow}{f}}
\newcommand{\eleft}{\overset{\scriptstyle\leftarrow}{e}}
\newcommand{\eright}{\overset{\scriptstyle\rightarrow}{e}}
\begin{document}
\title[There is no bound on Borel classes of the graphs in the Luzin-Novikov theorem]{There is no bound on Borel classes of the graphs\\ in the Luzin-Novikov theorem}
\author{P. Holick\'y}
\address{Charles University, Faculty of Mathematics and Physics, Department of Mathematical Analysis,
Soko\-lov\-sk\'{a}~83, Prague~8, 186~75, Czech Republic}
\email{holicky@karlin.mff.cuni.cz}
\author{M. Zelen\' y}
\address{Charles University, Faculty of Mathematics and Physics, Department of Mathematical Analysis,
Soko\-lov\-sk\'{a}~83, Prague~8, 186~75, Czech Republic}
\email{zeleny@karlin.mff.cuni.cz}

\subjclass[2010]{03E15, 28A05, 54H05}
\keywords{Luzin-Novikov theorem, sets with countable sections, Saint Raymond theorem, sets with $\sigma$-compact sections, Borel selections, Borel classes, Turing jumps}

\thanks{The research was supported in part by the grant GA\v{C}R 15-08218S}

\begin{abstract}
We show that for every ordinal $\alpha \in [1, \omega_1)$ there is a closed set $F \subset 2^\om \times \om^\om$
such that for every $x \in 2^\om$ the section $\{y\in \om^\om\set (x,y) \in F\}$
is a two-point set and $F$ cannot be covered by countably many graphs
$B(n) \subset 2^\om \times \om^\om$ of functions of the variable $x \in 2^\om$ such that each $B(n)$ is in the additive Borel class $\bSi^0_\al$.
This rules out the possibility to have a quantitative version of the Luzin-Novikov theorem. The construction is a modification of the method of Harrington who invented it to show that there exists a countable $\Pi^0_1$ set in $\om^\om$ containing a non-arithmetic singleton. By another application of the same method we get closed sets excluding a quantitative version of the Saint Raymond theorem on Borel sets with $\si$-compact sections.
\end{abstract}

\maketitle

\section{Main result}

The main goal of this paper is to consider a quantitative version of the following classical uniformization theorems:
the Luzin--Novikov theorem (\cite{Luzin-Novikov}, cf. \cite[18.10]{Kechris})  for Borel sets with countable sections and
the Saint Raymond theorem (\cite[Corollaire 10]{SaintRaymond}, cf. \cite[35.46]{Kechris}) for Borel sets with $\si$-compact sections.

\begin{LNtheorem*}
Let $X$ and $Y$ be Polish spaces and $B$ be a Borel subset of $X \times Y$ such that for every $x \in X$ the section $B_x := \{y\in Y\set (x,y)\in B\}$ is countable. Then $B$ can be covered by countably many Borel sets $B(n) \su B$, $n\in\om$, such that the section $B(n)_x$ contains at most one point for every $n \in \omega$ and $x \in X$.
\end{LNtheorem*}

\begin{SRtheorem*}
Let $X$ and $Y$ be Polish spaces and $B$ be a Borel subset of $X \times Y$ such that for every $x \in X$ the section $B_x$ is $\si$-compact.
Then $B$ can be covered by countably many Borel sets $B(n) \su B$, $n \in \omega$, such that $B(n)_x$ is compact for every $n \in \omega$ and $x \in X$.
\end{SRtheorem*}

Suppose that $X$ and $Y$ are Polish spaces. In the case of the Luzin-Novikov theorem we are interested in the question whether for every ordinal $1 \le \beta < \omega_1$ there
is an ordinal $1 \leq \ph(\be) < \om_1$ such that for every $B \subset X \times Y$ in the multiplicative Borel class $\bPi^0_\be$ with countable sections $B_x$, $x\in X$,
we can find graphs $B(n), n \in \omega,$ such that $B = \bigcup_{n \in \omega} B(n)$ and each $B(n)$ belongs to
$\bSi^0_{\ph(\be)}$ (cf. Remark~\ref{R:why-graphs}).
As for the Saint Raymond theorem, we are interested in the question whether for every ordinal $1 \le \beta < \omega_1$ there is an ordinal
$1 \le \ph(\be) < \om_1$ such that
for every $B \subset X \times Y$ in $\bPi^0_\be$ with $\sigma$-compact sections $B_x$, $x\in X$, we can find sets $B(n), n \in \omega,$ with compact sections $B(n)_x, x \in X$,
such that $B = \bigcup_{n \in \omega} B(n)$ and each $B(n)$ belongs to $\bSi^0_{\ph(\be)}$.

Our main results say that no countable upper bound $\ph(\be)$ exists even if $B$ is closed with two-point sections in the case of the Luzin-Novikov theorem,
and if $B$ is closed with countable discrete sections in the case of the Saint Raymond theorem. The main results of the paper read as follows.

\begin{theorem}\label{T:main-theorem-Luzin}
Let $1 \leq \alpha < \omega_1$ be an ordinal. Then there is a closed set $F \subset 2^{\om} \times \om^{\om}$ such that for every $x \in 2^\om$
the section $F_x$ is a two-point set and $F$ cannot be covered by countably many sets $B(n) \subset F$, $n\in\om$, such that each $B(n)$ is
in the Borel class $\bSi^0_\al$ and the section $B(n)_x$ contains at most one point for every $n \in \omega$ and $x\in 2^\om$.
\end{theorem}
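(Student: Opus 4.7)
The approach is to adapt Harrington's construction of a $\Pi^0_1$ singleton of non-arithmetical complexity, pushing the complexity of the two section points of $F$ past the Borel class $\al$ via Turing jumps. I would first reduce to the effective (lightface) setting by relativization: since every $\bSi^0_\al$ set is $\Sigma^0_\al(r)$ for some real $r$, it suffices, given a recursive notation for $\al$, to construct a $\Pi^0_1$ closed set $F \subset 2^\om \times \om^\om$ with two-point sections that cannot be covered by a countable family of $\Sigma^0_\al$-graphs with at-most-one-point sections. A boldface diagonalization over the possible parameters of a candidate covering then yields Theorem~\ref{T:main-theorem-Luzin}.

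The observation driving the construction is that if $F$ admitted such a covering $(B(n))$, then each $B(n)$ would be the graph of a partial function with $\bSi^0_\al$ graph and uniqueness sections, which by the Lusin--Souslin theorem is an injective Borel image and therefore a Borel-measurable selector lying at a uniform level of $\bDe^0_{\al+1}$ in the parameters. Thus it suffices to construct $F$ so that for every candidate family of class-$\al$ selectors, some section $F_x$ contains a point not realizable as one of their values at $x$---a point of ``too high'' effective complexity. Coding the iterated Turing jump at level $\al$ into the branches of $F$ produces exactly such points, in the same way Harrington's original construction produces a point exceeding every level of the arithmetical hierarchy.

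I would build $F = [T]$ as the set of branches of a pruned tree $T \subseteq (2 \times \om)^{<\om}$ by a finite-injury priority argument. An effective enumeration $R_e$ lists all candidate coverings via a universal $\Sigma^0_\al$-parametrization relative to the fixed parameter. At stage $e$ the tree is refined to meet $R_e$: one selects a witness $x$, predicts the two values the candidate covering would assign to $F_x$, and extends the tree so that $F_x$ instead contains a Harrington-style point coding the $\al$-th jump of $(r,x)$, provably outside any $\bDe^0_{\al+1}$-family. The invariants maintained throughout are that $T$ is closed, every $x \in 2^\om$ has exactly two extensions in $[T]$, and no earlier requirement is injured.

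The main obstacle will be the simultaneous preservation of the two-point fiber condition together with the diagonalization requirements: each local surgery on $T$ that defeats a new requirement must neither collapse a two-point section to a single branch nor expand one to three or more, and it must not destroy witnesses used at earlier stages. This is the precise geometric delicacy that Harrington's method handles in the singleton case, and the adaptation requires restricting the forcing to tree-conditions with a uniform ``exactly two branches above every $x$'' structure, so that the admissible refinements interact cleanly with the priority ordering while still leaving room for the jump-coding step at each stage.
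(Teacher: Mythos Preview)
Your outline names the right ingredients---Harrington's construction, iterated Turing jumps, a diagonal argument against candidate coverings---but it is a sketch, not a proof, and the sketch diverges from the paper's actual architecture in a way that leaves the central difficulty unsolved.

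First, the reduction to a lightface problem is not as simple as you suggest. You cannot fix a parameter $r$ in advance, build one $\Pi^0_1$ set $F$, and then ``boldface diagonalize'' afterwards: the candidate covering $(B(n))_{n\in\omega}$ may carry an arbitrary real parameter, and a single lightface $F$ cannot anticipate all of them. The paper resolves this by building $F$ uniformly in the first coordinate $x$, so that every section $F_x$ carries the diagonal property relative to $x$; given a boldface covering, one then \emph{chooses} $a\in 2^\omega$ so that the covering becomes $\Sigma^0_1(a,\varepsilon)$ after the jump reduction (Proposition~2.5 and Corollary~2.7), and applies the diagonal property at that specific $a$.

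Second, and more seriously, you describe Harrington's construction as a finite-injury priority argument in which at each stage $e$ one performs ``local surgery on $T$'' to defeat requirement $R_e$ while preserving exactly-two-point fibers. This is not how the argument works, and you yourself flag the surgery/two-point compatibility as ``the main obstacle'' without offering a mechanism to overcome it. The paper's strategy is to \emph{decouple} the two tasks. One first builds a tree $T_\alpha$ (Proposition~2.3) whose branches $E^{LN}_x$ form a two-point set for each $k$ and which already defeats every $\Sigma^0_1(x,\varepsilon)$ covering---a diagonalization that is easy because at level $\alpha$ one effectively has access to $x^{(\alpha)}$. Then Harrington's construction (Section~8, culminating in Proposition~2.6) produces a $\Pi^0_1(\varepsilon)$ set $F$ together with homeomorphisms $\Xi^x_k\colon (E^{LN}_x)_k\to (F_x)_k$ such that $y\mapsto \Xi^x(y)^{(\alpha)}_x$ is $\Sigma^0_1(\varepsilon)$-recursive on $E^{LN}_x$. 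The two-point fibers of $F$ come for free from the homeomorphism, and the $\bSi^0_\alpha$ diagonal property of $F$ is pulled back along $\Xi^a$ to the $\Sigma^0_1(a,\varepsilon)$ diagonal property of $E^{LN}$. Harrington's machinery itself is not a stage-by-stage injury argument but a transfinite tower of trees $P^x_\beta$, $\beta\le\alpha$, linked by maps $\hPsi^x_{\beta,\beta+1}$ and assembled via the recursion theorem and an auxiliary ordinal structure $(\diamond,\ell)$; the point is precisely that no surgery against individual requirements is ever performed on $F$.

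In short: your plan conflates the two layers that the paper keeps separate, and the obstacle you identify is real for your plan but dissolves in the paper's decomposition.
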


Using a homeomorphic embedding of $\om^\om$ onto a subset $2^{\om}\sm D$ of $2^{\om}$, where $D$ is a countable dense subset of $2^{\om}$, we get the following corollary.

\begin{corollary}\label{C:Luzin-Novikov}
Let $1 \leq \alpha < \omega_1$ be an ordinal.  Then there is a compact set $K \subset 2^\om \times 2^\om$ such that for every $x \in 2^\om$ the section $K_x$ is countable and $K$ cannot be covered by countably many sets $B(n) \subset K$, $n\in\om$,
such that each $B(n)$ is in the Borel class $\bSi^0_\al$ and the section $B(n)_x$ contains at most one point for every $n \in \omega$ and $x\in 2^\om$.
\end{corollary}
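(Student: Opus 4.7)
The strategy indicated in the sentence preceding the statement is to transport the closed counterexample $F$ from Theorem~\ref{T:main-theorem-Luzin} into $2^\om \times 2^\om$ via a homeomorphism of $\om^\om$ onto a co-countable subset of $2^\om$, and then to compactify by taking the closure. Concretely, fix a countable dense subset $D \su 2^\om$; the space $2^\om \sm D$ is a nonempty, zero-dimensional, nowhere locally compact Polish space, hence homeomorphic to $\om^\om$ by the Alexandrov--Urysohn characterization. Pick such a homeomorphism $h \colon \om^\om \to 2^\om \sm D$ and let $H \colon 2^\om \times \om^\om \to 2^\om \times 2^\om$ be defined by $H(x,y) := (x, h(y))$, which is a homeomorphism onto $2^\om \times (2^\om \sm D)$. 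Put $K := \overline{H(F)}$; being closed in a compact space, $K$ is compact.

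The main step, and the one requiring care, is to verify that every section $K_x$ remains countable after taking the closure. Given $(x,z) \in K$, pick $(x_n, h(y_n)) \in H(F)$ with $(x_n, h(y_n)) \to (x,z)$. If $z \notin D$, then $z = h(y)$ for the unique $y := h^{-1}(z) \in \om^\om$; continuity of $h^{-1}$ on $2^\om \sm D$ forces $y_n \to y$, and closedness of $F$ yields $(x,y) \in F$, whence $(x,z) \in H(F)$. Consequently $K_x \sm D \su h(F_x)$, so $K_x \su h(F_x) \cup D$, which is countable.

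For the non-coverability, assume for contradiction that $K = \bigcup_{n\in\om} B(n)$ with each $B(n) \in \bSi^0_\al$ and $|B(n)_x| \le 1$ for every $x$. Define $C(n) := F \cap H^{-1}(B(n))$. Continuity of $H$ puts $H^{-1}(B(n))$ in $\bSi^0_\al$, and intersecting with the closed set $F$ keeps us within the appropriate class (the boundary case $\al = 1$ is read in the relative topology on $F$, with the same argument). Each $C(n)_x$ is a subset of $h^{-1}(B(n)_x)$ and therefore contains at most one point; moreover $H(F) \su K \su \bigcup_n B(n)$ implies that the $C(n)$ cover $F$. This produces precisely the kind of cover forbidden by Theorem~\ref{T:main-theorem-Luzin}, contradiction.

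The only genuine obstacle here is the section-countability check: a priori the closure operation can enlarge fibers uncontrollably, but the observation that every new limit point must lie in the countable set $D$ (because $h^{-1}$ is continuous on $2^\om \sm D$ and $F$ is closed) is exactly what saves the argument. The pullback of the hypothetical cover through $H$ is a routine preservation-of-Borel-class argument.
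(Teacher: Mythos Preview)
Your proof is correct and follows the same route as the paper: push $F$ forward via a homeomorphism $h\colon \om^\om \to 2^\om\setminus D$, take the closure, and pull back a hypothetical cover. One small simplification: since $B(n)\subset K$ and $H^{-1}(K)=F$ (which is exactly your section-countability computation), the sets $H^{-1}(B(n))$ already lie in $F$, so the intersection with $F$ is redundant and your $\alpha=1$ caveat is unnecessary.
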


\begin{theorem}\label{T:SR}
Let $1 \leq \alpha < \omega_1$ be an ordinal. Then there is a closed set $F \subset 2^{\om} \times \om^{\om}$ such that for every $x \in 2^\om$ the section $F_x$ is a countable discrete set and $F$ cannot be covered by countably many sets $B(n) \subset F$, $n\in\om$,
such that each $B(n)$ is in the Borel class $\bSi^0_\al$ and the section $B(n)_x$ is compact for every $n \in \omega$ and $x\in 2^\om$.
\end{theorem}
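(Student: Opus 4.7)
The plan is to run the same Harrington-style construction as in the proof of Theorem~\ref{T:main-theorem-Luzin} once more, modifying only the combinatorics of the sections.

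First I would observe that any compact subset of a countable discrete subset of $\om^\om$ is finite, so once $F$ has countable discrete sections, the covering hypothesis $F\su\bigcup_n B(n)$ with $B(n)\in\bSi^0_\al$ and each $B(n)_x$ compact is equivalent to each $B(n)_x$ being finite. This is the only reason we can hope to import the Luzin--Novikov machinery at all.

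Next I would replace the single two-point obstruction placed above each $x$ in the proof of Theorem~\ref{T:main-theorem-Luzin} by an $\om$-sequence of such obstructions, separated by a fresh leading coordinate. Writing $(k)\w z$ for the concatenation of $k\in\om$ and $z\in\om^\om$, the target is
\[
F_x=\bigcup_{k\in\om}\bigl\{(k)\w a_k(x),\ (k)\w b_k(x)\bigr\},
\]
where each pair $(a_k(x),b_k(x))$ is produced by the same diagonalization as before, applied to the $k$-th candidate in an enumeration of the relevant $\bSi^0_\al$ data. The distinct leading coordinates make $F_x$ discrete in $\om^\om$, and because in $\om^\om$ the levels $\{y\set y(0)=k\}$ are clopen and pairwise separated, closedness of each stratum propagates to closedness of $F$ in $2^\om\times\om^\om$.

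To extract a contradiction from a hypothetical cover $F\su\bigcup_n B(n)$ with $B(n)\in\bSi^0_\al$ and $B(n)_x$ finite for every $x$, note that the trace $B(n)\cap\{(x,y)\set y(0)=k\}$ is still in $\bSi^0_\al$, and its fibre over $x$ is a subset of the two-point set $\{(k)\w a_k(x),(k)\w b_k(x)\}$. Since $B(n)_x$ is finite, for each $x$ all but finitely many levels $k$ contribute at most one of these two points to $B(n)_x$. A pigeonhole combined with a Baire-category argument in the parameter space $2^\om$ should then supply a single $n$, a single $k$, and a comeager $G\su 2^\om$ on which the restricted $\bSi^0_\al$ set uniformizes the $k$-th pair, contradicting the non-separability engineered into step $k$ of the construction.

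The main obstacle will be running the $\om$-many Harrington-style diagonalizations simultaneously while keeping $F$ closed and the fibres discrete, and, more delicately, arranging the non-separability at each level $k$ in a form strong enough (holding on a comeager set of parameters rather than at a single $x$) to support the final comeager extraction. This is precisely the forcing/Turing-jump bookkeeping that already powers the proof of Theorem~\ref{T:main-theorem-Luzin}, now indexed over the new level coordinate.
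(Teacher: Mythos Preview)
Your reduction from compact to finite sections is correct, and stacking countably many two-point obstructions on distinct first coordinates does give a closed set with discrete sections. The gap is in the extraction step. Restricting a hypothetical cover $F=\bigcup_n B(n)$ to level $k$ yields $\bSi^0_\al$ sets $B(n)^{(k)}$ whose fibres are subsets of a two-point set, hence have at most \emph{two} points, not at most one. That is not a cover of the level-$k$ copy by $\bSi^0_\al$ graphs, so Theorem~\ref{T:main-theorem-Luzin} does not apply; indeed the whole point of that theorem is that a $\bSi^0_\al$ set with two-point sections need not split into $\bSi^0_\al$ graphs. Your pigeonhole/category step tries to repair this by finding $n,k$ and a comeager $G$ on which $B(n)^{(k)}$ has singleton fibres, but that is both easy to arrange (for most $x$ the fibre is empty) and useless: a single $\bSi^0_\al$ partial graph on a comeager set contradicts nothing. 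To actually contradict the level-$k$ obstruction you would need \emph{all} the $B(n)^{(k)}$ to have singleton fibres on a common comeager $G$ while still covering $F^{(k)}$ there, and neither assertion follows from finiteness of the $B(n)_x$. You would also need the non-coverability of the level-$k$ set to survive restriction to a comeager $G$, which the proof of Theorem~\ref{T:main-theorem-Luzin} does not provide; that proof produces a single parameter $a\in\cC$ via Proposition~\ref{P:jump-and-classes-1} and Corollary~\ref{C:PSIBOREL}, not a comeager family.

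The paper does not reduce to Theorem~\ref{T:main-theorem-Luzin} at all. Instead it rebuilds the base tree: Proposition~\ref{P:E^SR} constructs a new $T_\al^{SR}$ in which each $(E^{SR}_x)_k$ carries an $\omega$-indexed fan (rather than a $\{0,1\}$-indexed pair) designed so that any $\Si^0_1(x,\ep)$ set meeting the fan in one branch must already contain infinitely many branches. Property~(v) of Proposition~\ref{P:E^SR} then directly rules out covers by $\Si^0_1(x,\ep)$ sets with \emph{finite} trace on $E^{SR}_x$. The Harrington transfer (Proposition~\ref{P:hPsi-properties}) and the effective choice of $a$ (Corollary~\ref{C:PSIBOREL}) are applied exactly as before, pulling a hypothetical $\bSi^0_\al$ cover with compact fibres back through the homeomorphisms $\Xi^a_k$ to such a forbidden $\Si^0_1(a,\ep)$ cover. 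The change is entirely in the combinatorics of Section~\ref{S:construction-of-T-alpha}, not in any soft category argument at the end.
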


Let us notice that we get an example disproving the possibility of a quantitative version of the Saint Raymond theorem for sets with $F_\si$ sections
(\cite[35.45]{Kechris}).

\begin{corollary}\label{C:Saint-Raymond}
Let $1 \leq \alpha < \omega_1$ be an ordinal.  Then there is a $G_\de$ set $G \subset 2^\om \times 2^\om$ such that for every $x \in 2^\om$ the section
$G_x$ is countable and relatively discrete and $G$ cannot be covered by countably many sets
$B(n) \subset G$, $n \in \om$,
such that each $B(n)$ is in the Borel class $\bSi^0_\al$ and the section $B(n)_x$ is closed for every $n \in \omega$ and $x\in 2^\om$.
\end{corollary}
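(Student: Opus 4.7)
The plan is to reduce the statement to Theorem~\ref{T:SR} by transporting the closed set $F \subset 2^\om \times \om^\om$ produced there into $2^\om \times 2^\om$ via the classical homeomorphic embedding of $\om^\om$ into Cantor space. Concretely, I would fix a countable dense set $D \subset 2^\om$ and a homeomorphism $h \colon \om^\om \to 2^\om \sm D$, and put $G := (\id_{2^\om} \times h)(F) \subset 2^\om \times 2^\om$.

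First I would verify that $G$ has the structural properties claimed in the statement. Since $F$ is closed in $2^\om \times \om^\om$ and $\id \times h$ is a homeomorphism onto $2^\om \times (2^\om \sm D)$, the image $G$ is closed in $2^\om \times (2^\om \sm D)$. The latter is a $G_\de$ subset of $2^\om \times 2^\om$, because $2^\om \times D$ is $F_\si$, so $G$ itself is $G_\de$ in $2^\om \times 2^\om$. Moreover, for each $x \in 2^\om$ the section $G_x = h(F_x)$ is countable and discrete as a subspace of $2^\om$, since $F_x$ is discrete in $\om^\om$ and $h$ is a homeomorphism.

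For the non-coverability, suppose for contradiction that $G = \bigcup_{n \in \om} B(n)$ with each $B(n) \subset G$ in $\bSi^0_\al(2^\om \times 2^\om)$ and each section $B(n)_x$ closed in $2^\om$. Pulling back by $\id \times h^{-1}$, the sets $A(n) := (\id \times h^{-1})(B(n))$ satisfy $A(n) \subset F$, $F = \bigcup_n A(n)$, and $A(n) \in \bSi^0_\al(2^\om \times \om^\om)$, because $\id \times h$ is a homeomorphism of $2^\om \times \om^\om$ onto $2^\om \times (2^\om \sm D)$ and preserves Borel classes. The point that requires attention is that the sections upgrade correctly: $B(n)_x$ is closed in the compact space $2^\om$, hence compact, and it is contained in $h(\om^\om) = 2^\om \sm D$, so $A(n)_x = h^{-1}(B(n)_x)$ is compact in $\om^\om$. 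Thus $(A(n))_n$ is a countable cover of $F$ by $\bSi^0_\al$ sets with compact sections, contradicting Theorem~\ref{T:SR}. No serious difficulty is expected; the only delicate ingredient is precisely this upgrade from \emph{closed section in $2^\om$} to \emph{compact section in $\om^\om$}, which is what makes the reduction to Theorem~\ref{T:SR} work.
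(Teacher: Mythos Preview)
Your proposal is correct and follows exactly the paper's own approach: transport $F$ from Theorem~\ref{T:SR} via a homeomorphism $h\colon \om^\om \to 2^\om \setminus D$ and set $G = (\id \times h)(F)$. The paper's proof simply asserts that $G$ ``has the desired properties'' without further detail, whereas you have correctly supplied the verification, including the key observation that closed sections in the compact space $2^\om$ are compact and hence pull back to compact sections in $\om^\om$.
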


The construction of the desired examples of closed subsets of $2^\om\times\om^\om$ is a modification of the method of Harrington.
Besides solving other problems concerning computability he used this method in \cite{Harrington} to show that there exists a countable  $\Pi^0_1$ (effectively closed)
set in $\om^\om$ containing non-arithmetic singleton (\cite[Problem~63]{Friedman}).
He pointed out in \cite{Harrington2} also the possibility to use his method to get for every ordinal $1 \leq \al < \omega_1$ a closed subset $F$ of $\om^\om\times\om^\om$ such that
all sections $F_x$, $x\in\om^\om$, contain exactly two points and there is no function $f\colon \om^\om\to\om^\om$ such that its graph is
a $\bSi_{\al}^0(\omega^{\omega} \times \omega^{\omega})$ subset of $F$. Theorem~\ref{T:main-theorem-Luzin} implies this result.
In unpublished manuscripts of Gerdes (\cite{Gerdes}) and of Hjorth (\cite{Hjorth}), Harrington's method is presented in two different generalized ways.
We use some ideas from these manuscripts, too.

The main results are inferred in Section~\ref{S:proofs-of-main-results} using auxiliary propositions which are proved in the next sections.
In Section~\ref{S:ordinal-structure}, following the ideas of \cite{Gerdes}, we describe a structure of the interval of ordinals
which is suitable for the application to Harrington's construction for countable ordinals.
In the paper we use effective descriptive theory.
We refer to Moschovakis' book \cite{Moschovakis}, however facts repeatedly used in proofs will be recalled in Section~\ref{S:recursive-background}
(e.g., recursive sets and functions, good universal sets and functions, Kleene's recursion theorems).
In Section~\ref{S:Turing-jumps} we present a version of Turing jumps and infer their properties needed later on.
Section~\ref{S:construction-of-T-alpha} is devoted to constructions of closed sets showing that naturally modified assertions
of Theorems~\ref{T:main-theorem-Luzin} and \ref{T:SR} for $\alpha = 1$ hold in an effective sense. These sets will serve as a starting point for the general construction.
In Section~\ref{S:reduction-of-Borel-classes} we reduce ``effectively Borel sets'' to ``effectively open sets'' using Turing jumps.
Then we present our modification of Harrington's construction (Section~\ref{S:Harrington-method}).

\begin{remark}\label{R:why-graphs}
Notice that $B(n)$'s in Theorem~\ref{T:main-theorem-Luzin} and Corollary~\ref{C:Luzin-Novikov} are graphs of Borel functions $f_n$ defined on Borel subsets of $X$ because the projections restricted to $B(n)$'s are continuous and one-to-one mappings of $B(n)$'s onto the domains of $f_n$'s (see, e.g., \cite[Corollary~15.2]{Kechris}).

Let us remark that Theorem~\ref{T:main-theorem-Luzin} remains true if we replace the assumption on the Borel class of $B(n)$'s
by $\bSi_{\alpha}^0$-measurability of  the \emph{mappings} $f_n$. This follows easily from classical results of descriptive set theory.
Indeed, let $H \subset 2^{\om}$ be a Borel set which is not of the class $\bSi^0_{\alpha}$.
Then there is an injective continuous function $g$ mapping a closed subset $E$ of $\om^{\om}$ onto $H$ (see, e.g., \cite[13.7]{Kechris}).
The graph $F$ of $f=g^{-1}\colon H \to E$ is closed in $2^{\om}\times \om^{\om}$ with at most one-point sections $F_x$, $x \in 2^{\om}$.
If we consider mappings $f_n \colon H_n \to \omega^{\omega}, n \in \omega$, with $\bigcup_{n \in \omega} \graph f_n = F$,
then at least one of these mappings is not $\bSi_{\alpha}^0$-measurable since otherwise $H$ would be of the class $\bSi^0_{\alpha}$.
\end{remark}

\section{Proof of the main results}\label{S:proofs-of-main-results}

\begin{notation}\label{N:alfa}
We fix a limit ordinal $\al < \om_1$.
\end{notation}

\begin{notation}\label{N:notation-1}
We use further the notation $\cN:=\om^{\om}$ and $\cC:=2^\om$.
By $\cS$ we denote the set $\om^{<\om}$ of all finite sequences of elements of $\om$ including the empty one. Given $k\in\om$
and $s\in \cN\cup\cS$ we write $k\w s$ for the sequence beginning by $k$ followed by $s$.
We denote by $\cl{n}$ the constant sequence in $\cN$ which attains the value $n$.
If $T \subset \cS$ is a tree, then $[T]$ denotes the set of all infinite branches of $T$.
\end{notation}

The choice of $\varepsilon \in \cN$ from Notation~\ref{N:epsilon} ensures that certain sets and mappings related to $[0,\alpha]$
become semirecursive in $\ep$ or recursive in $\ep$.

In Section~\ref{S:Turing-jumps} we define, for every $x\in\cN$ and $\be\in [0,\al]$, the ``$\be$-th Turing jump'' $y \in \cN \mapsto y^{(\be)}_x\in\cC\su\cN$  depending on $\ep$.
These mappings serve as the main tool for further constructions, mainly due to Proposition~\ref{P:jump-and-classes-1}.
We also write simply $y^{(\beta)}$ instead of  $y^{(\beta)}_{\cl{0}}$.

In Section~\ref{S:construction-of-T-alpha} we prove the following statements, where
the symbol $[\phantom{a}]_1$ denotes the mapping from $\cN$ to $\cN$  introduced in Notation~\ref{N:identification}.
The meaning of the notion ``$\Si_1^0([z]_1,\ep)$-recursive function'' is explained in Definitions~\ref{D:relativizations},  \ref{D:recursive-function}, and \ref{D:Ga-in-extended-spaces}(a).

\begin{proposition}\label{P:E^LN}
There exists a $\Si^0_1(\ep)$-recursive set $T_\al = T_\al^{LN} \su \cS\times\cN$ such that the following properties are satisfied for every $x \in \cN$.
\begin{itemize}
\item[\upshape (i)] The set $T_{\alpha}^x := \{t\in\cS\set (t,x)\in T_\al\}$ is a tree.

\item[\upshape (ii)] The set $\bigl\{[z]_1 \in \cN \set k \in \omega, k \w z \in [T^x_\al]\bigr\}$ is a singleton.

\item[\upshape (iii)] If $k \in \omega$ and $k \w z \in [T_{\alpha}^x]$, then the Turing jump $x^{(\alpha)}$ is $\Si_1^0([z]_1,\ep)$-recursive.
\end{itemize}

Moreover, the set $E^{LN} := \{(x,y) \in \cN^2 \set y \in [T_{\alpha}^x]\}$ satisfies
\begin{itemize}
\item[\upshape (iv)] For every $k \in \om$ we have that $\{z \in \cN \set k \w z \in E^{LN}_x\}$ is a two-point set.

\item[\upshape (v)] There is no $\Si^0_1(x,\ep)$ set $W$ in $\cN \times \om$ such that
\begin{itemize}
  \item $\bigcup_{n \in \omega} W^n \supset E^{LN}_x$ and

  \item the set $\{z \in \cN \set k\w z\in W^n \cap E^{LN}_x\}$ contains at most one point for every $k,n \in \omega$.
\end{itemize}
\end{itemize}
\end{proposition}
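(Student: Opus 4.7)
The plan is to build $T_\alpha^{LN}$ by a self-referential fixed-point construction along the lines of Harrington's singleton, using the presentation of the Turing jump $x^{(\alpha)}$ supplied by Section~\ref{S:Turing-jumps} together with the ordinal scaffolding from Section~\ref{S:ordinal-structure}. The tree $T_\alpha^x$ will consist of finite sequences $k\w s$ whose extensions to infinite branches correspond to admissible ``traces'' of the recursive approximation that computes $x^{(\alpha)}$, with a deliberate two-fold ambiguity in each $k$-fiber that nevertheless preserves $[z]_1$, giving (ii) and (iv) simultaneously.

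First, I would use the machinery of Section~\ref{S:Turing-jumps} to produce, uniformly in $x$, a $\Si^0_1(\ep)$-recursive approximation procedure that stabilizes on a real $c_x\in\cN$ from which $x^{(\alpha)}$ is $\Si^0_1(c_x,\ep)$-recursive. I would then declare $(t,x)\in T_\alpha$ precisely when $t=k\w s$ is compatible with the current state of this approximation at length $|s|$, wrapped so that (a)~the $[\cdot]_1$-image of $s$ matches the approximation of $c_x$ up to length $|s|$, and (b)~at each level two ``sibling'' continuations of $s$ remain available, differing only on coordinates that $[\cdot]_1$ discards. Kleene's recursion theorem is used to make the defining predicate self-consistent in the usual Harrington fashion; this is what forces $T_\alpha$ to be $\Si^0_1(\ep)$-recursive rather than merely $\Si^0_1(x,\ep)$-recursive.

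Given such a $T_\alpha$, properties (i) and (iv) are immediate from the construction, (ii) follows from the rigidity of the $[\cdot]_1$-coordinate, and (iii) follows from the fact that $[z]_1=c_x$ together with the $\Si^0_1(c_x,\ep)$-recursiveness of $x^{(\alpha)}$. For (v), I would argue by contradiction: assume $W\su\cN\times\om$ is $\Si^0_1(x,\ep)$, covers $E^{LN}_x$, and meets each two-point fiber $\{k\w z^0,k\w z^1\}\su E^{LN}_x$ in at most one point per $W^n$. Since each $W^n$ is $\Si^0_1(x,\ep)$, one can, uniformly in $k$, semirecursively watch which of $z^0$ or $z^1$ is first enumerated into some $W^n$; because each $W^n$ captures at most one of the two but both must be covered, this yields a $\Si^0_1(x,\ep)$-recursive recipe producing at least one tail $z^i(k)$ for every $k$. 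Passing to $[z^i(k)]_1=c_x$ and applying (iii) would then make $x^{(\alpha)}$ itself $\Si^0_1(x,\ep)$-recursive, contradicting the standard fact (proved in Section~\ref{S:Turing-jumps}) that $x^{(\alpha)}$ strictly exceeds this class.

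The main obstacle will be the effective construction in Step~1: ensuring $T_\alpha$ is $\Si^0_1(\ep)$-recursive uniformly in $x$ while each $x$-section encodes exactly two tails per initial digit $k$, all sharing the same $[\cdot]_1$-image coding $c_x$. The delicate balance — enough freedom for the doubling in (iv), enough rigidity for the uniqueness in (ii), and enough content for the jump recovery in (iii) — is what the ordinal structure from Section~\ref{S:ordinal-structure} and the specific presentation of the Turing jump from Section~\ref{S:Turing-jumps} are designed to provide, and stitching these together is precisely Harrington's method.
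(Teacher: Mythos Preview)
Your outline for (i)--(iv) is essentially right and matches the paper: encode $\sigma_\alpha^x$ (the unique branch of the jump tree $S_\alpha^x$ from Proposition~\ref{P:jump-trees-old}) in the $[\cdot]_1$-coordinate, and use the $[\cdot]_0$-coordinate to create a two-fold branching. However, your argument for (v) has a genuine gap, and the construction you sketch does not actually satisfy (v).

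The problem is that your proposed proof of (v) makes no use of the specific structure of $T_\alpha^x$ beyond (i)--(iv), and those properties alone are \emph{not} enough. Concretely, take the symmetric tree $(T_\alpha^x)_k=\{t:[t]_0\in\{\overline{0}||t|,\overline{1}||t|\},\ [t]_1\in S_\alpha^x\}$, which satisfies (i)--(iv). Then $W^i=\{y\in\cN:[y_1]_0=i\}$ for $i\in\{0,1\}$ is a clopen (hence $\Sigma^0_1(x,\ep)$) cover of $E_x$ that separates every fiber, so (v) fails. Your reduction ``from $W$ semirecursively pick a branch, read off $\sigma_\alpha^x$, hence compute $x^{(\alpha)}$'' breaks because watching basic open sets enter $W^n$ never tells you which tree nodes extend to actual branches; the branches themselves carry $\sigma_\alpha^x$, and distinguishing live nodes from dead ends is $\Pi^0_1(x,\ep)$, not $\Sigma^0_1(x,\ep)$.

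What the paper does instead is a direct diagonalization built into the tree: the fiber index $k$ is used as the G\"odel code of the $\Sigma^0_1(x,\ep)$ set being defeated. At fiber $k$ one initially grows two candidate branches with $[\cdot]_0$-parts $\overline{0}$ and $\overline{1}$, but the moment some basic neighborhood of one candidate is enumerated into $(G_k^x)^n$ for some $n$, the construction \emph{merges} the two branches onto that common prefix and only then resplits (conditions (A), (B), (C) in the paper's proof). Consequently, for $W=G_e^x$ and $k=e$: either no branch is ever caught (contradicting the covering), or both surviving branches share the caught prefix and hence lie in the \emph{same} $W^n$ (contradicting the separation). No Kleene recursion is needed for $T_\alpha^{LN}$ itself; the recursion theorem appears earlier, in the construction of $S$, not here.
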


\begin{proposition}\label{P:E^SR}
There exists a $\Si^0_1(\ep)$-recursive set $T_\al = T_\al^{SR} \su \cS\times\cN$ such that the following properties are satisfied for every $x \in \cN$.
\begin{itemize}
\item[\upshape (i)] The set $T_{\alpha}^x$ is a tree.

\item[\upshape (ii)] The set $\bigl\{[z]_1 \in \cN \set k \in \omega, k \w z \in [T^x_\al]\bigr\}$ is a singleton.

\item[\upshape (iii)] If $k \in \omega$ and $k \w z \in [T_{\alpha}^x]$, then the Turing jump $x^{(\alpha)}$ is $\Si_1^0([z]_1,\ep)$-recursive.
\end{itemize}

Moreover, the set $E^{SR} := \{(x,y) \in \cN^2 \set y \in [T_{\alpha}^x]\}$ satisfies
\begin{itemize}
\item[\upshape (iv)] The set $E^{SR}_x$ is closed and discrete.

\item[\upshape (v)] There is no $\Si^0_1(x,\ep)$ set $W$ in $\cN \times \om$ such that
\begin{itemize}
  \item $\bigcup_{n \in \omega} W^n \supset E^{SR}_x$ and

  \item the set $W^n \cap E^{SR}_x$ is finite for every $n \in \omega$.
\end{itemize}
\end{itemize}
\end{proposition}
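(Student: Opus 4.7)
The plan is to mirror Proposition~\ref{P:E^LN} as closely as possible, modifying only the places where the ``two-point section, at-most-singleton cover'' requirements are replaced by ``countable discrete section, finite cover''. Following Harrington's template (to be spelled out in Section~\ref{S:Harrington-method}), I would construct for each $x \in \cN$ a $\Si^0_1(\ep)$-recursive tree $T_\alpha^x$ whose infinite branches above each letter $k \in \om$ encode two pieces of data: a diagonal witness against the $k$-th candidate cover listed by a fixed good universal $\Si^0_1(\ep)$-set, and a block from which one can reconstruct the $\alpha$-th Turing jump $x^{(\alpha)}$. The singleton condition~(ii) and the recoverability condition~(iii) then transfer essentially verbatim from the Luzin--Novikov case, because the encoding layer of a branch is not affected by the structural changes described below.

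The structural change compared with $T_\alpha^{LN}$ is concentrated in the first coordinate. In place of the two witness certificates sitting above each $k$, I would enumerate a countable family of certificates indexed by pairs $(k,m)$, $m\in\om$, separated by distinct marker prefixes of the form $\cl{n}$ at stages depending on $m$, so that the resulting infinite branches spread apart in $\cN$ and form a closed discrete subset. This yields~(iv). Because every step of the enumeration is uniformly computable in $(x,\ep)$, the tree $T_\alpha^{SR}$ is $\Si^0_1(\ep)$-recursive, giving~(i).

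For~(v) I would argue by contradiction. Suppose a $\Si^0_1(x,\ep)$ set $W\su\cN\times\om$ covers $E^{SR}_x$ with each $W^n\cap E^{SR}_x$ finite. Since the section $\{z\in\cN \set k\w z\in E^{SR}_x\}$ is infinite for every $k$, the finiteness of the traces allows one to $\Si^0_1(x,\ep)$-search, uniformly in $k$ and in an index for $W$, for a witness branch above $k$ that avoids any prescribed finite union of the $W^n$'s. Combined with the recoverability of $x^{(\alpha)}$ from any such branch (condition~(iii)), this would produce a $\Si^0_1(x,\ep)$ description of $x^{(\alpha)}$, contradicting the complexity of the $\alpha$-th Turing jump to be established in Section~\ref{S:Turing-jumps}. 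As usual in Harrington-type constructions, the self-reference needed to fold the assumed cover back into the definition of $T_\alpha^{SR}$ is produced by Kleene's recursion theorem applied to the parameter indexing $W$.

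The main obstacle I anticipate is choreographing the discreteness markers and the diagonal witnesses simultaneously: the markers must be visible enough to the construction that the branches above each $k$ truly separate into a closed discrete set, yet invisible enough to a $\Si^0_1(x,\ep)$ adversary operating through a finite slice of $W$ that no finite subfamily $W^{n_1},\dots,W^{n_j}$ can absorb the full witness family above $k$ without revealing the jump. Once this balance is struck, (iv) and (v) become book-keeping consequences of the same effective ingredients that already drive Proposition~\ref{P:E^LN}.
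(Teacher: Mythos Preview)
Your sketch for (i)--(iv) is broadly on track, but your argument for (v) has a genuine gap, and it is not the route the paper takes.

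You propose to reach a contradiction by showing that a hypothetical $\Si^0_1(x,\ep)$ cover $W$ would let you $\Si^0_1(x,\ep)$-compute some branch of $[T_\al^x]$, and hence (via (iii)) $x^{(\al)}$, contradicting ``the complexity of the $\al$-th Turing jump to be established in Section~\ref{S:Turing-jumps}''. There are two problems. First, no such complexity lower bound is proved (or even true for arbitrary $x$): nothing in Section~\ref{S:Turing-jumps} excludes that $x^{(\al)}$ be $\Si^0_1(x,\ep)$-recursive, and indeed for suitably chosen $x$ it is. Second, the search you describe cannot be carried out: every branch $k\w z\in [T_\al^x]$ has $[z]_1=\si_\al^x$, which encodes $x^{(\al)}$, so you cannot ``$\Si^0_1(x,\ep)$-search'' for a branch without already having the jump in hand. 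Knowing that each $W^n$ meets $E^{SR}_x$ in a finite set does not help you locate any individual branch.

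The paper's argument for (v) is a direct structural diagonal that avoids both issues. Above each first coordinate $k$, the tree is built (via the dichotomy between conditions (A) and (B) in the definition of $T_\al^{SR}$) so that as soon as \emph{one} branch $k\w z$ is seen to enter some $G_k^{n,x}$ at a finite stage, the tree fans out at that stage into branches $k\w w^j$, $j\in\om$, \emph{all} of which share the same finite initial segment witnessing membership in $G_k^{n,x}$. Given a putative $\Si^0_1(x,\ep)$ cover $W$, take $e$ with $W=G^x_e$ and look above $k=e$: either no branch there ever enters any $W^n$ (contradicting $\bigcup_n W^n\supset E^{SR}_x$), or some branch does, and then infinitely many branches lie in the \emph{same} $W^n$ (contradicting finiteness of $W^n\cap E^{SR}_x$). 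No appeal to the complexity of $x^{(\al)}$ is needed, and no recursion theorem is used in this proof: the self-reference is handled by the universal-set trick, letting the first coordinate $k$ of a branch double as the index of the candidate cover being diagonalized.
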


\begin{remark}
Notice that for every $x \in \cN$ only finite subsets of $E^{SR}_x$ are compact.
\end{remark}

In Section~\ref{S:reduction-of-Borel-classes} we prove the following result for
the mapping $\Upsilon^x\colon \cN \to \cN$  defined by $\Upsilon^x(y) = y_x^{(\alpha)}$ for $x \in \cN$.

\begin{proposition}\label{P:jump-and-classes-1}
If $A(n) \su \cN^2$, $n \in \om$, are in $\bSi^0_{\al}$, then there are $a \in \cC$ and $H \in \Si^0_1(a,\ep)$ in $\cN \times \om$ such that
for every $n \in \omega$ we have $A(n)_a = (\Upsilon^a)^{-1}(H^n)$.
\end{proposition}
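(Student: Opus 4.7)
The plan is to decompose the claim into two independent steps: absorbing the boldface parameters of the countably many sets $A(n)$ into a single real $a$, and then invoking a Post-type reduction that collapses $\Sigma^0_\alpha$-complexity to $\Sigma^0_1$-complexity via one application of the transfinite Turing jump $\Upsilon^a$ constructed in Section~\ref{S:Turing-jumps}.

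For the first step, for each $n \in \om$ pick $a_n \in \cC$ such that $A(n)$ belongs to the lightface class $\Sigma^0_\alpha(a_n, \ep)$; such $a_n$ exist by definition of the boldface class and the choice of $\ep$ making the ordinal arithmetic up to $\alpha$ recursive in $\ep$. Package them into a single real $a := \langle a_n \rangle_{n \in \om} \in \cC$ via a standard recursive pairing. Uniformity of the good universal $\Sigma^0_\alpha$ set (recalled in Section~\ref{S:recursive-background}) guarantees that the ternary relation ``$(x,y) \in A(n)$'' is $\Sigma^0_\alpha(a, \ep)$ in $\omega \times \cN^2$, and in particular its specialization ``$(a, y) \in A(n)$'' is a $\Sigma^0_\alpha(a, \ep)$ relation $P \subset \cN \times \om$ with $P^n = A(n)_a$ for every $n$.

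For the second step, I appeal to the key feature of the jump $\Upsilon^a\colon y \mapsto y^{(\alpha)}_a$ developed in Section~\ref{S:Turing-jumps}: every $\Sigma^0_\alpha(a, \ep)$ subset of $\cN$ is the $\Upsilon^a$-preimage of some $\Sigma^0_1(a, \ep)$ subset of $\cN$, and this reduction is uniform in an additional integer parameter. Applying this uniform Post theorem to the relation $P$ produces $H \in \Sigma^0_1(a, \ep)$ on $\cN \times \om$ such that $(y, n) \in P \iff (\Upsilon^a(y), n) \in H$, which is exactly $A(n)_a = (\Upsilon^a)^{-1}(H^n)$ for every $n \in \om$.

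The main obstacle is the uniform Post-type theorem underlying the second step: one must verify that a single application of the $\alpha$-th jump (rather than the pointwise sequence of iterated finite jumps) collapses the whole class $\Sigma^0_\alpha(a, \ep)$ to $\Sigma^0_1(a, \ep)$ uniformly in the auxiliary parameter $n$. For $\alpha = 1$ this is classical Post's theorem; the general case is expected to follow by transfinite induction from the inductive definition of $\Upsilon^a$, with the successor step using the same classical argument and the limit step depending crucially on the fact that $\ep$ encodes a recursive enumeration of the ordinals below $\alpha$, so that the limit-level jump genuinely amalgamates the lower-level jumps in a recursive fashion. The amalgamation of parameters in the first step is, by contrast, a routine application of the standard coding apparatus.
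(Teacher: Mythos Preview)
Your two-step outline is correct in spirit, and the paper follows essentially the same logical architecture: absorb the boldface parameters into a single $a\in\cC$, then reduce the $\bSi^0_\alpha$ complexity to $\Si^0_1$ via the $\alpha$-th jump. But your second step defers all the actual work to a black box. The ``uniform Post-type theorem'' you invoke --- that every $\Sigma^0_\alpha(a,\ep)$ subset of $\cN$ (with an extra integer parameter) is the $\Upsilon^a$-preimage of a $\Sigma^0_1(a,\ep)$ set --- is precisely the content of Lemma~\ref{L:reduction}, which the paper proves via a tree-rank representation and Kleene's recursion theorem. Section~\ref{S:Turing-jumps} only establishes the basic calculus of the jumps (Lemma~\ref{L:jumps}); it does not contain the reduction you cite. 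Your sketch ``transfinite induction, successor step classical, limit step uses the $\ep$-recursive enumeration'' is the right shape, but it is exactly what needs to be carried out, and the limit step in particular requires the apparatus of Lemma~\ref{L:reduction} (or an equivalent argument) to make the quantifier manipulations uniform.

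There is also a framing mismatch. You speak of lightface classes $\Sigma^0_\alpha(a,\ep)$ and good universal sets for them, claiming these are ``recalled in Section~\ref{S:recursive-background}''. They are not: the paper only defines effective classes up to $\Sigma^0_2$ (Subsection~\ref{SS:higher-classes}) and deliberately avoids the transfinite lightface hierarchy. Instead, the paper encodes a $\bSi^0_{1+\beta}$ set as $B^{T,\Theta}(\emptyset)$ for a well-founded tree $T$ of rank $\le\beta$ and an open $\Theta$ (Lemma~\ref{L:classrepresentation}), then glues the trees for all $A(n)$ into one tree with $h^T_{(n)}\le\alpha$, and uses Lemma~\ref{F:relativization} to pick $a\in\cC$ making $T$, its rank function, and $\Theta$ all $\Sigma^0_1(a,\ep)$-recursive. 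Lemma~\ref{L:reduction} then gives the reduction at each node, and a final application of $r_3$ from Lemma~\ref{L:jumps}(\ref{I:a^ga-from-a^be}) passes from level $h^T_{(n)}$ to level $\alpha$. Your step-one packaging of the $a_n$ into a single $a$ is a legitimate alternative to the paper's tree-gluing, but it does not buy you anything unless you also supply the Post reduction in full.
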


The main construction in Section~\ref{S:Harrington-method} culminates by proving the following proposition in Subsection~\ref{SS:properties-of-Xi}.

\begin{proposition}\label{P:hPsi-properties}
Let $T_\al \su \cS\times\cN$ be a $\Si^0_1(\ep)$-recursive set such that
\begin{itemize}
\item[\upshape (i)] the set $T_\al^x$ is a tree for every $x \in \cN$,
\item[\upshape (ii)] the set $\bigl\{[z]_1 \in \cN \set k \in \omega, k \w z \in [T^x_\al]\bigr\}$ is a singleton, and
\item[\upshape (iii)] if $k \in \omega$ and $k\w z \in [T^x_\al]$, then the Turing jump $x^{(\alpha)}$ is $\Si_1^0([z]_1,\ep)$-recursive.
\end{itemize}
Using the notation $E := \bigl\{(x,y) \in \cN^2 \set y \in [T_{\alpha}^x]\bigr\}$, we get that
there are a $\Pi^0_1(\ep)$ set $F$ in $\cN \times \cN$ and functions $\Xi^x\colon E_x \to F_x$, $x \in \cN$, such that
\begin{itemize}
\item[\upshape (a)] for every $x \in \cN$ the function $\Upsilon^x\circ\Xi^x\colon E_x \to \cN$ is $\Si^0_1(\ep)$-recursive on $E_x$ and
\item[\upshape (b)] for every $x\in\cN$ and $k\in\om$ the mapping $\Xi^x_k$ defined by $\Xi^x_k(z)=\Xi^x(k\w z)$ is a homeomorphism of $(E_x)_k = \{z \in \cN \set k \w z \in E_x\}$
onto $(F_x)_k = \{z \in \cN \set k \w z \in F_x\}$.
\end{itemize}
\end{proposition}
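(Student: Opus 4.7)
The proof is Harrington's method: use Kleene's second recursion theorem to collapse the $x$-dependence of a naturally $x$-parameterized construction into a fixed-point index, yielding a set $F$ that is $\Pi^0_1$ in $\ep$ alone. The intuition is that each branch $y=k\w z\in[T_\al^x]$ should be replaced by a ``self-certifying'' element $\xi=\Xi^x(y)$ that carries, in addition to $y$, a witness realizing the computation of $x^{(\al)}$ from $[z]_1$ promised by hypothesis~(iii); the recursion theorem then allows this witness to be verified internally, from $\xi$ and $\ep$ alone.

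The plan is as follows. Fix a recursive homeomorphism $\pi\colon\cN\times\cN\to\cN$. For each candidate Turing index $e\in\om$, define tentatively $\Xi^x_{(e)}(k\w z):=k\w\pi(z,\{e\}^{[z]_1,\ep})$ and let $F^{(e)}$ be the set of $(x,\xi)\in\cN^2$ such that $\xi$ has this form for some $k\w z\in[T_\al^x]$ and the computation $\{e\}^{[z]_1,\ep}$ converges. For the ``correct'' index---one for which $\{e\}^{[z]_1,\ep}=x^{(\al)}$, promised in hypothesis~(iii)---the second factor of $\pi^{-1}(\xi)$ equals $x^{(\al)}$; hypothesis~(ii) ensures this value is uniquely determined by $x$ and shared across all branches. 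Applying the recursion theorem to the $\Si^0_1(\ep)$-recursive assignment $e\mapsto\langle\textrm{code of the defining formula of }F^{(e)}\rangle$ yields a self-referential fixed-point index $e^*$, and I set $F:=F^{(e^*)}$, $\Xi^x:=\Xi^x_{(e^*)}$.

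The verification of the required properties proceeds as follows. Property~(b) follows directly, since $z\mapsto\pi(z,\{e^*\}^{[z]_1,\ep})$ is a continuous bijection of $(E_x)_k$ onto $(F_x)_k$ with continuous inverse given by the first coordinate of $\pi^{-1}$. Property~(a) follows because the second coordinate of $\pi^{-1}(\Xi^x(y))$ equals $x^{(\al)}$, which is $\Si^0_1(\ep)$-recoverable from $\xi=\Xi^x(y)$; then $\Upsilon^x(\xi)=\xi^{(\al)}_x$ is computed by a fixed algorithm encoded through the recursion-theoretic index $e^*$, yielding the required $\Si^0_1(\ep)$-recursiveness on $E_x$ (because the relativized jump of $\xi$ can be assembled from the $x^{(\al)}$-information already encoded in $\xi$).

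The main obstacle is the recursion-theoretic step: showing that the formula defining $F$ becomes genuinely $\Pi^0_1(\ep)$ (with no $x$-oracle) once the fixed point is installed. This demands a careful parameterization via the good universal $\Si^0_1(\ep)$ sets recalled in Section~\ref{S:recursive-background}, and it exploits hypothesis~(ii) in an essential way---the uniqueness of $[z]_1$ across branches is what makes the consistency check of the encoded witness internal, so that verifying ``$\xi$ encodes a branch of $T_\al^x$'' reduces to verifying that the second factor of $\pi^{-1}(\xi)$ is a legitimate $x^{(\al)}$, and this latter check can be phrased without oracular access to $x^{(\al)}$ via the self-reference at $e^*$. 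This ``collapse of oracle power to nothing'' is the characteristic move of Harrington's construction.
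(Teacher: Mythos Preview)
Your proposal has a genuine gap at the crucial step: the claim that $\Upsilon^x(\xi)=\xi^{(\alpha)}_x$ ``can be assembled from the $x^{(\alpha)}$-information already encoded in $\xi$'' is unjustified and, for your $\xi$, false.

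Concretely, with $\xi=k\w\pi(z,x^{(\alpha)})$ you can indeed recover $x^{(\alpha)}$ from $\xi$. But $\Upsilon^x(\xi)=\xi^{(\alpha)}_x$ is the iterated jump \emph{of $\xi$} relative to $x$, not of $x$. Already at the first level, $\xi'_x(n)$ asks whether $(n,n,\xi,x,\ep)\in G^{\om\times\cN^3}_{\Si^0_1}$, a $\Si^0_1$ question about the triple $(\xi,x,\ep)$. Since $\xi$ carries the branch $z$, which is typically \emph{not} recursive in $(x,\ep)$ (the whole point is that $E_x$ has several branches), knowing $x^{(\alpha)}$ gives you no purchase on this question. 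There is no reason $\xi'_x$, let alone $\xi^{(\alpha)}_x$, should be $\Si^0_1(\ep)$-recursive in $\xi$ for your $\xi$. The recursion theorem does not manufacture this; a fixed point for an index $e$ coding $x^{(\alpha)}$ says nothing about jumps of objects that contain $z$.

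This is precisely the difficulty Harrington's construction is designed to overcome, and it is why the paper's proof occupies all of Section~\ref{S:Harrington-method}. The idea there is \emph{not} to append $x^{(\alpha)}$ once, but to build $\xi=\hPsi^x_{0,\alpha}(y)$ by a transfinite tower of encodings $\hPsi^x_{\beta,\beta+1}$, $\beta<\alpha$. At each level one passes from $[T^x_{\beta+1}]$ to $[T^x_{\beta}]$ by inserting, between successive symbols, markers $\eta_i\in\cS^*$ that record whether the $i$th bit of the jump of the current sequence has already fired (the set $\bbJ$ in Definition~\ref{D:definition-of-J}). Lemma~\ref{L:recursivity-of-Theta}(b) then shows that $(\hPsi^x_{\beta,\beta+1}(z))'_x$ can be read off from the markers, hence is $\Si^0_1(\ep)$-recursive in $z$; composing through the ordinal structure $(\diamond,\ell)$ of Section~\ref{S:ordinal-structure} yields Lemma~\ref{L:psi-properties}, which is exactly property~(a). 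The recursion theorem is used in this construction (Definition~\ref{D:function-f} and Lemma~\ref{L:Precursive}), but to make the layered tree $P$ $\Si^0_1(\ep)$-recursive, not to ``collapse oracle power''. Your one-shot encoding skips the entire mechanism that makes the successive jumps of $\xi$ readable.
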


\begin{corollary}\label{C:PSIBOREL}
Let $E$, $F$, and $\Xi^x, x \in \cN$, be as in Proposition~\ref{P:hPsi-properties}.
Then for every sequence $(B(n))_{n \in \omega}$ of $\bSi^0_{\al}$ subsets of $\cN^2$,
there are $a \in \cC$ and $W \in \Sigma^0_1(a,\ep)$ in $\cN \times \om$ such that
\[
(\Xi^a)^{-1}(B(n)_a) = W^n \cap E_a.
\]
\end{corollary}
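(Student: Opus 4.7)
The plan is essentially a one-line substitution combining Proposition~\ref{P:jump-and-classes-1} with property~(a) of Proposition~\ref{P:hPsi-properties}. First I apply Proposition~\ref{P:jump-and-classes-1} to the given sequence $(B(n))_{n \in \omega}$, obtaining $a \in \cC$ and a $\Sigma^0_1(a,\ep)$ set $H \su \cN \times \om$ with $B(n)_a = (\Upsilon^a)^{-1}(H^n)$ for every $n \in \omega$. This $a$ will serve as the $a$ in the conclusion of the corollary.

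For $y \in E_a$ the chain of equivalences
\[
y \in (\Xi^a)^{-1}(B(n)_a) \iff \Xi^a(y) \in (\Upsilon^a)^{-1}(H^n) \iff (\Upsilon^a \circ \Xi^a)(y) \in H^n
\]
reduces the problem to finding $W$ with $W^n \cap E_a = (\Upsilon^a \circ \Xi^a)^{-1}(H^n)$. By property~(a) the function $\Upsilon^a \circ \Xi^a$ is $\Sigma^0_1(\ep)$-recursive on $E_a$, so for each basic open $[s] \su \cN$ there is, uniformly in $s \in \cS$, a $\Sigma^0_1(\ep)$ set $V_s \su \cN$ with $(\Upsilon^a \circ \Xi^a)^{-1}([s]) = E_a \cap V_s$. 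Expanding the $\Sigma^0_1(a,\ep)$ code of $H$ as $\bigcup \{[s] \times \{n\} \set (s,n) \in J\}$ for a $\Sigma^0_1(a,\ep)$ set $J \su \cS \times \om$, and taking
\[
W := \bigl\{(y,n) \in \cN \times \om \set \exists s \in \cS \,\bigl((s,n) \in J \wedge y \in V_s\bigr)\bigr\},
\]
I obtain a $\Sigma^0_1(a,\ep)$ set whose trace $W^n \cap E_a$ equals $(\Upsilon^a \circ \Xi^a)^{-1}(H^n) = (\Xi^a)^{-1}(B(n)_a)$, as required.

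The only mildly technical point is the final bookkeeping: one needs the $V_s$'s to be given uniformly in $s$ by a single $\Sigma^0_1(\ep)$ code, and the double existential defining $W$ to stay inside $\Sigma^0_1(a,\ep)$. Both facts are immediate from the standard parameterization results for semirecursive sets and $\Sigma^0_1$-recursive partial functions recalled in Section~\ref{S:recursive-background}, so no new ideas are required beyond those already embedded in Propositions~\ref{P:jump-and-classes-1} and~\ref{P:hPsi-properties}.
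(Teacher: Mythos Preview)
Your proof is correct and follows essentially the same route as the paper's: both apply Proposition~\ref{P:jump-and-classes-1} to obtain $a$ and $H$, then use the $\Sigma^0_1(\ep)$-recursivity of $\Upsilon^a\circ\Xi^a$ on $E_a$ from Proposition~\ref{P:hPsi-properties}(a) together with the substitution property to pull $H$ back to the desired $W$. The only difference is cosmetic: where the paper simply invokes the substitution property (Fact~\ref{F:partialrecursivefunctions}\eqref{I:substitution-property}) and \cite[3D.1]{Moschovakis}, you unpack that invocation by hand via the computing set $\{V_s\}$ and the $\Sigma^0_1(a,\ep)$ code $J$ of $H$.
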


\begin{proof}
Let $B(n) \subset \cN^2$ be in $\bSi^0_{\al}$ for every $n \in \om$.
Then there are $a\in \cC$ and a $\Si^0_1(a,\ep)$ set $H$ in $\cN \times \omega$ such that
$B(n)_a = (\Upsilon^a)^{-1}(H^n)$ by Proposition~\ref{P:jump-and-classes-1}.
By Proposition~\ref{P:hPsi-properties}(a), \cite[3D.1]{Moschovakis}, and the substitution property (cf. Fact~\ref{F:partialrecursivefunctions}\eqref{I:substitution-property})
there exists a $\Si^0_1(a,\ep)$ set $W$ in $\cN\times\om$ such that, for every $n \in \omega$, we have
\[
(\Xi^a)^{-1}(B(n)_a) = (\Upsilon_a\circ \Xi^a)^{-1}(H^n) = W^n \cap E_a. \qedhere
\]
\end{proof}

We are now going to apply the properties of $E^{LN}$ and $E^{SR}$ (from Propositions~\ref{P:E^LN} and \ref{P:E^SR})
together with Proposition~\ref{P:hPsi-properties} and Corollary~\ref{C:PSIBOREL} to prove our main results.

\begin{proof}[{\bf Proof of Theorem~\ref{T:main-theorem-Luzin}}]
Let $E^{LN}$ be the set from Proposition~\ref{P:E^LN},
$F^{LN}$ and $\Xi^x$, $x \in \cN$, be the corresponding set and the mappings from Proposition~\ref{P:hPsi-properties}.
The set $F^{LN}$ is closed since it is $\Pi_1^0(\ep)$.
We set $\wt{F} := F^{LN} \cap (\cC\times\cN)$.

We first prove that $\wt{F}$ cannot be covered by countably many $\bSi^0_\al$ subsets $B(n)$, $n\in\om$, of $\cC\times\cN$ with at most one point sections.
Towards contradiction assume that $B(n), n \in \omega$, are $\bSi^0_{\al}$ subsets of $\wt{F}$ such that $\wt{F} = \bigcup_{n\in\omega} B(n)$
and $B(n)_x$  contains at most one point for every $x \in \cC$ and $n \in \omega$.
By Corollary~\ref{C:PSIBOREL} there are $a \in \cC$ and a $\Si^0_1(a,\ep)$ set $W \subset \cN \times \om$ such that $(\Xi^a)^{-1}(B(n)_a) = W^n \cap E^{LN}_a$ for every $n \in \omega$.
Since $\wt{F} = \bigcup_{n\in\omega} B(n)$ and $\Xi^a:E^{LN}_a\to F_a$, we get $\bigcup_{n \in \omega} W^n \supset E^{LN}_a$.
Therefore, due to injectivity of $\Xi^a$ and to the preservation of the first coordinate by $\Xi^a$,
the set $\{z \in \cN \set k \w z\in E^{LN}_a \cap W^n\}$ contains at most one point for every $k, n \in \omega$.
Using (v) of Proposition~\ref{P:E^LN}, we get a contradiction.

We have $\wt{F} = \bigcup_{k \in \omega} \{(x,k \w z) \in \cC \times \cN \set (x, k \w z) \in \wt{F}\}$.
So there exists $k' \in \omega$  such that $F := \{(x, z) \in \cC \times \cN \set (x, k' \w z) \in \wt{F}\}$ cannot be covered by countably many uniformizations from $\bSi^0_\al$ as well.
This $F$ has moreover exactly two-point sections $F_x$ as images by the injective mappings $\Xi^x_{k'}$
(see Proposition~\ref{P:hPsi-properties}(b)) of the two-point sets
$(E^{LN}_x)_{k'}$ for all $x \in \cC$ (see (iv) of Proposition~\ref{P:E^LN}).
The set $F^{LN}$ is closed and therefore $\wt{F}$ is closed. Thus $F$ is closed as well.
\end{proof}

\begin{proof}[{\bf Proof of Corollary~\ref{C:Luzin-Novikov}}]
Let us consider the set $F$ from Theorem~\ref{T:main-theorem-Luzin} and a homeomorphism $h\colon \cN\to \cC\sm D$, where $D$ is a countable dense subset of $\cC$, see, e.g., \cite[Theorem~7.7]{Kechris}. The set
\[
K = \cl{\bigl\{(x,h(y))\in \cC \times \cC\set (x,y)\in F\bigr\}}
\]
is compact, it has countable sections $K_x$, $x\in \cC$, because $\{(x,h(y))\set (x,y)\in F\}$ is closed in $\cC\times (\cC\sm D)$ and $D$ is countable. Suppose that we have a cover of $K$ by countably many $\bSi^0_{\al}$ sets $B(n) \subset \cC \times \cC$ such that $B(n)_x$ contains at most one point for every $n \in \omega, x \in \cC$.
We may notice that the $\bSi^0_{\al}$ sets $\wt B(n) := \{(x,y)\in \cC\times \cN\set (x,h(y))\in B(n)\}$, $n \in \omega$,
cover $F$ and $\wt B(n)_x$ contains at most one point for every $n \in \omega, x \in \cC$, which is a contradiction with the choice of $F$.
\end{proof}

\begin{remark}\label{R:no-compact-example}
Notice that for every compact $K\su\cN\times\cN$ the functions $x \mapsto \min K_x$ and $x \mapsto \max K_x$ are lower semicontinuous and upper semicontinuous on the compact set $\{x\in\cN\set K_x\ne\emptyset\}$, respectively. Therefore, if $K_x$, $x\in\cN$, contain at most two points, then we can cover $K$ by two graphs of functions of the first class,
i.e., with $\bPi^0_2$ graphs.
\end{remark}

\begin{proof}[{\bf Proof of Theorem~\ref{T:SR}}]
Let $E^{SR}$ be the set from Proposition~\ref{P:E^SR}, $F^{SR}$ and $\Xi^x$, $x \in \cN$, be the corresponding set and mappings from Proposition~\ref{P:hPsi-properties}.
The set $F^{SR}$ is closed since it is $\Pi_1^0(\ep)$.
We define a closed set $F = F^{SR} \cap (\cC\times\cN)$ and we first prove that it cannot be covered by countably many $\bSi^0_\al$ subsets of $\cC\times\cN$ with compact sections.

Towards contradiction assume that $B(n), n \in \omega$, are $\bSi^0_{\al}$ subsets of $\cC \times\cN$ such that $B(n)_x$ is compact for every $x \in \cN, n \in \omega$ and
$F = \bigcup_{n\in\omega} B(n)$. By Corollary~\ref{C:PSIBOREL} there are $a \in \cC$ and a $\Si^0_1(a,\ep)$ set $W \subset \cN \times \om$ such that $(\Xi^a)^{-1}(B(n)_a) = W^n \cap E^{SR}_a$ for every $n \in \omega$.
We claim that the set $W \subset \cN \times \om$ and $a \in \cC$ have the two properties from Proposition~\ref{P:E^SR}(v),
which is in contradiction with the choice of $T_\al^{SR}$.
Since $F_a = \bigcup_{n \in \omega} B(n)_a$, we get the first property from Proposition~\ref{P:E^SR}(v).
By Proposition~\ref{P:hPsi-properties}(b) the set $W^n \cap E^{SR}_a$ is compact for every $n \in \omega$.
Since $E^{SR}_a$ is discrete, we get that the sets $W^n \cap E^{SR}_a$, $n \in \omega$, are finite.
Thus $W$ and $a$ fulfil also the second condition from Proposition~\ref{P:E^SR}(v), a contradiction.

The sections $F_x, x \in \cC$, are discrete due to Proposition~\ref{P:E^SR}(iv) and Proposition~\ref{P:hPsi-properties}(b).
\end{proof}

\begin{proof}[{\bf Proof of Corollary~\ref{C:Saint-Raymond}}]
Let $h\colon \cN\to \cC\sm D$ be a homeomorphism, where $D$ is a countable dense subset of $\cC$.
Then the set
\[
G = \bigl\{(x,h(y))\in \cC\times\cC\set (x,y)\in F\bigr\}
\]
with $F$ from Theorem~\ref{T:SR} has the desired properties.
\end{proof}

\section{An auxiliary structure of \texorpdfstring{$[0,\al]$}{}}\label{S:ordinal-structure}

We describe two mappings, $\diamond$ and $\ell$, which will play the same role for us as they do in \cite{Gerdes}. Nevertheless, our application does not need any information on the effectiveness of these mappings as we will see. Let $A$ be a set of ordinals. Let
us denote the set of all limit ordinals in $A$ by $\LIM(A)$.

\begin{lemma}[auxiliary structure of ordinal intervals]\label{L:diamond}
Let $\la$ be  a countable limit ordinal and $\rho<\la$ be an isolated ordinal.
Then there are a surjection $\diamond_{\rho,\la}\colon [\rho,\la) \to \LIM((\rho,\la])$, a mapping $\ell_{\rho,\la}\colon [\rho,\la] \to \om \setminus \{0\}$, and a mapping $a_{\rho,\la}\colon \LIM((\rho,\lambda]) \times \omega \to [\rho,\lambda)$ such that the following conditions are satisfied.
\begin{itemize}
  \item[\upshape (a)] For every $\be \in [\rho,\la)$, there exists $s \in \mathbb N$ such that $\be < \diamond_{\rho,\la}(\be) < \diamond_{\rho,\la}^2(\be)<\dots<\diamond_{\rho,\la}^{s}(\be)=\la$.

  \item[\upshape (b)] For every $\mu \in \LIM((\rho,\la])$, we have
      \begin{itemize}
      \item[--] $\{a_{\rho,\lambda}(\mu,k)\}_{k=0}^{\infty}$ is an increasing sequence with limit $\mu$,
      \item[--] $\diamond_{\rho,\la}^{-1}(\mu) = \{a_{\rho,\lambda}(\mu,k) \set k \in \om\}$,
      \item[--] $\ell_{\rho,\la}(a_{\rho,\lambda}(\mu,k))=\ell_{\rho,\la}(\mu)+k$, $\ell_{\rho,\la}(\la) = 1$,
                and $\ell_{\rho,\la}(\rho) = 1$.
      \end{itemize}

  \item[\upshape (c)] For every $\be < \lambda$ and every $\ga \in (\be,\diamond_{\rho,\la}(\be))$, we have $\ell_{\rho,\la}(\ga)>\ell_{\rho,\la}(\be)$ and $\diamond_{\rho,\la}(\ga)\le\diamond_{\rho,\la}(\be)$.

  \item[\upshape (d)] For every $\beta \in [\rho,\la)$, we have $\ell_{\rho,\la}(\be+1)=\ell_{\rho,\la}(\be)+1$.
 \end{itemize}
\end{lemma}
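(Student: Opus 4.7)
The plan is to construct $\diamond_{\rho,\la}$, $\ell_{\rho,\la}$, and $a_{\rho,\la}$ by transfinite induction on $\la$, carrying out a direct construction when no limit ordinal lies strictly between $\rho$ and $\la$, and gluing along a cofinal $\om$-sequence in $(\rho,\la)$ in the inductive step.

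For the base case I would assume $\LIM((\rho,\la))=\emptyset$, so every ordinal in $(\rho,\la)$ is a successor and hence $\la=\rho+\om$ with $[\rho,\la)=\{\rho+n\set n\in\om\}$. The definitions $a_{\rho,\la}(\la,n)=\rho+n$, $\diamond_{\rho,\la}(\rho+n)=\la$ for every $n\in\om$, $\ell_{\rho,\la}(\rho+n)=n+1$, and $\ell_{\rho,\la}(\la)=1$ then satisfy properties (a)--(d) by inspection.

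In the inductive step, $\LIM((\rho,\la))\ne\emptyset$, and I pick a strictly increasing sequence $(\mu_k)_{k\ge 1}\su(\rho,\la)$ with $\sup_k\mu_k=\la$ and $\mu_1$ a limit: if $\la$ is a limit of limit ordinals, each $\mu_k$ can be chosen a limit; if instead $\la=\la^-+\om$ for a largest limit $\la^-<\la$ (necessarily $\la^->\rho$ here), I take $\mu_1=\la^-$ and $\mu_k=\la^-+(k-1)$ for $k\ge 2$. Applying the inductive hypothesis to $[\rho,\mu_1]$ and to each $[\mu_k+1,\mu_{k+1}]$ with $\mu_{k+1}>\mu_k+1$ (forcing $\mu_{k+1}$ to be a limit) produces triples $(\diamond_k,\ell_k,a_k)$ indexed by the sub-intervals ($k=0$ for $[\rho,\mu_1]$, $k\ge 1$ for the remaining ones). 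I then glue: set $\diamond_{\rho,\la}(\mu_k):=\la$ and $a_{\rho,\la}(\la,k-1):=\mu_k$ for $k\ge 1$; copy $\diamond_k$ and $a_k$ on the interior of the $k$-th sub-interval; and define $\ell_{\rho,\la}:=\ell_k+k$ on the $k$-th sub-interval. The shift is calibrated so that the two computations of $\ell(\mu_k)$---from the fundamental-sequence identity $\ell(\mu_k)=\ell(\la)+(k-1)=k$ and from $\ell_{k-1}(\mu_k)+(k-1)=1+(k-1)$ at the upper endpoint of the $(k-1)$-st sub-interval---coincide, and so that (d) holds across every glue point $\mu_k\mapsto\mu_k+1$.

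Verification of (a)--(d) then reduces to routine case analysis: inside any sub-interval the inductive hypothesis applies (and the shift cancels in the differences occurring in (c) and (d)), while at each glue point $\mu_k$ direct computation using $\diamond(\mu_k)=\la$ and $\ell(\mu_k)=k$ does the job; for (a) one also uses that iterating $\diamond$ from any $\be$ in a sub-interval eventually reaches the glue point $\mu_k$, after which one further application hits $\la$. The main obstacle is the case $\la=\la^-+\om$, where no cofinal $\om$-sequence of limits below $\la$ is available; the fix is to let $(\mu_k)_{k\ge 2}$ consist of successors, so that the would-be sub-intervals $[\mu_k+1,\mu_{k+1}]$ collapse to single points $\{\mu_k+1\}=\{\mu_{k+1}\}$ on which the values of $\diamond$, $\ell$, and $a$ are already pinned down by $\diamond(\mu_{k+1})=\la$ and $a(\la,k)=\mu_{k+1}$, so no further recursion is invoked beyond the one on $[\rho,\la^-]$.
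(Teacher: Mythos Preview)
Your proposal is correct and follows essentially the same approach as the paper: both proceed by transfinite induction on $\lambda$, choose a cofinal $\omega$-sequence in $\lambda$, apply the induction hypothesis on the resulting sub-intervals, and glue via a constant shift of $\ell$. The only inessential differences are that the paper starts its fundamental sequence at $\rho$ itself (so $\diamond(\rho)=\lambda$) and treats the case $\lambda_{k+1}=\lambda_k+1$ uniformly rather than via your dichotomy between ``limit of limits'' and ``$\lambda^-+\omega$''; these choices lead to slightly different maps but the same proof structure.
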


\begin{proof}
We proceed by transfinite induction on limit ordinal $\la < \omega_1$ to define $\diamond_{\rho,\la}, \ell_{\rho,\la}$, and $a_{\rho,\lambda}$ as above.
For $\lambda = \omega$ and $\rho<\om$, we set $\diamond_{\rho,\om}(\be) = \om$, $\ell_{\rho,\om}(\be)=\be-\rho+1$ for $\be \in [\rho,\omega)$, $a_{\rho,\om}(\omega,k) = \rho+k$ for $k \in \omega$, and $\ell_{\rho,\omega}(\omega) = 1$.
Now assume that $\la > \omega$ and that we have fixed mappings $\diamond_{\rho,\mu}$, $\ell_{\rho,\mu}$, and $a_{{\rho,\mu}}$ for every limit ordinal $\mu < \la$ and every isolated ordinal $\rho<\mu$.
We first choose an increasing sequence $\{\la_k\}_{k=0}^{\infty}$ of ordinals such that $\la_0=\rho$, for each $k$ we have that $\la_{k+1}$ is a limit ordinal
or $\la_{k+1} = \la_k + 1$, and $\lim_{k \to \infty} \la_k = \la$ as follows.
First we choose an increasing sequence $\{\xi_k\}_{k=0}^{\infty}$ of ordinals such that $\xi_0 = \rho$ and $\lim_{k \to \infty} \xi_k = \lambda$.
We set $\lambda_0 = \rho$ and suppose that $\lambda_k \leq \xi_k$ has already been defined.
If $(\lambda_k,\xi_{k+1}]$ is a nonempty finite set, then  we set $\lambda_{k+1} = \lambda_k + 1$.
If $(\lambda_k,\xi_{k+1}]$ is infinite, then we define $\lambda_{k+1}$ as the maximal limit ordinal in $(\lambda_k,\xi_{k+1}]$.
The above construction ensures that for any $k \in \omega$ there exists $k' \in \omega$ with $\xi_{k+1} \leq \lambda_{k'}$.

Then we put $\diamond_{\rho,\la}(\la_k) = \la$, $\ell_{\rho,\la}(\la_k) = k+1$, $\ell_{\rho,\la}(\la) = 1$, and $a_{\rho,\la}(\la,k)=\la_k$.
It remains to define the values $\diamond_{\rho,\la}(\beta)$ and $\ell_{\rho,\la}(\beta)$
for ordinal $\beta \in [\lambda_k + 1,\lambda_{k+1})$, $k \in \omega$,
and the values $a_{\rho,\la}(\beta,j)$ for $\beta \in \LIM([\lambda_k + 1,\lambda_{k+1}])$, $k,j \in \omega$.

If $\lambda_{k+1}$ is not a limit ordinal, then $[\la_k+1,\la_{k+1}) = \emptyset$ and $\LIM([\la_k+1,\la_{k+1}]) = \emptyset$
since $\lambda_{k+1} = \lambda_k + 1$. Suppose that $\la_{k+1}$ is a limit ordinal.
Using the induction hypothesis, we already have all the quantities $\diamond_{\la_k+1,\la_{k+1}}$, $\ell_{\la_k+1,\la_{k+1}}$, and $a_{\la_k+1,\la_{k+1}}$ defined. We define
\begin{alignat*}{2}
\diamond_{\rho,\la}(\be) &= \diamond_{\la_k+1,\la_{k+1}}(\be)  && \qquad \textrm{ for } \be \in [\la_k+1,\la_{k+1}), \\
\ell_{\rho,\la}(\be)     &= k+1+\ell_{\la_k+1,\la_{k+1}}(\be) && \qquad \textrm{ for } \be \in [\la_k+1,\la_{k+1}), \\
a_{\rho,\la}(\beta,j)    &= a_{\la_k+1,\la_{k+1}}(\be,j)       && \qquad \textrm{ for } \be \in \LIM([\la_k+1,\la_{k+1}]), j \in \omega.
\end{alignat*}
Thus $\diamond_{\rho,\la}$, $\ell_{\rho,\la}$, and $a_{\rho,\la}$ are defined.
We may easily verify that the required properties (a)--(d) are fulfilled following the inductive construction.
\end{proof}

\begin{notation}\label{N:diamant-el-a}
Let us recall that we fixed a countable limit ordinal $\al$ in Notation~\ref{N:alfa}.
For the rest of this paper we employ the following notation $\diamond := \diamond_{0,\alpha}$, $\ell := \ell_{0,\alpha}$, and $a := a_{0,\alpha}$.
\end{notation}

Figure~\ref{Fi:diamond} illustrates the behavior of the mappings $\diamond$ and $a$.

\begin{proposition}[$\diamond$-paths]\label{P:diamond-path}
For every $L \in \om, L >0$, there exists a unique $n(L) \in \omega$ and a unique finite sequence
$\{\beta_L^i\}_{i=0}^{n(L)}$ of ordinals such that
\begin{itemize}
\item[\upshape (a)] $0 = \be_L^{n(L)} < \dots < \be_L^0 = \al$,

\item[\upshape (b)] if $\beta_L^i$, $i < n(L)$, is isolated, then $\ell(\be^i_L) \le L$, $\be^i_L = \be^{i+1}_L+1$, and $\ell(\be^{i+1}_L)<L$.

\item[\upshape (c)] if $\beta_L^i$, $i < n(L)$, is limit, then $\ell(\beta_L^i)\le L$ and $\be_L^{i+1}$ is the unique ordinal with  $\diamond(\be_L^{i+1}) = \be_L^i$ and $\ell(\be_L^{i+1}) = L$.
\end{itemize}
\end{proposition}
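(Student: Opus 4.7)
The plan is to construct the sequence $\{\beta_L^i\}$ by forward recursion from $\beta_L^0 = \alpha$, appealing to Lemma~\ref{L:diamond} at each step to select and uniquely determine the next term, and then to invoke well-foundedness of the ordinals for termination.

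Starting from $\beta_L^0 = \alpha$, for which $\ell(\alpha) = 1 \le L$ by Lemma~\ref{L:diamond}(b), I would maintain the invariant $\ell(\beta_L^i) \le L$ as long as $\beta_L^i > 0$. If $\beta_L^i$ is isolated, I set $\beta_L^{i+1}$ to be its predecessor; by Lemma~\ref{L:diamond}(d) then $\ell(\beta_L^{i+1}) = \ell(\beta_L^i) - 1 < L$. If $\beta_L^i$ is a limit, Lemma~\ref{L:diamond}(b) gives $\diamond^{-1}(\beta_L^i) = \{a(\beta_L^i,k) \set k \in \omega\}$ with $\ell(a(\beta_L^i,k)) = \ell(\beta_L^i) + k$, so the choice $\beta_L^{i+1} := a(\beta_L^i, L - \ell(\beta_L^i))$ is the unique element of $\diamond^{-1}(\beta_L^i)$ whose $\ell$-value equals $L$, and this is well-defined precisely because $\ell(\beta_L^i) \le L$. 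In both cases $\beta_L^{i+1} < \beta_L^i$ (trivially in the isolated case, and in the limit case because $\{a(\beta_L^i,k)\}_k$ is a strictly increasing sequence with limit $\beta_L^i$ by Lemma~\ref{L:diamond}(b)), and the invariant $\ell(\beta_L^{i+1}) \le L$ is preserved.

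Since $\{\beta_L^i\}$ strictly decreases in the ordinals, the recursion must halt after finitely many steps, and it can only halt when $\beta_L^i = 0$: for every positive ordinal is either isolated or limit, so the prescribed step is always available. Setting $n(L)$ to be the first index at which $0$ is reached yields (a), while (b) and (c) hold by construction. Uniqueness is now immediate: (a) forces $\beta_L^0 = \alpha$, and the clauses (b)--(c) determine $\beta_L^{i+1}$ from $\beta_L^i$ without any freedom (the predecessor in the isolated case, the unique element of $\diamond^{-1}(\beta_L^i)$ of $\ell$-value $L$ in the limit case, by Lemma~\ref{L:diamond}(b)), so an induction on $i$ identifies any candidate sequence with the one just constructed and pins down $n(L)$.

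The only subtle point in the execution is maintaining the invariant $\ell(\beta_L^i) \le L$, since this is what guarantees that clause (c) can be applied whenever $\beta_L^i$ is a limit. Once this invariant is tracked through the two cases, the argument is a direct unrolling of Lemma~\ref{L:diamond} with no real obstacle.
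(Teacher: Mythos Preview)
Your proof is correct and follows essentially the same approach as the paper: forward recursion from $\beta_L^0=\alpha$, using Lemma~\ref{L:diamond}(d) in the isolated case and Lemma~\ref{L:diamond}(b) in the limit case, with termination by well-foundedness. You are in fact slightly more explicit than the paper in tracking the invariant $\ell(\beta_L^i)\le L$ and in spelling out the uniqueness argument.
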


\begin{proof}
Fix $L \in \om, L > 0$. We set $\beta_L^0 = \alpha$. Suppose that $\beta_L^i > 0$ with $\ell(\beta_L^i) \leq L$ has already been defined.
If $\be_L^i$ is isolated, we define $\be_L^{i+1}$ by $\be_L^{i+1}+1 = \be_L^{i}$.
By Lemma~\ref{L:diamond}(d) we have $\ell(\be^{i+1}_L) + 1 = \ell(\be^i_L)$. Thus we have $\ell(\be^{i+1}_L) < L$.
If $\be_L^i$ is a limit ordinal, then by Lemma~\ref{L:diamond}(b) there exists a unique $k \in \omega$ such that $\ell(a(\beta^i_L,k)) = L$.
We set $\beta_{L}^{i+1} = a(\beta^i_L,k)$.
In finite number $n(L)$ of steps we get $\be_L^{n(L)} = 0$ since the constructed sequence is strictly decreasing.
\end{proof}

\begin{figure}[h!]
\centering\includegraphics[scale=0.6,trim={20 250 50 385},clip]{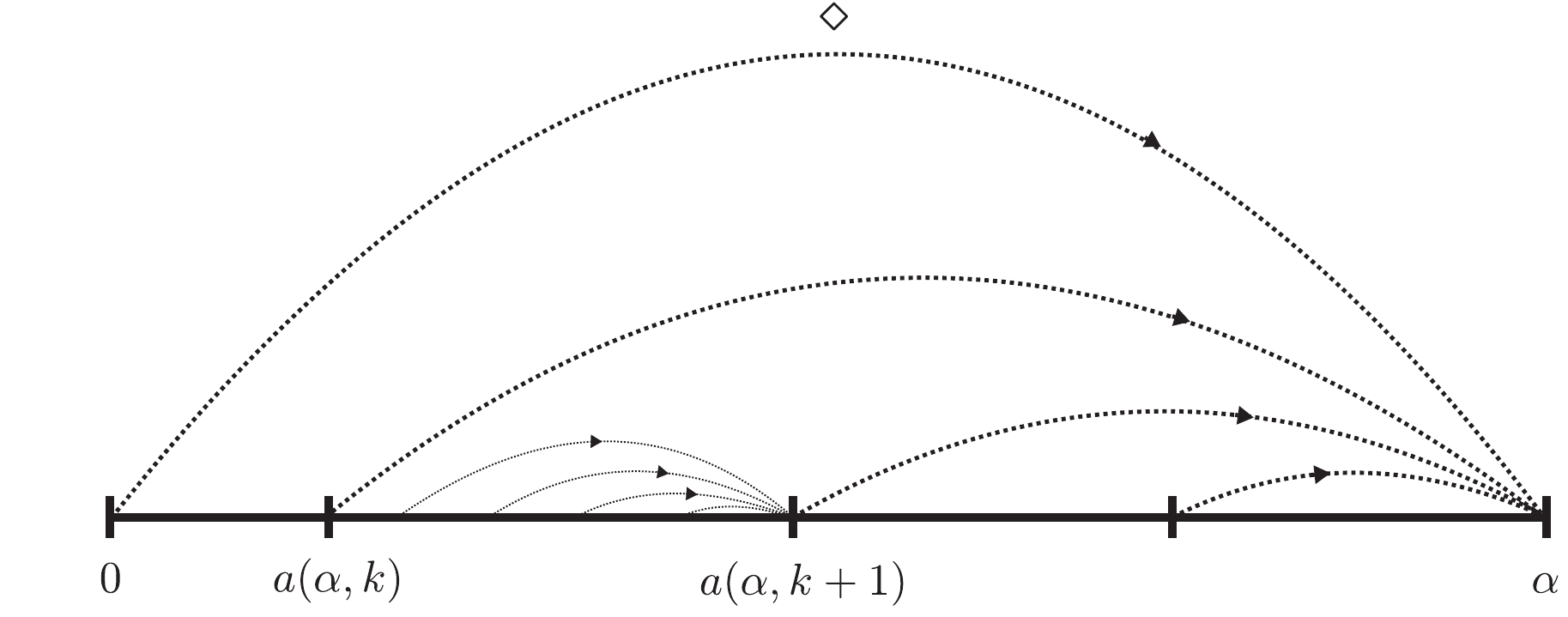}
\caption{}\label{Fi:diamond}
\end{figure}

\begin{lemma}\label{L:subsequence}
Let $L \in \omega, L>0$. Then $\{\beta_L^i\}_{i=0}^{n(L)}$ is a subsequence of $\{\beta_{L+1}^j\}_{j=0}^{n(L+1)}$.
\end{lemma}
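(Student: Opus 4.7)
The plan is to prove by induction on $i$ that for every $i\in\{0,\dots,n(L)\}$ there exists $j_i\in\{0,\dots,n(L+1)\}$ with $\beta_L^i=\beta_{L+1}^{j_i}$ and $j_0<j_1<\dots<j_{n(L)}$. The base case $i=0$ is immediate since $\beta_L^0=\alpha=\beta_{L+1}^0$. For the inductive step I would fix $i<n(L)$ and, assuming $\beta_L^i=\beta_{L+1}^{j_i}$, argue that the $(L+1)$-path reaches $\beta_L^{i+1}$ in at most two further steps.

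The case when $\beta_L^i$ is isolated is easy: since $\ell(\beta_L^i)\le L<L+1$, clause (b) of Proposition~\ref{P:diamond-path} makes both paths subtract $1$, so $\beta_{L+1}^{j_i+1}=\beta_L^i-1=\beta_L^{i+1}$. The main case is when $\beta_L^i=:\mu$ is a limit; set $m:=\ell(\mu)\le L$ and $k:=L-m$. Clause (c) of Proposition~\ref{P:diamond-path} together with Lemma~\ref{L:diamond}(b) then gives $\beta_L^{i+1}=a(\mu,k)$ with $\ell$-value $L$ and $\beta_{L+1}^{j_i+1}=a(\mu,k+1)$ with $\ell$-value $L+1$. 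If $a(\mu,k+1)$ is isolated, the construction in the proof of Lemma~\ref{L:diamond} (where the successor case in the choice of the sequence $\{\lambda_j\}$ forces $\lambda_{k+1}=\lambda_k+1$) yields $a(\mu,k+1)=a(\mu,k)+1$, and the next isolated step of the $(L+1)$-path produces $a(\mu,k)=\beta_L^{i+1}$. If instead $a(\mu,k+1)$ is a limit, the $(L+1)$-path jumps from $a(\mu,k+1)$ to $a\bigl(a(\mu,k+1),\,(L+1)-\ell(a(\mu,k+1))\bigr)=a(a(\mu,k+1),0)$, which I will identify with $a(\mu,k)+1$; since $a(\mu,k)+1$ is isolated with $\ell$-value $\ell(a(\mu,k))+1=L+1$ by Lemma~\ref{L:diamond}(d), one further subtraction then gives $a(\mu,k)=\beta_L^{i+1}$.

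The hard part is the identity $a(a(\mu,k+1),0)=a(\mu,k)+1$ required in the limit sub-case. To establish it I would unwind the recursion of Lemma~\ref{L:diamond}: the interval $[a(\mu,k)+1,\,a(\mu,k+1)]$ is one of the sub-intervals on which $\diamond$, $\ell$, and $a$ are built inductively from $\diamond_{\rho',\lambda'}$, $\ell_{\rho',\lambda'}$, $a_{\rho',\lambda'}$ with $\rho':=a(\mu,k)+1$ and $\lambda':=a(\mu,k+1)$. In that sub-construction $\rho'$ is by definition the zeroth term of the sequence converging to $\lambda'$, and this value is inherited by the ambient mapping $a$ in the proof of Lemma~\ref{L:diamond}, giving $a(a(\mu,k+1),0)=\rho'=a(\mu,k)+1$ and finishing the inductive step.
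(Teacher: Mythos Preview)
Your argument is essentially correct, but it takes a different route from the paper's proof and relies on more than it needs to.

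The paper proceeds by \emph{downward} induction on $i$, starting from $\beta_L^{n(L)}=0=\beta_{L+1}^{n(L+1)}$ and showing that if $\beta_L^{i+1}=\beta_{L+1}^{j+1}$ then $\beta_L^i$ also occurs in the $(L+1)$-path. The case $\ell(\beta_L^{i+1})<L$ is the same as your isolated case. In the case $\ell(\beta_L^{i+1})=L$ (so $\diamond(\beta_L^{i+1})=\beta_L^i$), the paper observes via Lemma~\ref{L:diamond}(c) that every $\gamma\in(\beta_L^{i+1},\beta_L^i)$ has $\ell(\gamma)>L$, and that $\diamond(\gamma)\le\beta_L^i$ for such $\gamma$. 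Hence the $(L+1)$-path, moving upward from $\beta_L^{i+1}$, cannot overshoot $\beta_L^i$: taking $k_0=\min\{k\le j:\beta_{L+1}^k\le\beta_L^i\}$ one gets $\beta_{L+1}^{k_0}=\beta_L^i$. This uses only the stated properties (a)--(d) of Lemma~\ref{L:diamond}.

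Your forward induction works, but the identities you need---that $a(\mu,k+1)=a(\mu,k)+1$ when $a(\mu,k+1)$ is isolated, and that $a(a(\mu,k+1),0)=a(\mu,k)+1$ when $a(\mu,k+1)$ is limit---are not consequences of Lemma~\ref{L:diamond}(a)--(d); they are features of the specific recursive construction in its proof (your justification of the second identity in particular requires an induction on the recursion depth that you only sketch). The paper's downward approach avoids this entirely: the single structural fact from Lemma~\ref{L:diamond}(c), that $\diamond$ cannot carry you past $\diamond(\beta)$ from inside $(\beta,\diamond(\beta))$, does all the work. Your approach buys an explicit bound on how many extra steps the $(L+1)$-path takes between consecutive $L$-path points (at most three, not two as you wrote), which the paper's argument does not give; but for the lemma as stated, the paper's route is both shorter and more robust.
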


\begin{proof}
We will proceed by downward induction on $i$. For $i = n(L)$ we have $\beta_{L}^{n(L)} = \beta_{L+1}^{n(L+1)} = 0$.
Suppose that $\beta_L^{i+1} = \beta_{L+1}^{j+1}$ for some $i < n(L)$ and $j < n(L+1)$.
We distinguish two possibilities.

If $\ell(\beta_L^{i+1}) = L$, then $\diamond(\be^{i+1}_L)=\be^i_L$ by Proposition~\ref{P:diamond-path}(c) and Lemma~\ref{L:diamond}(c) implies that $\ell(\gamma) > \ell(\beta_L^{i+1}) = L$ for every $\gamma \in (\beta_L^{i+1},\diamond(\beta_L^{i+1}))$.
Further, since $\ell(\beta^{j+1}_{L+1}) = L < L+1$, we have $\beta^j_{L+1} = \beta_{L+1}^{j+1}+1$ by Proposition~\ref{P:diamond-path}(b),(c). Consider the set $I = \{k \leq j\set \beta_{L+1}^k \leq \beta_L^i\}$. The set $I$ is nonempty since $j \in I$. Let $k_0$ be the minimum of $I$. Then by  Lemma~\ref{L:diamond}(c) and Proposition~\ref{P:diamond-path}(b),(c) we have $\beta_{L+1}^{k_0} = \beta_L^i$.

If $\ell(\be_L^{i+1}) = \ell(\beta_{L+1}^{j+1}) < L$, then $\beta^i_{L} = \beta_{L}^{i+1}+1 = \beta_{L+1}^{j+1}+1 = \beta^j_{L+1}$
by Proposition~\ref{P:diamond-path}(b),(c).
\end{proof}

\begin{lemma}\label{L:interval}
We have $\bigl\{\beta_L^j\set L > 0, j \in \{0,\dots,n(L)\}\bigr\} = [0,\alpha]$.
\end{lemma}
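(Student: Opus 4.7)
The containment $\{\beta_L^j : L > 0,\ 0 \le j \le n(L)\} \subseteq [0,\alpha]$ is immediate from Proposition~\ref{P:diamond-path}. For the reverse containment I plan to prove the slightly stronger statement: for every $\gamma \in [0,\alpha]$ there is some $j \le n(\ell(\gamma))$ with $\beta_{\ell(\gamma)}^j = \gamma$. The right induction parameter is the $\diamond$-\emph{height} $s(\gamma)$, namely the unique $s \ge 0$ with $\diamond^s(\gamma) = \alpha$; this is a nonnegative integer by Lemma~\ref{L:diamond}(a), with $s(\alpha)=0$.

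The base case $s(\gamma)=0$ forces $\gamma=\alpha$, and then $\beta_L^0=\alpha$ for every $L\ge 1$, in particular for $L=\ell(\alpha)=1$ (Lemma~\ref{L:diamond}(b)). For the inductive step, let $\gamma<\alpha$ and set $\mu:=\diamond(\gamma)$, a limit ordinal with $s(\mu)=s(\gamma)-1$. By the inductive hypothesis, $\mu=\beta_{\ell(\mu)}^{i}$ for some $i$. Write $\gamma=a(\mu,k)$ for the unique $k\in\omega$ furnished by Lemma~\ref{L:diamond}(b); the same lemma gives $\ell(\gamma)=\ell(\mu)+k\ge\ell(\mu)$. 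Iterating Lemma~\ref{L:subsequence} then places $\mu$ as some $\beta_{\ell(\gamma)}^{i'}$, with $i'<n(\ell(\gamma))$ because $\mu>0=\beta_{\ell(\gamma)}^{n(\ell(\gamma))}$. Since $\mu$ is limit, Proposition~\ref{P:diamond-path}(c) characterises $\beta_{\ell(\gamma)}^{i'+1}$ as the unique ordinal $\eta$ with $\diamond(\eta)=\mu$ and $\ell(\eta)=\ell(\gamma)$; as $\gamma$ itself satisfies both conditions, we conclude $\beta_{\ell(\gamma)}^{i'+1}=\gamma$, which finishes the induction.

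The only subtlety is to pick a workable induction parameter. A transfinite induction on $\gamma$ itself is awkward, because in the limit case the inductive hypothesis has to be invoked at $\diamond(\gamma)>\gamma$; working instead with the finite $\diamond$-height — which strictly decreases under $\gamma\mapsto\diamond(\gamma)$ — removes this obstruction. The accompanying technical move, upgrading the witness from the $\ell(\mu)$-th sequence to the $\ell(\gamma)$-th one, is precisely what Lemma~\ref{L:subsequence} supplies, and the exact matching of the index $k$ in $a(\mu,k)$ with $\ell(\gamma)-\ell(\mu)$ follows directly from the additive identity $\ell(a(\mu,k))=\ell(\mu)+k$ of Lemma~\ref{L:diamond}(b).
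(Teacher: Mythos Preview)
Your argument is correct and takes a genuinely different route from the paper's. The paper argues by contradiction: assuming some $\gamma\in[0,\alpha]\setminus B$ exists, it picks the least $\beta\in[\gamma,\alpha]\cap B$, writes $\beta=\beta_L^i$, observes that $\beta$ must be limit with $\diamond(\beta_L^{i+1})=\beta$, and then uses Lemma~\ref{L:diamond}(b) to find some $a(\beta,k)\in(\gamma,\beta)$; applying Lemma~\ref{L:subsequence} and Proposition~\ref{P:diamond-path}(c) at level $L'=\ell(a(\beta,k))$ forces $a(\beta,k)\in B$, contradicting minimality of $\beta$.

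Your approach is instead a direct finite induction on the $\diamond$-height $s(\gamma)$, and it actually proves the sharper fact that $\gamma$ already appears in the sequence at the specific level $L=\ell(\gamma)$. Both proofs use the same ingredients (Lemma~\ref{L:diamond}(b), Lemma~\ref{L:subsequence}, and Proposition~\ref{P:diamond-path}(c)), but yours is constructive and pinpoints the witness, while the paper's minimal-counterexample argument is slightly shorter and does not need to track $\ell$-values as carefully. Your choice of induction parameter is exactly right: since the inductive step appeals to $\diamond(\gamma)>\gamma$, ordinary transfinite induction on $\gamma$ would not work directly, whereas the $\diamond$-height strictly decreases along $\gamma\mapsto\diamond(\gamma)$ by Lemma~\ref{L:diamond}(a).
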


\begin{proof}
Set $B = \{\beta_L^j\set L > 0, j \in \{0,\dots,n(L)\}\}$. Towards contradiction assume that
there is $\gamma \in [0,\alpha] \setminus B$.
Since $\alpha \in B$, we have  $[\gamma,\alpha] \cap B \neq \emptyset$.  Thus we can define $\beta = \min ([\gamma,\alpha] \cap B)$.
Since $\beta \in B$ one can find $L > 0$ and $i \in \omega$ such that $\beta = \beta_L^i$.
By Proposition~\ref{P:diamond-path}(a), $\ga>0$ and $\beta_L^{i+1} < \gamma < \beta_L^i$.
Using Proposition~\ref{P:diamond-path}(b) and (c), we get that $\beta_L^i$ is a limit ordinal and $\diamond(\beta_L^{i+1}) = \beta_L^i$.
By Lemma~\ref{L:diamond}(b), there is a $k\in\om$ such that $\mu=a(\be^i_L,k) > \gamma$ and $\diamond(\mu) = \beta_L^i = \be$.
For $L':= \ell(\mu)$ we have $L'> L$ by Lemma~\ref{L:diamond}(c) and, using Lemma~\ref{L:subsequence}, $\beta_L^i = \beta_{L'}^j$ for some $j \in \{0,\dots,n(L')\}$.
This implies that $\beta_{L'}^{j+1} = \mu$ by Proposition~\ref{P:diamond-path}(c).
Consequently, using Lemma~\ref{L:diamond}(b), we get $\mu \in [\gamma,\be) \cap B$.
This is a contradiction with the definition of $\beta$.
\end{proof}

\begin{notation}\label{N:L}
If $0 \le \beta< \gamma \le\al$ are arbitrary, by Lemmas~\ref{L:subsequence} and \ref{L:interval} we find
the smallest $L' > 0$ such that $\{\be,\ga\} \subset \bigl\{\be^i_{L'} \set i \in \{0,\dots,n(L')\}\bigr\}$.
Let us denote it by $L(\be,\ga)$.
\end{notation}

\begin{lemma}\label{L:sequence}
For every $0 \leq \ga \leq \be \leq \alpha$ there exists a finite increasing sequence $\{\delta_j\}_{j=0}^n$ such that $\delta_0 = \gamma$, $\delta_n = \be$, $\delta_{j+1} = \diamond(\delta_j)$ whenever $\diamond(\delta_j) \leq \beta$ and $\delta_{j+1} = \delta_j + 1$ otherwise.
\end{lemma}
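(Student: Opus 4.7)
The plan is to define the sequence greedily, following the rule prescribed in the statement: set $\delta_0 := \gamma$, and, as long as $\delta_j < \beta$, put $\delta_{j+1} := \diamond(\delta_j)$ whenever $\diamond(\delta_j) \le \beta$ and $\delta_{j+1} := \delta_j + 1$ otherwise. Both branches yield $\delta_j < \delta_{j+1} \le \beta$ (using $\diamond(\delta_j) > \delta_j$ from Lemma~\ref{L:diamond}(a)), so the sequence is automatically strictly increasing and bounded by $\beta$; the only non-trivial point is to show that it terminates at $\beta$ after finitely many steps.

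For this, I intend to exploit the $\diamond$-path structure from Proposition~\ref{P:diamond-path}. Assuming $\gamma < \beta$ (else take $n = 0$), let $L := L(\gamma,\beta)$, so that $\gamma = \beta_L^{i_1}$ and $\beta = \beta_L^{i_0}$ with $i_0 < i_1$. I would then prove by induction on $j$ the following invariant: whenever $\delta_j \ne \beta$, there is an index $i \in (i_0, i_1]$ with $\delta_j = \beta_L^i$, and $\delta_{j+1}$ equals $\beta_L^{i''}$ for some $i'' \in [i_0, i)$. Strict monotonicity of the index then forces termination within at most $i_1 - i_0$ steps.

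The inductive step splits into cases depending on the nature of $\beta_L^{i-1}$. If $\beta_L^{i-1}$ is a limit, Proposition~\ref{P:diamond-path}(c) gives $\diamond(\delta_j) = \beta_L^{i-1}$, which is $\le \beta$ since $i - 1 \ge i_0$; the greedy rule thus jumps to $\delta_{j+1} = \beta_L^{i-1}$, and we may take $i'' = i - 1$. If $\beta_L^{i-1}$ is isolated and $\diamond(\delta_j) > \beta$, Proposition~\ref{P:diamond-path}(b) yields $\delta_j + 1 = \beta_L^{i-1}$, and the greedy rule adds one to produce the same $\delta_{j+1} = \beta_L^{i-1}$.

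The only delicate case is when $\beta_L^{i-1}$ is isolated but $\diamond(\delta_j) \le \beta$, so that greedy overshoots $\beta_L^{i-1}$ and jumps to $\mu := \diamond(\delta_j)$. Here Proposition~\ref{P:diamond-path}(b) and Lemma~\ref{L:diamond}(d) together give $\ell(\delta_j) = \ell(\beta_L^{i-1}) - 1 \le L - 1$, and writing $\delta_j = a(\mu,k)$ via Lemma~\ref{L:diamond}(b) yields $\ell(\mu) = \ell(\delta_j) - k \le L - 1 < L$. I would then invoke the auxiliary fact that every $\nu \in [0,\alpha]$ with $\ell(\nu) \le L$ belongs to path $L$, a statement provable by a short induction on $L$ (splitting the target $\nu$ into successor and limit cases, using Lemma~\ref{L:diamond}(d) and Proposition~\ref{P:diamond-path}(b) in the former, iterating $\diamond$ via Lemma~\ref{L:diamond}(a),(b) and invoking Proposition~\ref{P:diamond-path}(c) in the latter, with Lemma~\ref{L:subsequence} used to ascend between path levels). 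Granted this, $\mu = \beta_L^{i''}$ for some $i''$, and the inequalities $\beta_L^{i-1} < \mu \le \beta$ pin down $i_0 \le i'' < i - 1$, closing the induction. I expect this auxiliary path-membership step to be the main technical point.
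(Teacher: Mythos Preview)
Your proposal is correct but takes a genuinely different route from the paper's proof. The paper argues by contradiction without touching the $\diamond$-path machinery at all: it defines the greedy sequence, assumes it is infinite, sets $\beta' = \sup_j \delta_j \le \beta$ (which is then a limit ordinal), picks $k,j_0$ with $\delta_{j_0} \le a(\beta',k) < \delta_{j_0+1}$, and derives a contradiction in two lines. If $\delta_{j_0} = a(\beta',k)$ then $\diamond(\delta_{j_0}) = \beta' \le \beta$, so $\delta_{j_0+1} = \beta'$ and $\delta_{j_0+2} > \beta'$; if $\delta_{j_0} < a(\beta',k) < \delta_{j_0+1}$ then necessarily $\delta_{j_0+1} = \diamond(\delta_{j_0})$, and Lemma~\ref{L:diamond}(c) applied to $a(\beta',k) \in (\delta_{j_0}, \diamond(\delta_{j_0}))$ gives $\beta' = \diamond(a(\beta',k)) \le \delta_{j_0+1}$, again forcing $\delta_{j_0+2} > \beta'$. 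This uses only Lemma~\ref{L:diamond}, not Proposition~\ref{P:diamond-path}, Lemma~\ref{L:subsequence}, or Notation~\ref{N:L}.

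Your approach, by contrast, keeps the sequence inside the $\diamond$-path of level $L = L(\gamma,\beta)$ and shows the path index strictly decreases; this is longer but yields the explicit bound $n \le i_1 - i_0$. Your case analysis is sound, including the delicate sub-case where the greedy step overshoots the next path node. One remark on the auxiliary fact you flag (``$\ell(\nu) \le L$ implies $\nu$ lies on path $L$''): it admits a cleaner proof than the induction you sketch. If $\nu$ were not on path $L$, take the least $\beta_L^i > \nu$; then $\beta_L^{i+1} < \nu < \beta_L^i$, and either $\beta_L^i$ is isolated (forcing $\beta_L^{i+1}+1 = \beta_L^i$, impossible) or $\beta_L^i$ is limit, whence Lemma~\ref{L:diamond}(c) gives $\ell(\nu) > \ell(\beta_L^{i+1}) = L$, contradicting the hypothesis. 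This avoids the induction on $L$ and the successor/limit split on $\nu$.
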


\begin{proof}

Let $0 \leq \gamma \leq \beta \leq \alpha$. We define $\delta_0 = \gamma$ and if $\delta_j$ is defined and $\delta_j < \beta$, then we set
\[
\delta_{j+1} =
\begin{cases}
\diamond(\delta_j) &\text{if } \diamond(\delta_j) \leq \beta, \\
\delta_j+1         &\text{if } \diamond(\delta_j) > \beta.
\end{cases}
\]
If the above sequence is finite, then we are done. Towards contradiction we assume that the sequence of $\delta_j$'s
is infinite. Denote $\beta' = \sup\{\delta_j \set j \in \omega\}$. By definition we have $\beta'\leq \beta$.
Since $\{\delta_j\}_{j \in \omega}$ is increasing, $\beta'$ is a limit ordinal. Thus we can find $k,j_0 \in \omega$ such that
$\delta_{j_0} \leq a(\beta',k) < \delta_{j_0+1}$.
If $\delta_{j_0} = a(\beta',k)$, then $\diamond(\delta_{j_0}) = \diamond(a(\beta',k)) = \beta' \leq \beta$.
Thus $\delta_{j_0+2} > \delta_{j_0+1} = \diamond(\delta_{j_0}) = \beta'$, a contradiction with the definition of $\beta'$.
Thus we have $\delta_{j_0} < a(\beta',k) < \delta_{j_0+1}$. This implies that $\delta_{j_0+1} = \diamond(\delta_{j_0})$.
By Lemma~\ref{L:diamond}(c) we get $\beta' = \diamond(a(\beta',k)) \leq \delta_{j_0+1} < \delta_{j_0+2}$, a contradiction with the definition of $\beta'$.
\end{proof}

\section{Recursive sets and functions}\label{S:recursive-background}

Effective descriptive set theory is one of the basic tools used in the proofs.
The basic material can be found in \cite[Chpt. 3 and 7]{Moschovakis}.
For the reader's convenience we recall in Subsection~\ref{SS:Moschovakis} the most important notations and results following \cite{Moschovakis}.
In Subsection~\ref{SS:auxiliary-spaces} we define some particular spaces and explain how to use results from Subsection~\ref{SS:Moschovakis} also in this context.
The classes $\Pi_1^0$ and $\Sigma_2^0$ are mentioned in Subsection~\ref{SS:higher-classes}.

\subsection{Notation and known facts}\label{SS:Moschovakis}

As \emph{basic spaces} we consider just $\om$ and $\cN = \om^{\om}$. Finite products $X = \prod_{i=0}^k X_i$, where $k \in \omega$,
and each $X_i$ is either $\om$ or $\cN$, are called \emph{product spaces}.
A product space $X$ is of \emph{type $0$} if $X = \omega^k$ for some $k \in \omega$, $k\ge 1$.

\medskip
\centerline{\emph{The letters $X, Y, Z$ stand for arbitrary product spaces in Section~\ref{S:recursive-background}.}}
\medskip

\noindent
The \emph{class of semirecursive sets} is denoted by $\Si^0_1$ and the symbol $\Si^0_1\restriction X$ denotes the family of $\Sigma_1^0$ subsets of $X$.

We recall how {\it relativized classes of semirecursive sets} can be defined.
First we introduce the following notation which we often use further on.

\begin{notation**}
A function $f$ is a \emph{partial function from $A$ to $B$} if the domain $D(f)$ of $f$ is contained in $A$ and the range of $f$ is contained in $B$,
in notation $f\colon A \rightharpoonup B$.
\end{notation**}

\begin{notation**}[sections]\label{N:sections}
Given a set $V\su A\times B$ we use the notation $V_a=\{b\in B\set (a,b)\in V\}$ and $V^b=\{a\in A\set (a,b)\in V\}$.
If $W\su A\times B\times C$, we use also the notation $W^c_a=\{b\in B\set (a,b,c)\in W\}$ and $W^{b,c} = (W^c)^b$.

Let $f\colon A \times B \rightharpoonup E$, $g\colon A \times B \times C \rightharpoonup E$, and $h\colon A \times B \times C \times D \rightharpoonup E$ be partial mappings. We use the notation $f_a$, $g^b$, $h^{c,d}$ etc., to denote the mappings $f_a\colon b \mapsto f(a,b)$, $g^c\colon (a,b) \mapsto g(a,b,c)$, $h^{c,d}\colon (a,b) \mapsto h(a,b,c,d)$ etc., respectively.
\end{notation**}

\begin{definition**}\label{D:relativizations}
Let $a,b \in \cN$. We define the relativizations of the class $\Si^0_1$ in $X$, cf. \cite[3D]{Moschovakis}, by
\begin{align*}
\Si^0_1(a)  \restriction X &= \{H^a \set H\in\Si^0_1\restriction (X\times\cN)\} \quad \text{and} \\
\Si^0_1(a,b)\restriction X &= \{H^{a,b} \set H\in\Si^0_1\restriction (X\times\cN^2)\}.
\end{align*}
\end{definition**}

\begin{notation**}\label{N:gamma}
In Section~\ref{S:recursive-background} the symbol $\Ga$ denotes one of the classes $\Si^0_1$, $\Si^0_1(a)$, or $\Si^0_1(a,b)$.
\end{notation**}

\begin{definition**}\label{D:recursive}
A set $R \subset X$ is called {\it $\Ga$-recursive} in $X$ if $R$ and $X \setminus R$ are in $\Ga$.
We also say that $R$ is {\it recursive} if $R$ is $\Si^0_1$-recursive. See \cite[3C]{Moschovakis}.
\end{definition**}

The canonical basis $\{N(X,j) \set j \in \om\}$ for an arbitrary product space $X$ is defined in \cite[3B]{Moschovakis}.
We will use the following descriptions of $\Ga$ sets (cf. \cite[3C.4, 3C.5]{Moschovakis}).

\begin{fact**}[representations of semirecursive sets]\label{F:Gstar} \hfil
\begin{enumerate}[(a)]
\item For every $\Ga$ set $H \subset X$ there is a $\Ga$-recursive set $R_H$ in $X\times\om$ such that
$y\in H$ if and only if there is $n\in\om$ such that $(y,n)\in R_H$.

\item A set $H \su X$ is in $\Ga\restriction X$ if and only if there is a $\Ga$ set $H^*$ in $\om$ such that
$
H = \bigcup_{j \in H^*} N(X,j).
$
\end{enumerate}
\end{fact**}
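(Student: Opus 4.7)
The plan is to deduce both parts from the definitional characterization of semirecursive sets in product spaces and the standard closure properties of $\Ga$ under recursive substitution and existential quantification over $\om$. For part~(a), the unrelativized case $\Ga = \Si^0_1$ is essentially the definition in Moschovakis: $H \su X$ is semirecursive exactly when $H = \{y \set \exists n\ (y,n) \in R_H\}$ for some recursive $R_H \su X \times \om$. For $\Ga = \Si^0_1(a)$, I would invoke Definition~\ref{D:relativizations} to write $H = H'{}^a$ with $H' \in \Si^0_1 \restriction (X \times \cN)$, apply the unrelativized case to $H'$ to obtain a recursive $R' \su X \times \cN \times \om$, and then take $R_H := (R')^a$, which is $\Si^0_1(a)$-recursive as a section of a recursive set. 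The case $\Ga = \Si^0_1(a,b)$ is entirely analogous.

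For the forward direction of (b), given a $\Ga$ set $H^* \su \om$, I would apply (a) to obtain a $\Ga$-recursive $R^* \su \om \times \om$ such that $j \in H^*$ iff $\exists n\ (j,n) \in R^*$. The neighborhood predicate $U(y,j) \iff y \in N(X,j)$ is recursive by the canonical choice of basis, so $y \in \bigcup_{j \in H^*} N(X,j)$ iff $\exists j\, \exists n\ \bigl(U(y,j) \wedge R^*(j,n)\bigr)$, and this membership condition is in $\Ga$ by closure under recursive conjunction and projection over $\om$.

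For the backward direction of (b), the naive candidate $H^* = \{j \set N(X,j) \su H\}$ is in general $\Pi^0_1$-relative rather than $\Ga$, so one must use the good universal set $G^{X,\Si^0_1}$ together with the recursive indexing of the basis. Starting from the $\Ga$-presentation of $H$ supplied by (a), I would build a $\Ga$ set $V \su X \times \om$ whose pairs $(y,j)$ record that at some stage of the semirecursive enumeration of $H$ the basic open set $N(X,j)$ has been verified to be contained in $H$ and to contain $y$; then $H^* := \{j \set \exists y\ (y,j) \in V\}$ is $\Ga$ in $\om$, and the identity $H = \bigcup_{j \in H^*} N(X,j)$ follows since every $\Ga$ subset of $X$ is open and every point of $H$ eventually falls into some basic open neighborhood enumerated by the procedure. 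The main obstacle is precisely this step: extracting $H^*$ in the correct pointclass $\Ga$ requires the uniformity afforded by the good universal set (to be recalled in Subsection~\ref{SS:Moschovakis}), rather than a direct topological argument about open covers.
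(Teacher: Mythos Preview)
The paper does not prove this statement; it is recorded as a Fact with a citation to \cite[3C.4, 3C.5]{Moschovakis}. So there is no proof in the paper to compare against, and your task is really just to make sure your own argument is sound.

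Your treatment of (a) and of the easy direction of (b) (from $H^*$ to $H$) is fine. The problem is in the hard direction of (b). You propose to build a $\Ga$ set $V \subset X \times \omega$ and then set $H^* := \{j \set \exists y\ (y,j) \in V\}$. But this is an existential quantifier over $X$, and if $X$ has an $\cN$ factor then $\exists^{\cN}$ takes $\Si^0_1$ to $\Si^1_1$, not back into $\Ga$. So as written the step ``$H^*$ is $\Ga$ in $\omega$'' fails. Invoking the good universal set does not help here and would in any case be out of order, since in the paper good parametrizations are recorded only later in Fact~\ref{F:recursionthms}.

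The standard fix (and the content of \cite[3C.5]{Moschovakis}) avoids quantifying over $X$ altogether. One uses the normal form from (a) in its sharper version: $y \in H$ iff $\exists n\ R(\overline{y}(n),n)$ for a $\Ga$-recursive relation $R$ on $\omega \times \omega$, where $\overline{y}(n)$ is the code of the first $n$ values of (the $\cN$-components of) $y$ together with the type-$0$ components. From this one defines $H^*$ directly as a $\Ga$ subset of $\omega$ by a recursive rewriting of $R$: roughly, $j \in H^*$ iff the finite data coded by $j$ already witnesses membership via $R$. The point is that the quantification needed to produce $H^*$ is only over $\omega$ (codes of finite sequences and stages), never over points of $X$.
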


When proving semirecursivity or recursivity we often use the following known facts without explicit reference.

\begin{fact**}[preservation of semirecursivity]\label{F:properties-of-Gamma}
The family $\Gamma \restriction X$ contains all semirecursive subsets of $X$, it is closed with respect to finite unions, finite intersections,
$\exists^{\leq}$, $\forall^{\leq}$, $\exists^{\omega}$ (\cite[3C.1, 3D.7]{Moschovakis}),
and it is $\omega$-parametrized (cf. \cite[1D]{Moschovakis} for the definition and \cite[3F.6]{Moschovakis} with regard to Definition~\ref{D:relativizations} for the proof).
\end{fact**}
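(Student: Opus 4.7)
The plan is to reduce each assertion to its unrelativized counterpart already recorded in \cite{Moschovakis}, and then to lift to $\Si^0_1(a)$ and $\Si^0_1(a,b)$ by the standard mechanism of sections. For $\Gamma = \Si^0_1$, the fact that $\Gamma \restriction X$ contains all semirecursive subsets of $X$ and is closed under finite unions, finite intersections, $\exists^{\leq}$, $\forall^{\leq}$, and $\exists^{\om}$ is precisely the content of \cite[3C.1, 3D.7]{Moschovakis}, so no new argument is needed in that case.

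For the relativized classes I would use the observation that, by Definition~\ref{D:relativizations}, $\Si^0_1(a) \restriction X$ consists exactly of the sections $H^a$ with $H \in \Si^0_1 \restriction (X\times\cN)$, and each of the listed operations acts on coordinates of $X$ that are disjoint from the parameter slot and hence commutes with the section map $H \mapsto H^a$. Concretely, if $H_1, H_2 \in \Si^0_1 \restriction (X\times\cN)$, then $(H_1 \cup H_2)^a = H_1^a \cup H_2^a$ and similarly for intersection; if $H \in \Si^0_1 \restriction (X\times\om\times\cN)$, then $(\exists^{\om} H)^a = \exists^{\om}(H^a)$, and analogously for $\exists^{\leq}$ and $\forall^{\leq}$. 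So the closure properties are inherited directly from the unrelativized case, while the containment of all semirecursive subsets is immediate because any $\Si^0_1$ set can be written as $H^a$ for some $H$ that does not depend on the last coordinate. The same scheme applies verbatim to $\Si^0_1(a,b)$ once $\cN^2$ is identified with $\cN$ via a recursive pairing.

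For $\om$-parametrization I would invoke \cite[3F.6]{Moschovakis}, which supplies a good universal $\Si^0_1$ set $G \subset \om\times\cN\times X$ whose sections at $(n,z)$ enumerate $\Si^0_1(z) \restriction X$ as $(n,z)$ ranges over $\om\times\cN$. Specializing $z$ to $a$ (respectively to a recursive code of the pair $(a,b)$) then yields the required $\om$-parametrization of $\Si^0_1(a)\restriction X$ (respectively of $\Si^0_1(a,b)\restriction X$). I do not anticipate any substantial obstacle beyond the bookkeeping associated with the choice of a recursive pairing on $\cN$ and its interplay with the substitution property; this is entirely routine, which is presumably why the authors simply refer to \cite[3F.6]{Moschovakis} for the proof.
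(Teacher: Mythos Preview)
Your proposal is correct and in fact more detailed than what the paper offers: the paper treats this statement as a known fact and simply cites \cite[3C.1, 3D.7, 3F.6]{Moschovakis} without giving any argument. Your sketch of how to lift the closure properties and the $\omega$-parametrization from $\Si^0_1$ to the relativized classes via the section mechanism of Definition~\ref{D:relativizations} is exactly the routine verification that underlies those citations.
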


\begin{fact**}[preservation of recursivity of sets]\label{F:zachovani-recursivita}
The family of $\Gamma$-recursive sets in $X$ is closed with respect to complements, finite unions, finite intersections, $\exists^{\leq}$, and $\forall^{\leq}$
(see \cite[3C.3]{Moschovakis}).
\end{fact**}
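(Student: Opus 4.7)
The plan is to deduce each closure property of $\Gamma$-recursive sets directly from the corresponding closure properties of $\Gamma$ itself, which are collected in the preceding Fact (preservation of semirecursivity). Recall that by definition, $R \subset X$ is $\Gamma$-recursive iff both $R \in \Gamma$ and $X \setminus R \in \Gamma$. So the task is just to verify that each of the listed operations applied to $\Gamma$-recursive sets yields a set whose membership relation \emph{and} whose negation lie in $\Gamma$.

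For complements, there is nothing to do: the definition is symmetric in $R$ and $X \setminus R$. For finite unions and intersections the argument is by de Morgan. Given $\Gamma$-recursive $R,S \subset X$, the union $R \cup S$ is in $\Gamma$ by closure of $\Gamma$ under finite unions, while its complement $(X \setminus R) \cap (X \setminus S)$ is in $\Gamma$ by closure of $\Gamma$ under finite intersections; hence $R \cup S$ is $\Gamma$-recursive. The argument for $R \cap S$ is analogous, interchanging the roles of union and intersection.

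The bounded quantifier cases are handled the same way. Let $R \subset X \times \omega$ be $\Gamma$-recursive and consider, say, the set
\[
\exists^{\leq}\! R := \bigl\{(x,n) \in X \times \omega \set \exists k \leq n,\ (x,k) \in R\bigr\}.
\]
This set lies in $\Gamma$ by closure of $\Gamma$ under $\exists^{\leq}$, applied to $R \in \Gamma$. Its complement equals
\[
\bigl\{(x,n) \in X \times \omega \set \forall k \leq n,\ (x,k) \in X\times\omega \setminus R\bigr\},
\]
which is in $\Gamma$ by closure of $\Gamma$ under $\forall^{\leq}$, applied to the set $X \times \omega \setminus R \in \Gamma$. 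Thus $\exists^{\leq}\! R$ is $\Gamma$-recursive. The case of $\forall^{\leq}\! R$ is entirely symmetric.

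There is no real obstacle here: the whole content is that the operations are self-dual under complementation in a way that matches the closure properties already granted for $\Gamma$. Note in particular that one does \emph{not} claim closure under $\exists^{\omega}$ for the class of $\Gamma$-recursive sets, since negating an unbounded existential produces an unbounded universal quantifier and $\Gamma$ is not, in general, closed under $\forall^{\omega}$.
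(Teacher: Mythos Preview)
Your argument is correct and is the standard one: each operation on $\Gamma$-recursive sets is handled by pairing the $\Gamma$ closure property for the set with the dual $\Gamma$ closure property for its complement. The paper does not give its own proof of this Fact---it simply cites \cite[3C.3]{Moschovakis}---so there is no alternative approach to compare against; your write-up is exactly what that reference would unpack to.
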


We are going to recall the notion of $\Ga$-recursive functions.

\begin{definition**}\label{D:recursive-function}
A partial function $f\colon X \rightharpoonup Y$ is {\it $\Ga$-recursive on its domain} if and only if there exists
a $\Ga$ set $\De\su X\times\om$ which \emph{computes} $f$ on $D(f)$, i.e., for every $x \in D(f)$ we have
\[
\forall j \in \omega\colon f(x) \in N(Y,j) \Leftrightarrow (x,j)\in \De.
\]
We say that $f\colon X \rightharpoonup Y$ is \emph{$\Ga$-recursive} if it is $\Ga$-recursive on $D(f)$ and $D(f)\in \Ga \restriction X$.
We say that $f\colon X \rightharpoonup Y$ is \emph{recursive} if it is $\Si^0_1$-recursive.
\end{definition**}

\begin{fact**}[properties of the class of $\Ga$-recursive functions]\label{F:partialrecursivefunctions}\hfil
\begin{enumerate}[\upshape (a)]
\item\label{I:min} For every partial function $g\colon X\times\om\rightharpoonup\om$ which is $\Ga$-recursive on its domain, the partial function
\[
f(x) = \min\{i \in \omega \set g(x,i)=0 \land \forall j<i\colon g(x,j) > 0\}
\]
is $\Gamma$-recursive on its domain (\cite[7A.5]{Moschovakis}).

\item\label{I:recursivity-to-N} A partial function $f\colon X \rightharpoonup \cN$ is $\Ga$-recursive on its domain if and only if
$f(x)(n) = f^*(x,n)$, where $f^* \colon X\times\om\rightharpoonup \om$ is $\Ga$-recursive on $D(f^*) = D(f) \times \omega$
(\cite[3G.4(ii)]{Moschovakis}).

\item\label{I:coordinates} Let $l \in \omega$. A partial function $f\colon X \to Y_0 \times \dots \times Y_l$ is $\Gamma$-recursive on its domain if and only if
\[
f(x) = (f_0(x), \dots, f_l(x))
\]
with $f_0, \dots, f_l$ $\Gamma$-recursive on their (common) domain (\cite[3G.4(iii)]{Moschovakis}).

\item\label{I:recursivity-to-omega} A partial function $f \colon X \rightharpoonup \om$ is $\Ga$-recursive on its domain if and only if
there exists a $\Ga$ set $Q \su X \times \om$ such that $f(x) = n \Leftrightarrow (x,n) \in Q$ for every $x \in D(f)$ (\cite[3G.4(i)]{Moschovakis}).

\item\label{I:composition} The composition of partial functions which are $\Ga$-recursive on their domains is a partial function which is $\Ga$-recursive on its domain
(\cite[3G.1, 3G.2]{Moschovakis}).

\item\label{I:substitution-property}  The class $\Ga$ satisfies the \emph{substitution property}, i.e., for every partial function $f\colon X \rightharpoonup Y$, which is $\Ga$-recursive on its domain, and $Q \subset Y$ in $\Ga$, there is $P\su X$ in $\Ga$ such that $f^{-1}(Q) = P \cap D(f)$ (\cite[3G.2]{Moschovakis}).
\end{enumerate}
\end{fact**}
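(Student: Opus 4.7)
The plan is to establish each of items (a)--(f) by invoking the corresponding result in \cite{Moschovakis} already indicated in the statement and by observing that the proofs there, originally written for the unrelativized class $\Si^0_1$, go through verbatim for the relativized classes $\Si^0_1(a)$ and $\Si^0_1(a,b)$. The latter observation rests on Definition~\ref{D:relativizations} and on the fact that the closure properties collected in Fact~\ref{F:properties-of-Gamma} and Fact~\ref{F:zachovani-recursivita} are stated uniformly for all three incarnations of $\Ga$.

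For (a) I would apply the minimization scheme: given a $\Ga$ set $\De_g \su X \times \om \times \om$ computing $g$ on $D(g)$, the pair $(x,i)$ belongs to the sought computing set of $f$ exactly when $(x,i,0) \in \De_g$ and for every $j<i$ there exists $k>0$ with $(x,j,k) \in \De_g$; this description remains in $\Ga$ by closure under $\forall^{\le}$ and $\exists^\om$. For (b) and (c), the key is that the canonical basis of $\cN$ is indexed by finite sequences and the basis of a finite product factors as a product of bases, so computability of $f$ reduces to computability of the $\om$-valued coordinate functions $f^*(x,n)$. For (d) the basis of $\om$ consists of singletons, hence a computing set of a partial function to $\om$ coincides with its graph, in either direction. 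For (e), composition follows from the substitution property (f) applied to a computing set of the outer function, together with Fact~\ref{F:partialrecursivefunctions}\eqref{I:recursivity-to-omega} to rewrite the inner function coordinate-wise. Finally, (f) is obtained by writing $Q = \bigcup_{j \in Q^*} N(Y,j)$ via Fact~\ref{F:Gstar}(b) and observing that $f^{-1}(N(Y,j))$ is determined by the computing set $\De_f$ of $f$ in a manifestly $\Ga$ way, so that $f^{-1}(Q) \cap D(f)$ is $\Ga$.

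The principal, and essentially only, point requiring care is that all the constructions cited from \cite{Moschovakis} use only schemes that relativize to the parameters $a$ and $(a,b) \in \cN$. This is routine: adjoining parameters simply threads them through the good universal set and the canonical basis, and does not affect any of the Boolean, bounded-quantifier, or $\exists^\om$ operations employed. Once this has been noted, each of (a)--(f) is essentially a quotation of the result in \cite{Moschovakis} listed next to it in the statement.
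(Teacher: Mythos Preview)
Your proposal is correct. Note, however, that the paper does not give its own proof of this statement at all: it is presented as a \emph{Fact} with each item simply annotated by a reference to the relevant result in Moschovakis' book. Your sketch therefore goes beyond what the paper provides, supplying the standard verifications that the cited arguments relativize to the classes $\Si^0_1(a)$ and $\Si^0_1(a,b)$; this is entirely appropriate and in line with the paper's intent.
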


The following consequence of the above facts on preservation under ``recursive quantification'' turns out to be convenient.

\begin{lemma**}\label{L:bounded-quantifiers}
Let $V \subset X \times \omega$ be a $\Gamma$ set ($\Gamma$-recursive set, respectively) and $f\colon X \to \omega$ be $\Gamma$-recursive.
Then the sets
\begin{align*}
A &= \{x \in X \set \forall n \in \omega, n \leq f(x)\colon (x,n)\in V\} \quad \text{ and  } \\
B &= \{x \in X \set \exists n \in \omega, n \leq f(x)\colon (x,n)\in V\}
\end{align*}
are $\Gamma$ sets ($\Gamma$-recursive sets, respectively). If we replace the inequality $\leq$ by $<$ in the definition of $A$ and $B$, we get the same conclusion.
\end{lemma**}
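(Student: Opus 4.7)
The plan is to reduce both assertions to a single application of the substitution property (Fact~\ref{F:partialrecursivefunctions}\eqref{I:substitution-property}) combined with the closure of $\Gamma$ under the bounded quantifiers $\exists^{\leq}$ and $\forall^{\leq}$ (Facts~\ref{F:properties-of-Gamma} and~\ref{F:zachovani-recursivita}). The entire argument is routine bookkeeping once the right auxiliary set and the right substituting function are in place.

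First, I would introduce the auxiliary sets
\[
W_\forall = \{(x,m) \in X \times \omega \set \forall n \leq m\colon (x,n) \in V\}, \quad W_\exists = \{(x,m) \in X \times \omega \set \exists n \leq m\colon (x,n) \in V\}.
\]
If $V \in \Gamma$, then $W_\forall, W_\exists \in \Gamma$ by Fact~\ref{F:properties-of-Gamma}; if $V$ is $\Gamma$-recursive, then both are $\Gamma$-recursive by Fact~\ref{F:zachovani-recursivita}. Next, I would observe that the total map $\sigma\colon X \to X \times \omega$, defined by $\sigma(x) = (x, f(x))$, is $\Gamma$-recursive by Fact~\ref{F:partialrecursivefunctions}\eqref{I:coordinates}, since its coordinates are the identity on $X$ (trivially recursive) and $f$ itself. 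Applying the substitution property to $\sigma$ then yields $A = \sigma^{-1}(W_\forall) \in \Gamma$ and $B = \sigma^{-1}(W_\exists) \in \Gamma$, which settles the $\Gamma$ case. For the $\Gamma$-recursive case, the same substitution applied to $W_\forall, W_\exists$ and to their complements (which are also in $\Gamma$) shows that $A$, $B$ and their complements all lie in $\Gamma$, hence $A$ and $B$ are $\Gamma$-recursive.

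The variant with strict inequality is immediate: $n < f(x)$ is equivalent to $n + 1 \leq f(x)$, and the successor function $n \mapsto n + 1$ is recursive, so one may either rerun the above argument with $W_\forall, W_\exists$ replaced by their strict analogues, or compose $\sigma$ with the map $(x,m)\mapsto(x,m+1)$ before substituting. I do not expect any genuine obstacle here; the only point requiring minor care is to verify that $\sigma$ is total $\Gamma$-recursive, so that $D(\sigma)=X$ and the substitution property gives $\sigma^{-1}(W)\in\Gamma$ without any intersection with a partial domain.
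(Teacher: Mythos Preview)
Your proposal is correct and follows essentially the same route as the paper: both arguments combine the closure of $\Gamma$ under $\forall^{\le}$ and $\exists^{\le}$ with the substitution property applied to a $\Gamma$-recursive map built from $f$. The only cosmetic difference is that the paper first forms the set $S=\{(x,n):n\le f(x)\}$, shows it is in $\Gamma$ via substitution, and then expresses $A$ and $B$ as $\exists^{\omega}$-projections of intersections involving $S$, whereas you form $W_\forall=\forall^{\le}V$ and $W_\exists=\exists^{\le}V$ first and then pull back along $\sigma(x)=(x,f(x))$; these are two equivalent packagings of the same computation.
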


\begin{proof}
Set $S = \{(x,n) \in X \times \omega \set n \leq f(x)\}$.
Using Fact~\ref{F:partialrecursivefunctions}\eqref{I:coordinates},\eqref{I:composition} and recursiveness of projections
(see \cite[3D.4(i)]{Moschovakis}), we get that the function $(x,n) \mapsto (n,f(x))$ is $\Gamma$-recursive.
The set $\{(n,m) \in \omega^2 \set n \leq m\}$ is recursive by \cite[3A.3]{Moschovakis}.
Thus using Fact~\ref{F:partialrecursivefunctions}\eqref{I:substitution-property}, we get that $S$ is a $\Gamma$ set.

First suppose that $V$ is in $\Gamma$. Then the set $\forall^{\leq} \, V$ is in $\Gamma$ by Fact~\ref{F:properties-of-Gamma}.
Since $A = \exists^{\omega} (S \cap \forall^{\leq} \, V)$ and $B = \exists^{\omega} (V \cap S)$, the sets $A$ and $B$ are
$\Gamma$ sets by Fact~\ref{F:properties-of-Gamma}.

Now assume that $V$ is $\Gamma$-recursive. It remains to verify that the complements of $A$ and $B$ are in $\Gamma$.
We may use the previous results since the complement of $V$ is in $\Gamma$ and
\begin{align*}
X \sm B  &= \{x \in X \set \forall n \in \omega, n \leq f(x)\colon (x,n) \notin V\} \quad \text{ and  } \\
X \sm A  &= \{x \in X \set \exists n \in \omega, n \leq f(x)\colon (x,n) \notin V\}.
\end{align*}

An analogous reasoning gives the result if we replace $\leq$ by $<$ in the definition of $A$ and $B$.
\end{proof}

\begin{fact**}[good parametrizations and recursion theorems]\label{F:recursionthms} \hfil
\begin{enumerate}[\upshape (a)]
\item We may associate with each $X$ and each space $Z$ of type $0$ a set $G^X_{\Ga} \subset \omega \times X$ in $\Ga$ and
a recursive function $S^{Z,X}_{\Ga} \colon \om \times Z \to \om$ such that
\begin{itemize}
\item[--] $G^X_{\Ga}$ is universal for $\Ga \restriction X$, i.e., a set $A \subset X$ is in $\Gamma$ if and only if there exists $e \in \omega$ with $A = (G^X_{\Ga})_e$,

\item[--] for every $e \in \omega$, $z \in Z$, and $x \in X$ we have
\[
(e,z,x) \in G^{Z\times X}_{\Ga} \Leftrightarrow \bigl(S^{Z,X}_{\Ga}(e,z),x\bigr) \in G^X_{\Ga},
\]

\item[--] if $H \in \Ga\restriction (\om\times X)$, then there is $e^*\in\om$ such that $H_{e^*} = (G^X_{\Gamma})_{e^*}$
\end{itemize}
(cf. \cite[3H.4]{Moschovakis}).

\item We may associate with spaces $X$, $Y$ and with any space $Z$ of type $0$ a partial function $U^{X,Y}_{\Ga}\colon \om\times X \rightharpoonup Y$, which is $\Ga$-recursive on its domain, and a recursive function $S^{Z,X,Y}_{\Ga}\colon \om\times Z\to\om$ such that
\begin{itemize}
\item[--] a partial function $f\colon X \rightharpoonup Y$ is $\Ga$-recursive on its domain if and only if there is $e\in\om$ such that
$f(x)=U^{X,Y}_{\Ga} (e,x)$ whenever $f(x)$ is defined,

\item[--] for every $e\in\om$, $z\in Z$, $x \in X$ with $(e,z,x) \in D(U^{Z\times X,Y}_{\Ga})$, we have
\[
U^{Z\times X,Y}_{\Ga}(e,z,x) = U^{X,Y}_{\Ga}(S^{Z,X,Y}_{\Ga}(e,z),x),
\]
and

\item[--] if $f\colon \om \times X \rightharpoonup Y$ is $\Gamma$-recursive on its domain, then there is $e^*\in\om$ satisfying $f(e^*,x)=U^{X,Y}_{\Ga}(e^*,x)$ whenever $f(e^*,x)$ is defined
(see \cite[7A.6]{Moschovakis}).
\end{itemize}
\end{enumerate}
\end{fact**}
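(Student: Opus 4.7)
The plan is to follow the classical construction via Kleene's $S$-$m$-$n$ theorem and the second recursion theorem, as developed for effective descriptive set theory in Moschovakis' book; the references cited within the statement itself point at the precise results.

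For part (a), I would first construct a universal $\Si^0_1$ set $G^X \subseteq \omega \times X$ by enumerating (oracle) Turing machines that semi-decide subsets of $X$, where inputs are decoded through the canonical basis $\{N(X,j)\set j\in\omega\}$. The relativizations $\Si^0_1(a)$ and $\Si^0_1(a,b)$ are obtained by allowing the machines to query $a$ (and $b$) as oracles; universality then follows by combining this enumeration with Fact~\ref{F:Gstar}(b). Next, the substitution function $S^{Z,X}_\Ga \colon \omega \times Z \to \omega$ is produced by the standard $S$-$m$-$n$ construction: given a code $e$ for a semi-decision procedure on $Z \times X$ and a value $z \in Z$, effectively manufacture a new code $S^{Z,X}_\Ga(e,z)$ for the procedure that hardcodes $z$ and then simulates the original on $(z,x)$. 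This substitution function can be made primitive recursive and, in particular, recursive in the sense of Definition~\ref{D:recursive-function}.

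For the fixed-point clause I would use the familiar diagonal argument. Given $H \in \Ga \restriction (\omega \times X)$, define
\[
V = \bigl\{(e,x) \in \omega \times X \set \bigl(S^{\omega,X}_\Ga(e,e),x\bigr) \in H\bigr\}.
\]
Recursiveness of $S^{\omega,X}_\Ga$ together with the substitution property (Fact~\ref{F:partialrecursivefunctions}\eqref{I:substitution-property}) imply $V \in \Ga$, so $V = (G^{\omega \times X}_\Ga)_{e_0}$ for some $e_0 \in \omega$. Setting $e^* = S^{\omega,X}_\Ga(e_0,e_0)$, the definition of $V$ combined with the first two bullets of (a) yields the chain $(e^*,x) \in H \iff (e_0,x) \in V \iff (e_0, e_0, x) \in G^{\omega \times X}_\Ga \iff (e^*,x) \in G^X_\Ga$, i.e., $H_{e^*} = (G^X_\Ga)_{e^*}$. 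For part (b), a partial $\Ga$-recursive function $f \colon X \rightharpoonup Y$ is computed by a $\Ga$ set $\De \subseteq X \times \omega$ in the sense of Definition~\ref{D:recursive-function}; enumerating such $\De$'s via $G^{X \times \omega}_\Ga$ from part (a) yields the universal partial function $U^{X,Y}_\Ga$, and the corresponding $S$-function together with the fixed-point property for partial functions follow from the pieces in (a), with the diagonal argument adapted to handle partiality carefully.

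The main obstacle is the careful bookkeeping: ensuring that the functions $S^{Z,X}_\Ga$ and $S^{Z,X,Y}_\Ga$ are plain recursive (rather than merely $\Ga$-recursive), so that substitutions can be iterated without inflating the complexity class, and that the whole construction is uniform across all product spaces $X$, $Y$ and all type-0 spaces $Z$, with the three cases $\Si^0_1$, $\Si^0_1(a)$, $\Si^0_1(a,b)$ handled coherently under a single indexing scheme.
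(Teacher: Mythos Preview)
Your outline is correct and follows the standard route (good universal sets, the $S$-$m$-$n$ construction, and the diagonal fixed-point argument), which is exactly the approach in the references \cite[3H.4, 7A.6]{Moschovakis} that the statement cites. Note, however, that the paper itself does not prove this statement: it is recorded as a \emph{Fact} with a reference to Moschovakis and is used as a black box throughout, so there is no ``paper's own proof'' to compare against beyond the cited source.
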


Since the following lemma is essential to enable to work with nonrecursive ordinals, we explain how it follows from the above recalled facts.
Let us recall that the spaces of type $0$ are endowed with the discrete topology.

\begin{lemma**}[making sets and functions relatively recursive, cf. {\cite[3E.4]{Moschovakis}}]\label{F:relativization}
Let $H_n$ be open in $X_n$ and $f_n\colon Y_n \rightharpoonup Z_n$ be continuous,
where $X_n, Y_n, Z_n$ are product spaces for every $n \in \omega$.
Then there exists $a \in \cC$ such that for every $n \in \omega$ the set $H_n$ is in $\Sigma_1^0(a)$ and $f_n$ is $\Sigma_1^0(a)$-recursive on its domain.

Moreover, $a \in \cC$ can be found such that $H_n$ is $\Sigma_1^0(a)$-recursive for every $n \in \omega$ for which $H_n$ is clopen.
\end{lemma**}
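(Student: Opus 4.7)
The plan is to encode into a single $a \in \cC$ all the indexing data for the countably many open sets $H_n$ and continuous partial functions $f_n$ relative to the canonical basis, and then to observe that this parameter already suffices to render the $H_n$ members of $\Si^0_1(a)$ and the $f_n$ $\Si^0_1(a)$-recursive on their domains.

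First I would appeal to Fact~\ref{F:Gstar}(b) to choose, for each $n$, an (arbitrary) set $H_n^* \su \om$ with $H_n = \bigcup_{j \in H_n^*} N(X_n, j)$. For each continuous $f_n$, the preimage $f_n^{-1}(N(Z_n, j))$ is relatively open in $D(f_n)$, hence of the form $V_{n,j} \cap D(f_n)$ for some open $V_{n,j} \su Y_n$; then $\De_n = \bigcup_{j \in \om} V_{n,j} \times \{j\}$ is open in $Y_n \times \om$ and, by construction, computes $f_n$ on $D(f_n)$ in the sense of Definition~\ref{D:recursive-function}. Another application of Fact~\ref{F:Gstar}(b) produces $\De_n^* \su \om$ with $\De_n = \bigcup_{k \in \De_n^*} N(Y_n \times \om, k)$.

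Next I would fix a recursive bijection $\langle \cdot,\cdot\rangle\colon \om\times\om\to\om$ and assemble $a\in\cC$ by placing the characteristic functions of the countably many sets $H_n^*$ and $\De_n^*$ on pairwise disjoint recursive slices of $\om$ (and, for the moreover clause, also the analogous index sets of the open complements $X_n \sm H_n$ whenever $H_n$ is clopen). Then each coded subset is recursive in $a$, and combining this with closure of $\Si^0_1(a)$ under $\exists^{\om}$ (Fact~\ref{F:properties-of-Gamma}) and with the recursive dependence of basic neighborhoods on their indices gives $H_n, \De_n \in \Si^0_1(a)$, and in the clopen case $X_n \sm H_n \in \Si^0_1(a)$ as well, which yields the conclusion. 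The argument is essentially routine bookkeeping; the only point to watch is that the countably many slices of $a$ be laid out on pairwise disjoint, uniformly decodable subsets of $\om$, so that the uniform recovery of each indexing set from $a$ goes through using Fact~\ref{F:recursionthms}(a). This is precisely the pattern of the classical relativization argument \cite[3E.4]{Moschovakis}, to which the statement explicitly refers.
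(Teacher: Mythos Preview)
Your proposal is correct and follows essentially the same approach as the paper: pass from each open $H_n$ and each computing set $\De_n$ to an index set in $\om$, then pack all of these into a single parameter. The only cosmetic difference is that the paper first obtains $\sigma\in\cN$ via \cite[3E.4]{Moschovakis} and then converts $\sigma$ to $a\in\cC$ by a run-length encoding, whereas you write the characteristic functions of the index sets directly onto disjoint recursive slices of $a\in\cC$; your closing appeal to Fact~\ref{F:recursionthms}(a) is unnecessary, since the uniform decoding of the slices from $a$ is elementary and does not use the recursion theorem.
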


\begin{proof}
Let us consider the sets
$H_n^*=\{j\in\om\set N(X_n,j)\su H_n\}$ and $\De_n=\{(y,l)\in Y_n\times\om\set f_n(y)\in N(Z_n,l)\}$.

Since the sets $H_n$ are open and the sets $N(X_n,j)$, $j\in\om$, form a basis of $X_n$, we have $H_n = \bigcup_{j\in H_n^*} N(X_n,j)$.
Due to Fact~\ref{F:Gstar}(b), all $H_n$'s are $\Sigma_1^0(\si)$ sets if $\si\in\cN$ is such that each $H_n^*$ is a $\Sigma_1^0(\si)$ subset of $\om$.

Since the sets $N(Z_n,l)$, $l\in\om$, form a basis of $Z_n$, the set $\De_n$ computes $f_n$ on its domain.
As each $f_n$ is continuous, the set $\De_n$ is relatively open in $D(f_n)\times \om$.
Thus there is an open set $\wt{\De}_n$ in $Y_n\times\om$ such that $\De_n=(D(f_n)\times\om)\cap \wt{\De}_n$.
So the set $\wt{\De}_n$ computes $f_n$ and if the set
$\De^*_n = \{j \in \om \set N(Y_n \times \om,j) \su \wt{\De}_n\}$ is $\Sigma_1^0(\si)$ set for some $\si \in \cN$,
then $f_n$ is $\Sigma_1^0(\si)$-recursive on its domain.

Let us consider the subset $P$ of $\om^2$ such that $P_{2n}=\De^*_n$ and $P_{2n+1}=H_n^*$.
By \cite[3E.4]{Moschovakis} there is $\si \in \cN$ and $G \subset \cN \times \omega^2$ in $\Sigma_1^0$ such that $P = G_\si$.
Therefore $P$ is a $\Sigma_1^0(\si)$ set in $\om^2$.
It follows that every $P_{2n}=\De^*_n$ is in $\Sigma_1^0(\si)$ and every $P_{2n+1}=H_n^*$ is in $\Sigma_1^0(\si)$.
Thus all sets $H_n$ are in $\Si^0_1(\si)$ and all functions $f_n$ are $\Sigma_1^0(\si)$-recursive.

Having $\si = (\si_k)_{k=0}^{\infty} \in \cN$, we consider $a\in\cC$ defined as $a=s_0\w s_1\w s_2\w\dots$, where $s_{2k}$ is the constant zero sequence of length $\si_k$ and $s_{2k+1}=(1)$ for $k\in\om$. It is not difficult to verify that $\si$ is recursive in $a$.
Now, we use the fact that each $\Si^0_1(\si)$ subset of $\om^2$ is in $\Si^0_1(a)$ provided $\si$ is recursive in $a$, see \cite[3E.15]{Moschovakis}.

The last assertion follows from the previous one. It is sufficient to add complements of those $H_n$'s, which are clopen, to the list of considered sets.
\end{proof}

\subsection{Recursive sets and functions on auxiliary spaces}\label{SS:auxiliary-spaces}

Here we add several auxiliary spaces to the basic spaces $\omega$ and $\cN$ and we show how to use results from the previous subsection for their finite products.

\begin{notation**}[$\cS$, $\cS^*$, $\cP$, $\cP_\infty$]\label{D:sequences}\hfill\break
\noindent
(a) The space of all finite sequences of elements of $\om$ including the empty sequence is denoted by $\cS$.
The empty sequence is denoted by $\emptyset$.
Let $s, t \in \Seq$ and $\nu \in \Seq \cup \cN$. We denote the concatenation of sequences $s$ and $t$ by $s \w t$
and we write $s \preceq \nu$ if $\nu$ is an extension of $s$.
We use $\cN(s)$ to denote the set $\{\si\in\cN\set s \preceq \si\}$.
The symbol $|s|$ denotes the length of $s$.
By $s_i$ or $s(i)$ we denote the $i$th coordinate of $s$, where $i < |s|$.
If $n \in \omega$ and $|s| \ge n$, then the symbol $s|n$ denotes the finite sequence $(s_0,s_1,\dots,s_{n-1})$.

We add an auxiliary element ``$-$'' to the set $\Seq$ and we denote $\Seqm = \Seq \cup \{-\}$.
Concatenation of elements of $\Seqm$ is defined as above, where ``$-$'' is interpreted as the empty sequence.
The length of ``$-$'' is $0$.

\medskip\noindent
(b) The set of all finite sequences of elements of $\Seqm$ is denoted by $\cP$. We denote $\cP_{\infty} = (\Seqm)^{\om}$.
The length of $p \in \cP$ is denoted by $|p|$.
The $i$th coordinate of $p \in \cP\cup\cP_{\infty}$ is denoted by $p_i$ or $p(i)$ for $i<|p|$ or $i\in\om$ if $p\in\cP_\infty$.
The restriction $p|n$ is defined analogically as in the previous item, too.
The extension relation $\preceq$ on $\cP \cup \cP_{\infty}$ is defined in an obvious way.
The set $\{\pi\in \cP_{\infty} \set p \preceq \pi\}$ for $p \in \cP$ is denoted by $\cP_{\infty}(p)$.
The concatenation of $p,q \in \cP$ is an element of $\cP$ and is denoted by $p \w q$.
If $p \in \cP \cup \cP_{\infty}$, then the symbol $\wh{p}$ denotes the element of $\cS \cup \cN$, which is the concatenation of $p_0, p_1, \dots$ in the sense defined in the previous item.
\end{notation**}

To define recursive and semirecursive sets also in $\cS$, $\cS^*$, $\cP$, $\cP_\infty$, we choose and fix bijections between these spaces and the
basic ones. We do it in such a way that Lemma~\ref{L:bounded-quantifiers} can be easily applied.

\begin{notation**}[identifications with basic spaces]\label{N:bijectionsandGamma} \hfill
\begin{enumerate}[(a)]
  \item Let $c_\om$ be the identity mapping on $\om$.

  \item Let $c_\cN$ be the identity mapping on $\cN$.

  \item The partial ordering $<_*$ on $\cS$ is defined by $s<_* t$ if and only if $|s| \le |t|$, $s_i \le t_i$ for all $i < |s|$, and $s \ne t$.
    Let a bijection $c_{\cS^*}\colon \cS^*\to\om$ be chosen such that $c_{\cS^*}(-)=0$, $c_{\cS^*}(\emptyset)=1$, and $c_{\cS^*}(s) < c_{\cS^*}(t)$ whenever $s <_* t$, see Remark~\ref{R:ordering-star-explanation}.

  \item Let the bijection $c_\cS\colon\cS\to\om$ be defined by $c_\cS(s) = c_{\cS^*}(s)-1$.

  \item Let the bijection $c^\cS_{\cP}\colon \cP \to\cS$ be defined by $c^\cS_{\cP}(p) = \bigl(c_{\cS^*}(p_i)\bigr)_{i < |p|}$.
  We have that $c^\cS_{\cP}(p) \preceq c^\cS_{\cP}(q)$ if $p\preceq q$.  We also define the bijection $c_\cP = c_\cS \circ c^\cS_{\cP}\colon \cP\to\om$.

  \item Let the bijection $c_{\cP_\infty}\colon \cP_\infty\to\cN$ be defined by $c_{\cP_\infty}(\pi) = \bigl(c_{\cS^*}(\pi_i)\bigr)_{i=0}^\infty$.

  \item Let $c_{[0,\al]}\colon [0,\al]\to\om$ be an arbitrary bijection.
\end{enumerate}
\end{notation**}

\begin{remark**}\label{R:ordering-star-explanation}
We explain why $c_{\cS^*}\colon \cS^*\to\om$ with the properties from (c) really exists. We define $c_{\cS^*}^{-1}(n)\in\cS^*$ by induction on $n\in\om$.
Let $c_{\cS^*}^{-1}(0)=-$ and $c_{\cS^*}^{-1}(1)=\emptyset$.
Let $(s_k)_{k \geq 1}$ be an enumeration of nonempty elements of $\cS$.
Suppose that $n\ge 2$ and we already have defined the elements $c_{\cS^*}^{-1}(l)$ for $l < n$.
Let $k \geq 1$ be the smallest integer $k$ such that $s_k\notin\{c_{\cS^*}^{-1}(l)\set l<n\}$. We find a minimal element $s_{k(n)}$ of the finite set
$\{s_l\in\cS\set s_l\le_* s_k\}\sm\{c_{\cS^*}^{-1}(l)\set l<n\}$ with respect to the partial ordering $<_*$ and define $c_{\cS^*}^{-1}(n)=s_{k(n)}$.
It is easy to check that each $s_k$, $k\geq 1$, was chosen for some $n>1$ by this procedure and the extra condition on the order $<_*$ is satisfied.
\end{remark**}

We will use \emph{well-orderings} of $\cS$, $\cS^*$, and especially of $\cP$ defined as follows.

\begin{notation**}[ordering of $\cS^*$ and $\cP$]\label{N:usporadanicP}\hfill
\begin{enumerate}[(1)]
\item For $s,t\in\cS^*$ we define $s<_{\cS^*} t$ whenever $c_{\cS^*}(s)<c_{\cS^*}(t)$.
Of course, the restriction of $<_{\cS^*}$ to $\cS^2$ coincides with the partial order defined by $c_{\cS}(s)<c_{\cS}(t)$ due to the definition of $c_\cS$. By the choice of $c_{\cS^*}$ in Notation~\ref{N:bijectionsandGamma}(c), we have that $s<_{\cS^*} t$ if $s<_* t$ for $s,t\in\cS$.

\item For $p,q \in \cP$ we define $p <_{\cP} q$ whenever $c_\cP(p) < c_\cP(q)$, or equivalently $c^\cS_\cP(p) <_\cS c^\cS_\cP(q)$.
The item (1) implies that $c_\cP(p) < c_\cP(q)$ if $c^\cS_\cP(p) <_* c^\cS_\cP(q)$.
We may easily check that the following properties follow from the properties of the mappings
$c_{\cS^*}$, $c_\cS$ and the definition of $c_\cP$.
\begin{itemize}
\item [(a)] The number of elements of the set
$\{q \in \cP \set q <_{\cP} p\}$ is bounded by $c_\cP(p)$,

\item [(b)] if $p,q \in \cP$ and $q \preceq p$, then $q \leq_{\cP} p$,

\item [(c)] if $p\in \cP$, $m \in \om$, and $\eta\in\cS$, then $p \w ((m),\emptyset) \le_{\cP} p \w ((m),\eta)$.
\end{itemize}
\end{enumerate}
\end{notation**}

\begin{definition**}\label{D:Ga-in-extended-spaces}\hfil
\begin{enumerate}[(a)]
\item By \emph{extedend basic space} we mean any of the spaces $\om, \cN, \cS, \cS^*, \cP, [0,\al]$, and $\cP_{\infty}$.

\item Every space of the form $\cX = \prod_{i=0}^k \cX_i$, where $k \in \om$ and $\cX_i$ is an extended basic space for every $i \leq k$,
is called \emph{extended product space} and we define the mapping $c_{\cX}$ on $\cX$ by $c_{\cX}\bigl((x_i)_{i \leq k}\bigr) = \bigl(c_{\cX_i}(x_i)\bigr)_{i\leq k}$.
If $\cX_i \in \{\om,\cS,\cS^*,\cP,[0,\al]\}$ for every $i \leq k$, we say that $\cX$ is of type $0$.
Then for every extended product space $\cX$ we extend the class $\Ga$ by
\begin{equation*}
\Ga\restriction \cX = \{\cH\su \cX\set c_{\cX}(\cH) \in \Ga \restriction \cX^c\},
\end{equation*}
where $\cX^c = c_{\cX}(\cX)$ is the corresponding product space.

\item Let $\cX, \cY$ be extended product spaces and $f\colon \cX \rightharpoonup \cY$.
We say that $f$ is \emph{$\Gamma$-recursive on its domain} if the mapping
$c_{\cY} \circ f \circ c_{\cX}^{-1} \colon \cX^c \rightharpoonup \cY^c$ is $\Gamma$-recursive on its domain,
where $\cX^c$ and $\cY^c$ correspond to $\cX$ and $\cY$ as above.
We say that $f$ is \emph{$\Gamma$-recursive} if $f$ is $\Gamma$-recursive on $D(f)$ and $D(f) \in \Gamma\restriction \cX$.
\end{enumerate}
\end{definition**}

\begin{remark**}\label{R:recursivity-of-cP}
Observe that if $\cX$ is an extended product space, then the mapping $c_{\cX}\colon \cX \to \cX^c$ is $\Gamma$-recursive.
\end{remark**}

\begin{remark**}\label{R:extended-facts}
Now we can formulate statements on extended product spaces $\cX,\cY,\cZ,\dots$ reformulating the statements on the corresponding product spaces $X,Y,Z,\dots$ (see Definition~\ref{D:Ga-in-extended-spaces}), e.g., those recalled in \ref{SS:Moschovakis}, in an obvious way.
In particular, we define for any extended product space $\cX$ and for any extended product space $\cZ$ of type $0$ also the corresponding
\[
G^{\cX}_\Ga =\{(e,x)\in\om\times\cX\set (e,c_{\cX}(x))\in G^X_\Ga\} \quad  \textrm{ and } \quad
S^{\cZ,\cX}_{\Ga}(e,z) = S^{Z,X}_{\Ga}(e,c_{\cZ}(z))
\]
and we get the analogous properties from Fact~\ref{F:recursionthms}(a) for them.

Similarly we define for the extended product spaces $\cX$, $\cY$, and for $\cZ$ of type $0$ also
\[
U^{\cX,\cY}_{\Ga}(e,x) = c_{\cY}^{-1} \circ U^{X,Y}_{\Ga}(e,c_{\cX}(x)) \quad  \textrm{ and } \quad
S^{\cZ,\cX,\cY}_{\Ga}(e,z) = S^{Z,X,Y}_{\Ga}(e,c_{\cZ}(z)),
\]
and we get analogous properties from Fact~\ref{F:recursionthms}(b) for them.
\end{remark**}

\begin{notation**}[natural bases of extended product spaces]
In the particular case of $\cN$ the fixed presentation in \cite{Moschovakis} leads to the canonical basis $N(\cN,j)$, $j\in\om$.
We however prefer to work with the natural basis in $\cN$ and also in the spaces $\omega, \cS^*, \cP, \cP_{\infty}, [0,\alpha]$,
namely, with the family $\{\cN(s) \set s \in \cS\}$ in $\cN$, $\{\cP_{\infty}(p) \set p \in \cP\}$ in $\cP_{\infty}$,
and $\bigl\{\{x\} \set x \in \cX\bigr\}$ if $\cX$ is a space of type $0$.

For any extended basic space $\cX$ we define $\cX^0$ as follows.
\begin{enumerate}
  \item $\cX^0 = \cX$ if $\cX$ is of type $0$,

  \item $\cX^0 = \cS$ if $\cX = \cN$,

  \item $\cX^0=\cP$   if $\cX=\cP_\infty$.
\end{enumerate}
Moreover, for each $s \in \cX^0$ we define $\cX(s)=\{s\}$ if $\cX$ is of type $0$ and we of course have $\cX(s) = \cN(s)$ if $\cX = \cN$, and
$\cX(s)  =\cP_\infty(s)$ if $\cX = \cP_{\infty}$.

If $\cX = \prod_{i \leq k} \cX_i$, where $\cX_i$, $i \leq k$,
is an extended basic space, then $\cX^0=\prod_{i\le k} \cX_i^0$ and
$\cX(s) = \cX(s_0,\dots, s_k) = \prod_{i\leq k} \cX_i(s_i)$, where $s = (s_0,\dots,s_k) \in \cX^0 = \prod_{i \leq k} \cX_i^0$.
\end{notation**}

\begin{notation**}[bases of product spaces] Let $\{N(X,j) \set j \in \omega\}$ be the fixed canonical basis for a product space $X$
(see the note after Definition~\ref{D:recursive}).
Since we have
\[
\{N(X,j) \set j \in \omega\} \setminus \{\emptyset\} = \{X(s) \set s \in X^0\},
\]
there exists a unique partial function  $s^X\colon \omega \rightharpoonup X^0$ such that $D(s^X) = \{j \in \omega \set N(X,j) \neq \emptyset\}$ and $N(X,j) = X(s^X(j))$ whenever $j \in D(s^X)$.
\end{notation**}

\begin{notation**}[identification of $\omega$ and $\omega^2$]\label{N:identification}
We use a fixed bijection $n\mapsto [n] = ([n]_0,[n]_1)$ of $\om$ and $\om\times\om$
such that the  mapping $n\mapsto [n]_1$ is increasing on $\{n\in\om\set [n]_0=k\}$ for every $k\in\om$.
The corresponding inverse mapping is denoted $[\cdot,\cdot]^{-1}$.
Thus, if $s \in \Seq$ and $k \in \omega$, the formula $j \mapsto s([k,j]^{-1})$ defines a finite (possibly empty) sequence.
Moreover, we use the following coordinatewise application of $[\cdot]$.
For $i \in \{0,1\}$ and $t \in \Seq \cup \cN$ we define the sequence $[t]_i \in \Seq \cup \cN$ by $[t]_i(j) = [t(j)]_i$ for $j = 0, 1, \dots$.
\end{notation**}

We need and use (sometimes without quotation) well known facts concerning recursivity and semirecursivity of sets and functions on basic spaces which can be found in \cite{Moschovakis}.
Using Lemma~\ref{F:relativization}, we get Lemma~\ref{L:volba-epsilon} which ensures relative recursivity in an $\ep\in\cN$ of a number of sets and functions.
In fact, the only relations and functions in the lemma which need the relativization because of essential reasons are the statements
related to $[0,\al]$ to get our main results for the (not necessarily recursive) ordinal $\al$.

\begin{lemma**}\label{L:volba-epsilon}
There is $\ep \in \cN$ such that the following statements hold.
\begin{enumerate}[\upshape (a)]
\item The relation of equality on each extended product space of type $0$ is $\Si^0_1(\ep)$-recursive.

\item The projections $x \in \prod_{k=0}^n \cX_k \mapsto x_i\in\cX_i$, $i \leq n$, of the extended product spaces to the factors are $\Si^0_1(\ep)$-recursive.

\item The partial function $s^{X}$ is $\Si^0_1(\ep)$-recursive and moreover its domain is $\Si^0_1(\ep)$-recursive for every product space $X$.

\item The relations $\preceq$ on $\cS$ and $\preceq$ on $\cP$ are $\Si^0_1(\ep)$-recursive.

\item The following functions and their domains are $\Si^0_1(\ep)$-recursive.
\begin{itemize}
  \item $[\cdot]\colon \om \to \om^2$ from Notation~\ref{N:identification},
  \item $s\in\cS\sm\{\emptyset\}\mapsto \max\{s_i\set 0\le i <|s|\}$, $(i,n)\in\om^2\mapsto \cl{n}|i$ (see
        Notation~\ref{N:notation-1}),
  \item $(i,s) \in \om \times \cS^*\mapsto p \in \cP$, where $|p| = i$ and $p_j = s$ for $j < |p|$,
  \item $(n,s)\in\om\times\cS\mapsto s|n\in\cS$ for $n\le |s|$, $(n,p)\in\om\times\cP\mapsto p|n\in\cP$ for $n\le |p|$,
  \item $(i,s)\in\om\times\cS\mapsto s_i\in\om$ for $i < |s|$, $(i,p) \in \om \times\cP \mapsto p_i\in\cS^*$ for $i < |p|$,
  \item $s \in \cS^* \mapsto |s| \in \om$, $p \in \cP \mapsto |p| \in \om$,
  \item $(s,t)\in (\cS^*)^2 \mapsto s \w t\in\cS$, $p \in \cP \mapsto \wh{p}\in\cS$, $(p,q) \in \cP^2 \mapsto p \w q \in \cP$.
\end{itemize}

\item The sets $\LIM(\alpha)$ in $[0,\al]$ and $\{(\ga,\be)\in [0,\al]^2\set \ga < \be\}$ in $[0,\al]^2$ are $\Si^0_1(\ep)$-recursive.

\item The following functions and their domains are $\Si^0_1(\ep)$-recursive.
      \begin{itemize}
        \item $\ell\colon [0,\al]\to\om$, $\diamond\colon [0,\al)\to [0,\al]$, $L\colon \{(\be,\ga)\in [0,\al]^2 \set \be<\ga\}\to\om$,
        \item $\be \mapsto \be + 1$ on $[0,\al)$, and $a\colon \LIM(\al) \times \om \to [0,\al]$.
      \end{itemize}

\item The clopen relation $\preceq \ \subset \cS \times \cN$ is $\Si^0_1(\ep)$-recursive.

\item The following continuous partial functions are $\Si^0_1(\ep)$-recursive.
\begin{itemize}
  \item $(n,\si)\in\om\times\cN \mapsto \si_n \in \om$, $(n,\pi)\in\om\times\cP_{\infty}\mapsto \pi_n \in \cS^*$,
  \item $(n,\si)\in\om\times\cN \mapsto \si|n \in \cS$, $(n,\pi)\in\cP_\infty \mapsto \pi|n\in \cP$, and
  \item $\pi \in \cP_{\infty} \mapsto \wh{\pi} \in \cN$, $(p,\pi) \in \cP \times \cP_\infty\mapsto p\w\pi \in \cP_{\infty}$.
\end{itemize}
\end{enumerate}
\end{lemma**}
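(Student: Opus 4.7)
The strategy is to reduce the whole statement to a single application of Lemma~\ref{F:relativization}, which already provides exactly the sort of oracle that is needed: it fixes a common $a\in\cC$ that relativizes any countable family of open sets and continuous partial functions between product spaces, and, via its ``moreover'' clause, makes clopen sets actually $\Si^0_1(a)$-recursive. Since Definition~\ref{D:Ga-in-extended-spaces} defines $\Gamma$ on extended product spaces through the coding maps $c_{\cX}$, and Remark~\ref{R:recursivity-of-cP} records that these codings are $\Gamma$-recursive, the only real task is to transfer each item of (a)--(i) to the corresponding ordinary product space via $c_{\cX}$ and check the hypotheses of Lemma~\ref{F:relativization}.

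For items (a)--(g) the situation is quite favorable. Once coded via the bijections fixed in Notation~\ref{N:bijectionsandGamma}, every occurring space is a finite power of $\om$, hence discrete, because $\cS$, $\cS^*$, $\cP$, and $[0,\al]$ all become $\om$. Consequently every set named in those items is clopen and every partial function is automatically continuous; their domains are cut out by elementary arithmetic conditions on lengths and coordinates (such as $i<|s|$, $n\le|s|$, $\be<\al$, or $\be\in\LIM(\al)$) and are clopen as well. For items (h) and (i) the only additional observations are that $s\preceq\si$ on $\cS\times\cN$ is clopen because it depends only on the first $|s|$ coordinates of $\si$, and that the listed operations on $\cN$ and $\cP_\infty$ are the familiar coordinate projections, finite restrictions, and concatenations, all of which are continuous with clopen domains.

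The plan then concludes by collecting this countable list of clopen subsets of product spaces together with the continuous partial functions between product spaces, and by adding the (clopen) domains of those functions to the list of sets. A single application of Lemma~\ref{F:relativization}, invoked with its ``moreover'' clause, then produces one $\ep\in\cC\subset\cN$ making every listed set $\Si^0_1(\ep)$-recursive and every listed partial function $\Si^0_1(\ep)$-recursive on its domain; since the domains are themselves $\Si^0_1(\ep)$-recursive, each such function is $\Si^0_1(\ep)$-recursive in the full sense of Definition~\ref{D:recursive-function}. The only genuinely delicate point, and hence the main ``obstacle'' one is paying attention to throughout, is that $\al$ need not be a recursive ordinal, so the arithmetic versions of $\diamond$, $\ell$, $a$, and $L$ obtained after conjugation with $c_{[0,\al]}$ are in general not recursive at all; it is precisely to absorb this noneffective content into one oracle that the relativization provided by Lemma~\ref{F:relativization} is indispensable, and everything else is bookkeeping.
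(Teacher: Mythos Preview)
Your proposal is correct and matches the paper's approach: the paper does not write out a separate proof of this lemma at all, treating it as an immediate consequence of Lemma~\ref{F:relativization} (which was stated and proved just before precisely for this purpose). Your observation that after conjugating by the coding bijections $c_{\cX}$ every item in (a)--(g) lives in a discrete product space $\om^k$, so every set is clopen and every function continuous, and that (h)--(i) involve only standard clopen relations and continuous operations on $\cN$, is exactly the unspoken reasoning the paper relies on; your identification of the non-recursiveness of $\alpha$ as the sole reason relativization is needed is also on point.
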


\begin{notation**}\label{N:epsilon}
We fix one such $\ep$ from Lemma~\ref{L:volba-epsilon} and we use $\Ga(\ep)$ for $\Si^0_1(\ep)$ or $\Si^0_1(a,\ep)$ for $a\in \cN$ in the rest of this section.
\end{notation**}

\newpage

\begin{lemma**}\label{L:natural-base}\hfil
\begin{enumerate}[\upshape (a)]
\item A set $\cH\su \cX$ is a $\Ga(\ep)$ set in an extended product space $\cX$ if and only if there is a $\Ga(\ep)$ set $\cH^*$ in $\cX^0$ such that
\[
\cH=\bigcup \bigl \{\cX(s)\set s\in\cH^*\bigr\}.
\]

\item For every extended product spaces $\cX$ and $\cY$, a mapping $f\colon \cX \rightharpoonup \cY$ is
$\Ga(\ep)$-recursive on its domain if and only if there is a $\Ga(\ep)$ set $\De\su \cX\times\cY^0$ which computes $f$ on its domain, i.e.,
for every $\xi \in D(f)$ we have
\[
\forall t \in \cY^0 \colon f(\xi) \in \cY(t) \Leftrightarrow  (\xi,t) \in \De.
\]
\end{enumerate}
\end{lemma**}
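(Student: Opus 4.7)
The plan is to reduce both assertions to the corresponding statements on basic product spaces (Fact~\ref{F:Gstar}(b) for (a) and Definition~\ref{D:recursive-function} for (b)) by translating along the identification $c_\cX\colon\cX\to\cX^c$ from Definition~\ref{D:Ga-in-extended-spaces}. The key auxiliary ingredient is, for every extended product space $\cX$, a bijection $\iota_\cX\colon \cX^0\to (\cX^c)^0$ satisfying $c_\cX(\cX(s))=\cX^c(\iota_\cX(s))$ for every $s\in\cX^0$, with both $\iota_\cX$ and $\iota_\cX^{-1}$ being $\Si^0_1(\ep)$-recursive. One builds $\iota_\cX$ coordinatewise: on a factor of type $0$ it is $c_{\cX_i}$; on the factor $\cN$ it is the identity on $\cS$ (because $c_\cN=\id$); and on the factor $\cP_\infty$ it is the map $p\mapsto (c_{\cS^*}(p_i))_{i<|p|}$ from $\cP$ to $\cS$. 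Recursivity of each coordinate map and of its inverse follows from Remark~\ref{R:recursivity-of-cP} and from Lemma~\ref{L:volba-epsilon}(c),(e); the product map is then recursive by Fact~\ref{F:partialrecursivefunctions}\eqref{I:coordinates},\eqref{I:composition}.

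For (a) the $(\Rightarrow)$ direction: if $\cH\in\Ga(\ep)\restriction\cX$, then $c_\cX(\cH)\in\Ga(\ep)\restriction\cX^c$ by Definition~\ref{D:Ga-in-extended-spaces}(b), so Fact~\ref{F:Gstar}(b) yields $Q\in\Ga(\ep)\restriction\om$ with $c_\cX(\cH)=\bigcup_{j\in Q}N(\cX^c,j)$. I would set
\[
\cH^*:=\bigl\{s\in\cX^0 \set \exists j\in\om\colon\ j\in Q\cap D(s^{\cX^c})\text{ and }s^{\cX^c}(j)=\iota_\cX(s)\bigr\}.
\]
Then $\cH^*\in\Ga(\ep)\restriction\cX^0$ by $\Si^0_1(\ep)$-recursivity of $s^{\cX^c}$ (Lemma~\ref{L:volba-epsilon}(c)) and of $\iota_\cX$, recursivity of equality on $\cX^0$ (Lemma~\ref{L:volba-epsilon}(a)), closure of $\Ga(\ep)$ under $\exists^\om$ (Fact~\ref{F:properties-of-Gamma}), and the substitution property (Fact~\ref{F:partialrecursivefunctions}\eqref{I:substitution-property}). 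The identity $c_\cX^{-1}(N(\cX^c,j))=\cX(\iota_\cX^{-1}(s^{\cX^c}(j)))$ for $j\in D(s^{\cX^c})$ then gives $\cH=\bigcup_{s\in\cH^*}\cX(s)$. For $(\Leftarrow)$, given $\cH^*\in\Ga(\ep)\restriction\cX^0$, I would put $Q:=\{j\in D(s^{\cX^c})\set \iota_\cX^{-1}(s^{\cX^c}(j))\in\cH^*\}$; it lies in $\Ga(\ep)\restriction\om$ by the same closure properties, $c_\cX(\cH)=\bigcup_{j\in Q}N(\cX^c,j)\in\Ga(\ep)\restriction\cX^c$ by Fact~\ref{F:Gstar}(b), hence $\cH\in\Ga(\ep)\restriction\cX$.

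Part (b) is proved by the same translation applied to the graph relation. If $f\colon\cX\rightharpoonup\cY$ is $\Ga(\ep)$-recursive on its domain, then by Definitions~\ref{D:recursive-function} and \ref{D:Ga-in-extended-spaces}(c) the map $g:=c_\cY\circ f\circ c_\cX^{-1}$ is computed on $D(g)=c_\cX(D(f))$ by some $\De'\in\Ga(\ep)\restriction(\cX^c\times\om)$. One sets
\[
\De:=\bigl\{(\xi,t)\in\cX\times\cY^0 \set \exists j\in D(s^{\cY^c})\colon\ (c_\cX(\xi),j)\in\De'\text{ and }s^{\cY^c}(j)=\iota_\cY(t)\bigr\},
\]
which is in $\Ga(\ep)$ by the same closure properties, and verifies that for $\xi\in D(f)$ the condition $(\xi,t)\in\De$ is equivalent to $g(c_\cX(\xi))\in\cX^c(\iota_\cY(t))=c_\cY(\cY(t))$, that is, to $f(\xi)\in\cY(t)$. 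Conversely, starting from a computing set $\De$ for $f$, one puts
\[
\De':=\bigl\{(x,j)\in\cX^c\times\om \set j\in D(s^{\cY^c})\text{ and }\bigl(c_\cX^{-1}(x),\iota_\cY^{-1}(s^{\cY^c}(j))\bigr)\in\De\bigr\}
\]
and checks analogously that $\De'$ computes $g$ on $D(g)$. The main obstacle is just the careful bookkeeping around the partial, not necessarily injective function $s^{\cX^c}$ (whose domain must be intersected explicitly in all formulas) and the coordinatewise case analysis for $\iota_\cX$; no new ideas are required beyond Lemma~\ref{L:volba-epsilon} and the closure properties of $\Ga(\ep)$.
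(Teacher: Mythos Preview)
Your proposal is correct and follows essentially the same approach as the paper: both reduce to Fact~\ref{F:Gstar}(b) and Definition~\ref{D:recursive-function} on the product space $\cX^c$ via $c_\cX$, pass through the partial function $s^{\cX^c}$ to replace $\omega$-indexing by $(\cX^c)^0$-indexing, and then use a coordinatewise bijection $\cX^0\to(\cX^c)^0$ (your $\iota_\cX$, which the paper writes out case by case as $c_{\cX_i}$, $\id_\cS$, or $c_\cP^\cS$) to finish. The only difference is packaging: you name and verify $\iota_\cX$ once at the start, whereas the paper develops it separately at the final equivalence; note the minor typo in your (b), where $\cX^c(\iota_\cY(t))$ should read $\cY^c(\iota_\cY(t))$.
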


\begin{proof}
(a) We assume that $\cX$ is an extended product space and we use the notation $\cX^c = c_{\cX}(\cX)$.
We are going to prove the following equivalences.
\[
\begin{split}
\cH\in \Ga(\ep)\restriction \cX
&\Leftrightarrow H := c_{\cX}(\cH)\in\Ga(\ep)\restriction \cX^c \\
&\Leftrightarrow \exists \wt{H} \in\Ga(\ep)\restriction\om   \colon   H=\bigcup_{j\in \wt{H}} N(\cX^c,j)\\
&\Leftrightarrow \exists     H^*\in\Ga(\ep)\restriction (\cX^c)^0 \colon   H = \bigcup_{u \in H^*} \cX^c(u)\\
&\Leftrightarrow \exists   \cH^*\in\Ga(\ep)\restriction\cX^0 \colon \cH=\bigcup_{s\in  \cH^*} \cX(s).
\end{split}
\]
The first equivalence is just our definition of $\Ga(\ep)\restriction\cX$ in Definition~\ref{D:Ga-in-extended-spaces}(b).
The second equivalence follows from Fact~\ref{F:Gstar}(b).
The third equivalence uses the fact that $N(\cX^c,j)=\cX^c(s^{\cX^c}(j))$ and the mapping $s^{\cX^c}$ is $\Ga(\ep)$-recursive on its $\Ga(\ep)$-recursive domain $D(s^{\cX^c})\su\om$ by Lemma~\ref{L:volba-epsilon}(c). For the implication $\Rightarrow$, we put $H^*=s^{\cX^c}(\wt{H}\cap D(s^{\cX^c}))$. For the converse implication, we put $\wt{H}=(s^{\cX^c})^{-1}(H^*)$.
In both definitions the class $\Ga(\ep)$ is preserved. In the latter case, we may apply Fact~\ref{F:partialrecursivefunctions}(\ref{I:substitution-property}) to the preimage of $H^*$ by $s^{\cX^c}$.
The range of $\wt{H}$ by $s^{\cX^c}$ is the set
\[
\{u\in(\cX^c)^0\set \exists j\in\om\colon j\in D(s^X)\cap\wt{H} \wedge s^{\cX^c}(j)=u\}.
\]
Thus it is the projection along the type 0 space $(\cX^c)^0$ of a $\Ga(\ep)$ subset of $\om\times (\cX^c)^0$
(cf. Fact~\ref{F:properties-of-Gamma} and Remark~\ref{R:extended-facts}).

It remains to prove the last equivalence. For any extended basic space $\cX$ of type 0,
we have $\cX=\cX^0$, $\cX^c=(\cX^c)^0=\om$, and
$\cX^c(c_{\cX}(s))=\{c_{\cX}(s)\}=c_{\cX}(\cX(s))=c_{\cX}(\{s\})$ for $s\in\cX=\cX^0$.
So $c_{\cX}$ is the bijection which computes the set $H^*$ from $\cH^*$ and conversely.
The recursivity of each of them is equivalent to the recursivity of the other one by Definition~\ref{D:Ga-in-extended-spaces}.

For $\cX=\cN$ we have $\cX^c = \cN$, $c_{\cX}$ is the identity, $\cX^0 = (\cX^c)^0 = \cS$, and $\cX^c(s) = c_{\cX}(\cX(s))$
for $s\in\cS=\cX^0$ shows that identity is the mapping which computes $H^*$ from $\cH^*$ and conversely.
The equivalence of the recursivity of $H^*$ and $\cH^*$ is thus trivial.

For $\cX=\cP_\infty$ we have $\cX^c=\cN$, $\cX^0=\cP$, and
\[
\cN(c_{\cP}^{\cS}(p)) = \cN\bigl((c_{\cS^*}(p_i))_{i< |p|}\bigr) = c_{\cP_\infty}(\cP_\infty(p))
\]
for $p\in\cP=\cX^0$, so $c_{\cP}^{\cS}$ is a bijection which computes $H^*$ from $\cH^*$ and conversely in this case. The equivalence of the recursivity of $H^*$ and $\cH^*$ follows from Definition~\ref{D:Ga-in-extended-spaces} since $c_{\cP}^{\cS}=c_{\cS}^{-1}\circ c_{\cP}$ (see Notation~\ref{N:bijectionsandGamma}(e)).

For extended product spaces the last equivalence can be deduced applying the discussed cases of various extended basic spaces using them coordinate-wise.

\medskip\noindent
(b) We assume that $\cX$, $\cY$ are extended product spaces, and $f\colon \cX\rightharpoonup\cY$ is a partial mapping. As before we use the notation $\cX^c = c_{\cX}(\cX)$ and $\cY^c = c_{\cY}(\cY)$.
We are going to prove the following equivalences.
\[
\begin{split}
f\colon &\cX\rightharpoonup\cY  \textrm{ is recursive on its domain } D(f)\\
\Leftrightarrow & f^c :=c_{\cY}\circ f\circ c_{\cX}^{-1}\colon \cX^c\rightharpoonup \cY^c\textrm{ is recursive on its domain } D(f^c)=c_{\cX}(D(f))\\
\Leftrightarrow & \exists \wt{\De}^{f^c} \in \Ga(\ep)\restriction (\cX^c\times \om)\colon
\bigr[\forall (x,j)\in D(f^c)\times\om\colon f^c(x)\in N(\cY^c,j)\Leftrightarrow (x,j)\in\wt{\De}^{f^c} \bigl]\\
\Leftrightarrow & \exists \De^{f^c} \in \Ga(\ep)\restriction \bigl(\cX^c\times (\cY^c)^0\bigr)\colon
\bigr[\forall (x,v)\in D(f^c)\times (\cY^c)^0\colon f^c(x)\in \cY^c(v) \Leftrightarrow (x,v) \in \De^{f^c}\bigl]\\
\Leftrightarrow & \exists\De^f\in \Ga(\ep)\restriction (\cX\times \cY^0)\colon
\bigr[\forall (\xi,t)\in D(f)\times \cY^0\colon f(\xi)\in\cY(t)\Leftrightarrow (\xi,t) \in \De^f\bigl].
\end{split}
\]

The first equivalence is Definition~\ref{D:Ga-in-extended-spaces}(c). The second equivalence is Definition~\ref{D:recursive-function}.
The third equivalence uses the facts that $N(\cY^c,j)=\cY^c(s^{\cY^c}(j))$ for every $j\in D(s^{\cY^c})\su\om$ and that the mapping $s^{\cY^c}$ is $\Ga(\ep)$-recursive on its recursive domain $D(s^{\cY^c})\su\om$ by Lemma~\ref{L:volba-epsilon}(c).
For the implication $\Rightarrow$, we put $\De^{f^c}=(\id_{\cX^c}\times s^{\cY^c})(\wt{\De}^{f^c})$,
where the mapping $\id_{\cX^c}\times s^{\cY^c}$ is defined by
$(\id_{\cX^c}(x),s^{\cY^c}(j))$ for every $(x,j)\in\wt{\De}^{f^c}$. For the opposite implication we put
$\wt{\De}^{f^c}=(\id_{\cX^c}\times s^{\cY^c})^{-1}(\De^{f^c})$.
In both cases we get a $\Ga(\ep)$-recursive set. Indeed, the mappings $\id_{\cX^c}$ is $\Ga(\ep)$-recursive and $s^{\cY^c}$ is $\Ga(\ep)$-recursive on its recursive domain in $\om$ by Lemma~\ref{L:volba-epsilon}(c). Thus, using moreover $\Ga(\ep)$-recursivity of the projections of $\cX \times \om$,  the mapping $\id_{\cX^c} \times s^{\cY^c}$ is $\Ga(\ep)$-recursive on its domain
by Fact~\ref{F:partialrecursivefunctions}(b),(e).
Then the range of $\id_{\cX^c}\times s^{\cY^c}$ is the set
\[
\bigr\{(x,v)\in \cX^c\times (\cY^c)^0\set \exists j\in\om\colon s^{\cY^c}(j)=v\bigr\}
\]
which is in $\Ga(\ep)$ by Fact~\ref{F:properties-of-Gamma} since it is the projection of a $\Ga(\ep)$ set along $\om$.
The preimage under $\id_{\cX^c}\times s^{\cY^c}$ of $\De^{f^c}$ is $\Ga(\ep)$-recursive due to Fact~\ref{F:partialrecursivefunctions}(\ref{I:substitution-property}).

It remains to prove the last equivalence.
For any extended basic space $\cY$ of type 0, we have $\cY = \cY^0$, $\cY^c = (\cY^c)^0 = \om$,
and $c_{\cY} = c_{\cY^0}$. This gives
\[
\cY^c(c_{\cY}(v)) = \cY^c(c_{\cY^0}(v)) = \{c_{\cY^0}(v)\} = \{c_{\cY}(v)\} = c_{\cY}(\cY(v))
\]
for any $v \in \cY^0$.
For the implication $\Rightarrow$, we set $\De^{f} = (c_{\cX} \times c_{\cY})^{-1}(\De^{f^c})$.
The mapping  $c_{\cX} \times c_{\cY}$ is $\Ga(\ep)$-recursive by Fact~\ref{F:partialrecursivefunctions}(c),(e)
since the mappings $c_{\cX}$ and $c_{\cY}$ are $\Ga(\ep)$-recursive (see Definition~\ref{D:Ga-in-extended-spaces}(c)).
Using Fact~\ref{F:partialrecursivefunctions}(\ref{I:substitution-property}), we get that $\De^{f}$ is in $\Ga(\ep)$.
To verify that $\De^f$ computes $f$, we take $\xi \in D(f)$ and $t \in \cY^0$. Then we have
\[
\begin{split}
f(\xi) \in \cY(t)
&\Leftrightarrow c_{\cY} \circ f(\xi) \in c_{\cY}(\cY(t)) \\
&\Leftrightarrow f^c(c_{\cX}(\xi)) \in \cY^c (c_{\cY}(t)) \\
&\Leftrightarrow (c_{\cX}(\xi),c_{\cY}(t)) \in \De^{f^c} \\
&\Leftrightarrow (\xi,t) \in \De^f.
\end{split}
\]
This shows that $\De^f$ computes $f$.

For the opposite implication $\Leftarrow$, we set $\De^{f^c} = (c_{\cX} \times c_{\cY})(\De^{f})$.
The mappings $c_{\cX}^{-1}$ and $c_{\cY}^{-1}$ are $\Ga(\ep)$-recursive.
Thus the mapping $(c_{\cX} \times c_{\cY})^{-1}$ is $\Ga(\ep)$-recursive and we can proceed in a similar way as before.

For $\cY=\cN$ we have $\cY^c = \cN$, so $\cY^0 = (\cY^c)^0 = \cS$ and
the mapping $c_{\cX} \times \id_{\cS}$ maps $\De^f$ computing $f$ to a set $\De^{f^c}$ computing $f^c$ and conversely.
The equivalence of $\Ga(\ep)$-recursivity of these sets can be verified using the arguments of the preceding paragraph again.

For $\cY=\cP_\infty$ we have $\cY^0 = \cP$, $\cY^c = \cN$, and  $(\cY^c)^0 = \cS$.
We use the mapping $c_{\cX} \times c_{\cP}^{\cS}$ and its inverse to verify the last equivalence.

For extended product spaces the equivalence can be deduced from those concerning the extended basic spaces used coordinate-wise.
\end{proof}

\begin{notation**}\label{N:bigvee}
For every sequence $(z_k)_{k \in \om}$ of elements of $\cN$, we define
the element $\bigvee_{k\in\om} z_k$ from $\cN$ by $(\bigvee_{k\in\om} z_k)(n) = z_{[n]_0}([n]_1)$, $n \in \omega$.
\end{notation**}

In the next lemma we are avoiding the need of considering $\cN^\om$ as another extended product space explicitly.

\begin{lemma**}\label{L:bigvee}
If $r\colon \cX\times\om\rightharpoonup\cN$ is $\Ga(\ep)$-recursive on its domain,
then $\bigvee_{k\in\om} r(z,k)\colon \cX \rightharpoonup \cN$ is $\Ga(\ep)$-recursive on its domain.
\end{lemma**}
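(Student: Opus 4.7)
The plan is to apply Fact~\ref{F:partialrecursivefunctions}(\ref{I:recursivity-to-N}) twice: first to unfold $r$ into a scalar-valued function, and then to fold the result back into an $\cN$-valued function. Set $f(z) := \bigvee_{k\in\om} r(z,k)$, so that $D(f) = \{z\in\cX \set \forall k\in\om, (z,k)\in D(r)\}$ and $f(z)(n) = r(z,[n]_0)([n]_1)$ for $z\in D(f)$, $n\in\om$. My goal is to exhibit a partial function $f^*\colon \cX\times\om\rightharpoonup\om$ which is $\Ga(\ep)$-recursive on $D(f^*) = D(f)\times\om$ with $f(z)(n) = f^*(z,n)$.

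First I would apply Fact~\ref{F:partialrecursivefunctions}(\ref{I:recursivity-to-N}) to $r$ to obtain a partial map $r^*\colon \cX\times\om\times\om \rightharpoonup \om$ which is $\Ga(\ep)$-recursive on $D(r^*) = D(r)\times\om$ and satisfies $r(z,k)(m) = r^*(z,k,m)$. Then I would define $g\colon \cX\times\om\rightharpoonup\om$ by $g(z,n) := r^*(z,[n]_0,[n]_1)$. The auxiliary map $(z,n)\mapsto (z,[n]_0,[n]_1)$ is $\Si^0_1(\ep)$-recursive on $\cX\times\om$ because $n\mapsto[n]$ is $\Si^0_1(\ep)$-recursive by Lemma~\ref{L:volba-epsilon}(e), the projections are $\Si^0_1(\ep)$-recursive by Lemma~\ref{L:volba-epsilon}(b), and Fact~\ref{F:partialrecursivefunctions}(\ref{I:coordinates}) assembles them. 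Hence Fact~\ref{F:partialrecursivefunctions}(\ref{I:composition}) yields that $g$ is $\Ga(\ep)$-recursive on its domain $D(g) = \{(z,n)\in\cX\times\om \set (z,[n]_0)\in D(r)\}$.

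Now observe that $D(f)\times\om \subseteq D(g)$ and $f(z)(n) = g(z,n)$ on $D(f)\times\om$. Any $\Ga(\ep)$ set witnessing that $g$ is $\Ga(\ep)$-recursive on $D(g)$ also witnesses that the restriction $f^* := g\restriction (D(f)\times\om)$ is $\Ga(\ep)$-recursive on $D(f^*) = D(f)\times\om$. Applying the backward direction of Fact~\ref{F:partialrecursivefunctions}(\ref{I:recursivity-to-N}) to this $f^*$ then yields that $f$ is $\Ga(\ep)$-recursive on $D(f)$.

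The one point that needs care is a bookkeeping issue: the natural composition $g$ is a priori defined on $D(g)$, which is strictly larger than $D(f)\times\om$, and no closure under $\forall^\om$ is available to recover the universal quantifier defining $D(f)$. This is not a genuine obstacle because the definition of ``$\Ga$-recursive on its domain'' in Definition~\ref{D:recursive-function} imposes no condition on the domain itself lying in $\Ga$; restricting $g$ to $D(f)\times\om$ therefore remains $\Ga(\ep)$-recursive on that (possibly non-$\Ga$) domain, which is exactly what the converse direction of Fact~\ref{F:partialrecursivefunctions}(\ref{I:recursivity-to-N}) demands.
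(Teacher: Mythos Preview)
Your proposal is correct and follows essentially the same route as the paper's proof: apply Fact~\ref{F:partialrecursivefunctions}(\ref{I:recursivity-to-N}) to pass to the scalar function $r^*(z,k,m)=r(z,k)(m)$, compose with the $\Si^0_1(\ep)$-recursive map $n\mapsto([n]_0,[n]_1)$, and apply Fact~\ref{F:partialrecursivefunctions}(\ref{I:recursivity-to-N}) in the other direction. The paper's version is terser and does not spell out the domain bookkeeping you handle in your final paragraph, but that extra care is sound and the argument is the same.
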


\begin{proof}
The corresponding partial function $r^*\colon \cX \times \om^2 \rightharpoonup \omega$ defined by $r^*(z,k,n) = r(z,k)(n)$ is $\Ga(\ep)$-recursive on its domain
by Fact~\ref{F:partialrecursivefunctions}\eqref{I:recursivity-to-N} and $\bigl(\bigvee_{k\in\om} r(z,k)\bigr)(n) = r^*(z,[n]_0,[n]_1)$.
Since $r^*$ and $[\cdot]$ are $\Ga(\ep)$-recursive on their domains, the lemma is verified.
\end{proof}

\begin{remark**}\label{R:upper-bounds}
To apply Lemma~\ref{L:bounded-quantifiers} in the extended spaces, we several times need to use some properties of the orderings of $\cP$ and $\cS^*$ defined in Notation~\ref{N:usporadanicP}.
Notice first of all that the sets $\{p\set p<_\cP q\}$ for $q\in\cP$ and $\{s\set s<_{\cS^*} t\}$ for $t\in\cS^*$ are finite.

In the proof of Lemma~\ref{L:recursivity-of-f} we need several times the properties of the following upper bounds with respect to $\le_\cP$ for particular subsets of $\cP$.
Let $s\in\cS$ be nonempty. We first put $m(s)=\max\{|s_j|\set j<|s|\}\in\om$. If $s\in\{-,\emptyset\}$, we put $m(s)=0$.
Then we define $M_\cS(s)=(\cl{m(s)}\restriction |s|)\in\cS$, i.e., the constant sequence of length $|s|$ which attains the value $m(s)$.
Finally, we define $M_\cP(s)=(\cl{M_\cS(s)}\restriction 2|s|+1)\in\cP$, i.e., the constant sequence of length $2|s|+1$ which attains the value $M_\cS(s)$.

Now, we make the following observation. Let $q\in\cP$ be such that $\wh{q}\preceq s$ and $|q|\le 2|\wh{q}|+1$. Then $q\le_\cP M_\cP(s)$.
Notice first that we have $|q|\le 2|s|+1$ and $|q_i|\le |s|$ for $i<2|s|+1$.
Since $m(q_i)\le m(s)$ and $|q_i|\le |s|$, we get $q_i\le_* M_\cS(s)$ for every $i<2|s|+1$.
By Notation~\ref{N:usporadanicP}(1), we get $q_i\le_\cS^* M_\cS(s)$ for every $i<2|s|+1$.
It follows that $c_\cP^\cS(q)\le_* c_\cP^\cS(M_\cP(s))$.
Using Notation~\ref{N:usporadanicP}(2), this is equivalent to $q\le_\cP M_\cP(s)$.

Using Lemma~\ref{L:volba-epsilon}, in particular the second line of (e), the mapping $s \mapsto M_{\cS}(s)$ is a $\Si^0_1(\ep)$-recursive function $M_{\cS}$ on $\cS^*$ and
the function $M_\cP$ defined by $s\mapsto M_\cP(s)$ is also $\Si^0_1(\ep)$-recursive.
\end{remark**}

\subsection{The classes \texorpdfstring{$\Pi_1^0$}{} and \texorpdfstring{$\Sigma_2^0$}{}}\label{SS:higher-classes}

Later on we use also the class $\Pi_1^0$ of complements of $\Sigma_1^0$ sets in extended product spaces and
the class $\Sigma_2^0$ of the sets $A$ of the form $A = \{x \in \cX \set \exists n \in \omega\colon (x,n) \in B\}$,
where $\cX$ is an extended product space and $B \in \Pi_1^0 \restriction \cX \times \omega$.
Let $a \in \cN$. Then the symbols $\Pi_1^0(\ep)$, $\Pi_1^0(a,\ep)$, $\Si_2^0(\ep)$, $\Si_2^0(a,\ep)$ stand for the corresponding relativized classes.

The results on the classes $\Si^0_1$, $\Si^0_1(a)$, $\Si^0_1(a,b)$ can be used for higher classes appropriately,
e.g., if $F$ is $\Pi^0_1(a,b)$ in $\cX\times \cZ$, where $\cZ$ is an extended product space of type $0$,
the set
\[
E=\{x\in\cX\set \forall z\in \cZ\colon (x,z)\in F\}
\]
is in $\Pi^0_1(a,b)$.

\section{Turing jumps}\label{S:Turing-jumps}

Let us recall that $\ep\in\cN$ was fixed in Notation~\ref{N:epsilon}.
We adjust the definition of ``Turing jump'' to the setting we are going to use by the following definition of $u'_v$, which can be read as the Turing jump of
$u$ relative to $v$ and our fixed $\ep$.

\begin{definition}\label{D:Turing-jump}
Let $u,v \in \cN$. Then the \textit{Turing jump} $u'_v$ is the characteristic function of the set
\[
\bigl\{n \in \omega \set (n,n,u,v,\ep) \in G^{\om\times\cN^3}_{\Sigma_1^0}\bigr\}.
\]
The \textit{higher Turing jumps} $u^{(\be)}_v, \beta \le \alpha$, are defined inductively by $u^{(0)}_{v} = u$,
$u^{(\be)}_{v} = (u_{v}^{(\gamma)})'_{v}$ for $\be = \gamma+1\le\al$, and $u^{(\be)}_{v} = \bigvee_{k\in\om} u^{(a(\be,k))}_v$ for $\be \in \LIM(\al)$.
(See Lemma~\ref{L:diamond} and Notations~\ref{N:diamant-el-a}, \ref{N:bigvee} for the definition of $a(\beta,k)$.)
We write $u^{(\be)}$ instead of $u^{(\be)}_{v}$ if $v$ is the identically zero sequence $\cl{0}$ in $\cN$.
\end{definition}

We shall need some auxiliary statements on Turing jumps.

\begin{lemma}[relations between Turing jumps]\label{L:jumps}\
\begin{enumerate}[\upshape (1)]
\item\label{I:a-from-a'} There is a partial function $r_{\ref{I:a-from-a'}}\colon\cN\rightharpoonup\cN$ which is $\Si^0_1(\ep)$-recursive on its domain such that
    $u=r_{\ref{I:a-from-a'}}(u'_{v})$ for every $u,v \in \cN$ (cf. \cite[II.1(3)]{Sacks}).

\item\label{I:z(be,a)'-from-(be,a')}
Let $W$ be an extended product space of type $0$ and $q\colon W \times \cN \rightharpoonup \cN$ be a partial function which is $\Si^0_1(\ep)$-recursive on its domain.
Then there is a $\Si^0_1(\ep)$-recursive function $r_{q}\colon W \times \cN \to \cN$ such that $q(w,u)'_{v} = r_{q}(w,u'_{v})$ whenever $q(w,u)$ is defined and $v \in \cN$ (cf. \cite[II.1(4)]{Sacks}).

\item\label{I:a^ga-from-a^be}  There is a partial function $r_{\ref{I:a^ga-from-a^be}}\colon[0,\al]^2\times\cN\rightharpoonup\cN$ which is $\Si^0_1(\ep)$-recursive on its domain such that $u^{(\ga)}_{v}=r_{\ref{I:a^ga-from-a^be}}(\ga,\be,u^{(\be)}_{v})$, whenever $\ga\le\be \leq \alpha$ and $u,v\in\cN$.
\end{enumerate}
\end{lemma}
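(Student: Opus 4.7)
The three parts are handled in order, using Fact~\ref{F:recursionthms}(a) to convert $\Si^0_1$-in-$(u,v,\ep)$ statements into queries about the single characteristic function $u'_v$.

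\textbf{Part (1).} The predicate $u(n) = m$ is clopen and hence $\Si^0_1(\ep)$ in $(n,m,u,v) \in \om^2 \times \cN^2$. I will apply Fact~\ref{F:recursionthms}(a) twice: first to secure an index $e_0 \in \om$ with $u(n) = m \iff (e_0, n, m, u, v, \ep) \in G^{\om^3 \times \cN^3}_{\Si^0_1}$, and then, via the recursive $S^{\om^2, \om \times \cN^3}_{\Si^0_1}$, to diagonalize, obtaining a $\Si^0_1(\ep)$-recursive total $e\colon \om^2 \to \om$ such that
\[
u(n) = m \iff \bigl(e(n,m), e(n,m), u, v, \ep\bigr) \in G^{\om \times \cN^3}_{\Si^0_1} \iff u'_v\bigl(e(n,m)\bigr) = 1.
\]
Then $r_{\ref{I:a-from-a'}}(z)(n) := \min\{m \set z(e(n,m)) = 1\}$ will satisfy the required identity, its partial $\Si^0_1(\ep)$-recursiveness following from Fact~\ref{F:partialrecursivefunctions}\eqref{I:min},\eqref{I:recursivity-to-N}.

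\textbf{Part (2).} By the substitution property (Fact~\ref{F:partialrecursivefunctions}\eqref{I:substitution-property}) applied to the $\Si^0_1(\ep)$-recursive map $(n, w, u, v) \mapsto (n, n, q(w,u), v, \ep)$ defined on $\om \times D(q) \times \cN$, the predicate $(n, n, q(w,u), v, \ep) \in G^{\om \times \cN^3}_{\Si^0_1}$ is $\Si^0_1(\ep)$ in $(n, w, u, v)$. Applying Fact~\ref{F:recursionthms}(a) exactly as in (1) yields a $\Si^0_1(\ep)$-recursive $f\colon \om \times W \to \om$ with $q(w,u)'_v(n) = u'_v(f(n,w))$, so $r_q(w, z)(n) := z(f(n,w))$ is the required $\Si^0_1(\ep)$-recursive function.

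\textbf{Part (3).} The plan is to iterate one-step descent along the finite chain $\gamma = \delta_0 < \delta_1 < \dots < \delta_n = \beta$ produced by Lemma~\ref{L:sequence}. Given $z = u^{(\delta_{j+1})}_v$, a single descent step produces $u^{(\delta_j)}_v$ as follows. If $\delta_{j+1} = \delta_j + 1$, apply $r_{\ref{I:a-from-a'}}$ from (1). If $\delta_{j+1} = \diamond(\delta_j)$, which is then a limit ordinal, Lemma~\ref{L:diamond}(b) supplies a unique $k_0 \in \om$ with $a(\delta_{j+1}, k_0) = \delta_j$, and Definition~\ref{D:Turing-jump} together with Notation~\ref{N:bigvee} gives
\[
u^{(\delta_j)}_v(n) = u^{(\delta_{j+1})}_v\bigl([k_0, n]^{-1}\bigr),
\]
which is $\Si^0_1(\ep)$-recursive in $(k_0, n, z)$ by Lemma~\ref{L:volba-epsilon}(e),(i) and Notation~\ref{N:identification}. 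Since $\diamond$, $a$, successor on $[0,\al)$ and equality on $[0,\al]$ are $\Si^0_1(\ep)$-recursive by Lemma~\ref{L:volba-epsilon}(a),(f),(g), a single reduction step is uniformly $\Si^0_1(\ep)$-recursive in $(\gamma, \beta, z)$. To fuse a variable number of such steps into one function $r_{\ref{I:a^ga-from-a^be}}$, I shall invoke Kleene's second recursion theorem (third bullet of Fact~\ref{F:recursionthms}(b)) with the recipe ``return $z$ if $\gamma = \beta$; otherwise perform a one-step descent from $\beta$ to a smaller ordinal $\beta' \in [\gamma, \beta)$, producing $z'$, and recurse on $(\gamma, \beta', z')$''. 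The main obstacle is termination and uniformity of this fixed point, which rest on the finiteness of the descent chain from Lemma~\ref{L:sequence} and on the $\Si^0_1(\ep)$-recursive computability, from $(\gamma, \beta)$, of the next descent ordinal.
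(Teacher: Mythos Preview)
Your proposal is correct. Parts (1) and (2) follow the paper's argument essentially verbatim: code the relevant $\Si^0_1(\ep)$ predicate, pad with a dummy coordinate, diagonalize via $S^{Z,\om\times\cN^3}_{\Si^0_1}$, and read off the answer from the characteristic function $u'_v$.

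For Part (3) you take a genuinely different route. You fix $\gamma$ and recurse \emph{downward} on $\beta$, transforming $z = u^{(\beta)}_v$ into $z' = u^{(\beta')}_v$ at each step and calling yourself on $(\gamma,\beta',z')$. The paper instead fixes $(\beta,z)$ and recurses \emph{upward} on $\gamma$: it defines
\[
f(e,\gamma,\beta,z)(m)=
\begin{cases}
z(m) & \gamma=\beta,\\
U(e,\diamond(\gamma),\beta,z)([l,m]^{-1}) & \diamond(\gamma)\le\beta,\ \gamma=a(\diamond(\gamma),l),\\
r_1\bigl(U(e,\gamma+1,\beta,z)\bigr)(m) & \gamma<\beta<\diamond(\gamma),
\end{cases}
\]
and then applies the recursion theorem. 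The advantage of the paper's direction is that the ``next ordinal'' is read off directly from $\gamma$ alone (either $\diamond(\gamma)$ or $\gamma+1$, according to the very rule that defines the chain of Lemma~\ref{L:sequence}); no search is needed. In your downward scheme the step at a limit $\beta$ requires locating the correct $\beta' = a(\beta,k)$ with $\beta'\ge\gamma$, which is not the last element of the Lemma~\ref{L:sequence} chain in any immediately visible way. You can certainly recover it---either by iterating the forward rule from $\gamma$ until you hit $\beta$ and recording the penultimate term, or more simply by taking $k=\min\{j:a(\beta,j)\ge\gamma\}$ (which need not reproduce the Lemma~\ref{L:sequence} chain but still yields a terminating, correct descent). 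Either way your sketch is sound; the paper's organization merely sidesteps this small extra computation.
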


\begin{proof}
\eqref{I:a-from-a'}
Denote $S := S^{\om^2,\om\times\cN^3}_{\Si_1^0}\colon \om^3\to\om$ (see Fact~\ref{F:recursionthms}(a)).
The set $A = \{(n,l,u) \in \om^2\times\cN\set u(n)=l\}$ is in $\Si^0_1(\ep) \restriction \om^2\times\cN$
by Lemma~\ref{L:volba-epsilon}(i).
With the help  of Lemma~\ref{L:volba-epsilon}(b) we can deduce that the set
$B = \{(n,l,k,u,w) \in \om^3 \times \cN^2 \set (n,l,u)\in A\}$ is a $\Si^0_1(\ep)$ subset of $\om^3\times \cN^2$.
By Definition~\ref{D:relativizations}, there is a $\Si^0_1$ set $C\su\om^3\times\cN^3$ such that $B=C^{\ep}$.
Using Fact~\ref{F:recursionthms}(a), we find $e^* \in \om$ such that $C=(G^{\om^3\times{\cN^3}}_{\Si_1^0})_{e^*}$ and so
$B = C^{\ep}=(G^{\om^3\times{\cN^3}}_{\Si_1^0})_{e^*}^{\ep}$.

For every $n,l\in\om$ and $u,v \in\cN$ we have
\begin{equation}\label{E:property-i}
\begin{split}
u(n)=l &\Leftrightarrow (n,l,u)\in A \ \Leftrightarrow \ (n,l,S(e^*,n,l),u,v)\in B  \\
&\Leftrightarrow (e^*,n,l,S(e^*,n,l),u,v,\ep)\in G^{\om^3\times{\cN^3}}_{\Si_1^0} \\
&\Leftrightarrow (S(e^*,n,l),S(e^*,n,l),u,v,\ep)\in G^{\om\times\cN^3}_{\Si_1^0} \\
&\Leftrightarrow u'_v\bigl(S(e^*,n,l)\bigr) = 1.
\end{split}
\end{equation}
Let us make comments on equivalences in \eqref{E:property-i}.
First we used the definition of $A$. The second equivalence uses that the fact $(n,l,k,u,w) \in B$
does not depend on $k,w$. Thus we can make a particular choice of these coordinates.
The next equivalence follows from the choice of $e^*$. Then we use a property of good parametrizations (Fact~\ref{F:recursionthms}(a) and Remark~\ref{R:extended-facts}).
Finally we use the definition of Turing jump. Let us note that this standard trick of adding some coordinates will be used again couple of times.

The function $g$ from $\om^2 \times \cN$ to $\om$ defined by $g(n,l,z) = z\bigl(S(e^*,n,l)\bigr)$ is $\Si^0_1(\ep)$-recursive since
it is a composition of $\Si^0_1(\ep)$-recursive functions $(n,l,z) \in \om^2 \times \cN \mapsto (S(e^*,n,l),z) \in \om\times\cN$ and $(j,z)\in\om\times\cN\mapsto z(j)\in\om$ (see  Fact~\ref{F:partialrecursivefunctions}(\ref{I:composition})).
So
\[
r_{\ref{I:a-from-a'}}(z)(n) = \min\{l\in\om\set z(S(e^*,n,l))=1\}
\]
defines a partial function which is $\Si^0_1(\ep)$-recursive on
\[
D(r_1) = \{z\in\cN\set \forall n\in\om \ \exists l \in \om \colon z\bigl(S(e^*,n,l)\bigr)=1\}
\]
by Fact~\ref{F:partialrecursivefunctions}(\ref{I:min}). Using  \eqref{E:property-i}, we have $r_1(u'_v) = u$ for every $u,v \in \cN$.

\bigskip\noindent
(\ref{I:z(be,a)'-from-(be,a')}) Since $q$ is a partial function which is $\Sigma_1^0(\varepsilon)$-recursive on its domain,
there exists $Q \in \Sigma_1^0(\varepsilon)\restriction W \times \cN \times \cS$  such that for every $(w,u) \in D(q)$ and $s \in \cS$ we have
\[
q(w,u) \in \cN(s) \Leftrightarrow (w,u,s)\in Q,
\]
i.e., $Q$ computes $q$.
Let $G = (G^{\om\times\cN^3}_{\Si_1^0})^{\ep}$. Using Lemma~\ref{L:natural-base}(a), we find $G^*\in\Sigma_1^0(\varepsilon)\restriction \om^2 \times \cS^2$ such that
\[
(n_1,n_2,u,v) \in G  \Leftrightarrow  \exists s,t\in\cS \colon (n_1,n_2,s,t)\in G^* \land s \preceq u \land t \preceq v.
\]
Further we set
\[
H = \{(n,w,k,u,v) \in \om \times W \times \om \times \cN^2 \set \exists s,t\in\cS \colon (n,n,s,t) \in G^* \land (w,u,s) \in Q \land t \preceq v\}.
\]
Using in particular Lemma~\ref{L:volba-epsilon}(d), we get that the set $H$ is in $\Sigma_1^0(\varepsilon)$. By Definition~\ref{D:relativizations} and Remark~\ref{R:extended-facts}, there is a $\Sigma_1^0$ set $\Xi\su \om \times W \times \om \times \cN^3$ such that $H=\Xi^{\ep}$. Due to Fact~\ref{F:recursionthms}(a) and Remark~\ref{R:extended-facts}, there exists $e \in \omega$ such that
$\Xi=(G^{\om \times W \times \omega \times \cN^3}_{\Sigma_1^0})_e$.
Thus we have $H=\Xi^{\ep}=(G^{\om \times W \times \omega \times \cN^3}_{\Sigma_1^0})_e^{\ep}$.
Let $S := S^{\omega \times W,\omega \times \cN^3}_{\Sigma_1^0}$ be the $\Sigma_1^0(\varepsilon)$-recursive.
Then for $w\in W$, $u,v \in \cN$, $n \in \om$ with $(w,u) \in D(q)$ we have
\begin{equation}\label{E:property-ii}
\begin{split}
q(w,u)'_v(n)=1
&\Leftrightarrow (n,n,q(w,u),v) \in G \\
&\Leftrightarrow \exists s,t \in \cS\colon (n,n,s,t)\in G^* \land s \preceq q(w,u) \land t \preceq v\\
&\Leftrightarrow \exists s, t \in \cS \colon (n,n,s,t) \in G^* \land (w,u,s) \in Q \land t \preceq v\\
&\Leftrightarrow (n,w,S(e,n,w),u,v) \in H \\
&\Leftrightarrow (e,n,w,S(e,n,w),u,v,\ep)\in G^{\om \times W \times \omega \times \cN^3}_{\Sigma_1^0}\\
&\Leftrightarrow \bigl(S(e,n,w),S(e,n,w),u,v,\ep\bigr) \in G^{\omega \times \cN^3}_{\Sigma_1^0} \ \Leftrightarrow \ u'_v\bigl(S(e,n,w)\bigr) = 1.
\end{split}
\end{equation}
Here we used successively the definition of Turing jump, the choice of $G^*$, the choice of $Q$,
the definition of $H$ and the observation that $(n,w,k,u,v)\in H$ does not depend on the value of the third coordinate,
the choice of $e$, Fact~\ref{F:recursionthms}(a) together with Remark~\ref{R:extended-facts}, and finally the definition of Turing jump again.

For $n \in \om$, $w \in W$, and $z \in \cN$ we define
\[
r_q(w,z)(n) =
\begin{cases}
1  &\text{if } z(S(e,n,w)) = 1, \\
0  &\text{otherwise}.
\end{cases}
\]
By similar argument as in the proof of the assertion (1) the function $g$ from $\omega \times W \times \cN$ to $\om$ defined by $g(n,w,z) = z\bigl(S(e,n,w)\bigr)$ is a $\Sigma_1^0(\varepsilon)$-recursive function defined for every $n \in \omega, w \in W, z \in \cN$.
So $r_q$ is a $\Sigma_1^0(\varepsilon)$-recursive function.
Using \eqref{E:property-ii}, we have $q(w,u)_v' = r_q(w,u_v')$ whenever $q(w,u)$ is defined and $v\in\cN$.

\bigskip\noindent
(\ref{I:a^ga-from-a^be}) Using the function $r_1$ from (1), the function $U := U^{[0,\al]^2\times\cN,\cN}_{\Si_1^0(\ep)}$, and the mappings
$\diamond$ and $a$ from Lemma~\ref{L:diamond}, we define a partial function $f\colon \omega \times [0,\alpha]^2 \times \cN \rightharpoonup \cN$ by
\[
f(e,\ga,\be,z)(m) =
\begin{cases}
z(m)                                                 &\text{if } \ga = \beta, \\
U(e,\diamond(\ga),\be,z)([l,m]^{-1})                 &\text{if } \diamond(\ga) \leq \be \text{ and } \ga = a(\diamond(\ga),l), \\
r_1\bigl(U(e,\ga+1,\be,z)\bigr)(m)                   &\text{if } \ga < \be < \diamond(\ga).
\end{cases}
\]
Using previously stated facts, in particular Lemma~\ref{L:volba-epsilon}, one can verify that the function $f$ is $\Si^0_1(\ep)$-recursive on its domain. By Fact~\ref{F:recursionthms}(b) and Remark~\ref{R:extended-facts}
there is $e^*\in\om$ such that $f(e^*,\ga,\be,z) = U(e^*,\ga,\be,z)$ whenever $f(e^*,\ga,\be,z)$ is defined.
We set $r_3(\ga,\be,z) = f(e^*,\ga,\be,z)$. The partial function $r_3$ is $\Sigma_1^0(\ep)$-recursive on its domain.

Let $0 \leq \ga \le \be \le \alpha$ and $z \in \cN$. Using Lemma~\ref{L:sequence}, we find a finite increasing sequence $\{\delta_j\}_{j=0}^n$ such that $\delta_0 = \gamma$, $\delta_n = \be$,  $\delta_{j+1} = \diamond(\delta_j)$ whenever $\diamond(\delta_j) \leq \beta$ and $\delta_{j+1} = \delta_j + 1$ otherwise.
By definition we see that $(\beta,\beta,z) \in D(r_3)$ for every $z \in \cN$. If $(\delta_{j+1},\beta,z) \in D(r_3)$ then by definition we get that also
$(\delta_j,\beta,z) \in D(r_3)$. Consequently, $(\ga,\beta,z) \in D(r_3)$ for every $z \in \cN$.

Now we check that $r_3(\delta_j,\be,u_v^{(\beta)}) = u_v^{(\de_j)}$, $j = 0,\dots, n$. For $j = n$ we have
\[
r_3(\delta_n,\be,u_v^{(\beta)}) = r_3(\be,\be,u_v^{(\beta)}) = u_v^{(\be)} = u_v^{(\de_n)}.
\]
Suppose that we already have proved the desired equality for $j+1 \leq n$. If $\delta_{j+1} = \diamond(\de_j)$, then there exists a unique $l \in \omega$ with
$a(\diamond(\de_j),l) = \de_j$. Then we have for every $m \in \omega$
\[
r_3(\delta_j,\be,u_v^{(\beta)})(m) = r_3(\delta_{j+1},\be,u^{(\be)}_v)([l,m]^{-1}) = u_v^{(\delta_{j+1})}([l,m]^{-1}) = u_v^{(\de_j)}(m).
\]
If $\delta_{j+1} = \de_j+1$ we have
\[
r_3(\delta_j,\be,u_v^{(\beta)}) = r_1\bigl(r_3(\delta_{j+1},\be,u^{(\be)}_v)\bigr) = r_1\bigl(u_v^{(\delta_{j+1})}\bigr)  = r_1\bigl((u_v^{(\delta_{j})})'_v\bigr) = u_v^{(\de_j)}.
\]
Thus for $j=0$ we get $r_3(\ga,\be,u_v^{(\beta)}) = u_v^{(\gamma)}$.
\end{proof}

The next lemma shows relationships between jumps of the form $x^{(\beta)}_x$ and $x^{(\beta)}$ which simplifies some formulation later on.

\begin{lemma}\label{L:jump-in-x-and-in-0}
There exists a partial  function $r\colon [0,\alpha] \times \cN \rightharpoonup \cN$ which is $\Sigma_1^0(\ep)$-recursive on its domain
such that $x_x^{(\beta)} = r(\beta,x^{(\beta)})$ whenever $x \in \cN$ and $\beta \in [0,\alpha]$.

Conversely, there is a partial function $\wt{r}\colon [0,\alpha] \times \cN \rightharpoonup \cN$
which is  $\Sigma_1^0(\ep)$-recursive on its domain
such that $x^{(\beta)} = \wt{r}(\beta,x_x^{(\beta)})$ whenever $x \in \cN$ and $\beta \in [0,\alpha]$.
\end{lemma}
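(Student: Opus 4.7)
The plan is to construct $r$ and $\tilde r$ by transfinite recursion on $\be$, using Kleene's second recursion theorem (Fact~\ref{F:recursionthms}(b)). Let $U := U^{[0,\al]\times\cN,\cN}_{\Si^0_1(\ep)}$ denote the universal $\Si^0_1(\ep)$-recursive partial function. I will define a partial function $f\colon \om \times [0,\al] \times \cN \rightharpoonup \cN$, $\Si^0_1(\ep)$-recursive on its domain, by cases on $\be$, then apply Fact~\ref{F:recursionthms}(b) to obtain $e^* \in \om$ such that $f(e^*,\be,z) = U(e^*,\be,z)$ whenever $f(e^*,\be,z)$ is defined; setting $r(\be,z) := U(e^*,\be,z)$, I will verify $r(\be,x^{(\be)}) = x_x^{(\be)}$ by transfinite induction on $\be \in [0,\al]$. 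The construction of $\tilde r$ is entirely symmetric.

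The base case $\be = 0$ is immediate: set $f(e,0,z) := z$, so $r(0,x) = x = x_x^{(0)}$. In the limit case $\be \in \LIM(\al)$, both $x^{(\be)} = \bigvee_k x^{(a(\be,k))}$ and $x_x^{(\be)} = \bigvee_k x_x^{(a(\be,k))}$ by Definition~\ref{D:Turing-jump}, and Lemma~\ref{L:jumps}(\ref{I:a^ga-from-a^be}) gives $x^{(a(\be,k))} = r_{\ref{I:a^ga-from-a^be}}(a(\be,k),\be,z)$ from $z = x^{(\be)}$; the inductive hypothesis applied componentwise yields
\[
f(e,\be,z)(n) := U\bigl(e, a(\be,[n]_0), r_{\ref{I:a^ga-from-a^be}}(a(\be,[n]_0),\be,z)\bigr)([n]_1),
\]
which is $\Si^0_1(\ep)$-recursive on its domain by Lemmas~\ref{L:volba-epsilon} and \ref{L:bigvee} together with Fact~\ref{F:partialrecursivefunctions}.

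The main obstacle is the successor step $\be = \ga + 1$, since $x_x^{(\ga+1)} = (x_x^{(\ga)})'_x$ is a Turing jump relative to $x$ whereas the input $z = x^{(\ga+1)} = (x^{(\ga)})'_{\cl{0}}$ is the jump relative to $\cl{0}$. Using $x^{(\ga)} = r_{\ref{I:a^ga-from-a^be}}(\ga,\ga+1,z)$, $x = r_{\ref{I:a^ga-from-a^be}}(0,\ga+1,z)$, and the inductive identity $U(e^*,\ga,x^{(\ga)}) = x_x^{(\ga)}$, I consider the $\Si^0_1(\ep)$ relation
\[
R(e,n,\ga,u) \ \Leftrightarrow \ \bigl(n,n, U(e,\ga,u), r_{\ref{I:a^ga-from-a^be}}(0,\ga,u), \ep\bigr) \in G^{\om\times\cN^3}_{\Si^0_1},
\]
which at $u = x^{(\ga)}$ becomes precisely $(x_x^{(\ga)})'_x(n) = x_x^{(\ga+1)}(n)$. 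The diagonalization trick from the proof of Lemma~\ref{L:jumps}(\ref{I:z(be,a)'-from-(be,a')}) now applies to the padded relation $H(e,n,\ga,k,u,w) \Leftrightarrow R(e,n,\ga,u)$, viewed as a $\Si^0_1(\ep)$ set on $\om^2 \times [0,\al] \times \om \times \cN^2$: by the good parametrization (Fact~\ref{F:recursionthms}(a) and Remark~\ref{R:extended-facts}) applied with $Z = \om^2 \times [0,\al]$ and $X = \om \times \cN^3$, one obtains $e_0 \in \om$ and a recursive $S$ such that $H(e,n,\ga,k,u,w) \Leftrightarrow (S(e_0,e,n,\ga),k,u,w,\ep) \in G^{\om\times\cN^3}_{\Si^0_1}$; specializing $k = S(e_0,e,n,\ga)$ and $w = \cl{0}$ gives, by the very definition of the Turing jump, the identity $R(e,n,\ga,u) \Leftrightarrow u'_{\cl{0}}\bigl(S(e_0,e,n,\ga)\bigr) = 1$. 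Hence the clause $f(e,\ga+1,z)(n) := z\bigl(S(e_0,e,n,\ga)\bigr)$, which is $\Si^0_1(\ep)$-recursive by Lemma~\ref{L:volba-epsilon}, satisfies after substituting $u = x^{(\ga)}$ (so $u'_{\cl{0}} = x^{(\ga+1)} = z$) the identity $f(e^*,\ga+1,x^{(\ga+1)})(n) = x_x^{(\ga+1)}(n)$, closing the induction via Kleene's fixed point. The construction of $\tilde r$ follows the same scheme, with the single modification that one specializes $w = x$ (recoverable from $z = x_x^{(\ga+1)}$ through $r_{\ref{I:a^ga-from-a^be}}$) instead of $w = \cl{0}$, so that $u'_w = (x_x^{(\ga)})'_x = z$ at $u = x_x^{(\ga)}$, and the diagonal of $z$ recovers $x^{(\ga+1)}(n) = (x^{(\ga)})'_{\cl{0}}(n)$.
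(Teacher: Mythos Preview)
Your argument is correct and is essentially the paper's proof: Kleene recursion on $\be$ with the base and limit clauses identical to the paper's, and the successor clause handled by the same good-parametrization diagonalization trick. The only packaging difference is that the paper factors the successor step through an auxiliary function $r_+$ (respectively $w_+$) together with an application of Lemma~\ref{L:jumps}(\ref{I:z(be,a)'-from-(be,a')}) to $q=U$, whereas you fold the recursion index $e$ directly into the diagonalized relation $R$ and carry it through the type-$0$ parameter of $S$; both routes amount to the same computation.

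Two minor remarks. First, your $R$ is built from the partial functions $U$ and $r_{\ref{I:a^ga-from-a^be}}$, so strictly speaking you should pass via the substitution property (Fact~\ref{F:partialrecursivefunctions}\eqref{I:substitution-property}) to a genuine $\Si^0_1(\ep)$ set agreeing with $R$ on the relevant domain before padding and invoking the good parametrization; this is routine and the paper does the analogous move implicitly. Second, in the $\tilde r$ direction your parenthetical ``recoverable from $z$ through $r_{\ref{I:a^ga-from-a^be}}$'' is unnecessary: since the padded relation $H$ does not depend on $w$, the equivalence $\tilde R(e,n,\ga,u) \Leftrightarrow u'_w\bigl(S(\tilde e_0,e,n,\ga)\bigr)=1$ holds for \emph{every} $w$, so one may simply specialize $w=x$ in the verification without ever computing $x$ from $z$.
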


\begin{proof}
First we prove that there is a $\Sigma_1^0(\varepsilon)$-recursive function $r_+\colon [0,\al)\times\cN \to \cN$ such that $x^{(\ga+1)}_x = r_+(\ga,(x^{(\ga)}_x)' )$ whenever $x \in \cN$ and $\ga<\al$.
We define
\[
W = \bigl\{(n,\ga,l,y,w) \in \omega\times [0,\al] \times\om \times \cN^2 \set
(n,n,y,r_3(0,\ga,y),\ep) \in G^{\omega \times \cN^3}_{\Si_1^0}\bigr\}.
\]
The partial function $r_3$ comes from Lemma~\ref{L:jumps}(3). Thus $r_3$ is $\Sigma_1^0(\ep)$-recursive on its domain and, in particular, we have $x = r_3(0,\ga,x^{(\ga)}_x)$
for every $x \in \cN$ and $\gamma < \alpha$.
Notice that the fact $(n,\ga,l,y,w) \in W$ does not depend on $l$ and $w$.
The set $W$ is $\Sigma_1^0(\varepsilon)$ in the extended product space $\omega\times [0,\al]\times\om \times \cN^2$.
By Definition~\ref{D:relativizations} and Remark~\ref{R:extended-facts}, there is a $\Sigma_1^0$ set $V\su \omega\times [0,\al] \times\om \times \cN^3$ such that $W=V^{\ep}$.
By Fact~\ref{F:recursionthms}(a) and Remark~\ref{R:extended-facts}, there exists $e_W \in \omega$ such that
$V=(G^{\omega\times [0,\al]\times \om\times \cN^3}_{\Si_1^0})_{e_W}$. Thus we have that
$W = (G^{\omega\times [0,\al]\times \om\times \cN^3}_{\Si_1^0})_{e_W}^{\ep}$.
We set $S = S^{\omega\times [0,\al],\omega \times \cN^3}_{\Si_1^0}$.
Then we have for $x \in \cN$, $\ga<\al$, and $n \in \omega$
\[
\begin{split}
x_x^{(\ga+1)}(n) = 1
&\Leftrightarrow (n,n,x^{(\ga)}_x,r_3(0,\ga,x^{(\ga)}_x),\ep) \in  G^{\omega \times \cN^3}_{\Si_1^0}
\Leftrightarrow (n,\ga,S(e_W,n,\ga),x^{(\ga)}_x,\overline{0}) \in W \\
&\Leftrightarrow (e_W,n,\ga,S(e_W,n,\ga),x^{(\ga)}_x,\overline{0},\ep) \in G^{\omega\times [0,\al]\times\om \times \cN^3}_{\Si_1^0} \\
&\Leftrightarrow (S(e_W,n,\ga),S(e_W,n,\ga),x^{(\ga)}_x,\overline{0},\ep) \in G^{\omega \times \cN^3}_{\Si_1^0} \\
&\Leftrightarrow (x^{(\ga)}_x)'(S(e_W,n,\ga)) = 1.
\end{split}
\]
We define the desired auxiliary function $r_+$ by $r_+(\ga,z)(n) =  z(S(e_W,n,\ga))$ whenever $z \in \cN$, $\ga\in [0,\al)$, and $n \in \omega$.

We denote $U = U^{[0,\alpha]\times\cN,\cN}_{\Si_1^0(\varepsilon)}$ and we use the mapping $r_U$ from Lemma~\ref{L:jumps}(2) with the type 0 extended product space
$\om\times [0,\al]$ and $q = U$.
Now we define a partial function $f\colon \omega \times [0,\alpha] \times \cN \rightharpoonup \cN$ by
\[
f(e,\beta,z) =
\begin{cases}
z                                                                     &\text{if } \beta = 0, \\
r_+\bigl(\ga,r_U(e,\gamma,z)\bigr)                                    &\text{if } \beta = \gamma + 1, \\
\bigvee_{k \in \omega}U\bigl(e,a(\beta,k),r_3(a(\be,k),\be,z)\bigr)   &\text{if } \beta \text{ is limit}.
\end{cases}
\]
Using previously mentioned facts, in particular Lemma~\ref{L:bigvee}, we get that the function $f$ is $\Sigma_1^0(\ep)$-recursive on its domain.
Thus by Fact~\ref{F:recursionthms}(b) and Remark~\ref{R:extended-facts} there exists $e^* \in \omega$ such that $f(e^*,\beta,z) = U(e^*,\beta,z)$ whenever
$f(e^*,\beta,z)$ is defined. We set $r = f_{e^*}$. The function $r$ is $\Sigma_1^0(\ep)$-recursive on its domain
and we verify the equality $x_x^{(\beta)} = r(\beta,x^{(\beta)})$ for every $x\in\cN$ and $\beta \leq \alpha$
by transfinite induction on $\be \leq \al$. Let $x \in \cN$.
If $\beta = 0$, then  $x_x^{(0)} = x = f(e^*,0,x) = r(0,x^{(0)})$. If $\beta = \gamma + 1 < \alpha$, then
$x^{(\gamma)}_x = r(\gamma,x^{(\gamma)})$ by induction hypothesis and we can write
\[
\begin{split}
x_x^{(\beta)}
&= x_x^{(\gamma +1)} = r_+(\ga,(x_x^{(\gamma)})') = r_+(\ga,r(\gamma,x^{(\gamma)})')
= r_+(\ga,f(e^*,\gamma,x^{(\gamma)})') \\
&= r_+(\ga,U(e^*,\gamma,x^{(\gamma)})')
= r_+(\ga,r_U(e^*,\gamma,x^{(\gamma+1)}))
= f(e^*,\beta,x^{(\beta)}) = r(\beta,x^{(\beta)}).
\end{split}
\]
Let us point out that we apply $r_U$ to express Turing jumps with respect to $v=\cl{0}$.

If $\beta \leq \alpha$ is a limit ordinal, then we have
\[
\begin{split}
x_x^{(\beta)} &= \bigvee_{k \in \omega} x_x^{(a(\beta,k))}
= \bigvee_{k \in \omega} r(a(\beta,k),x^{(a(\beta,k))})
= \bigvee_{k \in \omega} U(e^*,a(\beta,k),x^{(a(\beta,k))})\\
&= \bigvee_{k \in \omega} U(e^*,a(\beta,k),r_3(a(\beta,k),\beta,x^{(\beta)}))
= f(e^*,\beta,x^{(\beta)}) = r(\beta,x^{(\beta)}).
\end{split}
\]
This concludes the proof of the first statement.

\medskip\noindent
As for the second statement, we first prove that there is a $\Si^0_1(\ep)$-recursive function $w_+\colon \cN \to \cN$ such that $x^{(\ga+1)} = w_+((x^{(\ga)})'_x )$ whenever $x \in \cN$ and $\ga \in [0,\alpha)$.
We define
\[
O = \bigl\{(n,l,y,w,z) \in \omega^2 \times \cN^3 \set (n,n,y,\overline{0},z) \in G^{\omega \times \cN^3}\bigr\}.
\]
The set $O$ is in $\Sigma_1^0$, thus there exists $e_O \in \omega$ such that
$O = (G^{\omega^2 \times \cN^3}_{\Si_1^0})_{e_O}$. We set $S = S^{\omega,\omega \times \cN^3}_{\Si_1^0}$.
Then we have for $x \in \cN$, $\ga<\al$, and $n \in \omega$
\[
\begin{split}
x^{(\ga+1)}(n) = 1
&\Leftrightarrow (n,n,x^{(\ga)},\overline{0},\ep) \in  G^{\omega \times \cN^3}_{\Si_1^0}
\Leftrightarrow (n,S(e_O,n),x^{(\ga)},x,\ep) \in O \\
&\Leftrightarrow (e_O,n,S(e_O,n),x^{(\ga)},x,\ep) \in G^{\omega^2 \times \cN^3}_{\Si_1^0}\\
&\Leftrightarrow (S(e_O,n),S(e_O,n),x^{(\ga)},x,\varepsilon) \in G^{\omega \times \cN^3}_{\Si_1^0}
\Leftrightarrow (x^{(\ga)})'_x(S(e_O,n)) = 1.
\end{split}
\]
We define the desired auxiliary function $w_+$ by $w_+(z)(n) =  z(S(e_O,n))$ whenever $z \in \cN$ and $n \in \omega$.

Let us point out that $U$ and $r_U$ have the same meaning as in the proof of the first part of the statement.
Now we define a partial function $\wt{f}\colon \omega \times [0,\alpha] \times \cN \rightharpoonup \cN$ by
\[
\wt{f}(e,\beta,z) =
\begin{cases}
z                                                            &\text{if } \beta = 0, \\
w_+(r_U(e,\gamma,z))                                         &\text{if } \beta = \gamma + 1, \\
\bigvee_{k \in \omega}U(e,a(\beta,k),r_3(a(\be,k),\be,z))    &\text{if } \beta \text{ is limit}.
\end{cases}
\]
Using Lemma~\ref{L:bigvee}, we get that the function $\wt{f}$ is $\Sigma_1^0(\ep)$-recursive on its domain.
Thus by Fact~\ref{F:recursionthms}(b) and Remark~\ref{R:extended-facts} there exists $\wt{e} \in \omega$ such that $\wt{f}(\wt{e},\beta,z) = U(\wt{e},\beta,z)$ whenever
$\wt{f}(\wt{e},\beta,z)$ is defined. We set $\wt r = \wt{f}_{\wt{e}}$. The function $\wt r$ is $\Sigma_1^0(\ep)$-recursive on its domain and
we verify the equality $x^{(\beta)} = \wt r(\beta,x_x^{(\beta)})$ for every $x \in \cN$ by transfinite induction on $\be \leq \al$.

Let $x \in \cN$.
If $\beta = 0$, then  $x^{(0)} = x = \wt{f}(\wt{e},0,x) = \wt r(0,x_x^{(0)})$. If $\beta = \gamma + 1 < \alpha$, then
$x^{(\gamma)} = \wt r(\gamma,x_x^{(\gamma)})$ by induction hypothesis and we can write
\[
\begin{split}
x^{(\beta)}
&= x^{(\gamma +1)}
= w_+((x^{(\gamma)})'_x) = w_+(\wt r(\gamma,x^{(\gamma)}_x)'_x)
= w_+(\wt{f}(\wt{e},\gamma,x^{(\gamma)}_x)'_x) \\
&= w_+(U(\wt{e},\gamma,x^{(\gamma)}_x)'_x)
= w_+(r_U(\wt{e},\gamma,x^{(\gamma+1)}_x))
= \wt{f}(\wt{e},\beta,x^{(\beta)}_x) = \wt r(\beta,x^{(\beta)}_x).
\end{split}
\]
Notice that we applied Lemma~\ref{L:jumps}(2) for $v=x$ here.

If $\beta \leq \alpha$ is a limit ordinal, then we have
\[
\begin{split}
x^{(\beta)} &= \bigvee_{k \in \omega} x^{(a(\beta,k))}
= \bigvee_{k \in \omega} \wt r \bigl(a(\beta,k),x^{(a(\beta,k))}_x\bigr)
= \bigvee_{k \in \omega} U\bigl(\wt{e},a(\beta,k),x^{(a(\beta,k))}_x\bigr)\\
&= \bigvee_{k \in \omega} U\bigl(\wt{e},a(\beta,k),r_3(a(\beta,k),\beta,x^{(\beta)}_x)\bigr)
= \wt{f}(\wt{e},\beta,x^{(\beta)}_x) = \wt r(\beta,x^{(\beta)}_x).
\end{split}
\]
This concludes the proof of the second statement.
\end{proof}

The relation $J$ for $(n,s,x,k)$ formalizes what could be understood by ``$s'_x(n)=1$ in at most $k+1$ steps'',
in other words by ``the Turing jump of $s$ relative to $x$ and $\ep$ is $1$ at $n$ in $k+1$ steps''.

\begin{definition}\label{D:J-jump}
Let $G = (G^{\om\times\cN^3}_{\Si_1^0})^\ep \su \om^2\times\cN^2$, $G^* \subset \omega^2 \times \Seq^2$ be from Lemma~\ref{L:natural-base}(a), and $R = R_{G^*} \subset \omega^2 \times \Seq^2 \times \omega$ be the corresponding
$\Si^0_1(\ep)$-recursive set which exists by Fact~\ref{F:Gstar}(a) and Remark~\ref{R:extended-facts}.
The set $R$ is $\Si_1^0(\ep)$-recursive.
We use the notation
\[
\begin{split}
J = \{(n,s,x,k)\in\om &\times (\cS\cup\cN) \times\cN\times \om \set \\
&\exists j_1 \le  \min\{k,|s|\} \ \exists j_2 \le k \ \exists j_3 \le k\colon (n,n,s|j_1,x|j_2,j_3) \in R\}.
\end{split}
\]
We also define the sets
\begin{align*}
J_0      &= \{(n,s,k)\in \omega \times \cS \times \omega \set (n,s,\cl{0},k)\in J\}, \\
J_\infty &= \{(n,s,k)\in\om\times\cN\times\om\set (n,s,\cl{0},k)\in J\}, \\
J_*      &= \{(n,s,x)\in \omega \times \cS \times \cN \set (n,s,x,|s|)\in J\}.
\end{align*}
\end{definition}

\begin{remark}\label{R:property-of-J}
Let $x, z \in \cN$, $n, k \in \om$, and $s, t \in \cS \cup \cN$.
Then we have
\begin{itemize}
\item[(a)] $(n,s,x,k) \in J$ if and only if $(n,t,x,k) \in J$ and $s|k\preceq t$,

\item[(b)] if $(n,s,x,k) \in J$ and $l \ge k$, then $(n,s,x,l) \in J$,

\item[(c)] if $(n,z,x,k) \in J$, then $z'_x(n)=1$,

\item[(d)] if $z'_x(n) = 1$, then there is $k \in \om$ such that $(n,z,x,k) \in J$.
\end{itemize}

We refer to the above observations also if applying their obvious consequences for the particular cases of $J$ which we defined above.
We point out two more observations by the following lemma, which we use in the proof of Lemma~\ref{L:recursivity-of-Theta}.
\end{remark}

\begin{lemma}\label{L:J-star-jump}
Let $\si\in\cN$, $x\in\cN$, and $n\in\om$.
\begin{itemize}
  \item[\upshape (a)] If $\si'_x(n)=1$, then we have
  \[
  \exists L\in\om\ \forall L'\ge L\colon (n,\si|L',x)\in J_*.
  \]

  \item[\upshape (b)] If there exists $s \in \cS$ such that $s \preceq \si$ and $(n,s,x) \in J_*$, then $\si'_x(n)=1$.
\end{itemize}
\end{lemma}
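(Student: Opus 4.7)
The plan is to derive both statements directly from the elementary properties of $J$ collected in Remark~\ref{R:property-of-J}, without unpacking the definition of the auxiliary set $R$ any further. Both parts are essentially bookkeeping: part~(a) promotes a finite $J$-witness for the infinite sequence $\sigma$ to every sufficiently long initial segment of $\sigma$; part~(b) goes the other way.

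For part~(a), I would start with $\sigma'_x(n)=1$, which by Remark~\ref{R:property-of-J}(d) produces some $k\in\omega$ such that $(n,\sigma,x,k)\in J$. The target conclusion $(n,\sigma|L',x)\in J_*$ unfolds, by definition of $J_*$, to $(n,\sigma|L',x,L')\in J$. I would reach this in two steps. First, the monotonicity in the fourth coordinate (Remark~\ref{R:property-of-J}(b)) upgrades $(n,\sigma,x,k)\in J$ to $(n,\sigma,x,L')\in J$ for every $L'\ge k$. Second, the invariance-under-extension property (Remark~\ref{R:property-of-J}(a)), applied with $s=\sigma|L'$, $k=L'$, $t=\sigma$, and using that $(\sigma|L')|L'=\sigma|L'\preceq\sigma$, transfers the $J$-membership from $\sigma$ to $\sigma|L'$. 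Taking $L:=k$ then delivers the claim.

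For part~(b), I would argue in the opposite direction. Given $s\preceq\sigma$ with $(n,s,x,|s|)\in J$, apply Remark~\ref{R:property-of-J}(a) with $k=|s|$, noting that $s|k=s\preceq\sigma$, to conclude $(n,\sigma,x,|s|)\in J$. Then Remark~\ref{R:property-of-J}(c) immediately gives $\sigma'_x(n)=1$.

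There is no substantive obstacle in either direction; the argument is an exercise in tracking which coordinates of $J$ depend only on a truncation of $s$. The only bookkeeping point to double-check is in part~(a): that the same bounds $j_1\le\min\{L',|\sigma|L'|\}=L'$ and $j_2,j_3\le L'$ in the definition of $J$ are automatically satisfied once $L'\ge k$, and that $(\sigma|L')|j_1=\sigma|j_1$ for $j_1\le L'$, so that the witnesses $j_1,j_2,j_3$ produced by $(n,\sigma,x,k)\in J$ can be reused verbatim for $(n,\sigma|L',x,L')\in J$.
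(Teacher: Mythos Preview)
Your proof is correct and follows essentially the same idea as the paper's: both arguments amount to unwinding the definition of $J$ and $J_*$ and tracking the finite witnesses $j_1,j_2,j_3$. The only cosmetic difference is that the paper unpacks the chain $G \to G^* \to R$ directly (taking $L=\max\{j_1,j_2,j_3\}$), whereas you route the same computation through the packaged properties of Remark~\ref{R:property-of-J} and take $L=k$; your final paragraph in effect reproduces the paper's direct verification.
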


\begin{proof}
(a) Let $G$, $G^*$, and $R$ be as in Definition~\ref{D:J-jump}, which gives under our assumption that $(n,n,\si,x)\in G$.
It follows that there are $j_1,j_2,j_3\in\om$ such that $(n,n,\si|j_1,x|j_2)\in G^*$ and $(n,n,\si|j_1,x|j_2,j_3)\in R$.
Choosing $L=\max\{j_1,j_2,j_3\}$ we get that $(n,\si|L',x)\in J_*$ for $L'\ge L$ by the definition of $J_*$.

\medskip\noindent
(b) The assumption gives the existence of $j_1,j_2,j_3\le |s|$ such that $(n,n,s|j_1,x|j_2,j_3)\in R$. Then $(n,n,\si,x)\in G$ and so $\si'_x(n)=1$ by Definition~\ref{D:Turing-jump}.
\end{proof}

\begin{lemma}\label{L:recursivity-of-Js}
The sets $J_0$, $J_\infty$, and $J_*$ are $\Sigma_1^0(\ep)$-recursive.
\end{lemma}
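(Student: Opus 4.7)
The plan is to obtain $J_0$, $J_\infty$, and $J_*$ from the $\Sigma_1^0(\ep)$-recursive set $R$ by three applications of Lemma~\ref{L:bounded-quantifiers}, since the quantifiers over $j_1, j_2, j_3$ in Definition~\ref{D:J-jump} are bounded by $\Sigma_1^0(\ep)$-recursive functions of the free variables, and the substitutions $x := \cl{0}$, $k := |s|$ are implemented by $\Sigma_1^0(\ep)$-recursive mappings.

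For $J_0$, first I would form the auxiliary set
\[
V_0 = \bigl\{(n,s,k,j_1,j_2,j_3) \in \omega \times \cS \times \omega^4 \set j_1 \le |s| \text{ and } (n,n,s|j_1,\cl{0}|j_2,j_3) \in R\bigr\}
\]
and check that it is $\Sigma_1^0(\ep)$-recursive. Indeed, the partial mapping $(n,s,k,j_1,j_2,j_3) \mapsto (n,n,s|j_1,\cl{0}|j_2,j_3)$ is $\Sigma_1^0(\ep)$-recursive on its $\Sigma_1^0(\ep)$-recursive domain $\{j_1 \le |s|\}$, being a composition of projections together with the basic $\Sigma_1^0(\ep)$-recursive operations $(n,s) \mapsto s|n$ (on its domain $n \le |s|$) and $(i,n) \mapsto \cl{n}|i$ listed in Lemma~\ref{L:volba-epsilon}(b),(e). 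Hence the substitution property (Fact~\ref{F:partialrecursivefunctions}\eqref{I:substitution-property}) applied to the $\Sigma_1^0(\ep)$-recursive set $R$ gives $\Sigma_1^0(\ep)$-recursivity of $V_0$.

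Next, $J_0$ arises from $V_0$ by three successive bounded existential quantifications with bounds $k$, $k$, and $\min\{k,|s|\}$ — all $\Sigma_1^0(\ep)$-recursive functions of $(n,s,k)$ by Lemma~\ref{L:volba-epsilon} — so iterated application of Lemma~\ref{L:bounded-quantifiers} yields $\Sigma_1^0(\ep)$-recursivity of $J_0$.

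The same scheme handles the other two cases with only minor adjustments. For $J_\infty$ the base space of $s$ is $\cN$ instead of $\cS$, and $(n,\sigma) \mapsto \sigma|n$ is totally $\Sigma_1^0(\ep)$-recursive on $\omega \times \cN$ by Lemma~\ref{L:volba-epsilon}(i); so the side condition $j_1 \le |s|$ disappears and all three bounds can be taken equal to $k$. For $J_*$ we substitute $|s|$ for $k$ (again a $\Sigma_1^0(\ep)$-recursive function of $s$) and keep $x$ as a free variable, so the three bounds collapse to $|s|$ and the same three-fold application of Lemma~\ref{L:bounded-quantifiers} concludes. I expect no substantive obstacle here beyond the routine bookkeeping of $\Sigma_1^0(\ep)$-recursivity of the auxiliary mappings, all of which is covered by Lemma~\ref{L:volba-epsilon}.
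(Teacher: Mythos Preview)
Your proposal is correct and follows essentially the same approach as the paper: form an auxiliary set by composing $\Sigma_1^0(\ep)$-recursive coordinate maps with the $\Sigma_1^0(\ep)$-recursive set $R$ via the substitution property, then apply Lemma~\ref{L:bounded-quantifiers} three times with the appropriate $\Sigma_1^0(\ep)$-recursive bounds. The only cosmetic difference is that the paper treats $J_*$ first and leaves the domain condition $j_1 \le |s|$ implicit in its auxiliary set $\wt R$, while you start with $J_0$ and include that condition explicitly in $V_0$.
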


\begin{proof}
We prove the statement for $J_*$. Since $R$ is $\Sigma_1^0(\ep)$-recursive (see Definition~\ref{D:J-jump}),
using the $\Si^0_1(\ep)$-recursivity of restrictions from Lemma~\ref{L:volba-epsilon}(e),(i) and Fact~\ref{F:partialrecursivefunctions}\eqref{I:substitution-property} we get that the set
\[
\wt R = \{(n,s,x,j_1,j_2,j_3) \in \omega \times \Seq \times \cN \times\om^3\set (n,n,s|j_1,x|j_2,j_3) \in R\}
\]
and its complement are in $\Sigma_1^0(\ep)$.
Thus $\wt R$ is $\Sigma_1^0(\ep)$-recursive.
Using in particular $\Si^0_1(\ep)$-recursivity of $s \mapsto |s|$ on $\cS$, we get that $J_*$ is $\Sigma_1^0(\ep)$-recursive
due to Lemma~\ref{L:bounded-quantifiers} (used for the quantifications coming in play through the definition of $J$, which appears in the definition of $J_*$).

The proofs for  $J_0$ and $J_\infty$ are analogous. Applying Lemma~\ref{L:bounded-quantifiers}, we use $\Sigma_1^0(\ep)$-recursivity of the upper bounds
$k$ for $j_1$ in the case of $J_\infty$, $\min\{k,|s|\}$ for $j_1$ in the case of $J_0$, and the upper bound $k$ for $j_2$ and $j_3$ in both cases.
\end{proof}

We represent the Turing jumps $y^{(\be)}$ ($= y^{(\be)}_{\cl{0}}$), $y \in \cN, \be \in [0,\al]$, as the infinite branches of a recursive family of trees as follows.
\begin{proposition}[$x^{(\be)}$'s as branches of trees]\label{P:jump-trees-old}
There exists a $\Si^0_1(\ep)$-recursive set $S \subset [0,\al] \times \cS \times{\cN}$ such that
\begin{itemize}
\item[(a)] $S_\be^{x} \ (= \{s \in \cS \set (\be,s,x) \in S\})$ is a tree for every $\beta \in [0,\alpha]$ and $x\in\cN$,

\item[(b)] for every $\beta \in [0,\alpha]$ and $x\in\cN$ there exists $\sigma_{\be}^{x} \in \cN$ such that $[S_\be^{x}] = \{\sigma_{\be}^{x}\}$, and

\item[(c)] there are $\Si^0_1(\ep)$-recursive on their domains partial functions $\rright \colon [0,\al]\times\cN\rightharpoonup\cN$ and $\rleft \colon [0,\al]\times\cN\rightharpoonup\cN$ such that $x^{(\be)} = \rright(\be,\sigma_{\be}^{x})$ and $\sigma_{\be}^{x} = \rleft(\be,x^{(\be)})$,
whenever $\beta \in [0,\alpha]$ and $x \in \cN$.
\end{itemize}
\end{proposition}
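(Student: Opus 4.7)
The plan is to construct $S$, the branches $\sigma_\beta^x$, and the conversion functions $\rright$ and $\rleft$ by transfinite recursion on $\beta \in [0, \alpha]$, then to collapse this informal recursion into a single $\Sigma^0_1(\ep)$-recursive definition by an application of Kleene's second recursion theorem (Fact~\ref{F:recursionthms}(a),(b)). The base case $\beta = 0$ is immediate: set $\sigma_0^x := x$, $S_0^x := \{s \in \cS : s \preceq x\}$, and $\rright(0,z) = \rleft(0,z) := z$; the tree is $\Sigma^0_1(\ep)$-recursive by Lemma~\ref{L:volba-epsilon}(h).

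For the successor case $\beta = \gamma + 1$, I would interleave $\sigma_\gamma^x$ with minimal $J_*$-witness lengths:
\[
\sigma_\beta^x([0,j]^{-1}) := \sigma_\gamma^x(j), \qquad \sigma_\beta^x([1,i]^{-1}) := \begin{cases} L_i + 1 & \text{if } L_i := \min\{L : (i, \sigma_\gamma^x|L, \cl{0}) \in J_*\} \text{ exists,} \\ 0 & \text{otherwise.} \end{cases}
\]
The tree $S_\beta^x$ admits $s$ iff (i) its $\sigma_\gamma$-slice $s_\gamma$ lies in $S_\gamma^x$; (ii) for each positive entry $s([1,i]^{-1}) = L + 1$ with $|s_\gamma| \ge L$ one has $(i, s_\gamma|L, \cl{0}) \in J_*$ and $(i, s_\gamma|L', \cl{0}) \notin J_*$ for all $L' < L$; and (iii) for each null entry $s([1,i]^{-1}) = 0$ one has $(i, s_\gamma|L, \cl{0}) \notin J_*$ for every $L \le |s_\gamma|$. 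Clauses (ii) and (iii) are finite Boolean combinations of decidable $J_*$-checks, so membership in $S_\beta^x$ is $\Sigma^0_1(\ep)$-recursive by the inductive hypothesis on $S_\gamma^x$; closure under $\preceq$ is a direct verification, and the unique infinite branch is $\sigma_\beta^x$ because on the branch $s_\gamma \to \sigma_\gamma^x$ and clauses (ii), (iii) combined with Lemma~\ref{L:J-star-jump} force each witness entry to its correct value. The conversion $\rright(\beta,\cdot)$ reads $\sigma_\gamma^x$ off the first slice, forms the indicator $(\sigma_\gamma^x)'_{\cl{0}}(i) = [\sigma_\beta^x([1,i]^{-1}) > 0]$, and applies Lemma~\ref{L:jumps}(2) with $q = \rright(\gamma,\cdot)$ to translate $(\sigma_\gamma^x)'_{\cl{0}}$ into $x^{(\gamma+1)}$; the inverse $\rleft(\beta,\cdot)$ uses Lemma~\ref{L:jumps}(1) to recover $x^{(\gamma)}$, then $\rleft(\gamma,\cdot)$ for $\sigma_\gamma^x$, then Lemma~\ref{L:jumps}(2) applied to $\rleft(\gamma,\cdot)$ to produce $(\sigma_\gamma^x)'_{\cl{0}}$, and finally a bounded $\mu$-search (Fact~\ref{F:partialrecursivefunctions}\eqref{I:min}) for each $L_i$ at those indices where a witness is signaled.

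For a limit ordinal $\beta$ I set $\sigma_\beta^x := \bigvee_k \sigma_{a(\beta,k)}^x$, i.e.\ $\sigma_\beta^x([k,j]^{-1}) := \sigma_{a(\beta,k)}^x(j)$. The tree $S_\beta^x$ admits $s$ iff every $k$-slice appearing in $s$ lies in $S_{a(\beta,k)}^x$; this is $\Sigma^0_1(\ep)$-recursive by Lemma~\ref{L:bounded-quantifiers} together with the recursivity of $k \mapsto a(\beta,k)$ from Lemma~\ref{L:volba-epsilon}(g). For $\rright$ I apply $\rright(a(\beta,k),\cdot)$ slicewise and join via Lemma~\ref{L:bigvee} to produce $x^{(\beta)} = \bigvee_k x^{(a(\beta,k))}$; for $\rleft$ I extract each $x^{(a(\beta,k))}$ from $x^{(\beta)}$ using Lemma~\ref{L:jumps}(3), apply $\rleft(a(\beta,k),\cdot)$, and join.

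The main obstacle is the successor step, where one must preserve $\Sigma^0_1(\ep)$-recursivity of tree membership (not merely $\Pi^0_1(\ep)$) while still forcing the branch to be unique. The trick is to encode witnesses positively by numbers $L+1$, with the null value $0$ being refutable precisely when a witness becomes visible inside $s_\gamma$; this way both $s \in S_\beta^x$ and $s \notin S_\beta^x$ are certified by finite recursive computations against $J_*$. To package the clause-by-clause transfinite construction into a single $\Sigma^0_1(\ep)$-recursive set, I would write the membership test as a partial $\Sigma^0_1(\ep)$-recursive function $F(e,\beta,s,x)$ that, at the values $\gamma < \beta$ it needs, queries the universal $\Sigma^0_1(\ep)$-set $(G^{[0,\alpha]\times \cS \times \cN}_{\Sigma^0_1(\ep)})_e$ in place of $S$, and then apply Fact~\ref{F:recursionthms}(a) to obtain a fixed index $e^*$; entirely analogous fixed-point applications of Fact~\ref{F:recursionthms}(b) produce $\rright$ and $\rleft$ as $\Sigma^0_1(\ep)$-recursive functions, and properties (a)--(c) follow by transfinite induction on $\beta$.
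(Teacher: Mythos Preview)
Your proposal is correct and follows essentially the same approach as the paper. The paper likewise builds $S$ by Kleene recursion, with $S_0^x=\{s:s\preceq x\}$, the limit case by slicing into $S_{a(\beta,k)}^x$, and the successor case by recording for each coordinate $n$ both the $\sigma_\gamma^x$-value and a minimal $J_0$-witness length (with $0$ meaning ``no witness yet''); the only cosmetic difference is that the paper packs both pieces of information into the single coordinate $s_n$ via $[s_n]_0,[s_n]_1$ (its auxiliary set $Z$), whereas you interleave them at separate positions $[0,j]^{-1}$ and $[1,i]^{-1}$. The constructions of $\rright$ and $\rleft$ via Lemma~\ref{L:jumps}(1),(2) and a further application of the recursion theorem are the same in both.
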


Before continuing with the proof of Proposition~\ref{P:jump-trees-old}, we informally explain its main idea.
First we define a set $Z \subset \cS$ such that $s$ is in $Z$ if and only if each coordinate $s_n$, $n < |s|$, is either too big in the sense that $[s_n]_1 > |s|$ or the number $[s_n]_1 \in \bigl[0,|s|\bigr]$  says whether $[s]_0$ ensures that $z'(n) = 1$ for any $z \in \cN$ with $[s]_0 \prec z$ or not. (We use Notation~\ref{N:identification} to interpret the coordinates of sequences from $\cS$ as pairs of elements of $\om$.)
In the former case $[s_n]_1$ is the smallest number such that $(n,[s]_0,[s_n]_1-1) \in J_0$, in the latter $[s_n]_1 = 0$.
Then we construct the desired set $S$ by constructing sections $S_{\beta}^x$ for $\beta \in [0,\alpha], \ x \in \cN$, using transfinite induction on $\beta$.

We start with $S_0^x = \{s\in\cS\set s\preceq x\}$. Having $S^x_{\gamma}$ for $\ga \in [0,\al)$, we consider elements $s$ of  $S^x_{\ga}$ as approximations of $x^{(\gamma)}$ or, because of technical reasons, of some $\sigma_{\ga}^x\in\cN$ Turing equivalent to $x^{(\ga)}$.
We put $s$ in $S_{\gamma + 1}^x$ if $[s]_0 \in S_{\ga}^x$ and $s \in Z$. Thus $s$ will be an approximation of the jump of $z \in \cN$ with $[s]_0 \prec z$. If $\beta \leq \alpha$ is a limit ordinal, then $s$ in $S_{\be}^x$ glues together approximations from
the trees $S_{a(\be,j)}^x, j \in \omega$. We use Kleene recursion theorem  to get $\Si_1^0(\ep)$-recursivity of $S$.

\begin{proof}[{\bf Proof of Proposition~\ref{P:jump-trees-old}}]

We introduce an auxiliary set $Z \subset \cS$ as follows.
A sequence $s \in \cS$ is in $Z$ if and only if for every $n < |s|$ we have
\begin{equation}\label{E:set-Z}
\begin{split}
&\bullet \ [s_n]_1 > |s| \text{ or } \\
&\bullet \ [s_n]_1=0 \land (n,[s]_0,|s|)\notin J_0 \text{ or } \\
&\bullet \ [s_n]_1 \in \bigl(0,|s|\bigr] \land (n,[s]_0,[s_n]_1-1)\in J_0 \land (n,[s]_0,[s_n]_1-2) \notin  J_0.
\end{split}
\end{equation}

\begin{claim}
The set $Z$ is a tree.
\end{claim}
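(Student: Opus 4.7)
The plan is to verify the two defining properties of a tree: that $\emptyset \in Z$, and that $Z$ is closed under taking initial segments. The empty sequence lies in $Z$ vacuously because the quantifier ``for every $n < |s|$'' in the definition is empty when $|s| = 0$, so the only real content is the prefix closure.

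For the prefix closure, I would fix $s \in Z$ and an integer $m \leq |s|$, set $t = s|m$, and show that for every $n < m$ one of the three disjuncts in \eqref{E:set-Z} holds with $t$ in place of $s$. Two trivial identities drive the argument: $[t_n]_1 = [s_n]_1$, and $[t]_0 = [s]_0|m$. The non-trivial inputs are the two properties of $J_0$ recorded in Remark~\ref{R:property-of-J}. Part (a) (applied with $x = \cl{0}$) says that membership of $(n,\sigma,k)$ in $J_0$ depends only on the prefix $\sigma|\min\{k,|\sigma|\}$; in particular $(n,[s]_0,k) \in J_0 \Leftrightarrow (n,[s]_0|m,k) \in J_0$ whenever $k \leq m$. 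Part (b) says $J_0$ is monotone in the last coordinate, so $(n,[s]_0,|s|) \notin J_0$ forces $(n,[s]_0,m) \notin J_0$.

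With these tools in hand, the argument is a straightforward case analysis on which of the three disjuncts in \eqref{E:set-Z} is satisfied by $s$ at an index $n < m$. If $[s_n]_1 > |s|$, then $[t_n]_1 = [s_n]_1 > |s| \geq m = |t|$, so the first disjunct transfers to $t$. If $[s_n]_1 = 0$ and $(n,[s]_0,|s|) \notin J_0$, then monotonicity and prefix-insensitivity give $(n,[t]_0,|t|) \notin J_0$, so the second disjunct transfers. If the third disjunct holds with value $v := [s_n]_1 \in (0,|s|]$, I would split further: when $v \leq m$ the witnesses $(n,[s]_0,v-1) \in J_0$ and $(n,[s]_0,v-2) \notin J_0$ move to $[t]_0$ by prefix-insensitivity (all indices used are $\leq m$), yielding the third disjunct for $t$; when $v > m$, we have $[t_n]_1 = v > |t|$, so the first disjunct is automatically satisfied for $t$.

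No real obstacle arises here: the definition of $Z$ was engineered so that every clause is either transferable downward through the prefix and monotonicity lemmas for $J_0$, or automatically rescued by the ``$[s_n]_1 > |s|$'' safety disjunct once $|s|$ shrinks to $|t|$. The case analysis is mechanical once this observation is made.
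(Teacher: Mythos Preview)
Your proposal is correct and follows essentially the same route as the paper's proof: the same three-way case split on which disjunct of \eqref{E:set-Z} holds for $s$ at index $n$, with the third case further split according to whether $[s_n]_1 > |t|$ or $[s_n]_1 \le |t|$, and the transfers handled via Remark~\ref{R:property-of-J}(a),(b). The only cosmetic addition is your explicit mention of $\emptyset \in Z$, which the paper leaves implicit.
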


\begin{proof}[Proof of Claim]
Suppose that we have sequences $s,t \in \cS$ with $t \preceq s$ and $s \in Z$. We show that $t \in Z$. Let $n < |t|$. If $[s_n]_1 > |s|$, then $[t_n]_1 = [s_n]_1 > |s| \geq |t|$.
If $[s_n]_1 = 0$ and $(n,[s]_0,|s|) \notin J_0$, then $[t_n]_1 = [s_n]_1 = 0$ and $(n,[t]_0,|t|) = (n,[s]_0||t|,|t|) \notin J_0$ by Remark~\ref{R:property-of-J}(a),(b).
Now suppose that $[s_n]_1 \in (0,|s|]$, $(n,[s]_0,[s_n]_1-1) \in J_0$, and $(n,[s]_0,[s_n]_1-2) \notin J_0$.
Then either $[t_n]_1 > |t|$ or $[t_n]_1 \leq |t|$. The first case verifies the first condition of the three for $t$ and $n$.
So assume that $[t_n]_1 \leq |t|$.
Since $[t_n]_1=[s_n]_1$, we have $(n,[s]_0,[t_n]_1-1) \in J_0$ and this implies $(n,[s]_0||t|,[t_n]_1-1) \in J_0$ by  Remark~\ref{R:property-of-J}(a). Consequently, $(n,[t]_0,[t_n]_1-1) \in J_0$.
Similarly, since $[t_n]_1-2 = [s_n]_1-2 \le |t|$,
$(n,[s]_0,[s_n]_1-2) \notin J_0$ implies $(n,[s]_0||t|,[s_n]_1-2) \notin J_0$ by Remark~\ref{R:property-of-J}(a).
Consequently, we have $(n,[t]_0,[t_n]_1-2) = (n,[s]_0||t|,[s_n]_1-2) \notin J_0$.
\end{proof}

Let us use the notation
\[
W = U_{\Si^0_1(\ep)}^{[0,\al]\times\cS\times\cN,\om}\colon \om\times [0,\al]\times\cS\times\cN\rightharpoonup \om,
\]
see Fact~\ref{F:recursionthms}(b) and Remark~\ref{R:extended-facts}.
Recall that if $s \in \Seq$ and $k \in \omega$, then $j \mapsto s([k,j]^{-1})$ defines a finite sequence
(see Notation~\ref{N:identification}.

We define a partial mapping $f\colon \omega \times [0,\al] \times \cS \times \cN \rightharpoonup \omega$ as follows.
We set $f(e,\beta,s,x) = 1$ whenever $(e,\beta,s,x)$ satisfies one of the following conditions:
\begin{itemize}
\item[(i)]   $\beta = 0 \land s \preceq x$,

\item[(ii)]  $\be = \ga + 1 \land W(e,\ga,[s]_0,x) = 1 \land  s \in Z$,

\item[(iii)] $\be\in \LIM(\al) \land  \forall n < |s|\colon W \bigl(e,a(\be,n),s([n,\cdot]^{-1}),x\bigr) = 1$.
\end{itemize}
We set $f(e,\beta,s,x) = 0$ whenever $(e,\beta,s,x)$ satisfies one of the following conditions:
\begin{itemize}
\item[(i')]   $\beta = 0 \land \neg(s \preceq x)$,

\item[(ii')]  $\be = \ga+1 \land \bigl((W(e,\ga,[s]_0,x) = 1 \land s \notin Z) \vee W(e,\ga,[s]_0,x) = 0 \bigr)$,

\item[(iii')] $\be \in \LIM(\al) \land \exists n < |s|\colon  W\bigl(e,a(\be,n),s([n,\cdot]^{-1}),x\bigr) = 0$.
\end{itemize}
The value of $f$ is not defined otherwise.

\begin{claim}
The function $f$ is $\Si^0_1(\ep)$-recursive on its domain.
\end{claim}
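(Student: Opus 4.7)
The plan is to verify that the auxiliary set $Z$ is $\Sigma_1^0(\varepsilon)$-recursive, to write each of the three cases of the definition of $f$ using $\Sigma_1^0(\varepsilon)$-recursive data, and then to assemble them into a $\Sigma_1^0(\varepsilon)$ graph. The three disjuncts in \eqref{E:set-Z} use only the bracket decoding from Notation~\ref{N:identification}, the basic operations $s\mapsto|s|$ and $(n,s)\mapsto s|n$ from Lemma~\ref{L:volba-epsilon}(e), and the $\Sigma_1^0(\varepsilon)$-recursive set $J_0$ from Lemma~\ref{L:recursivity-of-Js}. Combining them via the substitution property Fact~\ref{F:partialrecursivefunctions}\eqref{I:substitution-property} and absorbing the outer $\forall n<|s|$ by Lemma~\ref{L:bounded-quantifiers} with bound $|s|$ shows that both $Z$ and $\cS\setminus Z$ lie in $\Sigma_1^0(\varepsilon)$.

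I will then split $[0,\alpha]$ into the $\Sigma_1^0(\varepsilon)$-recursive pieces $\{0\}$, the set of successor ordinals, and $\LIM(\alpha)$: the first by equality on the type-$0$ space $[0,\alpha]$ (Lemma~\ref{L:volba-epsilon}(a)), the third by Lemma~\ref{L:volba-epsilon}(f), and the second as the image of the $\Sigma_1^0(\varepsilon)$-recursive successor map on $[0,\alpha)$, where the existential along the type-$0$ space $[0,\alpha)$ preserves $\Sigma_1^0(\varepsilon)$ by Fact~\ref{F:properties-of-Gamma} and Remark~\ref{R:extended-facts}. The universal partial function $W=U^{[0,\alpha]\times\cS\times\cN,\omega}_{\Sigma_1^0(\varepsilon)}$ is $\Sigma_1^0(\varepsilon)$-recursive on its domain by Fact~\ref{F:recursionthms}(b) and Remark~\ref{R:extended-facts}, so by Fact~\ref{F:partialrecursivefunctions}\eqref{I:recursivity-to-omega} there is a $\Sigma_1^0(\varepsilon)$ set $Q_W$ with $(\cdot,n)\in Q_W\Leftrightarrow W(\cdot)=n$ on $D(W)$.

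Using these, I will express the two sets $A_i=\{(e,\beta,s,x)\colon f(e,\beta,s,x)=i\}$, $i\in\{0,1\}$, as finite unions, one piece per clause (i)--(iii), respectively (i$'$)--(iii$'$). Each piece is assembled by substituting the $\Sigma_1^0(\varepsilon)$-recursive maps $[s]_0$, $j\mapsto s([n,j]^{-1})$, $a(\beta,\cdot)$, the successor on $[0,\alpha)$, and the relation $s\preceq x$ (all from Lemma~\ref{L:volba-epsilon}) into $Q_W$, into $Z$, and into $\cS\setminus Z$, via Fact~\ref{F:partialrecursivefunctions}\eqref{I:substitution-property}, and handling $\forall n<|s|$ and $\exists n<|s|$ in the limit case by Lemma~\ref{L:bounded-quantifiers} with bound $|s|$. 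Since the three cases are mutually exclusive and $f$ takes only values in $\{0,1\}$, the set $Q=(A_0\times\{0\})\cup(A_1\times\{1\})$ is a $\Sigma_1^0(\varepsilon)$ subset of $\omega\times[0,\alpha]\times\cS\times\cN\times\omega$ which satisfies $f(e,\beta,s,x)=n\Leftrightarrow(e,\beta,s,x,n)\in Q$ for every $(e,\beta,s,x)\in D(f)$, so $f$ is $\Sigma_1^0(\varepsilon)$-recursive on its domain by Fact~\ref{F:partialrecursivefunctions}\eqref{I:recursivity-to-omega}. The main subtlety to watch is the appearance of the partial function $W$ inside the bounded quantifier in clause (iii): using $Q_W$ as a surrogate for ``$W(\cdot)=i$'' is harmless outside $D(W)$ because $A_i$ is only required to agree with $\{f=i\}$ on $D(f)$, and on $D(f)$ all the invoked values of $W$ are defined by the very form of (iii) or (iii$'$).
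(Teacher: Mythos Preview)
Your proposal is correct and follows essentially the same route as the paper. The paper likewise first shows $Z$ is $\Sigma_1^0(\varepsilon)$-recursive using $J_0$, $|\cdot|$, $[\,\cdot\,]_i$ and Lemma~\ref{L:bounded-quantifiers}; then it takes a $\Sigma_1^0(\varepsilon)$ set $\Delta^W$ computing $W$ (your $Q_W$), replaces each occurrence of ``$W(\cdot)=i$'' in (i)--(iii), (i$'$)--(iii$'$) by ``$(\cdot,i)\in\Delta^W$'' to obtain a $\Sigma_1^0(\varepsilon)$ set $\Delta^f$ that computes $f$ on $D(f)$, and handles the bounded quantifier in the limit clause via Lemma~\ref{L:bounded-quantifiers} with bound $|s|$---exactly your strategy, including the observation that spurious values of $\Delta^W$ outside $D(W)$ are harmless on $D(f)$.
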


\begin{proof}[Proof of Claim]
Since $|\cdot|$, $[\, \cdot \,]_i$, the coordinate functions, and the function $a$ are $\Si^0_1(\ep)$-recursive functions by Lemma~\ref{L:volba-epsilon} and $J_0$ is $\Si^0_1(\ep)$-recursive by Lemma~\ref{L:recursivity-of-Js}, the set $Z$ is $\Si^0_1(\ep)$-recursive.

The function $W$ is $\Si^0_1(\ep)$-recursive on its domain. Thus there is a $\Si^0_1(\ep)$ set $\De^W$ computing $W$ on its domain.
We notice that replacing the occurrences of
$W(e,\de,u,x) = i$ for some $e\in\om$, $\de\in [0,\al]$, $u\in\cS$, $x \in \cN$, and $i=0,1$, in the definition of $f$
by the condition $\bigl(e,\de,u,x,i\bigr)\in\De^W$
defines a set $\De^f \subset \om \times [0,\alpha] \times \cS \times \cN \times \om$ which computes $f$ on its domain.
The set $\De^f$ is in $\Si^0_1(\ep)$. We get it from $\Si^0_1(\ep)$-recursivity of $Z$, $\LIM(\al)$, of the relation $<$ on $[0,\al]$, and further of functions and relations mentioned already above and in Lemma~\ref{L:volba-epsilon}.
We also apply Lemma~\ref{L:bounded-quantifiers} with the $\Si^0_1(\ep)$-recursive bound $|s|$ for $n$ several times.
\end{proof}

Using the recursion theorem, we get $e^*\in\om$ such that $f(e^*,\be,s,x) = W(e^*,\be,s,x)$, whenever $f(e^*,\be,s,x)$ is defined.

\begin{claim}
The value $f(e^*,\be,s,x)$ is defined for all $\be\in [0,\al]$, $s \in \cS$, $x\in\cN$.
\end{claim}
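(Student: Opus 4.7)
The plan is to proceed by transfinite induction on $\beta \in [0,\alpha]$, showing that for each such $\beta$ the value $f(e^*,\beta,s,x)$ is defined for every $s\in\cS$ and $x\in\cN$. The crucial observation throughout is that the fixed-point identity $f(e^*,\gamma,t,y) = W(e^*,\gamma,t,y)$ (whenever the left side is defined) turns every hypothesis ``$f$ is defined at a smaller ordinal'' into ``$W$ takes a determined value ($0$ or $1$) at that smaller ordinal'', which is exactly what the defining clauses of $f$ need in order to fire.

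For the base case $\beta = 0$, the conditions (i) and (i') partition all possibilities according to whether $s\preceq x$ holds (using Lemma~\ref{L:volba-epsilon}(h) to note that this is a definite alternative), so $f(e^*,0,s,x)$ is always defined.

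For the successor step $\beta = \gamma+1$, the induction hypothesis gives that $f(e^*,\gamma,[s]_0,x)$ is defined; by the choice of $e^*$ this equals $W(e^*,\gamma,[s]_0,x)$, which therefore is either $0$ or $1$. Combining with the two possibilities $s\in Z$ and $s\notin Z$, one of the clauses (ii) or (ii') applies, so $f(e^*,\beta,s,x)$ is defined. For the limit step $\beta\in\LIM(\alpha)$, the induction hypothesis applies at each ordinal $a(\beta,n) < \beta$ (for $n<|s|$, a finite range), so $f(e^*,a(\beta,n),s([n,\cdot]^{-1}),x)$ is defined and equals $W(e^*,a(\beta,n),s([n,\cdot]^{-1}),x)\in\{0,1\}$. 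Either all of these values (for $n<|s|$) equal $1$, in which case clause (iii) gives $f(e^*,\beta,s,x)=1$, or at least one equals $0$, in which case clause (iii') gives $f(e^*,\beta,s,x)=0$.

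There is no real obstacle here: the definition of $f$ was engineered to dovetail with the recursion theorem so that the clauses exhaust every possibility once the $W$-calls at smaller ordinals are known to return a value. The only point that requires care is to verify that the inductive hypothesis supplies values of $W$ precisely at the arguments invoked in clauses (ii)/(ii') and (iii)/(iii'), namely at $\gamma$ (for the successor) and at the ordinals $a(\beta,n)$, $n<|s|$ (for the limit), which are exactly the ordinals strictly below $\beta$ that appear.
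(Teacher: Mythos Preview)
Your proof is correct and follows essentially the same approach as the paper: both argue by transfinite induction on $\beta$, using the recursion-theorem identity $f(e^*,\gamma,t,x)=W(e^*,\gamma,t,x)$ to convert the induction hypothesis into determined $W$-values at smaller ordinals, so that clauses (i)/(i'), (ii)/(ii'), (iii)/(iii') exhaust all cases. The only cosmetic difference is that the paper phrases the induction contrapositively (take the least $\beta$ where $f$ is undefined and derive a contradiction), while you carry out the direct forward induction; the content is identical.
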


\begin{proof}[Proof of Claim]
Assume by contradiction that there is the smallest $\be \le \al$ such that for some $s$ and $x$ the value $f(e^*,\be,s,x)$ is not defined.
The function $f$ is defined for any $(e,0,s,x)$ and therefore $\beta > 0$.
If $\beta = \gamma+1$, then $f(e^*,\gamma,t,x)$ is defined for every $t \in \cS$ by our assumption.
Therefore either $W(e^*,\gamma,[s]_0,x) = 1$ or $W(e^*,\gamma,[s]_0,x) = 0$.
Since $s \in Z$ or $s \notin Z$, the value $f(e^*,\gamma+1,s,x) = f(e^*,\beta,s,x)$ is well defined, a contradiction.
If $\beta \in \LIM(\alpha)$, then $f(e^*,\gamma,t,x)$ is defined for every $\gamma < \beta$ and $t \in \Seq$. In particular,
$f(e^*,a(\beta,n),s([n,\cdot]^{-1}),x)$ is defined for every $n < |s|$.
Thus for every $n < |s|$ we have $W(e^*,a(\beta,n),s([n,\cdot]^{-1}),x) = 1$ or $W(e^*,a(\beta,n),s([n,\cdot]^{-1}),x) = 0$.
This shows that $f(e^*,\beta,s,x)$ is well defined, a contradiction again.
\end{proof}

Thus $f_{e^*}$ is the characteristic function of the set
\[
S = \{(\beta,s,x) \in [0,\alpha] \times \cS \times \cN \set f(e^*,\beta,s,x) = 1\}.
\]
The set $S$ is $\Si^0_1(\ep)$-recursive since $f$ is $\Si^0_1(\ep)$-recursive. We verify (a)--(c) for fixed $x \in \cN$.

\medskip\noindent
(a) By the definition of $S$ and (i), (i') above we have that $S_0^x$ is a tree since $S_0^x = \{s \in \Seq\set s \preceq x\}$.
Now assume that $1 \leq \beta \leq \alpha$ and $S_{\delta}^x$ is a tree for every $\delta < \beta$.
Suppose that $s \in S_{\beta}^x$ and $t \in \cS$ satisfy $t \preceq s$.

Assume that $\beta = \gamma + 1$. Then $W(e^*,\gamma,[s]_0,x) = 1$ and $s \in Z$ by (ii). Therefore $[s]_0 \in S_{\gamma}^x$. By induction hypothesis we have $[t]_0 \in S_{\gamma}^x$ since $[t]_0 \preceq [s]_0$. Since $Z$ is a tree by Claim above we have $t \in Z$ and we get $f(e^*,\beta,t,x) = 1$. Consequently, $t \in S_{\beta}^x$ by (ii).

Assume that $\beta \in \LIM(\alpha)$.  Then, using (iii) for $s$, we get for every $n < |t| \ (\leq |s|)$ that
\[
W\bigl(e^*,a(\beta,n),s([n,\cdot]^{-1}),x\bigr) = 1.
\]
Therefore $s([n,\cdot]^{-1}) \in S_{a(\beta,n)}^x$.
Using the induction hypothesis, we get $t([n,\cdot]^{-1}) \in S_{a(\beta,n)}^x$.
Now as before we easily infer $f(e^*,\beta,t,x) = 1$ and $t \in S_{\beta}^x$ using (iii) for $t$.

\medskip\noindent
(b) We define $\si^x_{\be}, \be \leq \al$, inductively. We set $\sigma_0^x = x$, for $\be=\ga+1 \leq \alpha$ we define
$\si^x_{\beta}$ by $[\si^x_{\beta}]_0 = \si^x_{\ga}$ and
\[
[\si^x_{\be}]_1(n)=
\begin{cases}
0                                                                &\text{if } (\si^x_{\ga})'(n) = 0, \\
\min\{l\ge 1\set (n,\si^x_{\ga},l-1) \in J_{\infty}\}            &\text{if } (\si^x_{\ga})'(n) = 1,
\end{cases}
\]
and for $\be \in \LIM(\al)$ we define $\si_{\be}^{x} = \bigvee_{n\in\om} \si^x_{a(\be,n)}$.

Using transfinite induction we verify that $[S^x_{\beta}] = \{\si^x_{\beta}\}$ for every $\beta \leq \alpha$.
Due to the definition of $f$, we see that $[S_0^{x}]=\{x\}$.
Let $1 \leq \beta \leq \alpha$ and assume that the assertion holds for every $\delta < \beta$.

Assume that $\beta = \ga + 1 \leq \alpha$. First we show that $\sigma^x_{\be} \in [S_{\be}^x]$.
Let $k \in \omega$. Then $[\si_{\be}^x] _0|k = \si_{\ga}^x|k \in S_{\ga}^x$ by the above definition and the induction hypothesis.
It remains to show $\si_{\be}^x|k \in Z$. Choose $n < k$. We verify \eqref{E:set-Z} for $n$ and $\si^x_{\be}|k$.
If $[\si^x_{\be}|k]_1(n) > k$, then we are done. If $[\si^x_{\be}|k]_1(n) = 0$, then $(\si_{\ga}^x)'(n) = 0$ by the definition of $\si^x_\be$
and therefore $(n,[\si^x_{\be}|k]_0,k) = (n,\si^x_{\ga}|k,k) \notin J_0$
by Remark~\ref{R:property-of-J}(a),(c). This verifies \eqref{E:set-Z} in this case.
Finally, if $[\si_{\beta}^x|k]_1(n) \in (0,k]$, then $(n,\si^x_{\ga},[\si^x_{\be}]_1(n)-1) \in J_{\infty}$ and
$(n,\si^x_{\ga},[\si^x_{\be}]_1(n)-2) \notin J_{\infty}$ by the definition of $\si^x_{\be}$.
Then we have $(n,[\si^x_{\be}|k]_0,[\si^x_{\be}|k]_1(n)-1) = (n,\si^x_{\ga}|k,[\si^x_{\be}|k]_1(n)-1)\in J_0$ and
$(n,[\si^x_{\be}|k]_0,[\si^x_{\be}]_1(n)-2) = (n,\si^x_{\ga}|k,[\si^x_{\be}|k]_1(n)-2) \notin J_0$.

Now assume that $z \in [S^x_{\beta}]$. For every $j \in \omega$ we have $z|j \in S^x_{\beta}$ and therefore $[z|j]_0 \in S_{\ga}^x$ for every $j \in \omega$.
This implies that $[z]_0 \in [S^x_{\ga}]$. The induction hypothesis gives $[z]_0 = \si^x_{\ga}$.
Fix $n \in \omega$. Then we have $z|k \in Z$ for every $k \in \omega$ by (ii).
If $k \geq [z(n)]_1$, then $z|k$ satisfies the second or the third condition in \eqref{E:set-Z}.
Thus if $[z(n)]_1 = 0$, we have $(n,\si^x_{\ga}|k,k) = (n,[z|k]_0,k) \notin J_0$ for every $k \geq [z(n)]_1$.
Using Remark~\ref{R:property-of-J}(a),(b),(d), this gives $([z]_0)'(n) = (\si^x_{\ga})'(n) = 0$.

If $[z(n)]_1 > 0$, then $(n,[z|k]_0,[z(n)]_1-1) \in J_0$ and $(n,[z|k]_0,[z(n)]_1-2) \notin J_0$ for every $k \geq [z(n)]_1$.
This implies that $(n,\si^x_{\ga},[z(n)]_1-1) \in J_{\infty}$ and $(n,\si^x_{\ga},[z(n)]_1-2) \notin J_{\infty}$.
This means that $(\sigma^x_{\gamma})'(n) = 1$ and $[z(n)]_1 = [\si^x_{\beta}]_1(n)$.
Consequently, $z = \si^x_{\be}$.
Thus we have $[S^x_{\beta}] = \{\si^x_{\beta}\}$.

Assume $\beta \in \LIM(\al)$. We show that $\sigma_{\be}^x \in [S_{\be}^x]$. Let $n,k \in \omega, n < k$.
According to induction hypothesis $\si_{a(\be,j)}^x \in [S_{a(\be,j)}^x]$ for every $j \in \omega$.
We have $(\si^x_{\be}|k)([n,\cdot]^{-1}) = \si^x_{a(\be,n)}|l$ for some $l \in \omega$ and so
\[
W(e^*,a(\be,n),(\si^x_{\be}|k)([n,\cdot]^{-1}),x) =
W(e^*,a(\be,n),\si^x_{a(\be,n)}|l,x) = 1.
\]
Thus $\sigma_{\be}^x|k \in S_{\be}^x$. Consequently, $\sigma_{\be}^x \in [S_{\be}^x]$.

Now assume that $z \in [S^x_{\beta}]$. Fix $n \in \omega$. Then $(z|j)([n,\cdot]^{-1}) \in S^x_{a(\beta,n)}$ for every $j \in \omega$.
This implies $z([n,\cdot]^{-1}) \in [S^x_{a(\beta,n)}]$.
Therefore by the induction hypothesis we get $z([n,\cdot]^{-1}) = \si^x_{a(\beta,n)}$.
Since $z = \bigvee_{n \in \omega} z([n,\cdot]^{-1})$, we get $z = \bigvee_{n \in \omega} \si^x_{a(\beta,n)} = \si^x_{\beta}$.

\medskip\noindent
(c) We define a $\Si_1^0(\ep)$-recursive function $w\colon \cN \to \cN$ by $w(z)(n) = \min\{1,[z]_1(n)\}$.
According to the previous part of the proof, we have $\sigma_0^x = x$, $w(\sigma_{\gamma+1}^x) = (\sigma_{\gamma}^x)'$ for every
$\gamma < \alpha$, $x \in \cN$, and $\sigma_{\beta}^x = \bigvee_{n \in \omega}\sigma^x_{a(\beta,n)}$
for every $\beta \in \LIM(\alpha)$ and $x \in \cN$.

Let $U^{[0,\alpha] \times \cN,\cN}_{\Si_1^0(\ep)}$ be the function guaranteed by Fact~\ref{F:recursionthms}(b) and Remark~\ref{R:extended-facts}.
Using Lemma~\ref{L:jumps}(2) for $W = \omega \times [0,\alpha]$ and $q = U^{[0,\alpha] \times \cN,\cN}_{\Si_1^0(\ep)}$,
we find a partial function $r_q\colon \omega \times [0,\alpha] \times \cN \rightharpoonup \cN$
which is $\Sigma_1^0(\ep)$-recursive on its domain such that $q(e,\beta,z)' = r_q(e,\beta,z')$
whenever $q(e,\beta,z)$ is defined. Finally, we define a partial function
$\fright\colon \omega \times [0,\alpha] \times \cN \rightharpoonup \cN$ by
\[
\fright(e,\beta,z) =
\begin{cases}
z                                                                      &\text{if } \beta = 0, \\
r_q(e,\gamma,w(z))                                                     &\text{if } \beta = \gamma + 1, \\
\bigvee_{n \in \omega} q\bigl(e,a(\beta,n),z([n,\cdot]^{-1})\bigr)     &\text{if } \beta \in \LIM(\alpha).
\end{cases}
\]
Then $\fright$ is $\Sigma_1^0(\ep)$-recursive on its domain by Lemma~\ref{L:volba-epsilon}(e),(g) and Lemma~\ref{L:bigvee}.
By Fact~\ref{F:recursionthms}(b) and Remark~\ref{R:extended-facts} there exists $\eright \in \omega$ such that
$\fright(\eright,\beta,z) = q(\eright,\beta,z)$ whenever $\fright(\eright,\beta,z)$ is defined.
The desired partial function $\rright$ is defined by $\rright(\beta,z) = \fright(\eright,\beta,z)$.
The function $\rright$ is clearly $\Sigma_1^0(\ep)$-recursive on its domain and we check by transfinite induction that
$\rright(\beta,\sigma_{\beta}^x) = x^{(\beta)}$.
If $\beta = 0$, then $\rright(0,\sigma_0^x) = \rright(0,x) = \fright(\eright,0,x) = x$
for every $x \in \cN$.
Now suppose that $\rright(\delta,\sigma_{\delta}^x) = x^{(\delta)}$ for every $\delta < \beta$ and $x \in \cN$.
If $\beta = \gamma + 1$ for some $\gamma$, then for every $x \in \cN$ we have
\[
\begin{split}
\rright(\beta,\sigma_{\beta}^x)
&= \fright(\eright,\beta,\sigma_{\beta}^x)
= r_q(\eright,\gamma,w(\sigma_{\beta}^x)) = r_q(\eright,\gamma,(\sigma_{\gamma}^x)') \\
&= q(\eright,\gamma,\sigma_{\gamma}^x)' = \fright(\eright,\gamma,\sigma_{\gamma}^x)' =
\rright(\ga,\sigma_{\ga}^x)'  = (x^{(\gamma)})' = x^{(\gamma+1)} =  x^{(\beta)}.
\end{split}
\]
If $\beta \in \LIM(\alpha)$, then for every $x \in \cN$ we have
\[
\begin{split}
\rright(\beta,\sigma_{\beta}^x) &= \bigvee_{n \in \omega} q\bigl(\eright,a(\beta,n),\sigma_{\beta}^x([n,\cdot]^{-1})\bigr)  \\
&= \bigvee_{n \in \omega} q\bigl(\eright,a(\beta,n),\sigma_{a(\beta,n)}^x\bigr)
= \bigvee_{n \in \omega} x^{(a(\beta,n))} = x^{(\beta)}.
\end{split}
\]

It remains to define the partial function $\rleft$. We put $\fleft(e,0,z) = z$ for $e\in\om$ and $z\in\cN$.
Now we define $\fleft(e,\be,z)$ for $e \in \omega$, $\be=\ga+1\le\al$, and $z \in \cN$ by determining $[\fleft(e,\be,z)]_i$ for $i=0,1$.
We recall that $r_1$ is the function from Lemma~\ref{L:jumps}(1).
We set $[\fleft(e,\be,z)]_0 = r_1(r_q(e,\gamma,z))$ and
\[
[\fleft(e,\be,z)]_1(n) =
\begin{cases}
0                                                                            &\text{if } r_q(e,\gamma,z)(n) = 0, \\
\min\{l \ge 1 \set (n,r_1(r_q(e,\gamma,z)),l-1) \in J_{\infty}\}             &\text{if } r_q(e,\gamma,z)(n) > 0.
\end{cases}
\]
If $\beta \in \LIM(\alpha)$, then we define
\[
\fleft(e,\beta,z) = \bigvee_{k \in \omega} q\bigl(e,a(\beta,k),r_3(a(\beta,k),\beta,z)\bigr),
\]
where $r_3$ is the function from Lemma~\ref{L:jumps}(3).
This finishes the definition of the function $\fleft$.

Using $\Sigma_1^0(\ep)$-recursivity of the functions used to define $\fleft$, we may check that the function $\fleft$ is $\Sigma_1^0(\ep)$-recursive on its domain.
By Fact~\ref{F:recursionthms}(b) and Remark~\ref{R:extended-facts} there is $\eleft \in \om$ such that $\fleft(\eleft,\be,z) = q(\eleft,\be,z)$ whenever the left-hand side is defined.
We set $\rleft(\beta,z) = \fleft(\eleft,\beta,z)$ whenever $\fleft(\eleft,\beta,z)$ is defined.
We check by induction that $\rleft(\be,x^{(\be)}) = \si_{\be}^{x}$ for all $\be \in [0,\al]$ and $x \in \cN$.
We will proceed by transfinite induction.

If $\beta = 0$, then we have $\rleft(0,x) = \fleft(\eleft,0,x) = x$ for every $x \in \cN$.
If $\beta = \gamma +1 \leq \alpha$, then we have
\[
[\rleft(\beta,x^{(\beta)})]_0
= \bigl[\fleft(\eleft,\gamma+1,x^{(\beta)})\bigr]_0  = \bigl[r_1\bigl(r_q(\eleft,\gamma,x^{(\beta)})\bigr)\bigr]_0.
\]
We have $r_q(\eleft,\gamma,x^{(\beta)}) = q(\eleft,\gamma, x^{(\gamma)})' = (\sigma_{\gamma}^x)'$.
Therefore we get
\[
[\rleft(\beta,x^{(\beta)})]_0 = [r_1((\sigma_{\gamma}^x)')]_0 = [\sigma_{\gamma}^x]_0.
\]
Now consider $n \in \omega$. If $r_q(\eleft,\gamma,x^{(\beta)})(n) = 0$, we have  $(\sigma_{\gamma}^x)'(n) = 0$. This implies that $[\sigma_{\beta}^x]_1(n) = 0$. Consequently,
$[\fleft(\eleft,\beta,x^{(\beta)})]_1(n) = [\sigma_{\beta}^x]_1(n)$.
If $r_q(\eleft,\gamma,x^{(\beta)})(n) > 0$, we have  $(\sigma_{\gamma}^x)'(n) > 0$ and
\[
\begin{split}
[\fleft(\eleft,\be,x^{(\beta)})]_1(n)
&= \min\{l \ge 1 \set (n,r_1(r_q(\eleft,\gamma,x^{(\beta)})),l-1) \in J_{\infty}\} \\
&= \min\{l \ge 1 \set (n,r_1((\sigma_{\gamma}^x)'),l-1) \in J_{\infty}\} \\
&= \min\{l\ge 1 \set (n,\sigma_{\gamma}^x,l-1) \in J_{\infty}\} = [\sigma_{\beta}^x]_1(n).
\end{split}
\]
This shows that $\rleft(\beta,x^{(\beta)}) = \sigma_{\beta}^x$ in this case.

Finally suppose that $\beta \in \LIM(\alpha)$ and $x \in \cN$. Then we have
\[
\begin{split}
\rleft(\beta,x^{(\beta)})
&= \fleft(\eleft,\beta,x^{(\beta)})  = \bigvee_{k \in \omega} q\bigl(\eleft,a(\beta,k),r_3(a(\beta,k),\beta,x^{(\beta)})\bigr) \\
&= \bigvee_{k \in \omega} \rleft\bigl(a(\beta,k),x^{(a(\beta,k))})\bigr) = \bigvee_{k \in \omega} \sigma_{a(\beta,k)}^x = \sigma_{\beta}^x
\end{split}
\]
and the proof is finished.
\end{proof}

\begin{proposition}\label{P:sigma-beta-x-jump}
There is a partial $\Sigma_1^0(\ep)$-recursive function $\Rleft\colon [0,\al]\times\cN\rightharpoonup\cN$
such that $(\si^x_\be)'_x = \Rleft(\be,x^{(\be+1)})$ for every $\beta \in [0,\alpha]$ and $x \in \cN$.
\end{proposition}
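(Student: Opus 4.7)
The plan is to build $\Rleft$ as a composition of $\Sigma_1^0(\ep)$-recursive pieces. First I express $\sigma_\beta^x$ as $\rleft(\beta, x^{(\beta)})$ via Proposition~\ref{P:jump-trees-old}(c), then pass to the Turing jump relative to $x$ using the ``jump transfer'' Lemma~\ref{L:jumps}(2), and finally recover the $x$-relative jump of $x^{(\beta)}$ from the (non-relativized) higher jump $x^{(\beta+1)}$ via Lemma~\ref{L:jump-in-x-and-in-0}.

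Concretely, I apply Lemma~\ref{L:jumps}(2) with the type-$0$ parameter space $W=[0,\alpha]$ and $q=\rleft$ to obtain a $\Sigma_1^0(\ep)$-recursive function $r_\rleft\colon [0,\alpha]\times\cN\to\cN$ satisfying $\rleft(\beta,u)'_v = r_\rleft(\beta,u'_v)$ whenever $\rleft(\beta,u)$ is defined. Specializing to $u=x^{(\beta)}$ and $v=x$, and using $\sigma_\beta^x=\rleft(\beta,x^{(\beta)})$, yields
\[
(\sigma_\beta^x)'_x = r_\rleft\bigl(\beta,\,(x^{(\beta)})'_x\bigr).
\]

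The heart of the argument is then to express $(x^{(\beta)})'_x$ in terms of $x^{(\beta+1)}$; note that $x^{(\beta+1)}$ is by Definition~\ref{D:Turing-jump} the jump of $x^{(\beta)}$ relative to $\cl{0}$ and not to $x$, so one cannot simply substitute. To bridge the two relativizations I use the function $\wt r$ from Lemma~\ref{L:jump-in-x-and-in-0}, which satisfies $x^{(\beta)}=\wt r(\beta,x_x^{(\beta)})$. A second application of Lemma~\ref{L:jumps}(2), this time with $q=\wt r$, produces a $\Sigma_1^0(\ep)$-recursive function $r_{\wt r}$ such that $\wt r(\beta,z)'_v = r_{\wt r}(\beta,z'_v)$ whenever $\wt r(\beta,z)$ is defined. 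Setting $z=x_x^{(\beta)}$, $v=x$ and observing that $(x_x^{(\beta)})'_x = x_x^{(\beta+1)}$ by Definition~\ref{D:Turing-jump}, together with $x_x^{(\beta+1)}=r(\beta+1,x^{(\beta+1)})$ from Lemma~\ref{L:jump-in-x-and-in-0}, gives
\[
(x^{(\beta)})'_x = r_{\wt r}\bigl(\beta,\,r(\beta+1,x^{(\beta+1)})\bigr).
\]

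Combining the two displays, the function
\[
\Rleft(\beta,z) := r_\rleft\Bigl(\beta,\; r_{\wt r}\bigl(\beta,\,r(\beta+1,z)\bigr)\Bigr)
\]
satisfies $\Rleft(\beta,x^{(\beta+1)}) = (\sigma_\beta^x)'_x$, and it is $\Sigma_1^0(\ep)$-recursive on its domain by repeated application of Fact~\ref{F:partialrecursivefunctions}\eqref{I:composition}. The only mildly subtle point is that $r(\beta+1,\cdot)$ requires $\beta+1\le\alpha$, so the identity is genuinely meaningful for $\beta\in[0,\alpha)$, the case $\beta=\alpha$ being vacuous since $x^{(\alpha+1)}$ is undefined. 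No substantial obstacle arises; the proof is essentially bookkeeping combined with two invocations of the jump-transfer Lemma~\ref{L:jumps}(2).
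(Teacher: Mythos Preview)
Your proof is correct and follows essentially the same route as the paper's: both use $\rleft$ from Proposition~\ref{P:jump-trees-old}(c), the pair $r,\wt r$ from Lemma~\ref{L:jump-in-x-and-in-0}, and the jump-transfer Lemma~\ref{L:jumps}(2). The only cosmetic difference is that the paper applies Lemma~\ref{L:jumps}(2) once to the composite $q(\beta,z)=\rleft(\beta,\wt r(\beta,z))$, whereas you apply it twice, separately to $\rleft$ and to $\wt r$; both yield the same conclusion, and your remark about the vacuity of the case $\beta=\alpha$ is apt.
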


\begin{proof}
By (c) of Proposition~\ref{P:jump-trees-old} we have $\sigma_{\be}^{x} = \rleft(\be,x^{(\be)})$ and therefore
$(\si^x_\be)'_x=(\rleft(\be,x^{(\be)}))'_x$.
By the second part of Lemma~\ref{L:jump-in-x-and-in-0} we have a partial function $\wt{r}$ which is $\Si^0_1(\ep)$-recursive on its domain
such that $x^{(\be)}=\wt{r}(\be,x^{(\be)}_x)$ and therefore
$(\si^x_\be)'_x=(\rleft(\be,\wt{r}(\be,x^{(\be)}_x))'_x$.
Using Lemma~\ref{L:jumps}(2) for $W = [0,\alpha]$ and $q(\be,z)=\rleft(\be,\wt{r}(\be,z))$ we have
$(\si^x_\be)'_x=r_q(\be,x^{(\be+1)}_x)$.
By the first part of Lemma~\ref{L:jump-in-x-and-in-0} we have a partial function $r$ which is $\Si^0_1(\ep)$-recursive on its domain
such that $(\si^x_\be)'_x = r_q(\be,r(\be+1,x^{(\be+1)}))$.
So $\Rleft(\be,z)=r_q(\be,r(\be+1,z))$ defines the desired function.
\end{proof}

\section{Construction of \texorpdfstring{$T_{\alpha}$}{}}\label{S:construction-of-T-alpha}

We fix some notation to be used during this section.

\begin{notation}
Let $G = G^{\cN \times \om \times \cN}_{\Si_1^0(\ep)}$ and using Lemma~\ref{L:natural-base}(a)
let $G^*$ be the corresponding $\Si^0_1(\ep)$ subset of $\om \times \cS \times \om \times \cS$ with
\[
G = \bigcup\bigl\{\{e\}\times\cN(t)\times\{n\}\times\cN(s)\set (e,t,n,s)\in G^*\bigr\}.
\]
Using Fact~\ref{F:Gstar}(a) for extended product spaces, we find a $\Si_1^0(\ep)$-recursive subset $R = R_{G^*}$ of $\om \times\cS \times \om \times \cS \times \om$ such that its projection along the last coordinate is $G^*$.
\end{notation}

Further we will use the following notation used already in Section~\ref{S:proofs-of-main-results}.
For $k \in \om$ and $M \su \cN \cup \cS$, $(M)_k$ or $M_k$ stand for the set $\{z \in \cN \cup \cS \set k\w z\in M\}$.
The symbol $\overline{n}$ for $n \in \omega$ denotes the infinite constant sequence of $n$'s (Notation~\ref{N:notation-1}).

Before the construction of $T_\al = T_\al^{LN}$ from Proposition~\ref{P:E^LN}, we informally indicate the idea of its construction.
We employ a diagonal method to ensure (v) from Proposition~\ref{P:E^LN}.
We are going to construct $T^x_\al$ so that, for every $k\in\om$, we get that
$k\w z\in [T^x_\al]\cap G^{n,x}_k$ implies
that there is other $z'\ne z$ in $\cN$ such that $k\w z, k\w z' \in [T^x_\al]\cap G^{n',x}_k$
for some $n' \in \om$.
If  $W \su \cN \times \om$ is a $\Si^0_1(x,\ep)$ set, there is $e \in \om$ such that $W^n = G^{n,x}_e$ for every $n \in \om$.
Using the aforementioned property for $k=e$, we get that (v) of Proposition~\ref{P:E^LN} is fulfilled for $W$.

To this end we proceed as follows. For every $x\in\cN$  we put all sequences $t \in \cS$ with $|t| \leq 1$ to $T^x_\al$.
Then for every $k \in \omega$ we consider successively pairs of sequences $t^0,t^1\in\cS$ of length $0,1,2,\dots$ such that $[t^0]_1=[t^1]_1\in S^x_\al$ and $[t^i]_0=\cl{i}| |t|$, $i=0,1$, and we decide when we add them to $(T^x_\al)_k$.
If for both $k\w t^i$, $i=0,1$, and all $a,b,c,n \leq |t^i|$ we have $(k,k\w (t^i|a),n,x|b,c) \notin R$, then we put both $k \w t^i$ to $T^x_\al$.
In the opposite case we find the smallest integer $1\le m\in\om$ such that there are $i^*\in\{0,1\}$ and $a,b,c,n \leq m$ such that $(k, k\w (t^{i^*}|a), n, x|b, c) \in R$.
In this case we add to $T^x_\al$ just one such $k\w t^{i^*}$ (with suitably fixed $i^*$).
Moreover, we put to $T^x_\al$ all its extensions $k\w (t^{i^*}\w w^j)$ such that
$[t^{i^*}\w w^j]_1\in S^x_\al$, $[w^j]_0 = \cl{j}| |w^j|$, where $j=0,1$. It is not difficult to observe that this procedure leads to the conclusion indicated in the preceding paragraph.

The construction of $T_{\alpha} = T_\al^{SR}$ from \ref{P:E^SR} is similar. Instead of ``$i, j \in \{0,1\}$'' we consider ``$i,j \in \omega$'' with appropriate changes.

\begin{proof}[{\bf Proof of Proposition~\ref{P:E^LN}}]
We define the set $T_{\alpha} = T_{\al}^{LN}$ using the set $S$ from Proposition~\ref{P:jump-trees-old} as follows.
Let $x \in \cN$ be fixed.
We put every $s\in\cS$ with $|s|\le 1$ into $T^x_{\alpha}$.
Let further $k \in \omega$ and we consider $t \in \Seq$ such that $|t|\ge 1$.
Then $(k \w t,x) \in T_\al$ if and only if $[t]_1 \in S_{\alpha}^x$ and one of the following conditions (A)--(C) is satisfied by $t$.

\begin{itemize}
\item[(A)] We have $[t]_0 = \overline{i}||t|$ for some $i\in\{0,1\}$,
\begin{align*}
\forall a,b,c,n \le |t|  &\colon \bigl(k,k\w (t|a),n,x|b,c\bigr) \notin R, \\
\shortintertext{and for $t' \in \Seq$ defined by $[t']_0 = \cl{i'}||t|$, $i' = 1-i$, $[t']_1=[t]_1$ we have}
\forall a,b,c,n \le |t'| &\colon \bigl(k,k\w (t'|a),n,x|b,c\bigr) \notin R.
\end{align*}

\item[(B)] There exist $l \in \omega$, $l \leq |t|$, and $i \in \{0,1\}$ such that
$[t]_0 = (\overline{0}|l) \w (\overline{i}|(|t|-l))$,
\begin{align*}
\exists a,b,c,n \le l &\colon  \bigl(k,k \w (t|a),n,x|b,c \bigr) \in R, \\
\shortintertext{and}
\forall a,b,c,n < l \ \forall j \in \{0,1\} \ &\forall t' \in \Seq, [t']_0 \preceq\cl{j}, [t']_1 \preceq [t]_1 \colon \bigl(k, k \w (t'|a),n,x|b,c\bigr) \notin R.
\end{align*}

\item[(C)] There exist $l \in \omega$, $l \leq |t|$, and $i \in \{0,1\}$ such that $[t]_0 = (\overline{1}|l) \w (\overline{i}|(|t|-l))$,
\begin{align*}
\exists a,b,c,n \le l          &\colon  \bigl(k,k\w (t|a),n,x|b,c \bigl) \in R, \\
\forall a,b,c,n \le l\ \forall &t'\in \Seq, [t']_0=\cl{0}|a, [t']_1\preceq [t]_1 \colon  \bigl(k, k \w (t'|a),n,x|b,c \bigr) \notin R,  \\
\shortintertext{and }
\forall a,b,c,n < l\ \forall   &t'\in \Seq, [t']_0=\cl{1}|a, [t']_1\preceq [t]_1 \colon  \bigl(k, k \w (t'|a),n,x|b,c \bigr) \notin R.
\end{align*}
\end{itemize}

\medskip\noindent
The set
\[
\begin{split}
T_\al = \bigl\{(s,x) \in \cS \times \cN &\set |s| \leq 1 \bigr\} \\
&\cup  \bigl\{(k\w t,x) \in \cS \times \cN \set k\in\om, |t| \geq 1, \textrm{ and $t$ satisfies (A) or (B) or (C)} \bigr\}
\end{split}
\]
is $\Si^0_1(\ep)$-recursive since the sets $S_\al$ and $R$ are $\Si_1^0(\ep)$-recursive,
the quantification of $t'$ in (A), (B), and (C) is formal since $t'$ is uniquely defined from $t$, $i$, $j$, and $a$ by $\Si^0_1(\ep)$-recursive functions defined on $\Si_1^0(\ep)$-recursive domains, the other quantifications of $i,j\in\{0,1\}$ and of $a, b, c, n, l \in \om$ can be reformulated using the $\Si_1^0(\ep)$-recursive bound by $1$ and $|t|$ respectively, and
Lemma~\ref{L:bounded-quantifiers} can be used for extended spaces (see Remark~\ref{R:extended-facts}).

\medskip\noindent
We prove (i)--(v) from Proposition~\ref{P:E^LN}.

\medskip\noindent
(i) Since each $t\in\cS$ is in $T^x_\al$ if $|t|\le 1$, it is sufficient to check that $(T^x_\al)_k$ is a tree for each choice of $k \in \omega$ and $x \in \cN$. Let $t \in (T_{\alpha}^x)_k$ and $s \preceq t$.
Since $S_{\alpha}^x$ is a tree, we have that $[s]_1 \in S_{\alpha}^x$. Now we look at the conditions (A)--(C).
If $t$ satisfies (A), then $s$ satisfies (A) and therefore $s \in (T_{\alpha}^x)_k$. If $t$ satisfies (B), respectively (C), then there exists the corresponding $l$.
If $|s| \geq l$, then $s$ satisfies (B), respectively (C). If $|s| < l$, then $s$ satisfies (A).
Thus we have $s \in (T_{\alpha}^x)_k$ again.

\medskip\noindent
(ii) Suppose that $x \in \cN$, $k \in \omega$, and $k \w z \in [T_{\alpha}^x]$. Then $[z]_1|j \in S_{\alpha}^x$ for every $j \in \omega$.
Using Proposition~\ref{P:jump-trees-old}(b), we get $[z]_1 = \sigma_{\al}^x$ and (ii) follows.

\medskip\noindent
(iii) Suppose that $x \in \cN$, $k \in \omega$, and $k \w z \in [T_{\alpha}^x]$.
In (ii) we saw that $[z]_1 = \sigma_{\alpha}^x$.
Using the partial function $\rright$ from Proposition~\ref{P:jump-trees-old}(c), we get (iii).

\medskip\noindent
(iv) Let $x \in \cN$ and $k \in \omega$. Now we show that $(E_x)_k = [(T_{\alpha}^x)_k]$ is a two-point set.
For $i \in \{0,1\}$ define $z^i \in \cN$ by $[z^i]_0 = \overline{i}$ and $[z^i]_1 = \sigma_{\alpha}^x$.
We distinguish the following possibilities. If
\begin{equation}\label{E:being-out}
\forall i \in \{0,1\}\ \forall a,b,c,n \in \om \colon \bigl(k, k \w (z^i|a),n,x|b,c\bigr) \notin R,
\end{equation}
then using condition (A) we get $z^i \in [(T_{\alpha}^x)_k], i \in \{0,1\}$.
Further, (B) and (C) are not satisfied by $z^i|a$ for any $a\in\om$ and therefore $[T_{\alpha}^x] = \{k\w z^x_{k,0},k\w z^x_{k,1}\}$, where $z^x_{k,i}=z^i$ for $i=0,1$ in this case.

Suppose \eqref{E:being-out} is not satisfied. Then there exists $i \in \{0,1\}$ and a unique
$l \in \omega$ such that

\begin{equation}\label{E:being-in}
\begin{aligned}
\exists a,b,c,n \le l &\colon \bigl(k,k \w (z^i|a),n,x|b,c\bigr) \in R, \\
\forall j \in \{0,1\}\ \forall a,b,c,n < l   &\colon \bigl(k,k \w (z^j|a),n,x|b,c\bigr) \notin R.
\end{aligned}
\end{equation}

If \eqref{E:being-in} is satisfied for $i=0$, then for every $t \in \cS$ with $|t| \geq l$ we have $t \in (T_{\alpha}^x)_k$ if and only if
$t$ satisfies (B). Thus $t \in (T_{\alpha}^x)_k$ if and only if $[t]_1 \in S_\al^x$ and $[t]_0 = \overline{0}|l \w (\overline{j}|(|t|-l))$, where $j \in \{0,1\}$.
This and (ii) imply that $z \in [(T_{\alpha}^x)_k]$ if and only if $[z]_1 = \sigma_{\alpha}^x$ and $[z]_0 = \overline{0}|l \w \overline{j}$, where $j \in \{0,1\}$.
In this case, we denote $z$ by $z_{k,j}^x$, if $j \in \{0,1\}$ is used in the definition.

If \eqref{E:being-in} is satisfied for $i=1$ and not for $i=0$, then for every
$t \in \cS$ with $|t| \geq l$ we have $t \in (T_{\alpha}^x)_k$ if and only if
$t$ satisfies (C). Thus $t \in (T_{\alpha}^x)_k$ if and only if $[t]_1 \in S_\al^x$ and $[t]_0 = \overline{1}|l \w (\overline{j}|(|t|-l))$, where $j \in \{0,1\}$.
This and (ii) imply that $z \in [(T_{\alpha}^x)_k]$ if and only if $[z]_1 = \sigma_{\alpha}^x$ and $[z]_0 = \overline{1}|l \w \overline{j}$, where $j \in \{0,1\}$.
We use the notation $z_{k,j}^x$ for these $z$ also in this last case.

\medskip\noindent
(v) Suppose that $x \in \cN$  and  $W \subset \cN \times \om$ is a $\Si^0_1(x,\ep)$ set such that $\bigcup_{n \in \omega} W^n \supset E_x$.
Then we can find $e \in \omega$ such that $W= G^x_e$. In the particular case of $k=e$, we have
\begin{equation}\label{E:inclusion}
\{e\w z_{e,0}^x, e\w z_{e,1}^x\} \subset \bigcup_{n \in \om} G^{n,x}_e.
\end{equation}

If for every  $i \in \{0,1\}$ and every $1\le l \in \omega$ the sequence $z_{e,i}^x|l$ satisfies (A), then we have $(e,e\w z_{e,i}^x, n, x) \notin G$ for every $i \in \{0,1\}$ and $n \in \om$ (by the definition of $R$) which is in contradiction with \eqref{E:inclusion}.
If for some $i \in \{0,1\}$ and $1\le l \in \omega$ we have that $z_{e,i}^x|l$ satisfies (B) or (C),
then there exists $n \in \omega$ such that $(e, e \w z_{e,j}^x,n,x) \in G$ for every $j \in \{0,1\}$.
Therefore $e \w z_{e,0}^x, e \w z_{e,1}^x \in W^n$ and so $(W^n)_e \cap [(T_{\alpha}^x)_e]$
contains more than one point. Thus (v) is proved.
\end{proof}

\begin{proof}[{\bf Proof of Proposition~\ref{P:E^SR}}]
We define the set $T_{\alpha} = T_{\al}^{SR}$ as follows.
Let $x \in \cN$ be fixed.
We put every $s\in\cS$ with $|s|\le 1$ into $T^x_{\alpha}$.
Let further $k \in \omega$ and we consider $t \in \Seq$ such that $|t|\ge 1$.
Then $k\w t \in T_\al^x$
if and only if $[t]_1 \in S_{\alpha}^x$ and one of the following conditions (A) and (B) is satisfied by $t$.
\begin{itemize}
\item[(A)] We have either
\begin{itemize}
\item[--] there is $i\in\om, i>|t|$, such that $[t]_0\preceq \cl{i}$, or

\item[--] there is $i\in\om, i\le |t|$, such that $[t]_0=\cl{i}||t|$, and
\[
\forall i'\le |t|\ \forall t' \in \Seq, [t']_0 \preceq \cl{i'}, [t']_1\preceq [t]_1\ \forall a,b,c,n \le |t'| \colon
\bigl(k,k\w (t'|a),n,x|b,c\bigr) \notin R.
\]
\end{itemize}

\item[(B)] There exist $l,i,j \in \omega, i \leq l \leq |t|$, such that $[t]_0 = (\overline{i}|l) \w (\overline{j}|(|t|-l))$ and
\begin{align}
\exists a,b,c,n \le l &\colon \bigl(k,k \w (t|a),n,x|b,c\bigr) \in R,    \label{E:B-1}\\
\forall a,b,c,n < l \ \forall i' < l \ &\forall t' \in \Seq, [t']_0 \preceq \cl{i'}, [t']_1 \preceq [t]_1 \colon
\bigl(k,k \w (t'|a),n,x|b,c\bigr) \notin R, \textrm{ and } \label{E:B-2}\\
\forall a,b,c,n \le l \ \forall i'<i \ &\forall t' \in \Seq, [t']_0 \preceq \cl{i'}, [t']_1 \preceq [t]_1 \colon
\bigl(k,k\w (t'|a),n,x|b,c\bigr) \notin R.\label{E:B-3}
\end{align}
\end{itemize}

\medskip\noindent
Thus
\[
\begin{split}
T_\al = \bigl\{(s,x) \in \cS \times \cN &\set |s| \leq 1 \bigr\} \\
&\cup  \bigl\{(k\w t,x) \in \cS \times \cN \set k\in\om, |t| \geq 1, \textrm{ and $t$ satisfies (A) or (B)} \bigr\}
\end{split}
\]
is $\Si^0_1(\ep)$-recursive since the sets $S_\al$ and $R$ are $\Si^0_1(\ep)$-recursive and the quantifications
of $a$, $b$, $c$, $n$, $l$, $t'$
can be reformulated using the $\Si_1^0(\ep)$-recursive bound by $|t|$ and Lemma~\ref{L:bounded-quantifiers} can be used.

\medskip\noindent
We prove (i)--(v) from Proposition~\ref{P:E^SR}.

\medskip\noindent
(i) Since each $s\in\cS$ is in $T^x_\al$ if $|s|\le 1$, it is sufficient to check that $(T^x_\al)_k$ is a tree for each choice of
$k \in \omega$ and $x \in \cN$.
Let $t \in (T_{\alpha}^x)_k$ and $s \preceq t$.
Since $S_{\alpha}^x$ is a tree, we have that $[s]_1 \in S_{\alpha}^x$.
Now we look at the conditions (A) and (B).
If $t$ satisfies (A), then $s$ satisfies (A),
and therefore $s \in (T_{\alpha}^x)_k$. If $t$ satisfies (B), then there exist the corresponding $l,i$, and $j$.
If $|s| \geq l$, then $s$ satisfies (B) with the same $l,i$, and $j$. If $|s| < l$, then $s$ satisfies (A).
Thus we have $s \in (T_{\alpha}^x)_k$ again.

\medskip\noindent
(ii) Suppose that $x \in \cN, k \in \omega$, and $z \in [(T_{\al}^x)_k]$. Then $[z]_1|l \in S_{\al}^x$ for every $l \in \omega$.
Using Proposition~\ref{P:jump-trees-old}(b), we get $[z]_1 = \sigma_{\al}^x$ and (ii) follows.

\medskip\noindent
(iii) Suppose that $x \in \cN$, $k \in \omega$, and $z \in [(T_{\al}^x)_k]$. In (ii) we verified that $[z]_1 = \sigma_{\alpha}^x$.
Using the partial function $\rright$ from Proposition~\ref{P:jump-trees-old}(c), we get (iii).

\medskip\noindent
(iv) Suppose that $x \in \cN$ and $k \in \omega$. Let us remark that it is obvious from the definition and (ii) that each element $z$ of $[(T_{\al}^x)_k]$ is of the form $[z]_0=\cl{i}|l\w \cl{j}$ for some $i,j,l\in\om$ and $[z]_1=\si^x_\al$.
Let us consider $z^i\in\cN$, $i\in\om$, defined by $[z^i]_0=\cl{i}$
and $[z^i]_1=\si^x_\al$. We distinguish the following possibilities. If
\begin{equation}\label{E:being-outSR}
\forall l \in \omega \ \forall i,a,b,c,n \leq l \colon \bigl(k,k\w (z^i|a),n,x|b,c\bigr) \notin R,
\end{equation}
then, using condition (A), we get $z^i \in [(T_{\alpha}^x)_k]$ for every $i \in \om$.
Further, (B) is never satisfied and therefore $[(T_{\alpha}^x)_k] = \{z^i \set i \in \om\}$.
Thus $(E_x^{SR})_k$ is closed and discrete.

Suppose \eqref{E:being-outSR} is not satisfied. Let us denote by $l_0$ the smallest $l \in \om$ such that
\begin{equation}\label{E:being-inSR}
\exists i,a,b,c,n \le l  \colon \bigl(k, k \w (z^i|a),n,x|b,c\bigr) \in R.
\end{equation}
Find the smallest $i \in \omega$ such that
\begin{equation}\label{E:condition}
\exists a,b,c,n \le l_0   \colon \bigl(k,k \w (z^i|a),n,x|b,c\bigr) \in R
\end{equation}
and denote it by $i_0$. Notice that $i_0\le l_0$ by the choice of $l_0$.
Find $a_0, b_0, c_0, n_0 \leq l_0$ such that we have $\bigl(k,k\w (z^{i_0}|a_0),n_0,x|b_0,c_0\bigr) \in R$.
The sequence $z^{i_0}|a_0$ is in $(T_{\alpha}^x)_k$ since $z^{i_0}|a_0$ satisfies (B) by the choice of $a_0$ and $i_0$.

Suppose that $z \in [(T_{\al}^x)_k]$. Then $[z]_1 = \sigma_{\al}^x$ and $[z]_0 = \cl{i}|m \w \cl{j}$ for some $i,j,m \in \omega$.
We show that if $i = j$, then $i = i_0$ and if $i \neq j$, then $i = i_0$ and $m = l_0$.
First assume that $i = j$. Then $z = z^i$. Set $d = \max\{i,l_0\}$. Suppose that $z|d$ satisfies (A).
Since $i \leq d$, we have that $z|d$ satisfies the second condition in (A).
As $i_0\le l_0\le d$, this implies, in particular, $\bigl(k,k \w (z^{i_0}|a_0),n_0,x|b_0,c_0\bigr) \notin R$, a contradiction.
Thus $z|d$ satisfies (B). Find the corresponding $l$. Using  \eqref{E:B-1} and minimality of $l_0$ we get $l_0 \leq l$.
Assume that $l_0 < l$. Then  \eqref{E:B-2} implies $\bigl(k,k \w (z^{i_0}|a_0),n_0,x|b_0,c_0\bigr) \notin R$, a contradiction.
Thus $l = l_0$. Using \eqref{E:B-3} we infer $i = i_0$.

Suppose that $i \neq j$. Then $\cl{i}|m \w j$ does not satisfy (A). Thus it satisfies (B) with $l = m$.
The condition \eqref{E:B-2} gives $m = l_0$  and then \eqref{E:B-3} gives $i_0 = i$.

It follows that $[(T^x_\al)_k] = \{z^x_{k,j}\set j \in \om\}$,
where $[z^x_{k,j}]_1 = \si^x_\al$ and $[z^x_{k,j}]_0 = \cl{i_0}|l_0\w \cl{j}$.
Consequently, $(E_x^{SR})_k$ is closed and discrete and therefore $E_x^{SR}$ is closed and discrete.

\medskip\noindent
(v) Let $x \in \cN$. Suppose towards contradiction that  $W \subset \cN \times \om$ is a $\Si^0_1(x,\ep)$ set such that
$\bigcup_{n \in \omega} W^n \supset E^{SR}_x$ and the set $W^n \cap E^{SR}_x$ is finite for every $n \in \omega$.
Then we can find $e \in \omega$ such that $W = G^x_e$. Thus we have
\begin{equation}\label{E:inclusionSR}
E^{SR}_x \subset \bigcup_{n\in\om} (G_e^x)^n.
\end{equation}
Let $e\w z \in E^{SR}_x$. If for every $l \in \om$ we have that $z|l$ satisfies (A), then $\bigl(e,e \w z,n,x\bigr) \notin G$
for every $n \in \om$ (by the definition of $R$) which is in contradiction with \eqref{E:inclusionSR}.
If for some $l' \in \om$ we have that $z|l'$ satisfies (B), then there exist $l,i \in \omega$ such that
any $w^j \in \cN$, $j\in\om$, with $[w^j]_0 = (\cl{i}|l) \w \cl{j}$ and  $[w^j]_1 = \sigma_{\alpha}^x$
satisfies $e\w w^j \in G_e^{n,x} \cap E^{SR}_x$.
Therefore $(W^n)_e \cap (E^{SR}_x)_e$ is infinite and $W^n \cap E^{SR}_x$ is infinite as well. This proves (v).
\end{proof}

\section{Reduction of effective Borel classes to effectively open sets}\label{S:reduction-of-Borel-classes}

Let $T \subset \cS$ be a nonempty well-founded tree on $\om$.
The tree $\{s \in \cS \set t^{\wedge}s\in T\}$, $t \in T$, is denoted by $T_t$.
Let $\rho^T\colon \cS \to\om_1$ be the corresponding {\it rank function} (see \cite[2D.1]{Moschovakis}).
We use also an alternative notation $h^T_t=\rho^T(t)$ for $t\in T$.
The symbol $T_{\max}$ denotes the set $\{t\in T\set h^T_t=0\}$ of all {\it terminal elements} of $T$.

\begin{definition}\label{D:B-z-T,H}
Let $X$ be a  metric  space.
Given a nonempty well-founded tree $T$ on $\om$ and an open set $\Th \subset X \times T_{\max}$,
we define sets
$B^{T,\Th}(t)$ for all $t\in T$ by the scheme
\[
B^{T,\Th}(t) =
\begin{cases}
\Th^t                                                                       &\text{if } t \in T_{\max}, \\
\bigcup \{X \sm B^{T,\Th}(t \w k) \set k \in \om, t \w k \in T\}           &\text{if } t \in T \sm T_{\max}.
\end{cases}
\]
\end{definition}

\begin{remark}
We may easily observe that $B^{T,\Th}(t)$ is a $\bSi^0_{1+h^T_t}$ set in $X$ for every $t\in T$.
\end{remark}

\begin{lemma}[{cf. \cite[Chapter 33]{Miller}}]\label{L:classrepresentation}
Let $X$ be a  metric  space,  $\be$ be a countable ordinal, and $B$ be a $\bSi^0_{1+\be}$ subset of $X$.
Then there exists a nonempty well-founded tree $T$ on $\om$ and an open set
$\Th \subset X \times T_{\max}$ such that $h^T_\emptyset\le\be$ and $B = B^{T,\Theta}(\emptyset)$.

Moreover, the tree $T$ can be chosen such that
\begin{equation}\label{full-tree}
t\w l\in T\textrm{ for every }l\in\om\textrm{ whenever } t \in T \setminus T_{\max}.
\end{equation}
\end{lemma}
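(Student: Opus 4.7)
My plan is to prove the lemma by transfinite induction on $\be < \omega_1$, building the pair $(T,\Theta)$ (together with property \eqref{full-tree}) simultaneously.

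\textbf{Base case $\be = 0$.} The set $B$ is open. Take $T = \{\emptyset\}$, so $T_{\max} = \{\emptyset\}$ and $h^T_{\emptyset} = 0$; the condition \eqref{full-tree} is vacuous. Set $\Theta = B \times \{\emptyset\}$, which is open in $X \times T_{\max}$ (with $T_{\max}$ discrete). Then $B^{T,\Theta}(\emptyset) = \Theta^{\emptyset} = B$.

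\textbf{Inductive step $\be \ge 1$.} Using the standard structural description of the additive Borel classes, write
\[
B = \bigcup_{k \in \om}(X \sm B_k),
\]
where each $B_k \in \bSi^0_{1+\ga_k}$ for some $\ga_k < \be$: for a successor $\be = \ga+1$ take all $\ga_k = \ga$, and for a limit $\be$ take any sequence $\ga_k \nearrow \be$. If the natural decomposition uses only finitely many terms or only countably many (possibly fewer than $\om$), pad it by repeating summands so that every index $k \in \om$ occurs; this will be what supplies \eqref{full-tree}. By the induction hypothesis, for every $k \in \om$ there is a nonempty well-founded tree $T^k$ satisfying \eqref{full-tree} with $h^{T^k}_{\emptyset} \le \ga_k$ and an open set $\Theta^k \subset X \times (T^k)_{\max}$ such that $B_k = B^{T^k,\Theta^k}(\emptyset)$. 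Graft these trees at $\emptyset$:
\[
T = \{\emptyset\} \cup \bigl\{k\w t \set k \in \om,\ t \in T^k\bigr\},
\qquad
T_{\max} = \bigl\{k\w t \set k \in \om,\ t \in (T^k)_{\max}\bigr\},
\]
and define $\Theta \subset X \times T_{\max}$ by $(x, k\w t) \in \Theta \Leftrightarrow (x,t) \in \Theta^k$, which is open. Property \eqref{full-tree} is immediate for $T$: at $\emptyset$ every $k \in \om$ is a successor (since each $T^k$ is nonempty), and at every other non-terminal node it is inherited from the corresponding $T^k$.

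\textbf{Verification.} A straightforward induction on the rank $h^{T^k}_t$ of $t$ in $T^k$ shows
\[
B^{T,\Theta}(k\w t) = B^{T^k,\Theta^k}(t) \qquad \text{for every } k \in \om,\ t \in T^k;
\]
indeed, the base case $t \in (T^k)_{\max}$ follows from the definition of $\Theta$, and the successor step follows from the recursive definition of $B^{T,\Theta}$ together with the observation that the successors of $k \w t$ in $T$ are exactly $\{k\w (t \w j) \set t \w j \in T^k\}$. Applied with $t = \emptyset$ this gives $B^{T,\Theta}(k) = B_k$, so
\[
B^{T,\Theta}(\emptyset) = \bigcup_{k \in \om}\bigl(X \sm B^{T,\Theta}(k)\bigr) = \bigcup_{k \in \om}(X \sm B_k) = B.
\]
For the rank, the recursive formula yields $h^T_{\emptyset} = \sup_{k}\bigl(h^{T^k}_{\emptyset}+1\bigr) \le \sup_{k}(\ga_k+1) \le \be$, where the last inequality is $\ga+1 = \be$ in the successor case and follows from $\ga_k < \be$ in the limit case.

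\textbf{Main obstacle.} The only place requiring care is matching the decomposition of $B$ to both the bound $h^T_{\emptyset} \le \be$ and the full-tree property \eqref{full-tree} at once; this is solved by the padding trick above, which forces the index set to be all of $\om$ without increasing the supremum $\sup_k(\ga_k+1)$.
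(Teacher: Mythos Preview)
Your proof is correct and follows essentially the same transfinite induction as the paper: the base case, the grafting of the trees $T^k$ at the root, and the definition of $\Theta$ are identical. Your explicit verification that $B^{T,\Theta}(k\w t)=B^{T^k,\Theta^k}(t)$ and your padding remark are minor elaborations rather than differences in approach; the paper simply invokes the standard $\bSi^0_{1+\be}$ decomposition (which already has index set $\om$) without discussing padding.
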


\begin{proof}
We proceed by transfinite induction over $\beta$. If $\beta = 0$, then $T = \{\emptyset\}$ and $\Th = B \times \{\emptyset\}$ satisfy the desired conclusions.

Suppose that $\beta > 0$ and the assertion holds for every ordinal $\ga < \be$. Let $B \in \bSi^0_{1+\be}\restriction X$ be given.
Then there are $0\le\ga_k < \be$ and $B(k) \in \bSi^0_{1+\ga_k}\restriction X$ for all $k\in\om$ such that
$B = \bigcup_{k\in\om} (X\sm B(k))$.
By the induction hypothesis there are trees $T(k)$ satisfying (\ref{full-tree}), and
open sets $\Th(k)\su X\times T(k)_{\max}$ with the corresponding properties $h^{T(k)}_\emptyset \le \gamma_k$
and $B(k) = B^{T(k),\Th(k)}(\emptyset)$. Thus setting
\[
T = \bigcup_{k \in \om}\{k \w t \set t \in T(k)\} \cup \{\emptyset\}
\]
and
\[
\Th = \bigl\{(z,k \w s) \in X \times \Seq \set k \in \omega,  k \w s \in T_{\max}, (z,s) \in \Th(k)\bigr\},
\]
we get that $T$ is a nonempty well-founded tree with $h^T_\emptyset\le\be$ satisfying (\ref{full-tree})
and $\Th$ is an open subset of $X \times T_{\max}$ such that $B^{T,\Th}\bigl((k)\bigr) = B(k)$ for every $k\in\om$, and therefore $B^{T,\Th}(\emptyset)=B$.
\end{proof}

\begin{lemma}\label{L:reduction}
Let $a \in \cN$, $T \su \cS$, $\Th \su \cN \times \cS$ be such that
\begin{itemize}
  \item $T$ is a nonempty well-founded $\Si^0_1(a,\ep)$ tree  satisfying (\ref{full-tree}),

  \item $\Th \su \cN \times T_{\max}$ is a $\Si^0_1(a,\ep)$ set, and

  \item the restricted rank function $\rho^T_\al \colon t \in T \rightharpoonup h^T_t\in [0,\al]$ is $\Si^0_1(a,\ep)$-recursive on its $\Si^0_1(a,\ep)$-recursive domain
      $D(\rho^T_\al) = \{t\in T\set h^T_t\in [0,\al]\}$.
\end{itemize}
Then there is a $\Sigma_1^0(a,\ep)\restriction (\cN \times \cS)$-recursive set $\wh{\Th}\su \cN \times T$ such that for every
$t \in D(\rho^T_\al)$ we have
\begin{equation}\label{E:jump-representation}
y \in B^{T,\Theta}(t)  \Leftrightarrow   y^{(h^T_t)}_{a} \in \wh{\Th}^t.
\end{equation}
\end{lemma}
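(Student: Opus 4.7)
The plan is to apply Kleene's recursion theorem (Fact~\ref{F:recursionthms}(a), extended via Remark~\ref{R:extended-facts}) so as to produce $\wh{\Th}$ as a single $\Si^0_1(a,\ep)$ set which captures, after composition with $y\mapsto y_a^{(h^T_t)}$, the whole transfinite recursion defining $B^{T,\Th}(\cdot)$. The guiding idea is Post's: each additional Turing jump $(\cdot)'_a$ converts a $\Pi^0_1(a,\ep)$ test on $y_a^{(\gamma)}$ into a single-coordinate query on $y_a^{(\gamma+1)}$, so the nested negations in Definition~\ref{D:B-z-T,H} flatten into one existential clause once lifted to $y_a^{(h^T_t)}$.

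The first step, which I expect to be the main technical obstacle, is to package this jump translation uniformly. Writing $G:=G^{\cN\times\cS}_{\Si^0_1(a,\ep)}\su\om\times\cN\times\cS$ for the good universal set, I will use Definition~\ref{D:Turing-jump} together with the s--m--n part of Fact~\ref{F:recursionthms}(a) to produce a recursive function $m\colon\cS\times\om\to\om$ such that for every $w\in\cN$, $s\in\cS$, $e\in\om$,
\[
(e,w,s)\in G \ \Longleftrightarrow\ w'_a(m(s,e))=1.
\]
Next I define, uniformly in $e\in\om$, a set $\cH\su\om\times\cN\times\cS$ by putting $(e,z,t)\in\cH$ iff $t\in D(\rho^T_\al)$ and either (i) $h^T_t=0$ and $(z,t)\in\Th$, or (ii) $h^T_t>0$ and there exists $k\in\om$ with $t\w k\in T$ and $r_{\ref{I:a^ga-from-a^be}}(h^T_{t\w k}+1,h^T_t,z)(m(t\w k,e))=0$, where $r_{\ref{I:a^ga-from-a^be}}$ is the function from Lemma~\ref{L:jumps}(3). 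The hypotheses on $T$ and $\Th$, the $\Si^0_1(a,\ep)$-recursivity of $\rho^T_\al$, $r_{\ref{I:a^ga-from-a^be}}$, and $m$, together with Fact~\ref{F:properties-of-Gamma} and Lemma~\ref{L:bounded-quantifiers}, make $\cH\in\Si^0_1(a,\ep)$. By the recursion part of Fact~\ref{F:recursionthms}(a) there is $e^*\in\om$ with $\cH_{e^*}=G_{e^*}$, and I set $\wh{\Th}:=G_{e^*}$; by clauses (i)--(ii) of the definition, $\wh{\Th}\su\cN\times D(\rho^T_\al)\su\cN\times T$.

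Finally, I verify (\ref{E:jump-representation}) by transfinite induction on $h^T_t$ for $t\in D(\rho^T_\al)$. The base $h^T_t=0$ is immediate since $y_a^{(0)}=y$ and $B^{T,\Th}(t)=\Th^t$. For the step, for each child $t\w k\in T$, which again lies in $D(\rho^T_\al)$ because $h^T_{t\w k}<h^T_t\le\al$, the inductive hypothesis gives $y\notin B^{T,\Th}(t\w k)\iff y_a^{(h^T_{t\w k})}\notin\wh{\Th}^{\,t\w k}$; the defining property of $m$ rewrites this as $(y_a^{(h^T_{t\w k})})'_a(m(t\w k,e^*))=0$, i.e., $y_a^{(h^T_{t\w k}+1)}(m(t\w k,e^*))=0$; and since $h^T_{t\w k}+1\le h^T_t$, Lemma~\ref{L:jumps}(3) turns this into $r_{\ref{I:a^ga-from-a^be}}(h^T_{t\w k}+1,h^T_t,y_a^{(h^T_t)})(m(t\w k,e^*))=0$. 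Taking the existential over $k$ is exactly clause (ii) in the definition of $\cH$ at $e=e^*$, so $(y_a^{(h^T_t)},t)\in\cH_{e^*}=\wh{\Th}$, which yields $y\in B^{T,\Th}(t)\iff y_a^{(h^T_t)}\in\wh{\Th}^{\,t}$.
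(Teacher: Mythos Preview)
Your proof is correct and follows essentially the same route as the paper's. The paper's Claim~A is your function $m$, its Claim~B packages your clause~(ii) via the substitution property (their partial function $q$ and set $Q$ correspond to your use of $r_{\ref{I:a^ga-from-a^be}}$ inside $\cH$), their $\wh Q$ is your $\cH$, and both apply the recursion theorem from Fact~\ref{F:recursionthms}(a) to obtain the fixed point $e^*$ and then verify~\eqref{E:jump-representation} by the same transfinite induction on $h^T_t$. One cosmetic remark: the unbounded existential over $k$ in clause~(ii) is handled by closure under $\exists^{\omega}$ (Fact~\ref{F:properties-of-Gamma}), not by Lemma~\ref{L:bounded-quantifiers}; and, like the paper's own argument, your construction yields $\wh\Th\in\Si^0_1(a,\ep)$, which is all that is actually used downstream.
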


\begin{proof}
To prove the statement we shall use the following observations. Notice that Claim A is related to Lemma~\ref{L:jumps}(1).

\begin{claima}
There is a $\Si_1^0(\ep)$-recursive $r\colon \om \times T \to \om$ such that for every $z \in \cN$, $t\in T$, and $e \in \omega$ we have
\[
z \in (G^{\cN \times \cS \times \cN^2}_{\Si_1^0})^{t,a,\ep}_e  \Leftrightarrow  z'_{a}\bigl(r(e,t)\bigr) = 1.
\]
\end{claima}

\begin{proof}[Proof of Claim A]
Set
\[
W = \bigl\{(e,t,w,z) \in \om \times \cS \times \om \times \cN \set (e,z,t,a,\ep) \in G^{\cN \times \cS \times \cN^2}_{\Si_1^0}\bigr\}.
\]
The set $W$ is in $\Sigma_1^0(a,\ep)$. Notice that the choice of $w$ has no influence on the validity of $(e,t,w,z)\in W$.
We find $e_W \in \omega$ such that $W = (G^{\omega\times \cS \times \om \times \cN^3}_{\Si_1^0})^{a, \ep}_{e_W}$.
We define $r\colon \omega\times T \to \omega$ by $r(e,t) = S^{\omega\times\cS,\omega \times \cN^3}(e_W,e,t)$.
For $(e,t,z)\in\om\times\cS\times\cN$ we get
\[
\begin{split}
z \in (G^{\cN \times \cS \times \cN^2}_{\Si_1^0})^{t, a, \ep}_{e}
&\Leftrightarrow \bigl(e,t,S^{\omega\times\cS,\omega \times \cN^3}(e_W,e,t),z\bigr) \in W \\
&\Leftrightarrow
\bigl(e_W,e,t,S^{\omega\times\cS,\omega \times \cN^3}(e_W,e,t), z, a, \ep \bigr) \in G^{\omega \times \cS \times \omega \times \cN^3}_{\Si_1^0} \\
&\Leftrightarrow  \bigl(S^{\omega\times\cS,\omega \times \cN^3}(e_W,e,t),S^{\omega\times\cS,\omega \times \cN^3}(e_W,e,t),z,a, \ep \bigr) \in G^{\omega \times \cN^3}_{\Si_1^0} \\
&\Leftrightarrow  \bigl(r(e,t),r(e,t),z,a, \ep \bigr) \in G^{\omega \times \cN^3}_{\Si_1^0} \ \Leftrightarrow \ z'_{a}(r(e,t)) = 1.
\end{split}
\]
The third equivalence follows from Fact~\ref{F:recursionthms}(a) and Remark~\ref{R:extended-facts}.
The last equivalence uses Definition~\ref{D:Turing-jump}.
\end{proof}

Let us use the notation $h_t = h_t^T$ in the remaining part of the proof.

\begin{claimb}
Let $r_3$ be the function from Lemma~\ref{L:jumps}(3) and $r$ be the function from Claim A.
We define a partial function $q\colon \omega \times \cN \times \Seq \rightharpoonup \cN$ by
\[
q(e,z,t)(k) = \bigl(r_3(h_{t \w k}+1,h_t,z)\bigr)(r(e,t \w k))
\]
on the domain
\[
D(q) = \bigl\{(e,z,t) \in \om \times \cN \times T \set 0 < h_t \le \al, z = y^{(h_t)}_a \textrm{ for some } y \in \cN\bigr\}.
\]
Then there is a $\Si^0_1(a,\ep)$ set $Q\su \om \times \cN \times \cS$ such that
\[
\bigl\{(e,z,t) \in D(q)\set \exists k\in\om \colon q(e,t,z)(k) = 0 \bigr\} = Q \cap D(q).
\]
\end{claimb}

\begin{proof}[Proof of Claim B]
The partial function $q$ is $\Sigma_1^0(a,\ep)$-recursive on its domain $D(q)$.
The set $E := \{v \in \cN \set \exists k \in \omega \colon v(k) = 0\}$ is a $\Sigma_1^0(a,\ep)$  set in $\cN$ as well.
Using substitution property of the class $\Sigma_1^0(a,\ep)$ on extended product spaces, we find $Q \in \Sigma_1^0(a,\ep)\restriction \omega \times \cN \times \Seq$ such that
\[
(e,z,t) \in D(q) \Rightarrow [(e,z,t) \in Q \Leftrightarrow q(e,z,t) \in E].
\]
\end{proof}

Now we get from the assumptions of the lemma that the set
\[
\wh{Q} = \Bigl(\om \times \Th \Bigr) \cup \Bigl(Q \cap \bigr(\om \times \cN  \times \{t\in T\set h_t>0\}\bigr)\Bigr)
\]
is a $\Si^0_1(a,\ep)$ subset of $\om \times \cN \times \cS$ and using Fact~\ref{F:recursionthms}(a) for extended product spaces we find $e^*\in\om$ such that $(G^{\cN \times \cS \times \cN^2}_{\Si_1^0})^{a,\ep}_{e^*} = \wh{Q}_{e^*}$.
Finally, we define $\wh{\Th} = \wh{Q}_{e^*}$.

We verify \eqref{E:jump-representation} by induction on the rank $h_t$.
For $t \in T$ with $h_t = 0$, i.e., $t \in T_{\max}$, we have $B^{T,\Theta}_t = \Theta^t$ by Definition~\ref{D:B-z-T,H}
and $\Th^t = (\wh{Q}_{e^*})^t = \wh{\Th}^t$. Thus we get \eqref{E:jump-representation}.

For $t \in T$ such that $0 < h_t \le \al$, we have
\[
\begin{split}
y \in B^{T,\Theta}(t)
&\Leftrightarrow \exists k \in \omega\colon y \notin B^{T,\Th}_{t \w k} \ \Leftrightarrow \
\exists k \in \omega\colon y_{a}^{(h_{t \w k})} \notin \wh{\Th}^{t\w k} = (G^{\cN \times \cS \times \cN^2}_{\Si_1^0})^{t \w k,a,\ep}_{e^*} \\
&\Leftrightarrow \exists k \in \omega\colon y_{a}^{(h_{t \w k}+1)}(r(e^*,t\w k)) = 0 \
\Leftrightarrow \ \exists k \in \omega\colon q(e^*,y^{(h_t)}_{a},t)(k) = 0  \\
&\Leftrightarrow (y^{(h_t)}_{a},t) \in \wh{Q}_{e^*} \ \Leftrightarrow \ y^{(h_t)}_{a} \in \wh{\Th}^t.
\end{split}
\]
We used successively
the definition of $B^{T,\Th}(t)$, $t \in T$;
the induction hypothesis, the fact that the rank $h_{t\w k}$ is smaller than $h_t$, and the choice of $e^*$;
Claim~A for $z=y^{(h_{t\w k})}_a$;
Lemma~\ref{L:jumps}(2) and definition of $q$;
Claim~B for $z=y^{(h_t)}_a$; definition of $\wh{\Th}$.
\end{proof}

\begin{proof}[{\bf Proof of Proposition~\ref{P:jump-and-classes-1}}]

Let $A(n) \in \bSi^0_{\al}\restriction \cN^2$ be given for every $n \in \omega$. We will need the following claim.

\begin{claim}\label{L:recursive-class-representation}
There exist $a \in \cC$, a nonempty well-founded tree $T$ satisfying \eqref{full-tree} on $\omega$,
and a $\Si^0_1(a,\ep)\restriction (\cN \times \cS)$ set $\Th \su \cN \times T_{\max}$  such that
\begin{itemize}
\item $h^T_\emptyset\le\alpha + 1$,

\item the set $T$ is $\Si^0_1(a,\ep)$-recursive,

\item the mapping $t \mapsto h^T_t$ is a partial $\Si^0_1(a,\ep)$-recursive function on $\{t\in T\set h^T_t\le\al\}$,

\item $h^T_{(n)}\le\al, n \in \omega$, and

\item $A(n)_{a} = B^{T,\Th}\bigl((n)\bigr), n \in \omega$.
\end{itemize}
\end{claim}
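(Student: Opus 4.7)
The plan is to first construct concrete combinatorial objects $(T,\Th')$ representing the sets $A(n)$ via the scheme of Definition~\ref{D:B-z-T,H} in the ambient space $\cN^2$, then relativize to a single real $a\in\cC$ that makes everything effective, and finally take the $a$-section to obtain $\Th \su \cN \times T_{\max}$.

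First I would apply Lemma~\ref{L:classrepresentation} to each $A(n) \in \bSi^0_\alpha \restriction \cN^2$. Since $\alpha$ is a limit ordinal, $1+\alpha = \alpha$, so the lemma supplies nonempty well-founded trees $T(n)$ satisfying \eqref{full-tree} with $h^{T(n)}_\emptyset \le \alpha$, together with open sets $\Th(n) \su \cN^2 \times T(n)_{\max}$ such that $A(n) = B^{T(n),\Th(n)}(\emptyset)$. I would then glue them together by
\[
T = \{\emptyset\} \cup \bigcup_{n \in \omega}\bigl\{(n)\w t \set t \in T(n)\bigr\}
\]
and
\[
\Th' = \bigl\{(y,(n)\w s) \in \cN^2 \times \cS \set (n)\w s \in T_{\max},\ (y,s) \in \Th(n)\bigr\}.
\]
One checks that $T_{(n)} = T(n)$, so $h^T_{(n)} = h^{T(n)}_\emptyset \le \alpha$ and hence $h^T_\emptyset \le \alpha + 1$; the tree $T$ inherits \eqref{full-tree} from the $T(n)$'s because the root $\emptyset$ has a child $(n)$ for every $n \in \omega$; and a straightforward induction along the rank gives $B^{T,\Th'}\bigl((n)\bigr) = A(n)$ as subsets of $\cN^2$ for every $n$.

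Next I would view the rank as a partial function $\rho \colon \cS \rightharpoonup [0,\alpha]$ with domain $D(\rho) = \{t \in T \set h^T_t \le \alpha\}$ and $\rho(t) = h^T_t$ on $D(\rho)$. Because $\cS$ and $[0,\alpha]$ are extended basic spaces of type $0$ carrying the discrete topology, the sets $T$, $D(\rho)$ and their complements are all clopen in $\cS$, the graph of $\rho$ is clopen in $\cS \times [0,\alpha]$, and $\Th'$ is open in $\cN^2 \times \cS$. Passing to the corresponding product spaces through the identifications of Definition~\ref{D:Ga-in-extended-spaces} and applying Lemma~\ref{F:relativization} (together with its ``moreover'' clause for clopen sets) to this finite list, I would obtain $a \in \cC$ such that $T$ is $\Si^0_1(a,\ep)$-recursive, $\rho$ is $\Si^0_1(a,\ep)$-recursive with $\Si^0_1(a,\ep)$-recursive domain, and $\Th' \in \Si^0_1(a,\ep)\restriction (\cN^2 \times \cS)$.

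Finally I would set
\[
\Th = \bigl\{(y,t) \in \cN \times \cS \set (a,y,t) \in \Th'\bigr\};
\]
this lies in $\Si^0_1(a,\ep)\restriction (\cN \times \cS)$ by the substitution property applied to the $\Si^0_1(a,\ep)$-recursive map $(y,t)\mapsto(a,y,t)$, and is contained in $\cN \times T_{\max}$ since $\Th' \su \cN^2 \times T_{\max}$. An induction on $h^T_t$ shows that sectioning at the first coordinate commutes with the scheme of Definition~\ref{D:B-z-T,H}: at terminals this is the definition, and at internal nodes it follows by distributing the $a$-section through unions and complementation via the identity $(\cN^2 \sm M)_a = \cN \sm M_a$ for $M \su \cN^2$. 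Consequently $B^{T,\Th}\bigl((n)\bigr) = \bigl(B^{T,\Th'}((n))\bigr)_a = A(n)_a$ for every $n$, as required. The step I expect to be most delicate is the application of Lemma~\ref{F:relativization} to the rank function: the lemma is phrased for product spaces and continuous mappings, so one has to present $\cS$ and $[0,\alpha]$ as type $0$ extended spaces with the discrete topology through their identifications with $\omega$ — but this is precisely the framework already set up in Subsection~\ref{SS:auxiliary-spaces} and Remark~\ref{R:extended-facts}, so once the identifications are invoked the application is routine.
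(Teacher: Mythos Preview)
Your proof is correct and follows essentially the same approach as the paper's: apply Lemma~\ref{L:classrepresentation} to each $A(n)$, glue the resulting trees and open sets by shifting under $(n)\w\cdot$, invoke Lemma~\ref{F:relativization} to find a single $a\in\cC$ making $T$, the restricted rank, and the open set effective, and then take the $a$-section to obtain $\Th$. The paper's argument is organized identically (with $\Xi$ in place of your $\Th'$), and your closing remark about passing through the type~$0$ identifications to apply Lemma~\ref{F:relativization} is exactly the point the paper handles implicitly.
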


\begin{proof}[Proof of Claim]
Since $\alpha$ is an infinite ordinal number, we have $1+\alpha=\alpha$.
Thus we may use Lemma~\ref{L:classrepresentation} to find for every $n \in \omega$ a nonempty well-founded tree $T(n)$ satisfying (\ref{full-tree}) with $h^{T(n)}_\emptyset\le\alpha$ and an open set $\Xi(n) \subset \cN^2 \times T(n)_{\max}$ such that $A(n) = B^{T(n),\Xi(n)}_{\emptyset}$. Then we define
\[
T = \bigl\{(n) \w t \set n \in \omega, t \in T(n)\bigr\} \cup \{\emptyset\}.
\]
The set $T$ is obviously a nonempty well-founded tree satisfying \eqref{full-tree} and  $h^T_\emptyset \le \alpha + 1$.
So $h^T_{(n)} = h^{T(n)}_\emptyset\le\al$.
Further we define $\Xi \subset \cN^2 \times \Seq$ by
\[
\Xi = \bigl\{(x,y,n \w s) \in \cN^2 \times \Seq \set n \in \omega, s \in T(n)_{\max}, (x,y,s) \in \Xi(n)\bigr\}.
\]
The set $\Xi$ is an open subset of $\cN^2 \times \Seq$. Using Fact~\ref{F:relativization}, we find $a\in\cC$ such that $\Xi$ is a $\Si^0_1(a,\ep)$
subset of $\cN^2 \times \Seq$, $T$ is $\Si^0_1(a,\ep)$-recursive, and the function $t \mapsto h_t^T$ is
$\Si^0_1(a,\ep)$-recursive on the $\Si^0_1(a,\ep)$-recursive set $\{t\in T\set h^T_t\le\al\}$.
The set $\Th=\Xi_a$ is $\Si^0_1(a,\ep)$ in $\cN \times \cS$.
It remains to prove the equalities from the last item of Claim.
Suppose that $t \in T_{\max}$. Then there exists $n \in \omega$ and $s \in \Seq$ such that
$n \w s = t$ and $s \in T(n)_{\max}$. We have
\[
B^{T,\Th}(n \w s) = B^{T,\Xi_a}(t) = \Xi^t_a = \Xi(n)^s_a = B^{T(n),\Xi(n)}(s)_a.
\]
The sets $B^{T,\Xi_a}((n))$ and $B^{T(n),\Xi(n)}(\emptyset)_a=(A(n))_a$ are for each $n\in\om$ defined by the same operations from the families of sets $\Xi(n)^s_a$, $n\w s\in T_{\max}$.
Therefore the sets  $B^{T,\Th}\bigl((n)\bigr)$ and $B^{T(n),\Xi(n)}(\emptyset)_a = A(n)_a$ coincide for every $n \in \omega$.
This proves the equalities.
\end{proof}

Now we find $\wh{\Th_a}$, for $T$ and $\Th_a$ from the above Claim, using Lemma~\ref{L:reduction}.
For the particular case of $t=(n)\in T$ we have
\[
y\in A(n)_a \Leftrightarrow y^{(h^T_{(n)})}_{a} \in \wh{\Th_a}^{(n)}.
\]
Using Lemma~\ref{L:jumps}(3), we have that $y^{(h^T_{(n)})}_{a}=r_3(h^T_{(n)},\al,y^{(\al)}_a)$.
Thus
\[
H = \bigl\{(z,n) \in \cN \times \omega \set \bigl(r_3(h_{(n)}^T,\al,z),(n)\bigr) \in \wh{\Th_a}\bigr\}
\]
is the desired set from Proposition~\ref{P:jump-and-classes-1}.
It is $\Si^0_1(a,\ep)$ since $t\mapsto h_t^T$ is $\Si^0_1(\ep)$-recursive and defined for all $t=(n)$, $n\in\om$, and the function $r_3$ is
$\Si^0_1(\ep)$-recursive and $(h_{(n)}^T,\al,y^{(\al)}_a)\in D(r_3)$.
\end{proof}

\begin{lemma}\label{P:jump-and-classes-2}
Let $\Om$ be a type zero extended product space, $a\in\cN$, and $C\in\Sigma_2^0(a,\ep)\restriction \cN \times \Om$.
Then there is $W\in \Sigma_1^0(a,\ep)\restriction \cN \times \Om$ such that for all
$(z,s) \in \cN \times \Om$ we have
\[
(z,s) \in C \Leftrightarrow (z'_a,s) \in W.
\]
\end{lemma}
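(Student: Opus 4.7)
The plan is to exploit the fact that the Turing jump $z'_a$ serves as a ``universal decider'' for $\Sigma_1^0(a,\ep)$-predicates in $z$, so that membership in a $\Pi_1^0(a,\ep)$ set can be uniformly reduced to checking a single bit of $z'_a$. Since a $\Sigma_2^0(a,\ep)$ set is a countable union of $\Pi_1^0(a,\ep)$ sets, this reduction plus a union over $n\in\om$ will yield a $\Sigma_1^0(a,\ep)$ description of $C$ in terms of $z'_a$.

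\smallskip

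\noindent\textbf{Step 1 (unfolding $C$).} Since $C \in \Sigma_2^0(a,\ep)\restriction \cN\times\Om$, I would write
\[
C = \bigl\{(z,s) \in \cN \times \Om \set \exists n \in \omega\colon (z,s,n) \in D\bigr\}
\]
for some $D \in \Pi_1^0(a,\ep)\restriction \cN \times \Om \times \om$. Then $D^c := (\cN\times\Om\times\om)\sm D$ belongs to $\Sigma_1^0(a,\ep)$. By Definition~\ref{D:relativizations} (extended to type zero spaces via Remark~\ref{R:extended-facts}), there is a $\Sigma_1^0$ set $E \subset \cN\times\Om\times\om\times\cN^2$ with $D^c = E^{a,\ep}$, and by Fact~\ref{F:recursionthms}(a) there is $e^* \in \omega$ such that $E = (G^{\cN \times \Om \times \om \times \cN^2}_{\Sigma_1^0})_{e^*}$.

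\smallskip

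\noindent\textbf{Step 2 (reducing $D^c$ to the Turing jump).} Following the pattern of Claim~A in the proof of Lemma~\ref{L:reduction}, I would introduce the auxiliary set
\[
V = \bigl\{(e,s,n,m,z,v,w) \in \om \times \Om \times \om^2 \times \cN^3 \set (e,z,s,n,v,w) \in G^{\cN \times \Om \times \om \times \cN^2}_{\Sigma_1^0}\bigr\},
\]
whose defining condition is independent of the dummy coordinate $m$. The set $V$ is in $\Sigma_1^0$, so there is $e_V \in \om$ with $V = (G^{\om\times\Om\times\om^2\times\cN^3}_{\Sigma_1^0})_{e_V}$. With $S := S^{\om\times\Om\times\om,\,\om\times\cN^3}_{\Sigma_1^0}$, set
\[
r(s,n) := S(e_V,e^*,s,n), \qquad (s,n) \in \Om \times \om.
\]
Then $r$ is recursive (in particular $\Sigma_1^0(\ep)$-recursive) since $S$ is recursive, and chaining the definitions yields, for every $(z,s,n) \in \cN \times \Om \times \om$,
\[
\begin{split}
(z,s,n) \in D^c &\Leftrightarrow (e^*,z,s,n,a,\ep) \in G^{\cN\times\Om\times\om\times\cN^2}_{\Sigma_1^0} \Leftrightarrow (e^*,s,n,r(s,n),z,a,\ep) \in V \\
&\Leftrightarrow (r(s,n),r(s,n),z,a,\ep) \in G^{\om\times\cN^3}_{\Sigma_1^0} \Leftrightarrow z'_a\bigl(r(s,n)\bigr) = 1,
\end{split}
\]
by Definition~\ref{D:Turing-jump}.

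\smallskip

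\noindent\textbf{Step 3 (the set $W$).} Define
\[
W = \bigl\{(v,s) \in \cN \times \Om \set \exists n \in \om \colon v\bigl(r(s,n)\bigr) \ne 1\bigr\}.
\]
Using Lemma~\ref{L:volba-epsilon} (recursivity of evaluation $(n,v)\mapsto v(n)$ on $\om\times\cN$, together with recursivity of $r$ and of the projections on $\Om$) and the closure of $\Sigma_1^0(\ep)$ under $\exists^\om$ (Fact~\ref{F:properties-of-Gamma}), $W$ belongs to $\Sigma_1^0(\ep) \subset \Sigma_1^0(a,\ep)$. The equivalence
\[
(z,s) \in C \ \Leftrightarrow \ \exists n\colon (z,s,n) \notin D^c \ \Leftrightarrow \ \exists n\colon z'_a\bigl(r(s,n)\bigr) \ne 1 \ \Leftrightarrow \ (z'_a,s) \in W
\]
then finishes the proof.

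\smallskip

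\noindent\textbf{Main obstacle.} The construction is essentially a formal $S^m_n$-argument, so there is no conceptual difficulty; the care needed is purely bookkeeping, namely ensuring that the parameter space $\Om$ (which is an arbitrary type zero extended product space) can be fed into the good parametrization machinery of Fact~\ref{F:recursionthms}(a) and the auxiliary space $\om\times\Om\times\om$ is again of type zero so that $S$ exists as a recursive function. All of this is guaranteed by Remark~\ref{R:extended-facts}, so the only genuine task is to write the reduction cleanly.
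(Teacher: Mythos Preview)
Your argument is correct. The route differs from the paper's: the paper reduces to an application of Lemma~\ref{L:reduction} by viewing the $\Sigma_2^0(a,\ep)$ set $C$ as $B^{T,\Theta}(s)$ for the trivial tree $T=\bigcup_{i\le k+1}\om^i$ (after identifying $\Om$ with $\om^k$), and then reads off $W$ from the $\wh{\Theta}$ produced there. You instead carry out a direct $S^m_n$ reduction, which is essentially the content of Claim~A inside the proof of Lemma~\ref{L:reduction}, specialized to this single-jump situation; this bypasses the tree framework of Definition~\ref{D:B-z-T,H} entirely. The paper's approach is shorter on the page because the work is delegated, while yours is more self-contained and makes the dependence on $a$ and $\ep$ more transparent (in particular your $W$ lands in $\Sigma_1^0(\ep)$, not merely $\Sigma_1^0(a,\ep)$, since the parameter $a$ is absorbed into the jump).
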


\begin{proof}
Due to Remark~\ref{R:extended-facts} we may assume that $\Om=\om^k$ for some positive $k\in\om$.
There is $\Theta \in \Sigma_1^0(a,\ep) \restriction (\cN \times \omega^{k+1})$ such that for every
$(z,s) \in \cN \times \omega^k$ we have
\[
(z,s) \in C \Leftrightarrow  \exists l \in \omega \colon (z,s\w l) \notin \Theta.
\]
We consider the tree $T = \bigcup_{i=0}^{k+1} \om^i$. Then $B^{T,\Theta}(s) = C^s$ for $s \in \om^k$ and
$B^{T,\Theta}(t) = \Theta^t$ for $t \in \om^{k+1} = T_{\max}$.
Using Lemma~\ref{L:reduction}, we find a  $\Si^0_1(a,\ep)\restriction \cN \times \cS$ set
$\wh{\Theta} \subset \cN \times T$ such that for every $s \in \om^k \subset T$ we have
\[
z \in B^{T,\Theta}(s) = C^s \Leftrightarrow z'_a \in \wh{\Theta}^s,
\]
since $h^T_s=1$ for such $s$.
Thus $W= \{(w,s) \in \wh{\Theta}\set s \in \om^k\}$ is the desired set since intersections of $\Sigma_1^0(a,\ep)\restriction \cN \times \cS$ sets with $\cN \times \om^k$ are $\Sigma_1^0(a,\ep)$ by Lemma~\ref{L:volba-epsilon}(e).
\end{proof}

\section{Harrington's construction}\label{S:Harrington-method}

We start with some notation needed later on. The main ideas of the construction are informally presented in Remark~\ref{R:basic-idea}.

\subsection{The set \texorpdfstring{$\bbA$}{} of admissible triples \texorpdfstring{$(\be,p,x)$}{}}

\begin{definition**}\label{D:Q}\hfil
\begin{enumerate}[(a)]
\item The set $\cQ \subset \cP$ contains all nonempty sequences of the form
\[
p = (\tau,n_0,\eta_0,\dots,n_{r-1},\eta_{r-1}),
\]
where $r \in \omega$, $\tau \in \cS$, $n_i \in \omega$, and $\eta_i \in \cS^*$ for $i = 0,\dots,r-1$.
Here we identify $n \in \omega$ and the sequence $(n)$.
See Definition~\ref{D:sequences}(b) for the definition of $\cP$.

\item Let $p = (\tau,n_0,\eta_0,\dots,n_{r-1},\eta_{r-1}) \in \cQ$ and $i < r$.
We set
\begin{gather*}
\tau(p) = \tau \in \cS, \qquad n_i(p) = n_i \in \omega, \qquad \eta_i(p) = \eta_i \in \Seqm,  \qquad r(p) = r \in \omega, \\
\nu(p) = (n_0,\dots,n_{r-1}) \in \Seq, \qquad \xi(p) = [n_0 \w \eta_0 \w \dots \w n_{r-1} \w \eta_{r-1}]_1 \in \Seq, \\
\zeta(p) = \tau(p) \w [\nu(p)]_0 \in \Seq.
\end{gather*}

\item For $p \in (\Seqm)^\om$ such that $p|(1+2k) \in \cQ$ for every $k \in \omega$ we define $\zeta(p)$ as the unique infinite sequence
with initial segments $\zeta\bigl(p|(1+2k)\bigr)$ for $k \in \om$.
\end{enumerate}
\end{definition**}

We may notice that $|p|\le 2|\wh{p}|+1$ for $p\in\cQ$. We use this observation in the proof of Lemma~\ref{L:recursivity-of-f}.
Throughout Section~\ref{S:Harrington-method} the letter $r$ will be used just in the sense of the above definition.

\begin{lemma**}\label{L:recursivity-of-Q}\hfil
\begin{enumerate}[\upshape (a)]
\item The set $\cQ$ is $\Si^0_1(\ep)$-recursive in $\cP$.

\item The mappings $(i,p)\mapsto n_i(p)\in\om$ and $(i,p)\mapsto \eta_i(p)\in\cS^*$ are $\Si^0_1(\ep)$-recursive on the $\Si^0_1(\ep)$-recursive domain $\{(i,p)\in\om\times\cQ\set 2i+1<|p|\}$.
      The mappings $\tau, r, \nu, \xi$, and $\zeta$ are $\Si^0_1(\ep)$-recursive on $\cQ$.
\end{enumerate}
\end{lemma**}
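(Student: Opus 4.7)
The plan is to assemble both assertions from the recursive building blocks collected in Lemma~\ref{L:volba-epsilon}, the closure properties of the $\Si^0_1(\ep)$-recursive functions recorded in Fact~\ref{F:partialrecursivefunctions}, the preservation under bounded quantifiers from Lemma~\ref{L:bounded-quantifiers}, and, when the target space is $\cS$, the characterization of recursive functions via computing sets supplied by Lemma~\ref{L:natural-base}(b). No subtle recursion-theoretic device is needed; the only thing to keep track of is that every composition is performed on an already-verified $\Si^0_1(\ep)$-recursive domain.

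For (a) I would rewrite $p\in\cQ$ as the conjunction
\[
|p|\ge 1\;\land\;\bigl(\exists\,r\le |p|\colon |p|=2r+1\bigr)\;\land\;p(0)\in\cS\;\land\;\bigl(\forall\,i<|p|\colon 2i+1<|p|\Rightarrow |p(2i+1)|=1\bigr).
\]
The length functions on $\cP$ and $\cS^*$, the coordinate functions $(i,p)\mapsto p_i$ and $(j,s)\mapsto s_j$, and the equalities on $\om$ and on $\cS^*$ are all $\Si^0_1(\ep)$-recursive by Lemma~\ref{L:volba-epsilon}(a),(e), and the two bounded quantifiers are absorbed by Lemma~\ref{L:bounded-quantifiers}. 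Negating each clause yields a $\Si^0_1(\ep)$ description of $\cP\sm\cQ$, so $\cQ$ is $\Si^0_1(\ep)$-recursive.

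The coordinate-type mappings in (b) are now immediate compositions: $\tau(p)=p(0)$ and $\eta_i(p)=p(2i+2)$ come directly from Lemma~\ref{L:volba-epsilon}(e), $r(p)=(|p|-1)/2$ is a recursive arithmetic function of $|p|$, and $n_i(p)=(p(2i+1))_0$ is the recursive coordinate function on $\cS$ composed with the coordinate function on $\cP$. The domain $\{(i,p)\in\om\times\cQ\set 2i+1<|p|\}$ is $\Si^0_1(\ep)$-recursive by (a) together with Lemma~\ref{L:volba-epsilon}(e) and Fact~\ref{F:zachovani-recursivita}, so Fact~\ref{F:partialrecursivefunctions}\eqref{I:composition} finishes this case.

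For the $\cS$-valued maps $\nu,\xi,\zeta$ I would appeal to Lemma~\ref{L:natural-base}(b) and exhibit, for each of them, a $\Si^0_1(\ep)$ set that computes it through its length and coordinates. For $\nu$,
\[
\Delta_\nu=\bigl\{(p,t)\in\cQ\times\cS\set |t|=r(p)\;\land\;\forall\,i<r(p)\colon t(i)=n_i(p)\bigr\}
\]
works, using the recursivity of $r$ and $n_i$ just established and Lemma~\ref{L:bounded-quantifiers}; for $\xi$ the analogous computing set is built from $\xi(p)(j)=[\wh{p}(|\tau(p)|+j)]_1$, with $\wh{\cdot}$ recursive by Lemma~\ref{L:volba-epsilon}(e); and $\zeta(p)=\tau(p)\w[\nu(p)]_0$ is then the composition of the already-recursive $\tau$ and $\nu$ with the coordinatewise $[\cdot]_0$ on $\cS$ (itself recursive by the same computing-set argument applied to the recursive $[\cdot]\colon\om\to\om^2$) and the recursive concatenation from Lemma~\ref{L:volba-epsilon}(e). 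The step deserving the most care, though not a genuine obstacle, is simply bookkeeping to ensure that every bounded quantifier entering these computing sets has a $\Si^0_1(\ep)$-recursive bound, so that Lemma~\ref{L:bounded-quantifiers} is applicable at each invocation.
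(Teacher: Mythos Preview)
Your proposal is correct and follows essentially the same approach as the paper: both reduce membership in $\cQ$ and the various coordinate-type maps to compositions of the basic $\Si^0_1(\ep)$-recursive functions from Lemma~\ref{L:volba-epsilon}, with Lemma~\ref{L:bounded-quantifiers} handling the bounded quantifiers. The only minor difference is that for the $\cS$-valued maps $\nu,\xi,\zeta$ you explicitly exhibit computing sets via Lemma~\ref{L:natural-base}(b), whereas the paper handles these more tersely as straight compositions (using the recursivity of concatenation and of $[\cdot]_0$, $[\cdot]_1$); this is a stylistic rather than a substantive distinction.
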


\begin{proof}
(a) Using Lemma~\ref{L:volba-epsilon}(e), we see that the functions $p\in\cP \mapsto |p|$, $(i,p) \in \omega \times \cP \mapsto p_i$,
and $s \in \cS \mapsto |s|$ are $\Si^0_1(\ep)$-recursive. Of course, $\cS$ is $\Si^0_1(\ep)$-recursive in $\cS^*$ by our definitions.
For $p \in \cP$ we have $p \in \cQ$ if and only if $p_0 \in \cS$, $|p|$ is odd, and
\[
\forall k \in \omega, 2k+1 <|p|\colon p_{2k+1} \in \cS \land |p_{2k+1}| = 1.
\]
Using Lemma~\ref{L:bounded-quantifiers} with the $\Si_1^0(\ep)$-recursive bound $k < \frac12 (|p|-1)$, the above recalled facts, and Fact~\ref{F:partialrecursivefunctions}\eqref{I:composition},
we get that $\cQ$ is $\Si_1^0(\ep)$-recursive.

\medskip\noindent
(b) We apply $\Si^0_1(\ep)$-recursivity of the mappings recalled in (a) and $\Si^0_1(\ep)$-recursivity of concatenations, of the mappings $[\,\,]_0$, $[\,\,]_1$, and of the equality relation on extended product spaces of type $0$. We need also the $\Si^0_1(\ep)$-recursivity of $(i,p)\mapsto (2i,p)$ or $(i,p)\mapsto (2i+1,p)$ to get $\Si^0_1(\ep)$-recursivity of their composition with $(n,p)\mapsto p_n$.
To this end we use Lemma~\ref{L:volba-epsilon} several times.
\end{proof}

The set $\bbJ$ defined in the next definition is a key notion for the construction.
See Definition~\ref{D:J-jump} for the definition of $J_*$.

\begin{definition**}\label{D:definition-of-J}
We set
\[
\bbJ = \{(p,x)\in \cQ\times\cN \set \forall i<r(p) \colon (i,\wh{p},x) \in J_*\Leftrightarrow \eta_i(p)\ne -\}.
\]
\end{definition**}

\begin{lemma**}\label{L:recursivity-of-F}
The set $\bbJ$ is $\Si^0_1(\ep)$-recursive.
\end{lemma**}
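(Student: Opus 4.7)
The plan is to verify that the defining condition of $\bbJ$ is a bounded universal quantification of a $\Si^0_1(\ep)$-recursive condition, and then to invoke Lemma~\ref{L:bounded-quantifiers} (in its extended version, via Remark~\ref{R:extended-facts}) together with the fact that $\cQ$ itself is $\Si^0_1(\ep)$-recursive in $\cP$ by Lemma~\ref{L:recursivity-of-Q}(a). The main work is to exhibit the inner biconditional as a $\Si^0_1(\ep)$-recursive relation of $(i,p,x)$ on a suitable $\Si^0_1(\ep)$-recursive domain.

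First I would consider the set
\[
A = \bigl\{(i,p,x) \in \omega \times \cQ \times \cN \set 2i+1 < |p|\bigr\}.
\]
Lemma~\ref{L:volba-epsilon}(e) says that $p \mapsto |p|$ is $\Si^0_1(\ep)$-recursive, and the relation $<$ on $\omega$ is recursive, so using Fact~\ref{F:partialrecursivefunctions}\eqref{I:substitution-property},\eqref{I:composition} the set $A$ is $\Si^0_1(\ep)$-recursive. On $A$, Lemma~\ref{L:recursivity-of-Q}(b) guarantees that $(i,p) \mapsto \eta_i(p) \in \cS^*$ is $\Si^0_1(\ep)$-recursive; combined with $\Si^0_1(\ep)$-recursivity of equality on the type-$0$ space $\cS^*$ (Lemma~\ref{L:volba-epsilon}(a)) the relation $\eta_i(p) \ne -$ is $\Si^0_1(\ep)$-recursive on $A$. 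Similarly, the mapping $p \mapsto \wh{p}$ is $\Si^0_1(\ep)$-recursive by Lemma~\ref{L:volba-epsilon}(e), and $J_*$ is $\Si^0_1(\ep)$-recursive by Lemma~\ref{L:recursivity-of-Js}, hence the relation $(i,\wh{p},x) \in J_*$ is $\Si^0_1(\ep)$-recursive by Fact~\ref{F:partialrecursivefunctions}\eqref{I:substitution-property}. The biconditional of two $\Si^0_1(\ep)$-recursive relations is again $\Si^0_1(\ep)$-recursive (Fact~\ref{F:zachovani-recursivita}), so the set
\[
V = \bigl\{(i,p,x) \in A \set (i,\wh{p},x) \in J_* \Leftrightarrow \eta_i(p) \ne - \bigr\}
\]
is $\Si^0_1(\ep)$-recursive.

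Next I would apply Lemma~\ref{L:bounded-quantifiers}, relying on the fact that the function $(p,x) \mapsto r(p)$ is $\Si^0_1(\ep)$-recursive on $\cQ \times \cN$ by Lemma~\ref{L:recursivity-of-Q}(b) and the $\Si^0_1(\ep)$-recursivity of projections (Lemma~\ref{L:volba-epsilon}(b)). Since the condition $i < r(p)$ is equivalent to $2i+1 < |p|$ on $\cQ$, the universally quantified set
\[
\bbJ' = \bigl\{(p,x) \in \cQ \times \cN \set \forall i < r(p) \colon (i,p,x) \in V\bigr\}
\]
is $\Si^0_1(\ep)$-recursive by Lemma~\ref{L:bounded-quantifiers} (with strict inequality) applied in the extended setting. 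Finally $\bbJ = \bbJ'$, and intersecting with the $\Si^0_1(\ep)$-recursive set $\cQ \times \cN$ does no harm.

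The only subtle point is to match the index set $\{i < r(p)\}$ with what Lemma~\ref{L:bounded-quantifiers} requires; this is purely bookkeeping once $r$ is known to be $\Si^0_1(\ep)$-recursive. There is no real obstacle beyond carefully tracking which recursivity facts from Lemma~\ref{L:volba-epsilon}, Lemma~\ref{L:recursivity-of-Q}, Lemma~\ref{L:recursivity-of-Js}, and Fact~\ref{F:partialrecursivefunctions} are invoked at each step.
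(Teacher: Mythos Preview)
Your proposal is correct and follows essentially the same approach as the paper's proof, which simply cites Lemmas~\ref{L:recursivity-of-Js}, \ref{L:recursivity-of-Q}, \ref{L:volba-epsilon}, and \ref{L:bounded-quantifiers} in one sentence; you have faithfully unpacked that citation into the explicit steps (recursivity of $J_*$, of $\eta_i$, $\wh{p}$, $r$, closure under Boolean combinations, and bounded quantification).
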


\begin{proof}
By Lemma~\ref{L:recursivity-of-Js} $J_*$ is $\Si_1^0(\ep)$-recursive. Using this, Lemmas~\ref{L:recursivity-of-Q}, \ref{L:volba-epsilon},
and \ref{L:bounded-quantifiers}, we get that $\bbJ$ is $\Si_1^0(\ep)$-recursive as well.
\end{proof}

\begin{definition**}\label{D:pripustnaA}
The set $\bbA \subset [0,\alpha] \times \cQ \times \cN$ contains all triples $(\beta,p,x)$ such that either $\beta = \alpha$ or
$\beta < \alpha$, $|\tau(p)| = \ell(\be)$, $(p,x) \in \bbJ$, and $\xi(p) \in S_{\beta}^x$. See Notation~\ref{N:diamant-el-a} for the definition of the mapping $\ell$.
\end{definition**}

\begin{lemma**}\label{L:recursivity-of-A}
The set $\bbA$ is $\Si^0_1(\ep)$-recursive.
\end{lemma**}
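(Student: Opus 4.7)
The plan is to write $\bbA$ as a finite Boolean combination of sets which are already known to be $\Si^0_1(\ep)$-recursive and then apply Fact~\ref{F:zachovani-recursivita} (closure of recursive sets under finite unions, intersections, and complements) together with Fact~\ref{F:partialrecursivefunctions}\eqref{I:substitution-property}. Concretely, I would set
\[
\bbA_1 = \{(\beta,p,x) \in [0,\al]\times\cQ\times\cN \set \beta = \alpha\}
\]
and
\[
\bbA_2 = \bigl\{(\beta,p,x) \in [0,\al)\times\cQ\times\cN \set |\tau(p)| = \ell(\be),\ (p,x)\in\bbJ,\ \xi(p)\in S_\be^x\bigr\},
\]
and argue that $\bbA = \bbA_1 \cup \bbA_2$ is $\Si^0_1(\ep)$-recursive.

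For $\bbA_1$ I would use $\Si^0_1(\ep)$-recursivity of the equality relation on $[0,\al]$ (Lemma~\ref{L:volba-epsilon}(a)) composed with the $\Si^0_1(\ep)$-recursive projection $(\beta,p,x)\mapsto\beta$ (Lemma~\ref{L:volba-epsilon}(b)). The set $[0,\al)$ is $\Si^0_1(\ep)$-recursive by Lemma~\ref{L:volba-epsilon}(f) (being the complement of $\{\al\}$ in $[0,\al]$, or equivalently the section at $\al$ of the strict order), so it remains to handle the three conjuncts defining $\bbA_2$. The equality $|\tau(p)| = \ell(\be)$ combines the $\Si^0_1(\ep)$-recursive function $\tau$ on $\cQ$ (Lemma~\ref{L:recursivity-of-Q}(b)), the $\Si^0_1(\ep)$-recursive length function $s\mapsto|s|$ on $\cS$ (Lemma~\ref{L:volba-epsilon}(e)), the $\Si^0_1(\ep)$-recursive function $\ell$ on $[0,\al]$ (Lemma~\ref{L:volba-epsilon}(g)), and equality on $\om$ (Lemma~\ref{L:volba-epsilon}(a)); hence this relation is $\Si^0_1(\ep)$-recursive by Fact~\ref{F:partialrecursivefunctions}\eqref{I:coordinates},\eqref{I:composition},\eqref{I:substitution-property}. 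The relation $(p,x)\in\bbJ$ is $\Si^0_1(\ep)$-recursive by Lemma~\ref{L:recursivity-of-F}, and the relation $\xi(p)\in S_\be^x$ is the preimage of the $\Si^0_1(\ep)$-recursive set $S\su [0,\al]\times\cS\times\cN$ from Proposition~\ref{P:jump-trees-old} under the $\Si^0_1(\ep)$-recursive map $(\be,p,x)\mapsto(\be,\xi(p),x)$ (using Lemma~\ref{L:recursivity-of-Q}(b) for $\xi$), hence $\Si^0_1(\ep)$-recursive by the substitution property.

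Finally I would conclude by intersecting these three $\Si^0_1(\ep)$-recursive relations together with $[0,\al)\times\cQ\times\cN$ (whose ambient membership is $\Si^0_1(\ep)$-recursive via Lemma~\ref{L:recursivity-of-Q}(a)) to obtain that $\bbA_2$ is $\Si^0_1(\ep)$-recursive, and then take the union with $\bbA_1$. No step here is really an obstacle; the only thing to watch is that the three relations appearing in $\bbA_2$ are genuine relations in the full product space $[0,\al)\times\cQ\times\cN$ and not just partial relations on some smaller set, but this is automatic because each is a pulled-back preimage of a total relation under a total $\Si^0_1(\ep)$-recursive function.
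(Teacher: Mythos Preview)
Your proposal is correct and follows essentially the same approach as the paper's proof, which simply cites the $\Si^0_1(\ep)$-recursivity of $|\cdot|$, $\ell$, $\tau$, $\xi$ (Lemmas~\ref{L:volba-epsilon}(e),(g) and \ref{L:recursivity-of-Q}(b)), of $S$ (Proposition~\ref{P:jump-trees-old}), and of $\bbJ$ (Lemma~\ref{L:recursivity-of-F}). Your version is just a more explicit unpacking of the same argument, including the decomposition $\bbA=\bbA_1\cup\bbA_2$ and the substitution-property bookkeeping that the paper leaves implicit.
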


\begin{proof}
The $\Si^0_1(\ep)$-recursivity of $\bbA$ follows from $\Si^0_1(\ep)$-recursivity of the mappings $s\in\cS \mapsto |s|$ and $\ell\colon[0,\alpha] \to \omega$ (see Lemma~\ref{L:volba-epsilon}(e),(g)), $\Si^0_1(\ep)$-recursivity of the mappings $\tau$ and $\xi$ (see Lemma~\ref{L:recursivity-of-Q}(b)),
$\Si^0_1(\ep)$-recursivity of $S$ by Proposition~\ref{P:jump-trees-old}, and $\Si^0_1(\ep)$-recursivity of $\bbJ$ by Lemma~\ref{L:recursivity-of-F}.
\end{proof}

\subsection{Construction of \texorpdfstring{$P^x_{\be}$}{}}\label{SS:construction-of-P}

We are going to construct the crucial objects $P^x_{\be}$ for $x \in \cN$ and $\be\in [0,\al]$ with the properties summarized in Proposition~\ref{P:object-P}.
To this end we need a particular relation $\R$ on $\cQ$.

\begin{definition**}\label{D:definition-of-R}
Let $a,b \in \cQ$. Then we set $a \R b$ if there exists $m \in \omega$ such that we have either
\begin{itemize}
\item[(1)] $b = a \w (m,-)$, or

\item[(2)] $b = a \w (m,\emptyset)$, or

\item[(3)] $b = a|(|b|-1) \w \bigl(\eta_{r(b)-1}(a) \w  n_{r(b)}(a) \w \eta_{r(b)}(a) \w \dots \w \eta_{r(a)-1}(a) \w m\bigr)$ and moreover  $\eta_{r(b)-1}(a) = -$.
\end{itemize}
\end{definition**}

\begin{remark**}\label{R:properties-of-R}
(a) Observe that if $a \R b$ then $\widehat{b} = \widehat{a} \w m$ for some $m \in \omega$, in particular, $|\widehat{b}| = |\widehat{a}|+1$.

\medskip\noindent
(b) Assume that $a,b \in \cQ$ and that (3) from Definition~\ref{D:definition-of-R} is satisfied for some $m \in \om$.
The informal scheme at Figure~\ref{Fi:relation-R} can help to understand the relation between $a$ and $b$.
Rectangles represent elements of $\Seqm$, in particular, squares stand for sequences of length $1$.

\begin{figure}
\centering\includegraphics[scale=0.8,trim={200 0 30 590},clip]{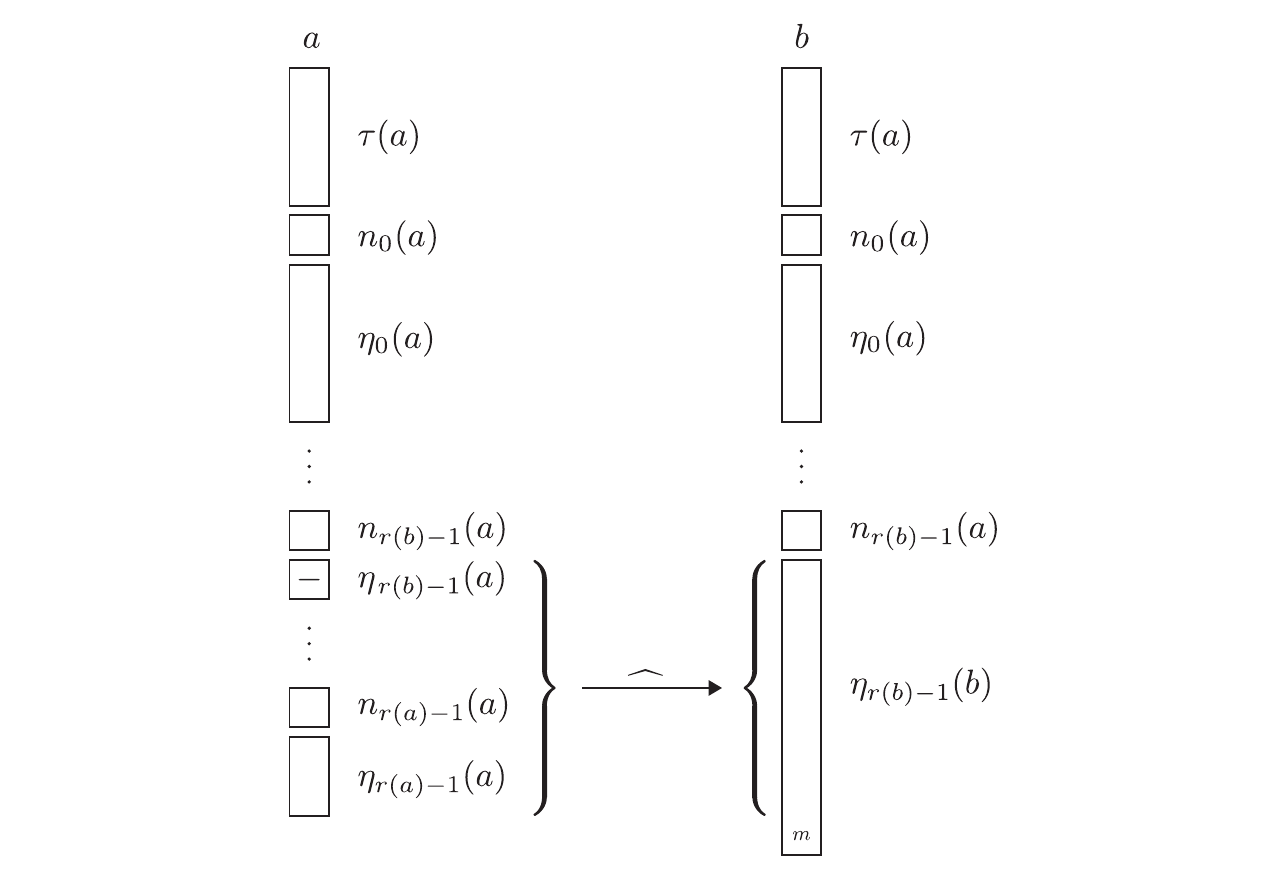}
\caption{}\label{Fi:relation-R}
\end{figure}

\medskip\noindent
(c) We point out that for every $q\in\bbJ^x$ and $m\in\om$ there is a  unique $p\in\bbJ^x$ such that $q\R p$ and $\wh{p}=\wh{q}\w(m)$.
Indeed, if there is no $j \le r(q)$ with $(j,\wh{q} \w m,x)\in J_*$, we have to define $p = q \w (m,-)$.
If $i = r(q)$ is the only $j \le r(q)$ with $(j,\wh{q}\w m,x)\in J_*$, we have to define $p = q \w (m,\emptyset)$.
If $i < r(q)$ is the smallest $j\le r(q)$ with $(j,\wh{q}\w m,x)\in J_*$, we have to define $p = q|(2i+2) \w (\eta)$, where $\eta = q(2i+2) \w \dots \w q(|q|-1) \w m$.
This operation is implicitly used in Harrington's construction later.
\end{remark**}

\begin{lemma**}\label{L:recursivity-of-R}
The relation $\R$ is $\Si^0_1(\ep)$-recursive in $\cP \times \cP$, i.e., the set $\{(a,b) \in  \cQ \times \cQ \set a \R b\}$ is $\Si^0_1(\ep)$-recursive in $\cP \times \cP$.
\end{lemma**}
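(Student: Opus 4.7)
The plan is to decompose the relation $\R$ into its three defining cases and show each is $\Si_1^0(\ep)$-recursive in $\cP\times\cP$, using Lemma~\ref{L:volba-epsilon} for the basic operations on $\cP$, $\cS$, $\cS^*$, Lemma~\ref{L:recursivity-of-Q} for the operations $r, \tau, n_i, \eta_i$ and for the recursivity of $\cQ$, and Lemma~\ref{L:bounded-quantifiers} to handle the bounded quantifiers that arise. First I would observe that $\R \subset \cQ \times \cQ$ and that the three cases in Definition~\ref{D:definition-of-R} are mutually exclusive: cases (1) and (2) force $|b| = |a|+2$ and differ only in the last coordinate of $b$ (which is ``$-$'' or $\emptyset$ respectively), while case (3) forces $|b| \le |a|$ since $b = a|(|b|-1)\w(\eta)$ requires $|b|-1 \le |a|$, and in fact $|b| < |a|$ by the parity argument (both $|a|$ and $|b|$ are odd, and $|b|-1 = |a|$ would give $|a|$ even).

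For cases (1) and (2) I would express the condition ``$\exists m\in\om\colon b = a\w(m,-)$'' (respectively $b = a\w(m,\emptyset)$) equivalently as the conjunction of the $\Si_1^0(\ep)$-recursive conditions: $|b| = |a|+2$, $b|{|a|} = a$, $b_{|a|}\in\cS$ with $|b_{|a|}| = 1$, and $b_{|a|+1} = -$ (respectively $b_{|a|+1} = \emptyset$). No explicit quantification over $m$ is needed because $m = b_{|a|}(0)$ is determined by $b$, and equality of elements of extended product spaces of type $0$ is $\Si_1^0(\ep)$-recursive by Lemma~\ref{L:volba-epsilon}(a).

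For case (3) I would again avoid quantifying over $m$, extracting it as $m = \wh{b}(|\wh{b}|-1)$ via the $\Si_1^0(\ep)$-recursive operations $\wh{\cdot}$, $|\cdot|$ and coordinate access from Lemma~\ref{L:volba-epsilon}(e). The condition then amounts to: $|b| < |a|$, the parity $|b| = 2r(b)+1$ (automatic from $b\in\cQ$), $a|(|b|-1) = b|(|b|-1)$, $\eta_{r(b)-1}(a) = -$, and $b_{|b|-1}$ equals the specified concatenation. The equality of restrictions is directly $\Si_1^0(\ep)$-recursive, while the concatenation equality I would check by verifying, using bounded quantification (Lemma~\ref{L:bounded-quantifiers}) over $i \in \{r(b),\dots,r(a)-1\}$ bounded by $r(a)\le|a|$, that the segments $n_i(a)$ and $\eta_i(a)$ of $a$ appear at the correct positions inside $b_{|b|-1}$ and that the final entry of $b_{|b|-1}$ is $m$. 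Each individual segment-matching condition is $\Si_1^0(\ep)$-recursive by $\Si_1^0(\ep)$-recursivity of the extraction operations $n_i, \eta_i$ (Lemma~\ref{L:recursivity-of-Q}(b)) and of concatenation and coordinatewise equality on $\cS^*$ (Lemma~\ref{L:volba-epsilon}).

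The main obstacle is the bookkeeping in case (3), where the description of $\eta$ is a variable-length concatenation indexed by $i$ between $r(b)$ and $r(a)-1$. I expect to handle this by computing, via a $\Si_1^0(\ep)$-recursive function on $\om\times\cQ\times\cQ$, the starting position inside $b_{|b|-1}$ of each block $n_i(a)$ or $\eta_i(a)$ (a cumulative sum of lengths, computable by bounded recursion/minimization as in Fact~\ref{F:partialrecursivefunctions}\eqref{I:min}), and then asserting with bounded quantifiers that the corresponding coordinates of $b_{|b|-1}$ agree with the coordinates of $a$; once this is set up, Lemma~\ref{L:bounded-quantifiers} with $\Si_1^0(\ep)$-recursive bounds polynomial in $|a|$ yields $\Si_1^0(\ep)$-recursivity of the whole condition, and taking the union of the three cases concludes the proof.
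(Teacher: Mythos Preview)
Your approach is correct and matches the paper's: both verify $\Si_1^0(\ep)$-recursivity by reducing to the basic operations on $\cQ$, $\cP$, $\cS^*$ via Lemmas~\ref{L:recursivity-of-Q} and~\ref{L:volba-epsilon}, with bounded quantification handled by Lemma~\ref{L:bounded-quantifiers}. The paper's proof is far terser---it simply names the two extraction maps $(i,a,b)\mapsto\eta_i(a)$ and $(i,a,b)\mapsto\eta_{r(b)-1}(b)\big|\,(|\eta_{r(b)-1}(b)|-1)$ and declares the rest routine---whereas you spell out the case split and the position-tracking for the variable-length concatenation in case~(3). Your more explicit treatment is a legitimate elaboration of the same argument; note incidentally that one can bypass the cumulative-sum bookkeeping by observing that, once $b|(|b|-1)=a|(|b|-1)$ holds, the concatenation condition in~(3) is equivalent to $\wh b=\wh a\w m$, which is directly $\Si_1^0(\ep)$-recursive.

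One small slip: your parity argument only yields $|b|\le|a|$ in case~(3), not $|b|<|a|$. From $|b|-1\le|a|$ and $|b|-1$ even, $|a|$ odd, you correctly exclude $|b|-1=|a|$, giving $|b|-1<|a|$, i.e.\ $|b|\le|a|$; equality is genuinely possible (take $r(b)=r(a)$, so the concatenation reduces to $-\w m=(m)$ and $b$ differs from $a$ only in its last coordinate). This does not affect your separation of case~(3) from cases~(1),~(2), since those force $|b|=|a|+2$, but you should write $|b|\le|a|$ in the description of case~(3).
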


\begin{proof}
It is sufficient to use Lemma~\ref{L:recursivity-of-Q}, Lemma~\ref{L:volba-epsilon}(a), and $\Si^0_1(\ep)$-recursivity of the mappings
\[
(i,a,b) \mapsto \eta_i(a) \quad \text{ and } \quad (i,a,b) \mapsto \eta_{r(b)-1}(b)| (|\eta_{r(b)-1}(b)|-1)
\]
on $\{(i,a,b) \in \omega \times \cQ^2 \set r(b)-1\le i\le r(a)-1\}$ which follows from Lemma~\ref{L:volba-epsilon}(e).
\end{proof}

\begin{remark**}[basic idea of the construction]\label{R:basic-idea}
Before we start the constructions leading to the definitions of $F$ and $\Xi^x\colon E_x \to F_x$, $x\in\cN$, with the properties stated in Proposition~\ref{P:hPsi-properties},
we point out roughly their main ideas.
We are going to define
\begin{itemize}
\item $T \su [0,\al] \times \cS \times \cN$ (in Subsection~\ref{SS:Tcka-a-uzavrenostF}) and

\item $\hPsi_{\ga,\be}^x \colon [T^x_{\be}] \to [T^x_{\ga}]$ for $x \in \cN$, $0 \le \ga<\be\le\al$ (in Subsection~\ref{SS:hPsi-beta-gamma})
\end{itemize}
such that $F_x = [T^x_0]$ and $\Xi^x = \hPsi^x_{0,\al}$, $x\in\cN$, satisfy the required conditions.

First we construct sets $P_{\be}^x \subset \bbA^x_{\be}$, $x \in \cN, \be \in [0,\al]$, cf. Proposition~\ref{P:object-P}.
Thus the sets $P_{\beta}^x$, $x\in\cN$, will be families of sequences of sequences.
One can also view the elements of each $P_{\be}^x$ as \emph{partitioned} sequences of natural numbers.
Applying concatenation to each element of $P_{\beta}^x$, we get the desired tree $T_{\beta}^x$.

The construction of $P_{\be}^x$ can be considered as an inductive modification of the elements of $T_{\al}^x$.
The definition of the mappings $\hPsi_{\ga,\be}^x$ knowing $P^x_\be$ will need an extra effort.
We start by defining $P_{\al}^x = \{p\in\cP\set \tau(p)\in T_{\al}^x,|p|=1\}$ (cf. \ref{D:function-f}(I)).
Then we successively decide whether $p \in \bbA^x_{\be}$ belongs to $P^x_\be$ or not for $\be<\al$.
We assume here that, when considering $p \in \bbA^x_{\be}$, we already know the corresponding decisions required
by the following two principles.
The first principle (``copying'') determines which sequences of the length $1$ will be in $P^x_{\be}$ and which not.
The second principle (``modification'') does the same job for sequences of the length greater than $1$.
The first principle requires a decision about a sequence from $P^x_{\diamond(\be)}$ and the second one requires decision about
some sequences from $P^x_{\be}$ and $P^x_{\be+1}$.
In fact, the formal application of these principles is realized using recursion theorem.

Let us describe these principles a little bit closer.
The elements of $P_{\be}^x$ of length $1$ are constructed by copying and concatenating of already constructed elements of $P_{\diamond(\be)}^x$.
More precisely, using the mappings $\diamond$ and $\ell$, we put in $P_{\beta}^x$ exactly those elements $p$ of length $1$ which satisfy $p = (\wh {q_{\diamond}})$, where $q_{\diamond} \in P_{\diamond(\be)}^x$ and $|\wh q_{\diamond}| = \ell(\beta)$, cf. \ref{D:function-f}($\diamond$) and Figure~\ref{Fi:length-1}.

\begin{figure}
\centering\includegraphics[scale=0.8,trim={0 70 300 650},clip]{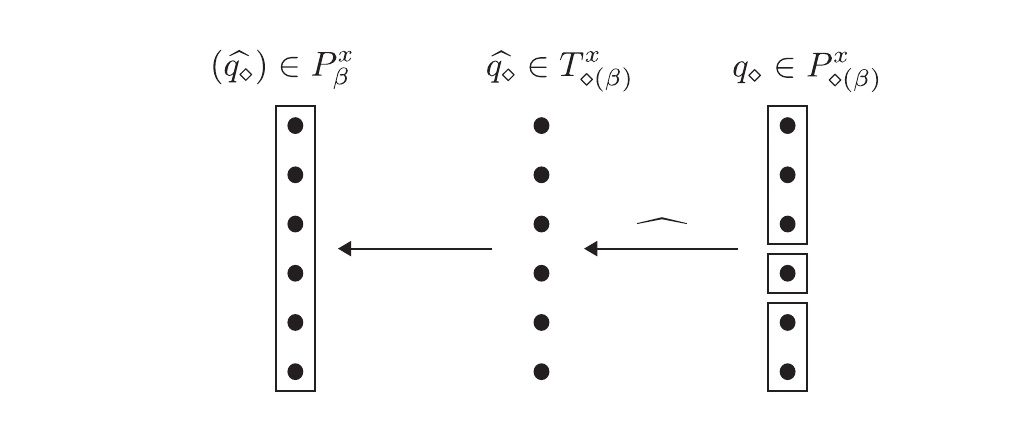}
\caption{}\label{Fi:length-1}
\end{figure}

The second principle of the construction consists of several steps.
We put to $P^x_\be$ exactly those $p \in \bbA_{\be}^x$ of length greater than
one which satisfy the following conditions.
There are $t \in T_{\be+1}^x$, i.e., $t = \wh{q_E}$ for some $q_E \in P^x_{\be+1}$, and $q_R \in P^x_\be$
such that
\[
\zeta(p) = t, \quad \xi(p) \in S^x_\be, \quad \text{ and } \quad q_R \R p,
\]
cf. \ref{D:function-f}(E),(R) and Remark~\ref{R:properties-of-R}(c).
Moreover, the following ``minimality condition'', cf. \ref{D:function-f}(M),(M'), has to be satisfied.
We need that the decision about $q\in P^x_\be$ has been already done for the elements of the set
\[
\begin{split}
M(p)=\{q\in\bbA^x_\be\set  \exists i<r(p)\colon |q|=2i+3, &\  q|(2i+2) = p|(2i+2), \\
&\eta_i(q)\ne -, \xi(q) \preceq \xi(p), q <_\cP p|(2i+3)\}
\end{split}
\]
and for every $q \in M(p)$ we have $q\notin P^x_\be$.

At Figure~\ref{Fi:principle-2} we illustrate how candidates for $p\in P^x_\be$ are constructed by (E) and (R) (ignoring the minimality condition (M)) from $q_E \in P^x_{\be+1}$ and $q_R\in T^x_\be$.
Each circle represents a natural number. Certain items are represented as pairs using the identification
$n = ([n]_0,[n]_1)$.
Rectangles represent elements of $\Seqm$.
The left column represents the sequence $p$.
Black circles used in the middle column and in the left column correspond to a sequence of natural numbers which is the same as the sequence $t = \wh{q_E}$
represented by the black circles in the right column. The middle column represents the sequence $q_R$ extended by the sequence $(m)$, where $m \in \omega$ is any integer such that
$[m]_0 = t(|t|-1)$ and $\xi(q_R) \w [m]_1 \in S^x_{\be}$. The sequence  $\xi(q_R) \w [m]_1 $ is represented by the gray column.
The left column shows the three possibilities how
sequences for given $q_R$ and $m$ to be considered by the minimality conditions
can be constructed: either $q_R$ is extended by $(m,-)$ or by $(m,\emptyset)$, or the sequence $(m)$ is added and certain sequences are concatenated, cf. Definition~\ref{D:definition-of-R}.

\begin{figure}
\centering\includegraphics[scale=0.8,trim={160 220 0 180},clip]{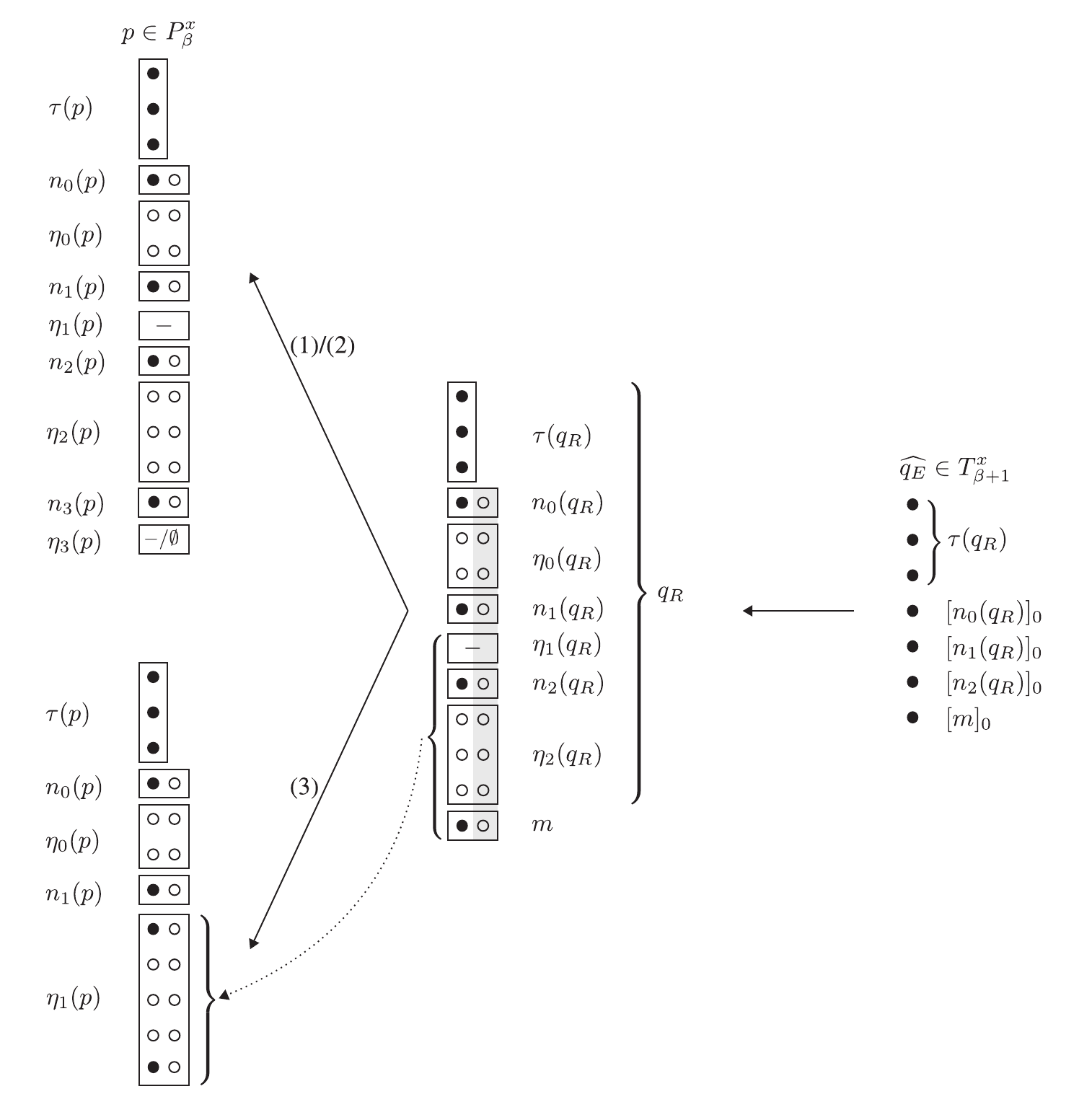}
\caption{}\label{Fi:principle-2}
\end{figure}

The construction of $P_{\be}^x$'s is, in fact, done uniformly, so we construct a $\Si_1^0(\ep)$-recursive
set $P \subset [0,\al] \times \cP \times \cN$.
Using $\Si_1^0(\ep)$-recursiveness of $P$, we get that the set $F=[T_0]$ is closed, cf. Proposition~\ref{P:uzavrenost-F}.

Now we indicate how the mappings $\Psi^x_{\be,\be+1}$ can be derived from $P$.
Suppose that $u \in [T^x_{\be+1}]$.
Due to the choice of sequences of length $1$ in $P^x_\be$, we have $(u|\ell(\beta)) \in P_{\be}^x$ and there are also $p \in P_{\be}^x$ of the form $(u|\ell(\beta), u(\ell(\be)), \eta)$ constructed as above.
Using (M) it is not difficult to observe that there exists a unique $\wt \eta$ such that $\wt p = (u|\ell(\beta), u(\ell(\be)), \wt \eta)$ has at least one infinite extension in $[P_{\be}^x]$. For the meaning of the symbol $[P_{\be}^x]$, see Definition~\ref{N:tree-like}(c). In fact, $\wt p$ is the $<_{\cP}$-minimal element among the mentioned $p$'s.
After finding the unique $\eta_0$ ($= \wt \eta$) corresponding to $u$, the same reasoning gives $\eta_1$, then $\eta_2$, and so on.
Then we define $\Psi^x_{\be,\be+1}(u)$ by
\[
\xi\bigl(\Psi^x_{\be,\be+1}(u)\bigr) = \sigma_{\be}^x, \qquad
\zeta\bigl(\Psi^x_{\be,\be+1}(u)\bigr) = u, \qquad
\eta_i\bigl(\Psi^x_{\be,\be+1}(u)\bigr) = \eta_i.
\]

Applying concatenation we define $\hPsi^x_{\beta,\beta+1}\colon [T_{\beta+1}^x] \to [T_{\beta}^x]$ by
$\hPsi^x_{\beta,\beta+1}(y) = \wh{\Psi^x_{\beta,\beta+1}(y)}$, cf. Definition~\ref{D:hPsi-pro-nasledniky}(2).

The crucial property is that the partitioned sequence $\Psi^x_{\be,\be+1}(u)$
can be $\Si_1^0(\ep)$-recursively computed from $u \in [T_{\be+1}^x]$. Since each element $v \in [P_{\be+1}^x]$ has its $\xi$-part $\xi(v) = \si^x_{\be+1}$,
the jump $x^{(\be+1)}$ can be $\Si_1^0(\ep)$-recursively computed from $u \in [T_{\be+1}^x]$.
The jump $x^{(\be+1)}$ is trivially used to $\Si_1^0(\ep)$-recursively compute the $\xi$-part of $\Psi_{\be,\be+1}^x(u)$
but also the remaining items of $\Psi_{\be,\be+1}^x(u)$ can be $\Si_1^0(\ep)$-recursively computed, cf. Lemma~\ref{L:properties-of-theta}.

We are going to show that $\hPsi^x_{\be,\be+1}(u)'_x(i)=1$ is equivalent to $\eta_i(\Psi^x_{\be,\be+1}(u))\ne -$.
Since $p=\Psi^x_{\be,\be+1}(u)|(2i+3)\in \bbA^x_{\be}$ ($\subset \bbJ^x$), the relation $\eta_i(p) \neq -$ is equivalent to that $z'_x(i) = 1$ for every infinite sequence $z$ extending $\wh{p}$. In particular, $\eta_i(p) \neq -$ implies that $\wh{\Psi}^x_{\be,\be+1}(u)'_x(i)=1$.
Assume now that $\hPsi^x_{\be,\be+1}(u)'_x(i)=1$. Using Remark~\ref{R:property-of-J}(d), for sufficiently large $I\in\om$ we have $\wh{q}\in (J_*)^x_i$, where $q=\Psi^x_{\be,\be+1}(u)|(2I+3)\in P^x_\be\su\bbJ^x$. Thus $\eta_i(\Psi^x_{\be,\be+1}(u))= \eta_i(q)=\eta_i(p)\ne -$.

This fact shows that the jump $\hPsi^x_{\beta,\beta+1}(u)'_x$, where $u \in [T_{\be+1}^x]$, is $\Si_1^0(\ep)$-recursively computable from $\Psi^x_{\beta,\beta+1}(u)$, since one can simply look at the element $\eta_i(\Psi^x_{\beta,\beta+1}(u))$ to evaluate $\hPsi^x_{\beta,\beta+1}(u)'_x(i)$.
Therefore also $\hPsi^x_{\beta,\beta+1}(u)'_x$ is $\Si_1^0(\ep)$-recursively computable from $u$.

Finally, using these mappings we build all the mappings $\wh{\Psi}^x_{\be,\ga}$ and in particular the mapping $\Xi^x=\hPsi_{0,\alpha}^x$ having the property that
$\Xi^x(y)^{(\al)}_x = \bigl(\hPsi^x_{0,\alpha}(y)\bigr)^{(\alpha)}_x$ can be $\Si_1^0(\ep)$-recursively computed from $y \in [T_{\alpha}^x]$.
\end{remark**}

Let us recall that we introduced the well ordering $<_{\cP}$ in Notation \ref{N:usporadanicP} and the mapping $\diamond$ in Lemma~\ref{L:diamond}.

\begin{proposition**}\label{P:object-P}
There is a $\Si^0_1(\ep)$-recursive set $P\su\bbA$ such that $(\be,p,x)\in \bbA$ is in $P$, i.e., $p\in P^x_\be$, if and only if one of the following three statements is satisfied:
\begin{enumerate}[\upshape (a)]
  \item $\be=\al$, $|p| = 1$, and $\wh{p} \in T_{\al}^x$,

  \item $\be<\al$, $|p|=1$, and there exists $q_{\diamond} \in \cQ$ such that $q_{\diamond}\in P^x_{\diamond(\be)}$ and $\wh{p} = \wh{q_{\diamond}}$,

  \item $\be < \al$, $|p| > 1$, and the following conditions are satisfied:
  \begin{itemize}
    \item[(i)]  $\exists q_E \in P^x_{\be+1} \colon \zeta(p) = \widehat{q}_E$,
    \item[(ii)] $\exists q_R \in P^x_\be \colon q_R \fR p$,
    \item[(iii)]
    \[
    \begin{split}
    \forall q \leq_{\cP} p,  &\, q \in P^x_\be, \xi(q) \preceq \xi(p) \ \forall i < r(p), \eta_i(q) \ne -, q|(2i+2) = p|(2i+2) \colon \\
    &q|(2i+3) \leq_{\cP} p|(2i+3)  \Rightarrow \eta_i(q) = \eta_i(p).
    \end{split}
    \]
\end{itemize}
\end{enumerate}
\end{proposition**}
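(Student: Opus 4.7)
The plan is to construct $P$ as the $1$-level set of a fixed point produced by Kleene's second recursion theorem (Fact~\ref{F:recursionthms}(b)), in complete analogy with the construction of $S$ in Proposition~\ref{P:jump-trees-old}. Concretely, taking $W = U^{[0,\alpha]\times\cP\times\cN,\omega}_{\Sigma^0_1(\varepsilon)}$, I would define a partial function $f\colon \omega \times [0,\alpha] \times \cP \times \cN \rightharpoonup \{0,1\}$ whose value at $(e,\beta,p,x)$ encodes the disjunction of clauses (a), (b), (c) from the statement, with every occurrence of ``$q \in P^x_\gamma$'' replaced by ``$W(e,\gamma,q,x) = 1$'' and every ``$q \notin P^x_\gamma$'' by ``$W(e,\gamma,q,x) = 0$''. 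The final set will be $P = \{(\beta,p,x) \in \mathbb{A} \colon f(e^*,\beta,p,x) = 1\}$, where $e^* \in \omega$ is the fixed point supplied by the recursion theorem.

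The first technical step is to check that $f$ is $\Sigma^0_1(\varepsilon)$-recursive on its domain, which requires effective bounds on the quantifiers in clause (c). The universal quantifier in (c)(iii) already ranges over the finite set $\{q \in \cP \colon q \leq_\cP p\}$. For the existential quantifier over $q_R$ in (c)(ii), the relation $q_R \fR p$ gives $\widehat{q_R} \prec \widehat{p}$, and since $q_R \in \cQ$ forces $|q_R| \leq 2|\widehat{q_R}| + 1$, Remark~\ref{R:upper-bounds} yields $q_R \leq_\cP M_\cP(\widehat{p})$; the analogous argument bounds $q_E$ in (c)(i) by $M_\cP(\zeta(p))$. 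Combining these $\Sigma^0_1(\varepsilon)$-recursive bounds with the $\Sigma^0_1(\varepsilon)$-recursivity of $\mathbb{A}$, $\fR$, $\cQ$, $M_\cP$, $\diamond$, $\ell$, $S$, $T_\alpha$, and of the projection functions on $\cQ$ (Lemmas~\ref{L:volba-epsilon}, \ref{L:recursivity-of-Q}, \ref{L:recursivity-of-R}, \ref{L:recursivity-of-A}, \ref{L:recursivity-of-F}, and Proposition~\ref{P:jump-trees-old}), the recursivity of $f$ follows by Lemma~\ref{L:bounded-quantifiers}.

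With $e^*$ fixed, the heart of the argument is to show that $f(e^*, \beta, p, x) \in \{0, 1\}$ is defined for every $(\beta, p, x) \in \mathbb{A}$. Writing $Q'(\beta)$ for the statement ``$f(e^*, \beta, \cdot, \cdot)$ is defined on the whole $\beta$-slice of $\mathbb{A}$'', the key local fact $Q'(\beta+1) \wedge Q'(\diamond(\beta)) \Rightarrow Q'(\beta)$ is proved by an inner induction on the lexicographic measure $(|\widehat{p}|, c_\cP(p))$ on $\cP$: one exploits that the $\fR$-predecessor $q_R$ invoked in clause (c)(ii) satisfies $|\widehat{q_R}| = |\widehat{p}| - 1$, while in clause (c)(iii) the constraints $q|(2i+2) = p|(2i+2)$ and $\xi(q) \preceq \xi(p)$ together force $|\widehat{q}| \leq |\widehat{p}|$ (with the equality case handled either by $q = p$ trivially or by $q <_\cP p$). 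The outer argument that $Q'$ holds on all of $[0,\alpha]$ starts from the base case $Q'(\alpha)$, which is immediate from clause (a) and $\Sigma^0_1(\varepsilon)$-recursivity of $T_\alpha$, and then propagates through the $\diamond$-structure of $[0,\alpha]$ from Section~\ref{S:ordinal-structure}: if a counterexample to $Q'$ existed, the finite $\diamond$-path guaranteed by Proposition~\ref{P:diamond-path}(a), Lemma~\ref{L:sequence}, and the coverage Lemma~\ref{L:interval} would lead, via the local implication above, to a contradiction with $Q'(\alpha)$.

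The principal obstacle is precisely this outer well-foundedness: the recursive equations for $P^x_\beta$ invoke strictly \emph{larger} ordinals $\beta + 1$ and $\diamond(\beta)$, so one cannot apply standard upward transfinite induction, and the termination argument has to be organized around the finite $\diamond$-paths constructed in Section~\ref{S:ordinal-structure}. A secondary challenge is the careful bookkeeping needed for $\Sigma^0_1(\varepsilon)$-recursivity of $f$, where one must route all quantifiers through the $M_\cP$ bound from Remark~\ref{R:upper-bounds} and Lemma~\ref{L:bounded-quantifiers}; and a small but genuine subtlety is the proof that the length constraint $|\widehat{q}| \leq |\widehat{p}|$ of clause (c)(iii) follows from its side conditions, which requires unpacking the interaction between $\xi$, $\widehat{\cdot}$, and the orderings $<_\cP$ and $<_{\cS^*}$.
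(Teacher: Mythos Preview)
Your overall architecture---define $f$ via the universal partial function $U=U^{[0,\alpha]\times\cP\times\cN,\omega}_{\Sigma^0_1(\varepsilon)}$, apply the recursion theorem to get $e^*$, verify $\Sigma^0_1(\varepsilon)$-recursivity of $f$, then prove $f_{e^*}$ is total on $\bbA$---is exactly the paper's, and your treatment of the recursivity step (bounding the existential quantifiers by $M_\cP(\widehat p)$ and $M_\cP(\zeta(p))$) is correct and matches Lemma~\ref{L:recursivity-of-f}.

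The gap is in the outer termination argument. Your local implication $Q'(\beta{+}1)\wedge Q'(\diamond(\beta))\Rightarrow Q'(\beta)$ is valid (via the inner $(|\widehat p|,c_\cP(p))$-induction you describe, and your observation $|\widehat q|=|\tau(q)|+|\xi(q)|\le|\tau(p)|+|\xi(p)|=|\widehat p|$ in clause~(c)(iii) is correct), but together with the base case $Q'(\alpha)$ it does \emph{not} force $Q'$ to hold on all of $[0,\alpha]$: the reverse of the ordinal order on $[0,\alpha]$ is not well-founded. Concretely, from $\neg Q'(\gamma)$ one obtains $\neg Q'(\gamma{+}1)\vee\neg Q'(\diamond(\gamma))$; the first disjunct may be realized repeatedly, producing an infinite ascending chain $\gamma<\gamma{+}1<\gamma{+}2<\cdots$ of failures, and the finite $\diamond$-paths of Section~\ref{S:ordinal-structure} do not rule this out (Lemma~\ref{L:diamond}(a) and Lemma~\ref{L:sequence} show that \emph{pure} $\diamond$-iteration reaches $\alpha$ in finitely many steps, not that every mixed $+1$/$\diamond$ chain of failures does). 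Already for $\alpha=\omega$, where $\diamond\equiv\omega$, your scheme collapses to ``$Q'(n{+}1)\Rightarrow Q'(n)$ for $n<\omega$, and $Q'(\omega)$'', which yields nothing about any finite $n$.

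The paper's Lemma~\ref{L:Precursive} repairs this by folding the length $|\widehat p|$ into the outer argument, exploiting the constraint $\ell(\beta)=|\tau(p)|\le|\widehat p|$ built into $\bbA$ (Definition~\ref{D:pripustnaA}). One first fixes the least $L$ for which some failure with $|\widehat p|=L$ occurs, then the least $\beta^*$ such that all admissible triples with $\beta\ge\beta^*$ and $|\widehat q|\le L$ are defined. If $\beta^*=\beta{+}1$, the $<_\cP$-least failure $(\beta,p)$ with $|\widehat p|=L$ is seen to be defined after all, since every $U$-value it requires lives either at some $\beta'\ge\beta^*$ with $|\widehat{p'}|\le L$ (this covers both $q_E$ at $\beta{+}1$ and $q_\diamond$ at $\diamond(\beta)\ge\beta{+}1$), or at $|\widehat{p'}|<L$ (the $q_R$ case), or at the same $\beta$ with $p'<_\cP p$ (the minimality clause). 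If $\beta^*$ is a limit ordinal, Lemma~\ref{L:diamond}(b) gives $\ell(\beta)>L$ for all $\beta$ in a left neighbourhood $[a(\beta^*,k),\beta^*)$, so \emph{no} admissible triple with $|\widehat p|\le L$ can live there, contradicting the minimality of $\beta^*$. It is precisely this interaction between $\ell$ and $|\widehat p|$---absent from your outline---that kills the infinite-regress problem.
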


To define $P$ from Proposition~\ref{P:object-P} we will rather look for the characteristic function of $P$ applying the recursion theorem
to the function $f$ which we define now.

\begin{notation**}
Further on in this subsection we use the notation $X = [0,\alpha] \times \cP \times \cN$
and $U = U^{X,\om}_{\Si_1^0(\ep)}$ (see Remark~\ref{R:extended-facts}).
In this subsection, let $T_\al$ be a set with properties (i)--(iii) from Proposition~\ref{P:hPsi-properties}.
Notice that $T_\al^{LN}$ defined in Proposition~\ref{P:E^LN} and $T_\al^{SR}$ in Proposition~\ref{P:E^SR} are particular cases of such $T_{\alpha}$'s.
These propositions have been already proved in Section~\ref{S:construction-of-T-alpha}.
\end{notation**}

\begin{definition**}\label{D:function-f}
We define a partial function $f \colon \om \times \bbA \rightharpoonup \{0,1\} \su \om$ as follows. Let $e\in\om$ be arbitrary.

\medskip\noindent
(a) For $\be=\al$ and $(\be,p,x)\in\bbA$ we define $f(e,\be,p,x)=1$ if
\begin{equation*}
|p|=1 \land \wh{p} \in T^x_\al, \tag{I}
\end{equation*}
and we set $f(e,\be,p,x)=0$ if
\begin{equation*}
|p| \ne 1  \vee  \wh{p} \notin T^x_\al. \tag{I'}
\end{equation*}

\medskip\noindent
(b) For $\be<\al$, $(\be,p,x)\in\bbA$, and $|p|=1$ we define $f(e,\be,p,x)=1$ if
\begin{equation*}
\exists q_{\diamond} \in \cQ \colon (\diamond(\be),q_{\diamond},x)\in\bbA \land \wh{p} = \wh{q_{\diamond}} \land U(e,\diamond(\be),q_{\diamond},x)=1, \tag{$\diamond$}
\end{equation*}
and we set $f(e,\be,p,x) = 0$ if
\begin{equation*}
\forall q \in \cQ, (\diamond(\be),q,x)\in\bbA, \wh{p} = \wh{q} \colon  U(e,\diamond(\be),q,x) = 0. \tag{$\diamond$'}
\end{equation*}

\medskip\noindent
(c) For $\be < \al$, $(\be,p,x)\in\bbA$, and $|p|>1$ we define $f(e,\be,p,x)=1$ if the following conditions are satisfied
\begin{align*}
\exists q_E \in \cQ, &(\be+1,q_E,x) \in \bbA, \zeta(p) = \wh{q}_E \colon U(e,\be+1,q_E,x) = 1,  \tag{E} \\
\exists q_R \in \cQ, &(\be,q_R,x)   \in \bbA, q_R \fR p           \colon U(e,\be,q_R,x)   = 1,  \tag{R} \\
\begin{split}
\forall q <_{\cP} p, &(\be,q,x) \in \bbA,  \xi(q)\preceq\xi(p) \ \forall i < r(p), \eta_i(q)\ne -, q|(2i+2)=p|(2i+2) \colon \\
&q|(2i+3) <_{\cP} p|(2i+3) \Rightarrow U(e,\be,q,x)=0,
\end{split}
\tag{M}
\end{align*}
and we set $f(e,\be,p,x)=0$ if at least one of the following conditions is satisfied
\begin{align*}
\forall q \in \cQ, &(\be+1,q,x) \in \bbA, \zeta(p) = \wh{q} \colon U(e,\be+1,q,x) = 0,   \tag{E'} \\
\forall q \in \cQ, &(\be,q,x)   \in \bbA, q \fR p           \colon U(e,\be,q,x)   = 0,   \tag{R'} \\
\begin{split}
\exists q <_\cP p, &(\be,q,x) \in \bbA, \xi(q)\preceq\xi(p) \ \exists i < r(p), \eta_i(q)\ne -, q|(2i+2)=p|(2i+2) \colon \\
&q|(2i+3) <_{\cP} p|(2i+3) \land U(e,\be,q,x) = 1.
\end{split}\tag{M'}
\end{align*}
\end{definition**}

Let us remark that the conditions ($\diamond$) and ($\diamond$') define disjoint subsets of $\omega \times \bbA$ which need not cover $\om \times \bbA$. The same is true for the pairs of conditions (E) and (E'), (R) and (R'), (M) and (M').
The pair (I) and (I') defines a decomposition of $\omega \times \bbA$. Thus the partial function $f$ is well defined.
Now we verify the assumption of the recursion theorem (see Fact~\ref{F:recursionthms}(b) and Remark~\ref{R:extended-facts}).

\begin{lemma**}\label{L:recursivity-of-f}
The function $f$ is a partial $\Sigma_1^0(\ep)$-recursive function on its domain.
\end{lemma**}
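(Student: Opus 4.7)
The plan is to exhibit a $\Sigma_1^0(\ep)$ set $Q \su \om \times \bbA \times \om$ that computes $f$ on $D(f)$; by Fact~\ref{F:partialrecursivefunctions}\eqref{I:recursivity-to-omega} (extended to our spaces via Remark~\ref{R:extended-facts}) this is exactly what is needed. Since $f$ takes only the values $0,1$, it is enough to show that the two sets
\[
A_1 = \{(e,\be,p,x) \in \om \times \bbA \set (\be,p,x) \text{ satisfies (I) or } (\diamond) \text{ or } ((E) \land (R) \land (M))\},
\]
\[
A_0 = \{(e,\be,p,x) \in \om \times \bbA \set (\be,p,x) \text{ satisfies (I') or } (\diamond') \text{ or } ((E') \lor (R') \lor (M'))\},
\]
(appropriately partitioned according to whether $\be = \al$, $\be<\al \land |p|=1$, or $\be<\al \land |p|>1$) are $\Sigma_1^0(\ep)$ subsets of $\om \times \bbA$. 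Combining them into $Q = (A_1 \times \{1\}) \cup (A_0 \times \{0\})$ gives the required computing set.

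First I would check that all the atomic sub-formulas produce $\Sigma_1^0(\ep)$ sets: (I)/(I') use $\Sigma_1^0(\ep)$-recursivity of $T_\al$ together with $p \mapsto \wh{p}$, $p \mapsto |p|$, and equality on $\om$ from Lemmas~\ref{L:volba-epsilon} and \ref{L:recursivity-of-Q}; the membership tests $(\be,q,x), (\diamond(\be),q,x), (\be+1,q,x) \in \bbA$ use Lemmas~\ref{L:recursivity-of-A} and \ref{L:volba-epsilon}(g); the clauses involving $\zeta, \wh{q}, \wh{p}, \tau, r, \nu, \xi, \eta_i, q|k, p|k$ are handled by Lemmas~\ref{L:recursivity-of-Q} and \ref{L:volba-epsilon}(e); the relation $q \fR p$ is $\Sigma_1^0(\ep)$-recursive by Lemma~\ref{L:recursivity-of-R}; the extension relation $\preceq$ on $\cS$ and the strict order $<_\cP$ on $\cP$ are $\Sigma_1^0(\ep)$-recursive by Lemma~\ref{L:volba-epsilon}(d) and Notation~\ref{N:usporadanicP}; finally $U = U^{X,\om}_{\Si_1^0(\ep)}$ is $\Sigma_1^0(\ep)$-recursive on its domain and the test $U(e,\cdot,\cdot,x) = 0$ on $D(U)$ is $\Sigma_1^0(\ep)$ since $\{0\}$ is recursive in $\om$ and we may invoke Fact~\ref{F:partialrecursivefunctions}\eqref{I:substitution-property}.

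The key point, and the main obstacle, is that the quantifications over $q \in \cQ$ appearing in $(\diamond),(\diamond'),(E),(E'),(R),(R'),(M),(M')$ must be replaced by $\Sigma_1^0(\ep)$-recursively bounded quantifiers so that Lemma~\ref{L:bounded-quantifiers} applies. For this I use the observation in Remark~\ref{R:upper-bounds}: if $q \in \cQ$ satisfies $\wh{q} \preceq s$ and $|q| \le 2|\wh{q}|+1$, then $q \le_\cP M_\cP(s)$, and $M_\cP$ is $\Sigma_1^0(\ep)$-recursive. In $(\diamond),(\diamond')$ the witness $q_\diamond$ satisfies $\wh{q_\diamond} = \wh{p}$, so $q_\diamond \le_\cP M_\cP(\wh{p})$; in (E),(E') the witness $q_E$ satisfies $\wh{q_E} = \zeta(p)$, giving $q_E \le_\cP M_\cP(\zeta(p))$; in (R),(R') Remark~\ref{R:properties-of-R}(a) and Definition~\ref{D:definition-of-R} imply $|\wh{q_R}| = |\wh{p}| - 1$ and $|q_R| \le |p|+1 \le 2|\wh{q_R}|+1$ (one easily checks all three cases), so $q_R \le_\cP M_\cP(\wh{p})$. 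In (M),(M') the quantifier is already bounded by $q \le_\cP p$, which is a $\Sigma_1^0(\ep)$-recursive bound by Notation~\ref{N:usporadanicP}(2)(a), and the further quantifier over $i$ is bounded by the $\Sigma_1^0(\ep)$-recursive function $r(p)$.

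With these bounds in hand, Lemma~\ref{L:bounded-quantifiers} (and its obvious extension to the spaces of the paper via Remark~\ref{R:extended-facts}) converts every quantified sub-formula into a $\Sigma_1^0(\ep)$ subset of $\om \times \bbA$. Finite unions and intersections preserve $\Sigma_1^0(\ep)$ by Fact~\ref{F:properties-of-Gamma}, so both $A_1$ and $A_0$ are $\Sigma_1^0(\ep)$; hence $Q$ is $\Sigma_1^0(\ep)$ and computes $f$ on $D(f)$, finishing the proof.
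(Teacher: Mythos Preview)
Your approach is essentially the same as the paper's: replace the conditions ``$U(\cdots)=i$'' by membership in a $\Sigma_1^0(\ep)$ proxy set (the paper does this explicitly via a computing set $\Delta^U$ rather than the substitution property, but these amount to the same thing), and then bound all the quantifiers over $q\in\cQ$ by $M_\cP$ so that Lemma~\ref{L:bounded-quantifiers} applies.

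Two small points deserve attention. First, your chain ``$|q_R|\le |p|+1\le 2|\wh{q_R}|+1$'' in the treatment of (R),(R') is not correct: in case~(3) of Definition~\ref{D:definition-of-R} one has $|q_R|>|p|$, so $|q_R|\le |p|+1$ can fail. The right justification is simply that $q_R\in\cQ$ gives $|q_R|\le 2|\wh{q_R}|+1$ automatically, and $\wh{q_R}\preceq\wh{p}$ by Remark~\ref{R:properties-of-R}(a), so Remark~\ref{R:upper-bounds} yields $q_R\le_\cP M_\cP(\wh{p})$ directly; your conclusion is correct even though the intermediate inequality is not. Second, you assert that $Q=(A_1\times\{1\})\cup(A_0\times\{0\})$ computes $f$ on $D(f)$ without checking it. Since the $\Sigma_1^0(\ep)$ proxies for ``$U=0$'' and ``$U=1$'' need not agree with the literal conditions outside $D(U)$, one must verify that on $D(f)$ the sets $A_0$ and $A_1$ are disjoint and that $f(e,\be,p,x)=i$ implies $(e,\be,p,x)\in A_i$; the paper makes this step explicit, and it goes through because whenever $f$ is defined the relevant $U$-arguments lie in $D(U)$.
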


\begin{proof}
Let $\De^U \su X \times \omega$ be a $\Si^0_1(\ep)$ set which computes the function $U$ on its domain,
see Lemma~\ref{L:natural-base}(b).
We define a set $\De^f\su \om \times \bbA \times \omega$ which computes $f$ on its domain as follows.
Suppose that $(e,\beta,p,x,i) \in \omega \times \bbA \times \omega$.
We modify conditions in (a)--(c) from Definition \ref{D:function-f} to (a)$_m$--(c)$_m$ by replacing conditions of the form ``$U(e,\gamma,q,x) = i$'' with ``$(e,\gamma,q,x,i) \in \Delta^U$''.
More precisely, ``$U(e,\diamond(\be),q_{\diamond},x)=1$'' is replaced by ``$(e,\diamond(\be),q_{\diamond},x,1) \in \Delta^U$'' in ($\diamond$),
``$U(e,\diamond(\be),q,x)=0$'' is replaced by ``$(e,\diamond(\be),q,x,0) \in \Delta^U$'' in ($\diamond$'), and so on.
We denote the corresponding modified conditions by adding the index $m$, i.e., by  ($\diamond$)$_m$, ($\diamond$')$_m$, $\text{(E)}_m$, $\text{(E')}_m$, and so on ((I) and (I') are not changed by this replacement).

Then we define $\Delta^f$ as the set of all $(e,\beta,p,x,i) \in \omega \times \bbA \times \omega$ for which
either $i = 1$ and one of the following conditions holds:
\begin{itemize}
\item $\beta = \alpha$, $(\beta,p,x) \in \bbA$, and (I) holds, or

\item $\beta < \alpha$, $(\beta,p,x) \in \bbA$, $|p|=1$, and ($\diamond$)$_m$ holds, or

\item $\beta < \alpha$, $(\beta,p,x) \in \bbA$, $|p|>1$, and $\text{(E)}_m \land \ \text{(R)}_m \land \text{(M)}_m$ holds,
\end{itemize}

or $i = 0$ and one of the follwing conditions holds:

\begin{itemize}
\item $\beta = \alpha$, $(\beta,p,x) \in \bbA$, and (I') holds, or

\item $\beta < \alpha$, $(\beta,p,x) \in \bbA$, $|p|=1$, and ($\diamond$')$_m$ holds, or

\item $\beta < \alpha$, $(\beta,p,x) \in \bbA$, $|p|>1$, and $\text{(E')}_m \vee \ \text{(R')}_m \vee \text{(M')}_m$ holds.
\end{itemize}

To verify that the set $\Delta^f$ computes $f$,
it is sufficient to observe that for $(e,\be,p,x)\in D(f)$ the conditions
$(e,\be,p,x,1) \in \De^f$ and $(e,\be,p,x,0) \in \De^f$ are mutually exclusive and
that $f(e,\be,p,x) = i$ implies $(e,\be,p,x,i) \in \Delta^f$.

It remains to show that $\De^f$ is a $\Sigma_1^0(\ep)$ set. It is sufficient to prove that the modified conditions define $\Si^0_1(\ep)$ sets
since the relation $\be=\al$, $\be<\al$, $|p|=1$, and $|p|>1$ define $\Si_1^0(\varepsilon)$ sets.

\medskip\noindent
(a)$_m$ Using $\Si^0_1(\ep)$-recursiveness of $\bbA$ (Lemma~\ref{L:recursivity-of-A}) and of $T_{\al}$,
and Lemma~\ref{L:volba-epsilon}(e), we see that (I) and (I') define $\Si^0_1(\ep)$ sets. Thus the case (a)$_m$ = (a) is verified.

\medskip\noindent
(b)$_m$ The set defined by ($\diamond$)$_m$ is $\Si^0_1(\ep)$-recursive by Lemma~\ref{L:volba-epsilon}(e) and by Fact~\ref{F:properties-of-Gamma} with Remark~\ref{R:extended-facts}.
Using Lemma~\ref{L:volba-epsilon}(a),(e),(g), the set
\[
V := \bigl\{(e,(\be,p,x),i,q) \in \omega \times X \times \omega \times \cP \set (\diamond(\be),q,x) \in \bbA, \
\wh{p} = \wh{q}, \ (e,\diamond(\be),q,i) \in \Delta^U, \ i = 0 \bigr\}
\]
is also in $\Sigma_1^0(\ep)$.
We consider the $\Si^0_1(\ep)$-recursive function $M_\cP$ from Remark~\ref{R:upper-bounds}.
If $\wh{q} = \wh{p} =: s$, $q \in \cQ$, and $p \in \cP$, we have that $|q| \le 2|\wh{q}|+1$ and so $q \le_\cP M_\cP(\wh{p})$ by Remark~\ref{R:upper-bounds}.
Then applying Lemma~\ref{L:bounded-quantifiers} for $V$ and $g(p) = M_\cP(\wh{p})$ we obtain that the condition ($\diamond$')$_m$ defines  also a $\Sigma_1^0(\ep)$ set.

\medskip\noindent
(c)$_m$  The property (E)$_m$ defines $\Si^0_1(\ep)$ sets by Lemma~\ref{L:volba-epsilon}(a),(g),(e),
Fact~\ref{F:properties-of-Gamma}, and Lemmas~\ref{L:recursivity-of-Q}, \ref{L:recursivity-of-A}.
The property (R)$_m$ defines $\Si^0_1(\ep)$ sets by Lemma~\ref{L:volba-epsilon}(a), Fact~\ref{F:properties-of-Gamma}, and Lemmas~\ref{L:recursivity-of-Q}, \ref{L:recursivity-of-A}, \ref{L:recursivity-of-R}.
The property (M')$_m$ defines $\Si^0_1(\ep)$ sets by Lemma~\ref{L:volba-epsilon}(a),(d),(e), Remark~\ref{R:recursivity-of-cP},
Fact~\ref{F:properties-of-Gamma}, and Lemmas~\ref{L:recursivity-of-Q}, \ref{L:recursivity-of-A}.

The remaining properties (E')$_m$, (R')$_m$, and (M)$_m$ lead to $\Si^0_1(\ep)$ sets by the arguments used for ($\diamond$)$_m$.
We only point out the bounds used in the applications of Lemma~\ref{L:bounded-quantifiers} in these three remaining cases.
In (M)$_m$ the quantifiers $\forall q <_\cP p$ and $\forall i<r(p)$ satisfy the assumptions of Lemma~\ref{L:bounded-quantifiers}, see Remark~\ref{R:upper-bounds}.
In (E')$_m$ we use the fact that $\wh{q}=\zeta(p)$ implies $q\le_\cP M_\cP(\zeta(p))$ and $\Si^0_1(\ep)$-recursivity of the functions $M_\cP$ and $\zeta$.
Here we may use Remark~\ref{R:upper-bounds} since $\wh{q}=\zeta(p)=:s$ gives $\wh{q}\preceq s$ and $|q|\le 2|\wh{q}|+1$ for $q\in\cQ$ as above.
In (R')$_m$ we may use Remark~\ref{R:upper-bounds} since $q\fR p$ for $p,q\in\cP$ implies that $\wh{q}\preceq\wh{p}=:s$ and we have that $|q|\le 2|\wh{q}|+1$ for $q\in\cQ$. Therefore $q\le_{\cP} M_\cP(\wh{p})$.
\end{proof}

\begin{definition**}
Using recursion theorem (Fact~\ref{F:recursionthms}(b) and Remark~\ref{R:extended-facts}), we find $e^* \in \omega$ such that for all $(\beta,p,x) \in X$ we have
$f(e^*,\beta,p,x) = U(e^*,\beta,p,x)$ whenever $f(e^*,\beta,p,x)$ is defined.
Thus we have defined the function $f_{e^*}\colon (\beta,p,x) \mapsto f(e^*,\beta,p,x)$, cf. Notation~\ref{N:sections}.
\end{definition**}

\begin{lemma**}\label{L:Precursive}
The function $f_{e^*}$ is $\Sigma_1^0(\ep)$-recursive on $\bbA$.
\end{lemma**}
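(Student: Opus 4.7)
Part one, that $f_{e^*}$ is $\Sigma_1^0(\ep)$-recursive on its domain, is immediate from Lemma~\ref{L:recursivity-of-f} (fixing the first coordinate to $e^*$). Combined with Lemma~\ref{L:recursivity-of-A}, the essential task is to verify $D(f_{e^*}) \supseteq \bbA$, i.e., that $f(e^*,\beta,p,x)$ is defined for every $(\beta,p,x) \in \bbA$.

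I plan to establish this by a nested transfinite induction. The outermost layer is on $L := |\wh{p}|$ (increasing from $0$). Within fixed $L$, the middle layer is on $\beta$, ordered with $\alpha$ first and then downward over the finite set $\{\beta \in [0,\alpha) : \ell(\beta) \le L\}\cup\{\alpha\}$ (finite by a routine induction on the recursive construction of $\alpha$ in Lemma~\ref{L:diamond}, equivalently because this set coincides with the $L$-path from Proposition~\ref{P:diamond-path}). Within fixed $(L,\beta)$, the innermost layer is on $c_\cP(p)$ (well-ordered by Notation~\ref{N:usporadanicP}(2)(a)). The base cases (either $L=0$ forcing $p = (\emptyset)$ and $\beta = \alpha$, or $\beta = \alpha$ at any $L$) are handled by case (a) of Definition~\ref{D:function-f}: the alternatives (I) and (I') are mutually exclusive and exhaustive without any reference to $U$.

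In the inductive step, each dependency descends in this order. In case (b) ($|p|=1$, $\beta<\alpha$), the dependency on $U(e^*,\diamond(\beta),q_\diamond,x)$ has $\ell(\diamond(\beta)) \le \ell(\beta) \le L$, so $\diamond(\beta)$ lies in the middle-induction set and is $>\beta$; the middle hypothesis applies. In case (c) ($|p|>1$), the identity $|\wh{p}| = \ell(\beta) + r(p) + \sum_j |\eta_j(p)|$ combined with $r(p) \ge 1$ forces $\ell(\beta) < L$, so $\ell(\beta+1) = \ell(\beta) + 1 \le L$ and $\beta+1$ is in the middle-induction set; this handles (E)/(E'), whose target $(\beta+1,q_E)$ has $|\wh{q_E}| = |\zeta(p)| \le L$ (if $<L$, the outer hypothesis applies; if $=L$, the middle hypothesis does). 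Condition (R)/(R') descends at the outer level since $|\wh{q_R}| = |\wh{p}| - 1 < L$ by Remark~\ref{R:properties-of-R}(a).

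The delicate case is (M)/(M'), where $q$ ranges with $q <_\cP p$ but no obvious bound on $|\wh{q}|$. The crucial observation is that any such $q$ satisfies $q|(2i+2) = p|(2i+2)$ for some $i \ge 0$, forcing $\tau(q) = \tau(p)$, and simultaneously $\xi(q) \preceq \xi(p)$, so $|\xi(q)| \le |\xi(p)|$. The identity $|\wh{\cdot}| = |\tau(\cdot)| + |\xi(\cdot)|$ then yields $|\wh{q}| \le |\wh{p}| = L$; when $|\wh{q}|<L$ the outer hypothesis applies, and when $|\wh{q}|=L$ the strict decrease $c_\cP(q) < c_\cP(p)$ triggers the inner hypothesis. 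Once every $U$-value involved is in $\{0,1\}$, the alternatives in Definition~\ref{D:function-f} are mutually exclusive and jointly exhaustive, so $f(e^*,\beta,p,x)$ receives a definite value. The main obstacle is identifying a single well-founded order simultaneously handling (R) and (M); the $\xi$-bound giving $|\wh{q}| \le L$ in (M) is the crux that makes the lexicographic order $(L,\text{position of }\beta,c_\cP(p))$ work.
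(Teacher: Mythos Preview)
Your proof is correct and follows essentially the same route as the paper's: the paper argues by contradiction, taking the minimal $L = |\wh{p}|$, then the minimal $\beta^*$ above which everything at level $\le L$ is defined, then the $<_\cP$-minimal offending $p$, which is exactly your lexicographic induction on $(L,\text{position of }\beta,c_\cP(p))$ unwound. The only cosmetic difference is that you replace the paper's separate limit-ordinal argument (finding an interval $[a(\beta^*,k),\beta^*)$ on which $\ell > L$) by the equivalent observation that $\{\beta : \ell(\beta) \le L\}$ is finite (indeed it is the $L$-path of Proposition~\ref{P:diamond-path}), and you make explicit the bound $|\wh{q}| \le L$ in the (M)/(M') case via the identity $|\wh{\cdot}| = |\tau(\cdot)| + |\xi(\cdot)|$, which the paper leaves implicit.
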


\begin{proof}
We know from the recursion theorem that $f_{e^*}$ is $\Si_1^0(\ep)$-recursive on its domain.
Therefore it is sufficient to prove that $f_{e^*}(\be,q,x)$ is defined for every $(\be,q,x)\in\bbA$.
Fix $x \in \cN$. Towards contradiction assume that the set
\[
N := \{(\be,q) \in [0,\alpha] \times \cP \set (\be,q,x) \in \bbA \land f_{e^*}(\be,q,x) \text{ is not defined}\}
\]
is nonempty.

Let $L$ be the smallest integer such that there is $(\be,q) \in N$ with $|\wh{q}| = L$.
Since $(\alpha,q) \notin N$ for every $q \in \bbA^x_{\alpha}$ we may define $\be^* \in (0,\al]$ as the smallest ordinal such that $f_{e^*}(\be,q,x)$ is defined for
all $(\be,q,x) \in \bbA$ such that $\be \ge \be^*$ and $|\wh{q}| \le L$.

If $\be^* = \be+1$ for some $\beta$ then there is $p \in \cP$ such that $(\be,p) \in N$ and $|\wh{p}|=L$. We may assume that $p$ is moreover the smallest one with respect to $<_{\cP}$ with these properties.
It is not difficult to check that all needed evaluations of $U_{e^*}$ in the definition of $f(e^*,\be,p,x)$ are defined since we need them only for triples $(\be',p',x)\in\bbA$ such that $|\wh{p'}|\le L$ and either $|\wh{p'}|< L$, or $\be'\ge\be^*$, or ($|\wh{p'}|=L$ and $\be'=\be$ and $p'<_{\cP} p$). Thus the value $f_{e^*}(\beta,p,x)$ is defined, a contradiction with $(\be,p)\in N$.

If $\be^*$ is a limit ordinal then by Lemma~\ref{L:diamond} there is $k\in\om$ such that $\ell(\be) > L$ for every $\be\in [a(\be^*,k),\be^*)$. Denote $\be_0 = a(\be^*,k)$.
Now for every $\beta \in [\beta_0,\beta^*)$ and $p \in \cP$ with $|\wh{p}| \leq L$ we have $(\be,p,x) \notin \bbA$ by definition of $\bbA$ (Definition~\ref{D:pripustnaA}).
So $\beta^* \leq \beta_0 < \beta^*$, a contradiction.
\end{proof}

\begin{definition**}\label{D:object-P}
We set $P = \{(\be,p,x) \in \bbA \set f_{e^*}(\beta,p,x) = 1\}$. Let $\be \in [0,\al]$ and $x \in \cN$.
We often use the notation $P^x_\be$ (see Notation~\ref{N:sections}).
\end{definition**}

\begin{proof}[{\bf Proof of Proposition~\ref{P:object-P}}]
The set $P \subset \bbA$ is $\Si^0_1(\ep)$-recursive due to Lemmas~\ref{L:recursivity-of-A} and \ref{L:Precursive}.
By Lemma~\ref{L:Precursive} we know that for each symbol $F \in \{I,\diamond, E, R, M\}$ the corresponding formulae (F) and (F') are complementary in $\bbA$.
Let $(\beta,p,x) \in \bbA$. If $\beta = \alpha$ then we may easily check that the condition (I) from Definition~\ref{D:function-f}(a) is satisfied for $(\be,p,x)$ if and only if
$(\beta,p,x)$ satisfies (a) in Proposition~\ref{P:object-P}. The case $\beta < \alpha$ and $|p|=1$ can be treated similarly.

Assume that $(\beta, p, x) \in P$, $\beta < \alpha$, and $|p| > 1$.
Then $(\be, p, x)$ clearly satisfies (c-i) and (c-ii). To show (c-iii) take $q$ and $i$ such that $q \leq_{\cP} p,  q \in P^x_\be, \xi(q) \preceq \xi(p)$, $i < r(p)$, $\eta_i(q) \ne -$, $q|(2i+2) = p|(2i+2)$, and $q|(2i+3) \leq_{\cP} p|(2i+3)$.
If $p = q$ or just $p|(2i+3) = q|(2i+3)$, then clearly $\eta_i(q) = \eta_i(p)$. If $q <_{\cP} p$ and  $p|(2i+3)  <_{\cP} q|(2i+3)$,
then (M) gives $q \notin P_{\be}^x$, a contradiction.

Now assume that $(\beta, p, x) \in \bbA$ satisfies (c) in Proposition~\ref{P:object-P}. Then we have either $f_{e^*}(\be,p,x) = 1$
or  $f_{e^*}(\be,p,x) = 0$ by Lemma~\ref{L:Precursive}. Towards contradiction assume that the latter case occurs.
Since $(\beta, p, x) \in \bbA$ satisfies (c-i) and (c-ii), we get that $(e^*,\beta, p, x)$ satisfies neither (E') nor (R').
Thus it satisfies (M'). This condition gives  $q$ and $i$ such that $q <_{\cP} p, \xi(q) \preceq \xi(p)$, $i < r(p)$, $\eta_i(q) \ne -$, $q|(2i+2) = p|(2i+2)$, $q|(2i+3) <_{\cP} p|(2i+3)$, and $q \in P_{\be}^x$. This implies $\eta_i(q) \neq \eta_i(p)$,  a contradiction with (c-iii).
\end{proof}

\subsection{\texorpdfstring{$T_\be^x$}{} and \texorpdfstring{$[T_\be^x]$}{}}\label{SS:Tcka-a-uzavrenostF}

\begin{definition**}
We set $T_{\beta}^x = \{\wh p\set p \in P_{\beta}^x\}$.
\end{definition**}

\begin{notation**}\label{N:tree-like}\hfil
\begin{enumerate}[(a)]
\item The set $T_{\beta}^x$ is not a tree since any $t \in T_{\beta}^x$ satisfies $|t| \geq l(\beta)$.
However, if $s \in T_{\beta}^x, t \preceq s$, and $|t| \geq l(\beta)$, then $t \in T_{\beta}^x$ by (c-ii) in Proposition~\ref{P:object-P} and Remark~\ref{R:properties-of-R}(a).
We set $[T^x_\be] = \{z \in \cN \set \forall n \geq \ell(\beta)\colon z|n \in T_{\beta}^x\}$.

\item We define a mapping $\pi\colon [0,\alpha] \times \cN \to \cN$ by
\[
\pi(\beta,z) = ([z(\ell(\beta))]_1,[z(\ell(\beta)+1)]_1, \dots).
\]
Using Definition~\ref{D:object-P}, Definition~\ref{D:pripustnaA}, and Proposition~\ref{P:jump-trees-old},
we get that $\pi(\be,z)=\si^x_\be$ for $z\in [T^x_\be]$.

The mapping $\pi$ is $\Si^0_1(\ep)$-recursive.
To this end consider the mapping $\pi^*(\be,z,n)=\pi(\be,z)(n)$ (cf. Fact~\ref{F:partialrecursivefunctions}\eqref{I:recursivity-to-N}).
It is the composition of the mappings $(\be,z,n)\mapsto (\ell(\be)+n,z)$, $(k,z)\mapsto z_k$, and $[\,\,]_1$
which are $\Si^0_1(\ep)$-recursive due to Lemma~\ref{L:volba-epsilon}(g),(i),(e).
Therefore using Fact~\ref{F:partialrecursivefunctions}(e) the function $\pi^*$ is $\Si^0_1(\ep)$-recursive.
Thus also $\pi$ is recursive by Fact~\ref{F:partialrecursivefunctions}\eqref{I:recursivity-to-N}.

\item The set $P_{\beta}^x$ has the property that if $q\preceq p$ for $p \in P^x_\be$ and $q \in \cP$ is of odd length, then $q \in P^x_\be$.
We set $[P_{\beta}^x] = \{z \in \cP_{\infty} \set \forall l \in \omega \colon z|(2l+1) \in P_{\beta}^x\}$.
\end{enumerate}
\end{notation**}

\begin{proposition**}\label{P:uzavrenost-F}
The set $F := \bigl\{(x,y)\in \cN \times \cN\set y \in [T_0^x]\bigr\}$ is $\Pi^0_1(\ep)$ in $\cN \times \cN$.
\end{proposition**}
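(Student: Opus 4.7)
The plan is to characterize $[T_0^x]$ as a bounded-existential statement held universally over $\om$, then use Lemma~\ref{L:bounded-quantifiers} to turn the bounded existential into a $\Si^0_1(\ep)$-recursive relation. Since $\ell(0)=1$ by Lemma~\ref{L:diamond}(b), the definition in Notation~\ref{N:tree-like}(a) gives
\[
(x,y)\in F \iff \forall n\ge 1\colon y|n \in T_0^x \iff \forall n\ge 1\ \exists p\in \cQ\colon (0,p,x)\in P \ \land\ \wh{p}=y|n.
\]
First I would bound the inner $\exists p$. For any $p\in\cQ$ with $\wh{p}=y|n$ the observation after Definition~\ref{D:Q} gives $|p|\le 2|\wh{p}|+1=2n+1$, and trivially $\wh{p}\preceq y|n$; hence Remark~\ref{R:upper-bounds} yields $p\le_{\cP} M_{\cP}(y|n)$, where $M_{\cP}$ is the $\Si^0_1(\ep)$-recursive function introduced there.

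Next I would observe that the set
\[
D = \bigl\{(x,y,n,p) \in \cN\times\cN\times\om\times\cP \set (0,p,x)\in P \ \land \ \wh{p}=y|n\bigr\}
\]
is $\Si^0_1(\ep)$-recursive: the membership $(0,p,x)\in P$ is $\Si^0_1(\ep)$-recursive by Proposition~\ref{P:object-P}, and $\wh{p}=y|n$ is a composition of the $\Si^0_1(\ep)$-recursive maps from Lemma~\ref{L:volba-epsilon}(e),(i) combined with equality on $\cS$. Applying Lemma~\ref{L:bounded-quantifiers} (extended to the ordering $<_{\cP}$ on $\cP$ via Remark~\ref{R:extended-facts} and the identification $c_{\cP}$ of Notation~\ref{N:bijectionsandGamma}) with bound $(x,y,n)\mapsto M_{\cP}(y|n)$, I obtain a $\Si^0_1(\ep)$-recursive set $W\su \cN\times\cN\times\om$ such that
\[
(x,y,n)\in W \iff \exists p\le_{\cP} M_{\cP}(y|n)\colon (x,y,n,p)\in D.
\]
Combining with the preceding paragraph, $(x,y)\in F$ iff $(x,y,n)\in W$ for every $n\ge 1$.

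Finally, I would take complements: since $W$ is $\Si^0_1(\ep)$-recursive its complement is again $\Si^0_1(\ep)$, and $\cN^2\sm F$ is the projection along $\om$ of that complement intersected with $\{n\ge 1\}$, hence $\Si^0_1(\ep)$ by Fact~\ref{F:properties-of-Gamma}. Therefore $F\in \Pi^0_1(\ep)$. I do not expect any serious obstacle here: the argument is essentially a packaging of the $\Si^0_1(\ep)$-recursivity of $P$ (Proposition~\ref{P:object-P}) together with the length bound $|p|\le 2|\wh{p}|+1$ on elements of $\cQ$, which is precisely what the tool $M_{\cP}$ of Remark~\ref{R:upper-bounds} was designed to exploit; the only point requiring care is the correct invocation of Lemma~\ref{L:bounded-quantifiers} in the extended space $\cP$ rather than in $\om$.
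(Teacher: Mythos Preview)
Your proof is correct and follows essentially the same route as the paper: your set $D$ is the paper's $Z^*$, your $W$ is the paper's $Z$, and both arguments bound the existential over $p$ by $M_{\cP}(y|n)$ via Remark~\ref{R:upper-bounds} before applying Lemma~\ref{L:bounded-quantifiers} and then quantifying universally over $n\ge 1$. The only cosmetic difference is that you spell out $\ell(0)=1$ and the complement/projection step explicitly, while the paper absorbs the latter into a reference to Subsection~\ref{SS:higher-classes}.
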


\begin{proof}
We define
\[
Z = \{(x,y,n) \in \cN \times \cN \times \omega\set \exists p \in P_{0}^x\colon y|n = \wh p\}.
\]
To apply Lemma~\ref{L:bounded-quantifiers} we define
\[
Z^* = \{(x,y,n,p) \in \cN \times \cN  \times \om \times \cP \set (0,p,x)\in P,\ y|n=\wh{p}\}
\]
and we use that
\[
Z = \{(x,y,n)\in\cN \times \cN \times \omega\set \exists p\le_{\cP} M_{\cP}(y|n)\colon (x,y,n,p)\in Z^*\}.
\]
The set $Z^*$ is $\Si^0_1(\ep)$-recursive since $P$ is $\Si^0_1(\ep)$-recursive by Proposition~\ref{P:object-P}
and the relation $y|n = \wh{p}$ is $\Si^0_1(\ep)$-recursive by Lemma~\ref{L:volba-epsilon}(a),(e),(i).
The mapping $(x,y,n)\mapsto M_{\cP}(y|n)$ is $\Si^0_1(\ep)$-recursive by Remark~\ref{R:upper-bounds}, Lemma~\ref{L:volba-epsilon}(i), and Fact~\ref{F:partialrecursivefunctions}(e).
Using Remark~\ref{R:recursivity-of-cP} we apply Lemma~\ref{L:bounded-quantifiers} for the corresponding extended product spaces to get $\Si^0_1(\ep)$-recursivity of $Z$.
Then we have
\[
\bigl\{(x,y)\in \cN \times \cN\set y \in [T_0^x]\bigr\} = \{(x,y) \in \cN \times \cN\set \forall n \in \omega, n \geq 1\colon (x,y,n) \in Z\}
\]
and therefore the set $F$ is $\Pi_1^0(\ep)$ (see Subsection~\ref{SS:higher-classes}).
\end{proof}

\begin{lemma**}\label{L:hatsurjection}
Let $x \in \cN$ and $\beta \in [0,\alpha)$.
\begin{enumerate}[\upshape (a)]
\item The mapping $\widehat{\phantom{a}} \colon P_{\beta}^x \to T_{\beta}^x$ is a bijection.

\item The mapping $\widehat{\phantom{a}} \colon [P_{\beta}^x] \to [T_{\beta}^x]$ is a bijection.

\item The mapping $\zeta$ on $[P^x_\be]$ is an injective mapping to $[T^x_{\be+1}]$.
\end{enumerate}
\end{lemma**}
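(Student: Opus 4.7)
The plan is to prove the three parts in order, using the structural properties of $P_\beta^x$ given by Proposition~\ref{P:object-P} together with the uniqueness noted in Remark~\ref{R:properties-of-R}(c) and the closure of $P_\beta^x$ under odd-length prefixes from Notation~\ref{N:tree-like}(c).

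For (a), surjectivity is immediate from the definition $T_\beta^x=\{\wh p:p\in P_\beta^x\}$, so I concentrate on injectivity, which I will prove by strong induction on $|\wh p|$. The base case $|\wh p|=\ell(\beta)$ forces $|p|=|p'|=1$, hence $p=(\wh p)=p'$. For the inductive step, with $|p|,|p'|>1$, condition (c-ii) of Proposition~\ref{P:object-P} yields $q_R,q_R'\in P_\beta^x$ satisfying $q_R\fR p$ and $q_R'\fR p'$. Since $\wh{q_R}$ and $\wh{q_R'}$ both equal $\wh p$ with its last entry removed (Remark~\ref{R:properties-of-R}(a)), the inductive hypothesis forces $q_R=q_R'$. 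Invoking Remark~\ref{R:properties-of-R}(c) on the uniqueness of the $\bbJ^x$-extension of a given predecessor by a fixed $m\in\omega$, and using $P_\beta^x\subseteq\bbJ^x$, we conclude $p=p'$.

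For (b), surjectivity follows by assembling, for $y\in[T_\beta^x]$ and each $n\ge\ell(\beta)$, the unique $p_n\in P_\beta^x$ with $\wh{p_n}=y|n$ provided by part (a); the closure of $P_\beta^x$ under odd-length prefixes and the $\fR$-links between successive $p_n$'s let us coherently extract a $\pi\in[P_\beta^x]$ with $\wh\pi=y$. For injectivity, I induct on $i$ to show $\pi|(2i+1)=\pi'|(2i+1)$; the base case is trivial. In the step, letting $q=\pi|(2i+1)=\pi'|(2i+1)$, both $\pi|(2i+3),\pi'|(2i+3)\in P_\beta^x$ extend $q$ with the same $n_i$ read off $\wh\pi=\wh{\pi'}$, and possibly different $\eta_i$'s. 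The $\bbJ$-condition together with $\wh\pi=\wh{\pi'}$ forces the two $\eta_i$'s to be simultaneously $-$ or simultaneously in $\cS$; in the latter case both are prefixes of the common suffix of $\wh\pi$ starting at the appropriate position, so one is a strict prefix of the other. Arranging WLOG $\eta_i(\pi)\prec\eta_i(\pi')$ yields $\eta_i(\pi)<_*\eta_i(\pi')$, hence $\pi|(2i+3)<_\cP\pi'|(2i+3)$ and $\xi(\pi|(2i+3))\prec\xi(\pi'|(2i+3))$. All preconditions of (c-iii) of Proposition~\ref{P:object-P} applied to $\pi'|(2i+3)$ with $q_{\mathrm{arg}}=\pi|(2i+3)$ are then satisfied, so $\eta_i(\pi)=\eta_i(\pi')$, a contradiction.

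For (c), I first note that $\zeta(\pi)\in[T_{\beta+1}^x]$: for each $i\ge 1$, condition (c-i) of Proposition~\ref{P:object-P} applied to $\pi|(2i+1)$ yields $q_E\in P_{\beta+1}^x$ with $\zeta(\pi|(2i+1))=\wh{q_E}\in T_{\beta+1}^x$, and these initial segments exhaust the prefixes of $\zeta(\pi)$ of length at least $\ell(\beta+1)=\ell(\beta)+1$. For injectivity, $\zeta(\pi)=\zeta(\pi')$ gives common $\tau$ and $n_i$'s, and I induct on $i$ to prove $\pi|(2i+3)=\pi'|(2i+3)$. The key structural input is that $\xi(\pi|(2j+1))\in S_\beta^x$ for every $j$ (from $\bbA$), so by Proposition~\ref{P:jump-trees-old}(b) both $\xi(\pi|(2i+3))$ and $\xi(\pi'|(2i+3))$ are prefixes of the unique branch $\sigma_\beta^x$ of $[S_\beta^x]$, hence $\preceq$-comparable; WLOG $|\eta_i(\pi)|\le|\eta_i(\pi')|$. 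When the inequality is strict, a variant of the minimality argument from (b) applies to $p:=\pi'|(2i+3)$ with $q:=\pi|(2i+3)$, the preconditions being secured by $\xi(q)\prec\xi(p)$ and $q\le_\cP p$ (via the prefix-induced $<_{\cS^*}$-comparison on $\eta_i$); the conclusion $\eta_i(\pi)=\eta_i(\pi')$ contradicts $|\eta_i(\pi)|<|\eta_i(\pi')|$. The residual case of equal lengths with differing $[\cdot]_0$-components is settled by the same (c-iii)-minimality scheme (this is precisely the $<_\cP$-minimal $\wt\eta$ selection indicated in Remark~\ref{R:basic-idea}). The main obstacle is part (c): the $[\cdot]_0$ components of the $\eta_i$'s are not constrained by $\zeta(\pi)$, so the minimality argument must carefully leverage the $\xi$-prefix constraint coming from $\sigma_\beta^x$ together with the structure of $<_{\cS^*}$ and $<_\cP$ to force uniqueness of $\eta_i$ beyond what $\bbJ$ alone provides.
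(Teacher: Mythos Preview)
Your proofs of (a) and (b) are correct. For (a) your argument is the paper's. For (b) your injectivity argument is a direct induction on $i$; the paper instead writes the one-liner ``Injectivity follows from (a)'', relying implicitly on the uniqueness of the limit built in the surjectivity construction (for each $n\ge\ell(\beta)$ there is a unique $r_n\in P_\beta^x$ with $\wh{r_n}=y|n$, and every $\pi$ with $\wh\pi=y$ has each coordinate equal to the eventually constant value of the corresponding coordinate of $r_n$). Both routes work; yours is more self-contained.

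Your argument for (c) has a genuine gap. You try to transport the minimality step from (b), but the ingredient that made it work there---namely that $\eta_i(\pi)$ is a \emph{prefix} of $\eta_i(\pi')$ because both are initial segments of the common tail of $\wh\pi=\wh{\pi'}$---is simply false under the weaker hypothesis $\zeta(\pi)=\zeta(\pi')$. The map $\zeta$ only records $\tau$ and the $[\,\cdot\,]_0$-parts of the $n_i$'s; it says nothing about the $[\,\cdot\,]_0$-parts of the $\eta_i$'s. So from $|\eta_i(\pi)|<|\eta_i(\pi')|$ you cannot conclude $\eta_i(\pi)<_*\eta_i(\pi')$, and hence your ``prefix-induced $<_{\cS^*}$-comparison'' giving $\pi|(2i+3)\le_\cP\pi'|(2i+3)$ is unjustified. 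The same problem hits your ``residual case of equal lengths with differing $[\,\cdot\,]_0$-components'', which you only wave at: you still need one of $\pi|(2i+3),\pi'|(2i+3)$ to be $\le_\cP$ the other \emph{and} to have $\eta_i\ne-$, and when exactly one of the $\eta_i$'s equals $-$ that combination is impossible (the one with $\eta_i=-$ is automatically the smaller, since $c_{\cS^*}(-)=0$).

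The paper's device for (c) is different and is what you are missing: instead of comparing $p|(2j+3)$ with $u|(2j+3)$, pass to $u|(2k+3)$ for \emph{large} $k$. Since $<_\cP$ is a well-ordering of $\cP$ (so any fixed element has only finitely many $<_\cP$-predecessors) and since the $u|(2k+3)$ are pairwise distinct, one gets $p|(2j+3)<_\cP u|(2k+3)$ and $\xi(p|(2j+3))\preceq\xi(u|(2k+3))$ automatically for $k$ large, with no need for any prefix relation between the two $\eta_j$'s. This is the move that replaces your unjustified $<_*$-comparison; incorporate it and the rest of your induction goes through.
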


\begin{proof}
(a) The mapping $\widehat{\phantom{a}}$ is onto by definition. It remains to prove injectivity.
Suppose that we have $p^0,p^1 \in P_{\beta}^x$ with $\widehat{p^0} = \widehat{p^1}$. We will proceed by induction over the length of $\widehat{p^0}$.
If $|\widehat {p^0}| = \ell(\beta)$, then $|p^0| = 1$ by Definition~\ref{D:pripustnaA} and Notation~\ref{N:tree-like}(a). Then we have $p^0 = (\tau(p^0)) = (\tau(p^1)) = p^1$ by Definition~\ref{D:Q} of $\cQ$.
If $|\widehat {p^0}| > \ell(\beta)$ then there exist $q^0,q^1 \in P_{\beta}^x$ such that $q^0 \R p^0$ and $q^1 \R p^1$ by Proposition~\ref{P:object-P}(c-ii).
We have $\widehat{q^0} = \widehat{q^1}$ by Remark~\ref{R:properties-of-R}(a).
Using induction hypothesis we infer $q^0 = q^1$. Due to this and to the fact that $\widehat{p^0} = \widehat{p^1}$ we have $p^0 = p^1$ by Remark~\ref{R:properties-of-R}(c).

\medskip\noindent
(b) Injectivity follows from (a). We prove surjectivity.
Let $w\in [T^x_\be]$. Using the definition of $T_{\beta}^x$ and (a), we find for every $k \in \omega$ a unique $r_k \in P^x_\be$ with $w|(\ell(\be)+k) = \wh{r_k}$.
Observe that $r_k \R r_{k+1}$ for every $k \in \omega$. Indeed, by the property Proposition~\ref{P:object-P}(c-ii) there exists $q \in P_{\beta}^x$ with $q \R r_{k+1}$.
By Remark~\ref{R:properties-of-R}(a) we have that $\widehat{q} = \widehat{r_{k+1}}|_{|\widehat{r_{k+1}}|-1} = w|_{|\widehat{r_{k+1}}|-1} = \widehat{r_k}$.
By (a) we have $q = r_k$.

We show that the sequence $\{\tau(r_k)\}_{k \geq 0}$ is constant and for every $j \in \omega$ the sequences
$\{n_j(r_k)\}_{k \geq j}$, $\{\eta_j(r_k)\}_{k \geq j}$ are eventually constant.
All $r_k$'s begin by the same finite sequence $\tau(r_k) = w|\ell(\be)$ since $|\tau(r_k)| = \ell(\be)$ by the definition of $\bbA$ (Definition~\ref{D:pripustnaA}).
Thus the sequence $\{\tau(r_k)\}_{k \geq 0}$ is constant. Now assume that the sequences
$\{n_j(r_k)\}_{k \geq j}$, $\{\eta_j(r_k)\}_{k \geq j}$ are eventually constant for every $j < j_0$.
Find $k_0 \in \omega$ such that for every $j < j_0$ the sequences $\{n_j(r_k)\}_{k \geq k_0}$, $\{\eta_j(r_k)\}_{k \geq k_0}$
are constant. The sequence $\{n_{j_0}(r_k)\}_{k > k_0}$ is constant, since $n_{j_0}(r_k) = w_m$, where
\[
m = \ell(\beta) + (j_0-1) + |\eta_0(r_{k_0})| + \dots + |\eta_{j_0-1}(r_{k_0})|.
\]
Now we distinguish two possibilities. If we have that $\eta_{j_0}(r_k) = -$ for every $k > k_0$, then we are done.
If this is not the case, there is $k' > k_0$ such that $\eta_{j_0}(r_{k'}) \neq -$. We have $r_{k'} \R r_{k'+1}$, $r_{k'+1} \R r_{k'+2},
\dots$ and $r_{k'}|(2j_0+2) = r_{k'+1}|(2j_0+2) = r_{k'+2}|(2j_0+2) = \dots$ by induction assumption.
Then by the definition of $\R$ we have $\eta_{j_0}(r_{k'}) = \eta_{j_0}(r_{k'+1}) = \eta_{j_0}(r_{k'+2}) = \dots$
It follows that the sequence $\{\eta_{j_0}(r_k)\}_{k \geq j_0}$ is eventually constant.

If we set $p_0 = \lim_{k \to \infty} \tau(r_k)$, $p_{2l+1} = \lim_{k \to \infty} n_l(r_k)$ and $p_{2l+2} = \lim_{k \to \infty} \eta_l(r_k)$ for $l \in \omega$,
then $p=\{p_i\}_{i\in\om} \in [P_{\be}^x]$ and $\wh{p} = w$.

\medskip\noindent
(c) By Proposition~\ref{P:object-P}(c-i) we have $\zeta(p) \in [T^x_{\be+1}]$ for every $p \in [P_{\beta}^x]$.
Now we suppose on the contrary that there are sequences $u,p\in [P^x_\be]$ such that $u \ne p$ and $\zeta(u) = \zeta(p)$.
We have $\xi(u) = \xi(p) = \sigma_{\beta}^x$ and there exists $m \in \omega$ with $u|m = p|m$ and $u|(m+1) \neq p|(m+1)$.
Since $\tau(u) = \zeta(u)|1 = \zeta(p)|1 = \tau(p)$, we get $u|1 = p|1$ and therefore $m \geq 1$.

First suppose that $m = 2j+2$ for some $j \in \omega$. Since $\eta_j(u) \neq \eta_j(p)$ we may assume without any loss of generality
that  $\eta_j(p) \neq -$. We find $k \in \omega$ such that $\xi(p|(2j+3)) \preceq \xi(u|(2k+3))$ and $p|(2j+3) <_{\cP} u|(2k+3)$. Then by Proposition~\ref{P:object-P}(c-iii) we get
$\eta_j(u) = \eta_j(u|(2j+3)) = \eta_j(p|(2k+3)) =\eta_j(p)$, a contradiction.

Now suppose that $m = 2j + 1$ for some $j \in \omega$. Then
\[
[u_m]_0 = \zeta(u)_{\ell(\beta)-1+j} = \zeta(p)_{\ell(\beta)-1+j} = [p_m]_0
\]
and $[u_m]_1 = [p_m]_1 = \si^x_\be(k)$, where $k = |\xi(q|m)| = |\xi(p|m)|$. This implies $u_m = p_m$. Consequently, $u|(m+1) = p|(m+1)$, a contradiction.
\end{proof}

\subsection{The mapping \texorpdfstring{$\psi_{\beta,\be+1}^x$}{}}\label{SS:psi-be-be+1}

We employ the following notation in the sequel.

\begin{notation**}
For $t \in \Seq$ with $|t| > 0$ we denote $t^- = t|(|t|-1)$. Thus we have $t = t^- \w t_{|t|-1}$.
\end{notation**}

\begin{definition**}\label{D:psi}
Let $x \in \cN$, $\beta \in [0,\alpha)$.
We define $\psi(t)=\psi_{\be,\be+1}^x(t)\in\cQ$ for every $t\in T^x_{\be+1}$ by the following inductive procedure.
We put first $\psi(t^-)=(t^-)$ if $|t|=\ell(\be+1)$ ($=\ell(\be)+1$). Assuming that $\psi(t^-)\in\cQ$ has already been defined, we define $\psi(t)$ as
the element of the form $\psi(t)=\psi(t^-)\w (n,\eta)\in\cQ$ described uniquely by the following properties.
\begin{itemize}
\item[(a)] $\zeta(\psi(t)) = t$, in particular $[n]_0=t_{|t|-1}$ and $\tau(\psi(t))=t|\ell(\be)$,

\item[(b)] $\xi(\psi(t)) \preceq \sigma_{\beta}^x$, in particular $[n]_1$ is uniquely defined,

\item[(c)] $\psi(t)$ is the $<_{\cP}$-minimal element of the set
\[
E(t)=\{\psi(t^-)\w (n,s)\in P^x_\be\set s\in\cS^*, s\ne -,\ \xi\bigl(\psi(t^-)\w (n,s)\bigr) \preceq \si^x_\be\}
\]
if $E(t)\ne\emptyset$ and $\eta=-$ otherwise.
\end{itemize}
\end{definition**}

\begin{remark**}\label{R:monotony-of-theta}
We may easily notice that from Definition~\ref{D:psi} it follows
\begin{itemize}
\item  $\psi^x_{\be,\be+1}(s) \preceq \psi^x_{\be,\be+1}(t)$ for $s \preceq t$ in $T^x_{\be+1}$ since $\psi^x_{\be,\be+1}(t^-) \preceq \psi^x_{\be,\be+1}(t)$,
\item $|\psi^x_{\be,\be+1}(t)|=|\psi^x_{\be,\be+1}(t^-)|+2$,
\item $\psi^x_{\be,\be+1}$ is injective by (a) in Definition~\ref{D:psi}.
\end{itemize}
\end{remark**}

\begin{lemma**}\label{L:range-of-psi}
Let $x \in \cN$ and $\be \in [0,\al)$. The mapping $\psi^x_{\be,\be+1}\colon T^x_{\be+1}\to\cQ$ takes values in $P^x_\be$.
\end{lemma**}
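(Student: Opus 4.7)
The plan is to prove the statement by induction on $|t|$, starting at the base case $|t| = \ell(\beta+1) = \ell(\beta)+1$. For any $t \in T^x_{\beta+1}$ we have $|\psi(t)| \ge 3 > 1$, so by Proposition~\ref{P:object-P} it suffices to verify conditions (c-i)--(c-iii) for the triple $(\beta,\psi(t),x)$. The analysis naturally splits according to whether the last entry $\eta := \eta_{r(\psi(t))-1}(\psi(t))$ equals $-$ or not.

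The easy case is when $\eta \ne -$: by clause~(c) of Definition~\ref{D:psi}, $\psi(t)$ is then the $<_\cP$-minimum of the set $E(t)$, and since $E(t) \subseteq P^x_\beta$ by its very definition, we conclude $\psi(t) \in P^x_\beta$ immediately. The substantive case is $\eta = -$, where $E(t) = \emptyset$ and $\psi(t) = \psi(t^-) \w (n,-)$. For (c-i) we use that $\zeta(\psi(t)) = t \in T^x_{\beta+1}$ by Definition~\ref{D:psi}(a), which via Lemma~\ref{L:hatsurjection}(a) supplies a $q_E \in P^x_{\beta+1}$ with $\wh{q_E} = t$. For (c-ii) we take $q_R := \psi(t^-)$: in the inductive step ($|t| > \ell(\beta+1)$) we have $\psi(t^-) \in P^x_\beta$ by the induction hypothesis (since $t^- \in T^x_{\beta+1}$ by Notation~\ref{N:tree-like}(a)), and in the base case we use the initialization $\psi(t^-) = (t^-)$ together with Proposition~\ref{P:object-P}(b) and Lemma~\ref{L:hatsurjection}(a) applied to $t^- \in T^x_{\diamond(\beta)}$. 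The relation $q_R \fR \psi(t)$ then holds by clause~(1) of Definition~\ref{D:definition-of-R}.

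The bulk of the work is (c-iii). Suppose for contradiction there exist $q \in P^x_\beta$ with $q \le_\cP \psi(t)$, $\xi(q) \preceq \xi(\psi(t))$ and some index $i < r(\psi(t))$ such that $\eta_i(q) \ne -$, $q|(2i+2) = \psi(t)|(2i+2)$, $q|(2i+3) \le_\cP \psi(t)|(2i+3)$, but $\eta_i(q) \ne \eta_i(\psi(t))$. I split on whether $i = r(\psi(t^-))$ (the last index) or $i < r(\psi(t^-))$. In the former case, $q|(2i+2) = \psi(t^-) \w ((n))$ forces $q = \psi(t^-) \w ((n),\eta_i(q))$ with $\eta_i(q) \ne -$; the constraint $\xi(q) \preceq \xi(\psi(t))$ combined with $\xi(\psi(t)) = \xi(\psi(t^-)) \w ([n]_1)$ pins $\eta_i(q)$ down to $\emptyset$ (anything longer would make $|\xi(q)| > |\xi(\psi(t))|$, any entry $\ne \emptyset$ in $\cS$ would contribute to $\xi$). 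Then $q$ is an element of $E(t)$ with $\xi$-prefix of $\sigma^x_\beta$, contradicting $E(t) = \emptyset$. In the latter case, $\eta_i(\psi(t)) = \eta_i(\psi(t^-))$ because $\psi(t^-) \preceq \psi(t)$ and $2i+3 \le |\psi(t^-)|$, and we apply (c-iii) for $\psi(t^-) \in P^x_\beta$ (inductive hypothesis) to obtain the required contradiction.

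\emph{Main obstacle.} The most delicate points are (a) the base case verification of $(t^-) \in P^x_\beta$, which reduces via Proposition~\ref{P:object-P}(b) and Lemma~\ref{L:hatsurjection}(a) to the inclusion $t^- = t|\ell(\beta) \in T^x_{\diamond(\beta)}$ for $t \in T^x_{\beta+1}$ of length $\ell(\beta)+1$ — this will require unwinding the construction of $P^x_{\beta+1}$ in light of Proposition~\ref{P:object-P}(b) together with the ordinal interplay between $\beta+1$ and $\diamond(\beta)$ supplied by Lemma~\ref{L:diamond} (plausibly via an auxiliary monotonicity observation about length-$1$ elements of $P^x_\gamma$ as $\gamma$ varies); and (b) the clean execution of the sub-case $i = r(\psi(t^-))$ in (c-iii), which hinges on the order-theoretic fact that $\psi(t^-) \w ((n),-) <_\cP \psi(t^-) \w ((n),\emptyset)$ (cf. Notation~\ref{N:usporadanicP}(2)(c) and the defining properties of $c_{\cS^*}$), forcing any putative $<_\cP$-smaller witness into $E(t)$.
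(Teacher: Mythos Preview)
Your overall inductive scheme and the treatment of (c-i), (c-ii), and (c-iii) are essentially in line with the paper's proof, and your identification of the base-case difficulty (showing $t|\ell(\beta) \in T^x_{\diamond(\beta)}$) is on target.

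However, there is a genuine gap. Proposition~\ref{P:object-P} characterizes membership in $P^x_\beta$ for triples $(\beta,p,x)$ \emph{already in} $\bbA$; since $P \subset \bbA$, verifying (c-i)--(c-iii) alone is not sufficient. In the case $E(t)=\emptyset$ (so $\psi(t)=\psi(t^-)\w((n),-)$) you must also check $(\beta,\psi(t),x)\in\bbA$, and the nontrivial part of that is $(\psi(t),x)\in\bbJ$: for every $i<r(\psi(t))$ with $\eta_i(\psi(t))=-$ one needs $(i,\wh{\psi(t)},x)\notin J_*$. This is not automatic: $\psi(t^-)\in\bbJ^x$ only tells you $(i,\wh{\psi(t^-)},x)\notin J_*$, and appending the symbol $n$ may well push $\wh{\psi(t)}$ into $(J_*)^x_i$ (cf.\ Remark~\ref{R:property-of-J}).

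In the paper this verification is the most technical part of the proof. One argues by contradiction: assuming a minimal $i$ with $\eta_i(\psi(t))=-$ and $(i,\wh{\psi(t)},x)\in J_*$, one builds an auxiliary element $w\in\cQ$ by collapsing the tail of $\psi(t)$ from position $2i+2$ onward into a single $\eta_i(w)\neq -$ (so that $\wh w=\wh{\psi(t)}$ and $r(w)=i+1$). Two claims are then established: first, $w\notin P^x_\beta$, because otherwise $w$ would lie in the appropriate $E(t')$ with $t'=\zeta(\psi(t^-)|(2i+3))$ (or in $E(t)$ when $i=r(\psi(t^-))$), contradicting the choice of $\eta_i(\psi(t))=-$; second, $w\in P^x_\beta$, by directly verifying $\bbA$-membership (this uses the minimality of $i$) together with (c-i)--(c-iii) for $w$. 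The resulting contradiction yields $(\psi(t),x)\in\bbJ$. Your proposal contains no trace of this argument, and without it the proof does not close.
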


\begin{proof}
We fix $x\in\cN$ and $\be\in [0,\al)$ and use the abbreviation $\psi$ for $\psi^x_{\be,\be+1}$ again.
By Lemma~\ref{L:diamond}(d) we have $\ell(\be+1)=\ell(\be)+1$.

By Lemma~\ref{L:diamond}(a) there exists $k \in \omega$ such that
\[
\beta < \diamond^{k-1}(\beta+1) < \diamond(\beta) \leq \diamond^k(\beta+1).
\]
By Lemma~\ref{L:diamond}(c) we get $\diamond(\diamond^{k-1}(\beta+1)) =  \diamond^k(\beta+1) \leq \diamond(\beta)$.
Thus we have $\diamond^k(\beta+1) = \diamond(\beta)$.
For every $i \in \{0,\dots,k-1\}$ we have $a(\diamond^{i+1}(\beta+1),0) = \diamond^i(\beta+1)$ by Lemma~\ref{L:diamond}(c), consequently,
\begin{equation}\label{E:rovnost-ell}
\forall i \in \{0,\dots,k-1\} \colon \ell(\diamond^{i+1}(\be+1)) = \ell(\diamond^i(\be+1))
\end{equation}
by Lemma~\ref{L:diamond}(b).

\bigskip\noindent
\emph{Case 1.} Let $t \in T_{\beta +1}^x$ and $|t| = \ell(\beta+1)$.
Thus using the definition of $T^x_{\be+1}$, Proposition~\ref{P:object-P}(b), and \eqref{E:rovnost-ell} we get that
there exist $q^i \in P_{\diamond^i(\be+1)}^x$ such that  $t = \wh{q^0}$ and $\wh{q^i} = \wh{q^{i+1}}$ for every $i \in \{0,\dots,k-1\}$.
Then we have $\wh{q^k} \in T^x_{\diamond(\be)}$, $|\wh{q^k}| = |t| = \ell(\be) + 1$, and $\ell(\diamond(\be)) \leq \ell(\be)$.
Thus we have  $\wh{q^k}^- \in T_{\diamond(\be)}^x$. Since $\wh{q^k}^- = t^-$ we get $t^- \in T^x_{\be}$.
This shows that $\psi(t^-) = (t^-) \in P_{\beta}^x$.

\bigskip\noindent
\emph{Case 2a}.  Assume that $t\in T^x_{\be+1}$ with $\psi(t^-)\in P^x_\be$. Moreover, we assume that the set $E(t)$ introduced in Definition~\ref{D:psi} is nonempty.
Then immediately from the definition of $\psi$ we have $\psi(t) \in P_{\beta}^x$.

\bigskip\noindent
\emph{Case 2b.} Assume that $t\in T^x_{\be+1}$ with $\psi(t^-)\in P^x_\be$ and $E(t) = \emptyset$. In particular, $\psi(t^-)\in\bbA$ and satisfies one of the properties (a)--(c) of Proposition~\ref{P:object-P}. Recall that $\psi(t)$ is of the form $\psi(t^-) \w (n,-)$ in this case.
To this end we are going to verify that $(\be,\psi(t),x)\in\bbA$ and the properties (i)--(iii) of Proposition~\ref{P:object-P}(c)
(since $\be<\al$ and $|\psi(t)|>1$, the properties (a) and (b) of Proposition~\ref{P:object-P} are not satisfied by $(\be,\psi(t),x)$).

\medskip\noindent
We begin with verification of (c) from Proposition~\ref{P:object-P}.

\smallskip\noindent
(c-i) By the definition of $T^x_{\be+1}$ there is $q \in P^x_{\be+1}$ such that $t = \wh{q}$.
Setting $q_E = q$ the equality (a) from Definition~\ref{D:psi} gives (c-i).

\smallskip\noindent
(c-ii) This condition is satisfied since $\psi(t^-) \in P^x_\be$ and $\psi(t^-) \R \psi(t)$ by the definitions of $\psi(t)$ and $\fR$.

\smallskip\noindent
(c-iii) Suppose that we have $q\in P^x_\be$ and $i < r(\psi(t))$ such that
\[
\begin{split}
q \leq_{\cP} \psi(t) \ \land \ \xi(q) & \preceq \xi(\psi(t)) \ \land \ \eta_i(q) \neq - \\
&\land \ q|(2i+2) = \psi(t)|(2i+2) \ \land \ q|(2i+3) \le_{\cP} \psi(t)|(2i+3).
\end{split}
\]
If $i = r(\psi(t))-1 = r(\psi(t^-))$, then $\psi(t^-) = q|(2i+1)$. Consequently,  $q|(2i+3) \in E(t)$, a contradiction with $E(t) = \emptyset$.
Thus we may assume that $i <r(\psi(t))-1=r(\psi(t^-))$.
Then we have
\[
q|(2i+3)\le_{\cP} \psi(t)|(2i+3)=\psi(t^-)|(2i+3)\le_{\cP}\psi(t^-).
\]
Now (c-iii) for $\psi(t^-)$ gives $\eta_i(q) = \eta_i(q|(2i+3)) = \eta_i(\psi(t^-)) = \eta_i(\psi(t))$.
This verifies (c-iii) for $\psi(t)$.

\medskip\noindent
It remains to prove that $(\be,\psi(t),x)\in\bbA$.
By Definition~\ref{D:psi}(b) we have $\xi(\psi(t))\preceq\si^x_\be$ which implies that $\xi(\psi(t))\in S^x_\be$.
It follows easily from the definition of $\psi(t)$ that $\tau(\psi(t)) = t|\ell(\be)$ and so $|\tau(\psi(t))|=\ell(\be)$ for $t\in T^x_{\be+1}$.

It remains to prove that $(\psi(t),x) \in \bbJ$. If $i < r(\psi(t))$ and $\eta_i(\psi(t)) \neq -$, then $i < r(\psi(t^-))$ and $(i,\wh{\psi(t^-)},x) \in J_*$, since $\psi(t^-) \in P^x_\be$.
Since $\wh{\psi(t^-)} \preceq \wh{\psi(t)}$, we get $(i,\wh{\psi(t)},x) \in J_*$.
Now assume that $i < r(\psi(t))$ and $\eta_i(\psi(t)) = -$. We want to show that $(i,\wh{\psi(t)}, x) \notin J_*$.
Towards contradiction suppose that $(i,\wh {\psi(t)},x) \in J_*$. We may moreover assume that $i$ is the smallest number with the properties

\begin{equation}\label{CL:claim}
i<r(\psi(t)), \quad \eta_i(\psi(t)) = - , \text{ and } \quad \bigl(i,\wh {\psi(t)},x\bigr) \in J_*.
\end{equation}
If $i < r(\psi(t))-1$ we set
\begin{align*}
w &= \bigl(\tau(\psi(t)),n_0(\psi(t)),\eta_0(\psi(t)), \dots,n_i(\psi(t)),\eta_i(\psi(t)) \w n_{i+1}(\psi(t)) \w \dots \w \eta_{r(\psi(t))-1}(\psi(t))\bigr) \\
  \Bigl(&= \psi(t)|(2i+2) \w \bigl(\eta_i(\psi(t)) \w n_{i+1}(\psi(t)) \w \dots \w \eta_{r(\psi(t))-1}(\psi(t))\bigr) = \psi(t)|(2i+2)\w (\eta_i(w))\Bigr).
\end{align*}
If $i = r(\psi(t))-1$ we set $w = \psi(t)|(2i+2) \w (\emptyset)$. In both cases we have $w|(2i+2) = \psi(t)|(2i+2)$, $\eta_i(w) \neq -$, and $r(w) = i+1$.
We observe the following claim.

\begin{claima}
The property $\eta_i(\psi(t)) = -$ implies $w \notin P^x_\be$.
\end{claima}

\begin{proof}
If $i < r(\psi(t^-))$ and $w \in P^x_{\be}$, then the relations $\eta_i(\psi(t^-)) = \eta_i(\psi(t)) = -$, $\xi(w) \preceq \si^x_{\be}$, $\psi(t^-)|(2i+2)\preceq w$, and $\eta_i(w)\ne -$
give that $w\in E(t^i)$, where $t^i=\zeta(\psi(t^-)|(2i+3))$, which is a contradiction with Definition~\ref{D:psi}(c) for $\psi(t^-)|(2i+3)$.

If $i=r(\psi(t^-))$ and $w \in P^x_{\be}$, then the relations $\psi(t^-)\w (n) \preceq w$, $\xi(w) \preceq \si^x_{\be}$, and $\eta_i(w) \ne -$ give
that $w\in E(t)$ which is a contradiction with $E(t)=\emptyset$. This concludes the proof of Claim~A.
\end{proof}

Finally, we prove that (\ref{CL:claim}) implies also that $w \in P^x_\be$ which, together with the above Claim A, gives the desired contradiction.

\begin{claimb}
The property $(i,\wh {\psi(t)},x) \in J_*$ implies $w \in P_{\beta}^x$.
\end{claimb}

\begin{proof}
Clearly $w \in \cQ$ and $\xi(w)\in S^x_{\be}$, as an initial segment of $\si^x_\be$, by the definitions of $w$ and $\psi(t)$.
Since $\wh{w} = \wh{\psi(t)}$ and $i$ is the minimal integer witnessing $(\psi(t),x) \notin \bbJ$, we have for $j < i$ that $(j,\wh{w},x) \in J_*$ if and only if $\eta_j(w) \neq -$.
Further we have $(i,\wh{w},x) \in J_*$ by our assumption and $\eta_i(w) \ne -$. Thus $(w,x) \in \bbJ$ and $(\beta,w,x) \in \bbA$.
Since $\be <\al$ and $|w|>1$, it remains to prove (c-i)--(c-iii) of Proposition~\ref{P:object-P} for $w$.

We have $\zeta(w) \preceq t$ and $|\zeta(w)| \geq l(\beta+1)$. By Notation~\ref{N:tree-like}(a)
we have $\zeta(w)  \in T_{\beta+1}^x$.
This shows that $(\beta,w,x)$ satisfies the condition (c-i) from Proposition \ref{P:object-P}.
The condition (c-ii) is satisfied because clearly $\psi(t^-) \R w$.

Now we check (c-iii).
Let $q \in P^x_\be$ satisfy $q \leq_{\cP} w$ and $j < r(w)$.
Towards contradiction suppose that
\[
\begin{split}
\xi(q) \preceq \xi(w) \ \land \ \eta_{j}(q) \neq - \land \ q|(2j+2) = w|(2j+2) \\
q|(2j+3) \leq_{\cP} &w|(2j+3) \ \land \ \eta_{j}(q) \neq \eta_{j}(w).
\end{split}
\]
First suppose that moreover $j < i$.
Then we have $w|(2j+3) = \psi(t^-)|(2j+3)$. This implies that $q|(2j+2) = \psi(t^-)|(2j+2)$ and $q|(2j+3) \leq_{\cP} \psi(t^-)|(2j+3)$.
Since $\psi(t^-)|(2j+3) \in P_{\beta}^x$ we have $\eta_j(\psi(t^-)|(2j+3)) = \eta_j(q)$. This implies $\eta_j(q) = \eta_j(w)$, a contradiction.

Now suppose that $j = i < r(\psi(t))-1$. Then we have $\psi(t^-)|(2j+2) = w|(2j+2)$ and $\eta_j(\psi(t^-))=\eta_j(\psi(t))=-$. Using induction hypothesis for $\psi(t^-)$, we get that there is no $q \in P_{\beta}^x$ such that $\xi(q) \preceq \sigma_{\beta}^x$, $\psi(t^-)|(2j+2) = q|(2j+2)$, and $\eta_j(q) \neq -$, a contradiction.

Finally suppose that $j = i = r(\psi(t))-1$. Then $\eta_{j}(w) = \emptyset$ and $w$ is minimal with respect to $<_{\cP}$ extension of
$\psi(t^-)\w (n)$
(see Notation~\ref{N:usporadanicP}) such that $\eta_j(w)\ne -$.
Thus there is no $q \leq_{\cP} w$ such that $q|(2j+2)=w|(2j+2)$, $\xi(q)\preceq\xi(w)$, $\eta_j(q)\ne -$, and $\eta_j(q) \neq \eta_j(w)$,
a contradiction. Therefore (c-iii) for $w$ is satisfied and Claim~B is proved.
\end{proof}
\end{proof}

\begin{notation**}\label{N:psi-hat}
We define a function $\wh{\psi}^x_{\be,\be+1}\colon T_{\beta+1}^x \to T_{\beta}^x$ by $\wh{\psi}^x_{\be,\be+1}(t) = \widehat{\psi_{\be,\be+1}^x(t)}$.
\end{notation**}

\begin{remark**}\label{R:monotony-of-theta-2}
We may easily notice that for every $t \in T_{\beta+1}^x$ we have
\[
|\wh{\psi}_{\be,\be+1}^x(t)| \geq |\zeta\bigl(\psi_{\be,\be+1}^x(t)\bigr)| = |t|
\]
due to Definition~\ref{D:psi}.
\end{remark**}

\begin{lemma**}[recursivity of $\psi_{\be,\be+1}^x$]\label{L:properties-of-theta}
Let $x \in \cN$. There exists a partial function $u\colon [0,\alpha) \times \Seq \times \cN  \rightharpoonup \cP$
which is $\Sigma_1^0(\ep)$-recursive on its domain such that for $\be \in [0,\alpha], t \in T_{\be+1}^x$ we have $\psi_{\be,\be+1}^x(t) = u\bigl(\be,t,x^{(\be+1)}\bigr)$.
\end{lemma**}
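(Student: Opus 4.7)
The plan is to define $u$ by Kleene's recursion theorem (Fact~\ref{F:recursionthms}(b) together with Remark~\ref{R:extended-facts}), following the inductive scheme of Definition~\ref{D:psi}. First I would prepare the data one extracts from the oracle $z = x^{(\be+1)}$. From $z$ one computes $x^{(\be)} = r_3(\be,\be+1,z)$ and $x = r_3(0,\be+1,z)$ by Lemma~\ref{L:jumps}\eqref{I:a^ga-from-a^be}, then $\si_{\be}^{x} = \rleft(\be, r_3(\be,\be+1,z))$ by Proposition~\ref{P:jump-trees-old}(c), and $(\si^x_\be)'_x = \Rleft(\be,z)$ by Proposition~\ref{P:sigma-beta-x-jump}. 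All four extractions are $\Si_1^0(\ep)$-recursive on their domains and will be treated as black boxes that turn $z$ into the parameters actually governing the construction of $\psi^x_{\be,\be+1}(t)$.

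Next, letting $U = U^{[0,\al)\times\cS\times\cN,\cP}_{\Si^0_1(\ep)}$ I would set up a partial function $g(e,\be,t,z)$ that mirrors Definition~\ref{D:psi}: if $|t|<\ell(\be+1)$ it is undefined; if $|t|=\ell(\be+1)$ I set $p^-:=(t^-)$; and if $|t|>\ell(\be+1)$ I call $p^-:=U(e,\be,t^-,z)$ as the recursive reference. Once $p^-$ is available, I determine the unique $n\in\om$ with $[n]_0=t_{|t|-1}$ and $[n]_1=\si^x_\be(|\xi(p^-)|)$, and then choose $\eta\in\cS^*$ by inspecting the set
\[
E = \bigl\{s\in\cS^*\sm\{-\} \set p^-\w(n,s)\in P^x_\be \ \land\ \xi(p^-\w(n,s))\preceq \si^x_\be\bigr\}.
\]
If $E=\emptyset$ I put $\eta=-$, otherwise I take $\eta$ to be the $<_{\cS^*}$-minimal element of $E$; finally $g(e,\be,t,z) = p^-\w(n,\eta)$. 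Applying the recursion theorem produces $e^*\in\om$ with $g(e^*,\be,t,z)=U(e^*,\be,t,z)$ whenever defined, and I set $u(\be,t,z)=g(e^*,\be,t,z)$. Correctness of $u(\be,t,x^{(\be+1)})=\psi^x_{\be,\be+1}(t)$ for $t\in T^x_{\be+1}$ would be proved by induction on $|t|$: the base case $|t|=\ell(\be+1)$ matches $\psi(t^-)=(t^-)$ from Definition~\ref{D:psi}, and the step uses $t^-\in T^x_{\be+1}$ (Notation~\ref{N:tree-like}(a)) together with $\psi(t^-)\in P^x_\be$ (Lemma~\ref{L:range-of-psi}) so that the $(n,\eta)$-choice in $g$ reproduces clauses (a)--(c) of Definition~\ref{D:psi}.

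The main obstacle will be to verify that the selection of $\eta$ is $\Si_1^0(\ep)$-recursive on the natural domain. The membership predicate $s \mapsto \bigl(p^-\w(n,s)\in P^x_\be\bigr)$ is $\Si^0_1(\ep)$-recursive in $(s,x)$ by $\Si^0_1(\ep)$-recursiveness of $P$ (Proposition~\ref{P:object-P}), and $s\mapsto \xi(p^-\w(n,s))\preceq\si^x_\be$ is $\Si^0_1(\ep)$-recursive in $(s,\si^x_\be)$ by Lemma~\ref{L:volba-epsilon}. The emptiness question $E=\emptyset$ is thus a $\Pi^0_1$-condition in $(x,\si^x_\be,\ep)$, equivalently its complement ``$E\ne\emptyset$'' is $\Si^0_1(x,\si^x_\be,\ep)$; by the good-parametrization of $\Si^0_1$ (Fact~\ref{F:recursionthms}(a)) combined with Definition~\ref{D:Turing-jump}, the truth value of ``$E\ne\emptyset$'' reads off a specific coordinate $r(\be,t,p^-,n)$ of $(\si^x_\be)'_x=\Rleft(\be,z)$, for a $\Si_1^0(\ep)$-recursive index function $r$. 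When $E\ne\emptyset$, the $<_{\cS^*}$-minimal witness is then extracted by Fact~\ref{F:partialrecursivefunctions}\eqref{I:min} applied to the $\Si^0_1(x,\si^x_\be,\ep)$-decidable membership in $E$, with $c_{\cS^*}$ providing the enumeration. Piecing everything together with the $\Si_1^0(\ep)$-recursivity of the sequence operations from Lemma~\ref{L:volba-epsilon}, Lemma~\ref{L:recursivity-of-Q}, Fact~\ref{F:partialrecursivefunctions}, and Step 1, the function $g$ is $\Si_1^0(\ep)$-recursive on its domain, which is all that the recursion theorem requires.
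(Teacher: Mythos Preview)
Your proposal is correct and follows the same essential strategy as the paper: use Kleene recursion for the step $t\mapsto t^-$, compute $x$, $\si^x_\be$, and $(\si^x_\be)'_x$ from $z=x^{(\be+1)}$ via $r_3$, $\rleft$, and $\Rleft$, and resolve the one $\Pi^0_1$-question (emptiness of $E(t)$) by reading off a coordinate of $(\si^x_\be)'_x$. The only notable difference is presentational: the paper packages the whole target $w=\psi(t^-)\w(n,\eta)$ and the minimality condition into a single $\Pi^0_1(x,\ep)$ set $F$, then invokes Lemma~\ref{P:jump-and-classes-2} once to obtain $\Si^0_1$-companions $C_0,C_1$ at level $x^{(\be+1)}$ and searches for the $<_\cP$-minimal $w\in C_1\sm C_0$ extending the recursive call; you instead compute $n$ first from $t$ and $\si^x_\be$, decide $E\ne\emptyset$ via the jump directly, and then run the bounded $<_{\cS^*}$-search for $\eta$ using $\Si^0_1(\ep)$-recursiveness of $P$. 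Both routes are sound; the paper's packaging into $F$ makes the appeal to the jump more uniform, while your version tracks Definition~\ref{D:psi} more literally.
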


\begin{proof}
First we denote $U = U^{[0,\alpha] \times \cS \times \cN,\cP}_{\Si_1^0(\ep)}$ and we fix a $\Si^0_1(\ep)$ set
$\De^U \su [0,\al] \times \cS \times \cN \times \cP$ computing $U$, see Remark~\ref{R:extended-facts} and Lemma~\ref{L:natural-base}(b).
We set $Z = [0,\alpha] \times \cP \times \Seq \times \cN$ and
\[
\begin{split}
F &= \Bigl\{(\beta,w,t,z) \in  Z \set \beta < \alpha \land  w \in P^x_\be \land \zeta(w) = t \land \xi(w) \preceq z \\
& \land \Bigl(r(w) \geq 1 \Rightarrow \bigr(\eta_{r(w)-1}(w) \neq - \vee (\forall q \in P_{\beta}^x, \xi(q) \preceq z, w|2r(w) \preceq q \colon \eta_{r(w)-1}(q) = -)\bigr)\Bigr) \Bigr\}.
\end{split}
\]

Using Definition~\ref{D:psi} and Lemma~\ref{L:range-of-psi}, we have the following claim.

\begin{claima}
For $t \in T_{\beta+1}^x$ we have that $\psi^x_{\be,\be+1}(t)$ is the $<_{\cP}$-minimal $w\in\cP$ such that $(\be,w,t,\si^x_\be)\in F$ and $\psi^x_{\be,\be+1}(t^-) \preceq w$.
\end{claima}

We show that $F$ is in $\Pi_1^0(x,\ep)$. First observe that $[0,\al)$ is $\Si_1^0(\ep)$-recursive in $[0,\al]$ by definition.
The set $P^x$ is $\Si^0_1(x,\ep)$-recursive by Proposition~\ref{P:object-P}. The sets $\{(w,t) \in \cQ \times \cS\set \zeta(w) = t\}$ and $\{(w,z)\in\cQ \times \cN \set \xi(w) \preceq z\}$ are $\Si^0_1(\ep)$-recursive  sets due to $\Si^0_1(\ep)$-recursivity of the mappings $\zeta$ and $\xi$ (see Lemma~\ref{L:recursivity-of-Q}(b)) and by $\Si^0_1(\ep)$-recursivity of the equality relation on $\cS^*$ and of $\preceq \ \subset \cS\times\cN$ due to Lemma~\ref{L:volba-epsilon}(a),(h).
Also the sets $\{w \in \cP \set r(w) \geq 1\}$ and $\{w \in \cP \set \eta_{r(w)-1}(w) \neq -\}$ are $\Si^0_1(\ep)$-recursive
by $\Si^0_1(\ep)$-recursivity of $r$ and of $(i,p)\mapsto \eta_i(p)$ due to Lemma~\ref{L:recursivity-of-Q}(b) and Lemma~\ref{L:volba-epsilon}(e).
Now one can easily infer that the set of all $(q,\beta,w,t,z) \in \cQ \times Z$ such that
\[
(q \in P_{\beta}^x \land \xi(q) \preceq z \land w|2r(w) \preceq q) \Rightarrow \eta_{r(w)-1}(q) = -
\]
is $\Pi_1^0(x,\ep)$.
The class $\Pi_1^0(x,\ep)$ is closed with respect to $\forall^{\cP}$ (see Subsection~\ref{SS:higher-classes}).
Thus the set of all $(\beta,w,t,z) \in Z$ such that
\[
\forall q \in P_{\beta}^x, \xi(q) \preceq z, w|2r(w) \preceq q \colon \eta_{r(w)-1}(q) = -
\]
is $\Pi_1^0(x,\ep)$. Now we have that $F \in \Pi_1^0(x,\ep)\restriction Z$.

Since the space $\Om=[0,\alpha] \times \cP \times \cS$ is of type $0$, we may use Lemma~\ref{P:jump-and-classes-2} to find sets
$W_0, W_1 \in \Sigma_1^0(x,\ep)\restriction Z$ such that
\begin{align*}
(\beta,w,t,z) \notin F   &\Leftrightarrow (\beta,w,t,z_x') \in W_0, \\
(\beta,w,t,z) \in F      &\Leftrightarrow (\beta,w,t,z_x') \in W_1.
\end{align*}
By Definition~\ref{D:relativizations} we find $\Sigma_1^0(\ep)$ sets $H_0,H_1 \subset Z \times \cN$ such that $W_0 = H_0^x$ and $W_1 = H_1^x$. Further we define
\[
\tilde C_i = \bigl\{(\beta,w,t,y) \in Z \set \beta < \alpha \land \bigl(\beta, w, t, \Rleft(\beta,y), r_3(0,\beta+1,y)\bigr) \in H_i\bigr\}, \qquad i = 0, 1,
\]
where $\Rleft$ is the function introduced in Proposition~\ref{P:sigma-beta-x-jump} and
$r_3$ is the function from Lemma~\ref{L:jumps}(3).
The functions $\Rleft$ and $r_3$ are partial functions which are $\Sigma_1^0(\ep)$-recursive on their domains.
Therefore there exist sets $C_0,C_1$ in $\Sigma_1^0(\ep)$ such that for $(\beta,w,t,y) \in Z$ with $(\beta,y) \in D(\Rleft)$ and $(0,\beta+1,y) \in D(r_3)$
we have $(\beta,w,t,y) \in C_i$ if and only if $(\beta,w,t,y) \in \tilde C_i$, $i \in \{0,1\}$.
One can simply define
\[
\begin{split}
C_i = \bigl\{(\beta,w,t,y) \in Z &\set \beta < \alpha \ \land \exists u,v\in \cS \ \exists a,b,c,d \in \omega\colon
(\beta,y,u|a) \in \Delta^{\hskip-2pt\Rleft}, \\
&(0,\be+1,y,v|b) \in \Delta^{r_3},(\be,w,t,u|c,v|d) \in (H_i)^* \bigr\}, \qquad i = 0, 1,
\end{split}
\]
where $\Si_1^0(\ep)$-sets $\Delta^{\hskip-2pt\Rleft}$ and $\Delta^{r_3}$ compute the functions $\Rleft$ and $r_3$ on their domains respectively and $(H_i)^* \subset [0,\alpha] \times \cP \times \cS^3$ is a $\Si_1^0(\ep)$-set corresponding to $H_i$ by Lemma~\ref{L:natural-base}(a).

\begin{claimb}\label{properties-of-theta:claimb}
For all $(\be,w,t)\in \Om$ we have that
\begin{itemize}
\item[\upshape (a)] $(\be,w,t,\si^x_\be)\in F$ if and only if $(\be,w,t,x^{(\be+1)})\in C_1$ and

\item[\upshape (b)] $(\be,w,t,\si^x_\be)\notin F$ if and only if $(\be,w,t,x^{(\be+1)})\in C_0$.
\end{itemize}
\end{claimb}

\begin{proof}
(a) Notice that substituting $z$ by $\si^x_\be$ we get for all $(\be,w,t)\in [0,\al)\times \cP \times T^x_{\be+1}$  that the following statements are equivalent due to the above definitions:
\[
\begin{split}
(\be,w,t,\si^x_\be) \in F &\Leftrightarrow (\be,w,t,(\si^x_\be)_x') \in W_1 \Leftrightarrow (\be,w,t,(\si^x_\be)_x',x)\in H_1 \\
&\Leftrightarrow (\be,w,t,x^{(\be+1)})\in C_1.
\end{split}
\]

The part (b) can be inferred analogously.
\end{proof}

We define a partial function $f\colon \omega \times [0,\alpha] \times \cS \times \cN \rightharpoonup \mathcal P$ by
\begin{enumerate}[(a)]
\item $f(e,\beta,t,y) = (t)$, whenever $t \in \cS, |t| = \ell(\beta)$,

\item $f(e,\beta,t,y) = w$, whenever $t \in T_{\beta+1}^x$, $y = x^{(\be+1)}$, and the following conditions are satisfied
\begin{enumerate}[(i)]
\item $\bigl(\beta,w,t,y\bigr)\in C_1\sm C_0$,

\item $U(e,\beta,t^-,y) \preceq w$,

\item
\begin{equation}\label{E:minimality-1}
\forall v \in \cP, v <_{\cP} w, \bigl(\beta,v,r_3(0,\beta+1,y)\bigr) \in P, U(e,\beta,t^-,y) \preceq v\colon
\bigl(\beta,v,t,y\bigr)\in C_0\sm C_1.
\end{equation}
\end{enumerate}
\end{enumerate}
To show that $f$ is $\Si_1^0(\ep)$-recursive on its domain we define $\De^f$ computing $f$ as follows.  For $(e,\beta,t,y,w) \in \omega \times [0,\alpha] \times \cS \times \cN \times \cP$
we define $\De^f$ by setting $(e,\beta,t,y,w) \in \De^f$ if and only if
\begin{enumerate}[(a)]
\item $t \in \cS$, $|t| = \ell(\beta)$, and $w = (t)$, or

\item $t \in T_{\beta+1}^x$ and the following conditions are satisfied
\begin{enumerate}[(i)]
\item $(\beta,w,t,y)\in C_1$,
\item there is $z\in\cP$ such that $(e,\beta,t^-,y,z)\in\De^U$, $z \preceq w$, and
\begin{equation}\label{E:minimality-2}
\begin{split}
\forall v \in \cP, v <_{\cP} w &,  z \preceq v \colon \\
& \bigl(\exists u \in \cS \colon (0,\be+1,y,u) \in \Delta^{r_3} \land (\beta,v,u) \in P^*\bigr) \Rightarrow (\beta,v,t,y) \in C_0.
\end{split}
\end{equation}
\end{enumerate}
\end{enumerate}
The set $P^* \subset [0,\alpha] \times \cP \times \cS$ corresponds to the set $P$ via Lemma~\ref{L:natural-base}.

Using Facts~\ref{F:zachovani-recursivita}, \ref{F:partialrecursivefunctions}, Lemma~\ref{L:bounded-quantifiers},
Lemma~\ref{L:volba-epsilon}(d), we get that $\De^f$ is a $\Si^0_1(\ep)$ set in $\om \times [0,\al] \times \cS \times \cN \times \cP$.

Now we show that $\Delta^f$ computes $f$. Consider $(e,\be,t,y) \in D(f)$ and put $f(e,\be,t,y) = w$.
Assume that $(e,\be,t, y,w') \in \Delta^f$ for some $w' \in \cP$.
If $|t| = \ell(\be)$, then obviously $w = w'$. So assume that $t \in T^x_{\be+1}$ and $y = x^{(\be+1)}$.
Then we have $(\be,w,t,x^{(\be+1)}) \in C_1\sm C_0$ by the condition (b-i) of the definition of $f$ and $(\be,w',t,x^{(\be+1)}) \in C_1$ by the condition (b-i) of the definition of $\De^f$. Therefore both $(\be,w,t,\sigma_{\be}^x)$ and $(\be,w',t,\sigma_{\be}^x)$ are elements of $F$ by Claim~B. In particular $w,w' \in P_{\be}^x$ by the definition of $F$.

By the definition of $f$, we get from $f(e,\be,t,x^{(\be+1)})=w$ that $(e,\be,t^-,x^{(\be+1)}) \in D(U)$ and so $z=U(e,\be,t^-,x^{(\be+1)})$ from the condition (b-ii) of the definition of $\De^f$ is uniquely determined and $z \preceq w'$.

If $w' <_{\cP} w$, we get using $w' \in P^x_\be$ and \eqref{E:minimality-1} that $(\be,w',t,x^{(\be+1)}) \notin C_1$. This is in contradiction with the corresponding above observation.

If $w <_{\cP} w'$, using that $w \in P_{\be}^x$, we find $u \in \cS$ such that $u \preceq r_3(0,\be+1,x^{(\be+1)})$ and $(\be,w,u) \in P^*$. Using \eqref{E:minimality-2} we get $(\be,w,t,x^{(\be+1)}) \in C_0$, which is in contradiction with the corresponding above observation.
Thus we have $w=w'$.

Applying Fact~\ref{F:recursionthms}(b), we find $e^* \in \omega$ such that $f(e^*,\beta,t,y) = U(e^*,\beta,t,y)$ whenever $f(e^*,\beta,t,y)$ is defined.
The desired function $u$ is defined by $u = f_{e^*}$.

By induction on length of $|t|$ we show that $\psi_{\be,\be+1}^x(t) = u(\beta,t,x^{(\beta+1)})$ for $\beta \in [0,\alpha)$ and $t \in \cS$ with $|t|=\ell(\be)$ or with $t \in T_{\beta+1}^x$.
If $|t|=\ell(\be)$, we defined both $f(e^*,\be,t,x^{(\be+1)})$ and $\psi_{\be,\be+1}^x(t)$ by $(t)$.

Let $t\in T^x_{\be+1}$ and assume that the equality was proved for $t^-$. We use the notation $\psi= \psi_{\beta,\beta+1}^x$.
Using Claim A, we know that $(\be,\psi(t),t,\si^x_\be)\in F$. Applying Claim B, we get the equality $(\be,\psi(t),t,x^{(\be+1)})\in C_1\sm C_0$.
This proves the condition (i) from the definition of $f(e^*,\be,t,x^{(\be+1)}) = \psi(t)$.

Using the induction hypothesis and the definition of $\psi(t)$, we have
\[
u(\be,t^-,x^{(\beta+1)}) = \psi(t^-) \preceq \psi(t).
\]
This verifies (ii) from the definition of $f(e^*,\be,t,x^{(\be+1)}) = \psi(t)$.

If $v \in P_{\beta}^x$, $v <_{\cP} \psi(t)$, and $\psi(t^-)=u(\be,t^-,x^{(\beta+1)})=U(e^*,\beta,t^-,x^{(\beta+1)}) \preceq v$, then
$(\beta,v,t,\sigma_{\beta}^x) \notin F$ as $\psi(t)$ is the minimal element of $F$ such that $U(e^*,\be,t^-,x^{(\beta+1)})=\psi(t^-)\prec\psi(t)$ using Claim~A.
This means that $\bigl(\beta,v,t,x^{(\beta+1)}\bigr)\in C_0\sm C_1$ due to Claim B.
Thus condition (iii) from the definition of $f(e^*,\be,t,x^{(\be+1)})=\psi(t)$ is also satisfied and this concludes the verification of $u(\be,t,x^{(\be+1)})=\psi(t)$.
\end{proof}

\subsection{The mapping \texorpdfstring{$\Psi^x_{\be,\be+1}$}{}}\label{SS:Psi-be-be+1}

\begin{definition**}\label{D:hPsi-pro-nasledniky}\hfil
\begin{enumerate}[(1)]
\item For any $x \in \cN$ and $\beta \in [0,\alpha)$ we define a mapping $\Psi_{\beta,\be+1}^x \colon [T_{\beta+1}^x] \to \cP_\infty=(\Seqm)^{\omega}$ as follows.
If $z \in [T_{\beta+1}^x]$, then by Remarks~\ref{R:monotony-of-theta} and \ref{R:monotony-of-theta-2}
we have $\psi_{\beta,\be+1}^x(z|l) \preceq \psi_{\beta,\be+1}^x(z|(l+1))$ and $|\psi_{\beta,\be+1}^x(z|l)| \geq l$ for every $l \geq \ell(\beta+1)$.
Therefore there exists a uniquely determined element $\Psi_{\beta,\be+1}^x(z)$ of $\cP_\infty$ such that
$\psi_{\beta,\be+1}^x(z|l) \preceq \Psi_{\beta,\be+1}^x(z)$ for every $l \geq \ell(\beta+1)$.

\item Using Lemma~\ref{L:range-of-psi}, we define a mapping $\hPsi_{\beta,\be+1}^x \colon [T_{\beta+1}^x] \to [T_{\beta}^x]$ using the formula $\hPsi_{\beta,\be+1}^x(z) = \wh{\Psi_{\beta,\be+1}^x(z)}$.
\end{enumerate}
\end{definition**}

\begin{lemma**}[bijectivity of $\Psi_{\beta,\be+1}^x$]\label{L:bijectivity-of-Theta} Let $x \in \cN$.
\begin{enumerate}[\upshape (a)]
\item The mapping $\Psi^x_{\be,\be+1}$ is a well defined bijection of $[T^x_{\be+1}]$ onto $[P^x_{\be}]$  for every $\beta \in [0,\alpha)$.
\item The mapping $\hPsi_{\be,\be+1}^x$ is a well defined bijection of  $[T_{\beta+1}^x]$ onto $[T_{\beta}^x]$ for every $\beta \in [0,\alpha)$.
\end{enumerate}
\end{lemma**}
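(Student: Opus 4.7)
The plan is to obtain surjectivity cheaply from the injectivity of $\zeta$ on $[P^x_\be]$ already established in Lemma~\ref{L:hatsurjection}(c); once (a) is proved, (b) will follow at once by composing with the bijection $\widehat{\phantom{a}}\colon [P^x_\be]\to [T^x_\be]$ from Lemma~\ref{L:hatsurjection}(b).

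For the well-definedness half of (a), I observe that Definition~\ref{D:hPsi-pro-nasledniky}(1) already produces an element of $\cP_\infty$ using the monotonicity noted in Remarks~\ref{R:monotony-of-theta} and~\ref{R:monotony-of-theta-2}, so it only remains to check that $\Psi^x_{\be,\be+1}(z)$ lies in $[P^x_\be]$, i.e., that every odd-length prefix $\Psi^x_{\be,\be+1}(z)|(2l+1)$ belongs to $P^x_\be$. For $l\ge 1$ the length count from Remark~\ref{R:monotony-of-theta} identifies this prefix with $\psi^x_{\be,\be+1}(z|(\ell(\be+1)+l-1))$, which is in $P^x_\be$ by Lemma~\ref{L:range-of-psi}; for $l=0$ the prefix is $(z|\ell(\be))$, and this is covered by Case~1 of the proof of Lemma~\ref{L:range-of-psi} (applied to $z|\ell(\be+1)\in T^x_{\be+1}$).

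Next I would establish injectivity of $\Psi^x_{\be,\be+1}$ using Definition~\ref{D:psi}(a), which gives $\zeta(\psi^x_{\be,\be+1}(z|l)) = z|l$ for every $l\ge\ell(\be+1)$ and hence $\zeta(\Psi^x_{\be,\be+1}(z)) = z$ after passing to the limit; in particular $z$ is recovered from $\Psi^x_{\be,\be+1}(z)$. For surjectivity, given $w \in [P^x_\be]$ I set $z := \zeta(w)$; by Lemma~\ref{L:hatsurjection}(c) we have $z \in [T^x_{\be+1}]$, so $\Psi^x_{\be,\be+1}(z)\in [P^x_\be]$ is defined and satisfies $\zeta(\Psi^x_{\be,\be+1}(z)) = z = \zeta(w)$. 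The injectivity of $\zeta$ on $[P^x_\be]$ coming from the same Lemma~\ref{L:hatsurjection}(c) then forces $\Psi^x_{\be,\be+1}(z) = w$.

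Part (b) will then follow at once by writing $\hPsi^x_{\be,\be+1} = \widehat{\phantom{a}}\circ\Psi^x_{\be,\be+1}$ and combining (a) with Lemma~\ref{L:hatsurjection}(b). I do not anticipate any serious obstacle: the whole argument is essentially an observation that $\zeta$ provides a two-sided inverse to $\Psi^x_{\be,\be+1}$ on the level of $[P^x_\be]$, so no direct inspection of the intricate minimality condition in Definition~\ref{D:psi}(c) is needed here. The one mildly delicate point is checking that the length-$1$ prefix of $\Psi^x_{\be,\be+1}(z)$ sits in $P^x_\be$, but this is already implicit in the proof of Lemma~\ref{L:range-of-psi} and could be isolated as a short remark if desired.
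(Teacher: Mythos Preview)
Your proposal is correct and follows essentially the same route as the paper: well-definedness via the monotonicity remarks and Lemma~\ref{L:range-of-psi}, injectivity from $\zeta\bigl(\Psi^x_{\be,\be+1}(z)\bigr)=z$, surjectivity by applying the injectivity of $\zeta$ on $[P^x_\be]$ from Lemma~\ref{L:hatsurjection}(c), and part~(b) by composing with the bijection $\widehat{\phantom{a}}$ from Lemma~\ref{L:hatsurjection}(b). The only cosmetic difference is that the paper cites Proposition~\ref{P:object-P}(c-i) directly for $\zeta(p)\in[T^x_{\be+1}]$ rather than appealing to Lemma~\ref{L:hatsurjection}(c), and it compresses the ``into $[P^x_\be]$'' check into a single reference to Lemma~\ref{L:range-of-psi} rather than splitting off the length-$1$ prefix as you do.
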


\begin{proof}
(a) The value $\Psi_{\beta,\be+1}^x(z)$ of the mapping ${\Psi_{\beta,\be+1}^x}\colon [T_{\beta + 1}^x] \to (\Seqm)^{\omega}$ for each $z\in [T^x_{\be+1}]$
is well defined since the sequence of
$\bigl(\psi_{\beta,\be+1}^x(z|l)\bigr)_{l \geq \ell(\beta+1)}$ is ordered by $\preceq$ and the lengths tend to infinity by Remark~\ref{R:monotony-of-theta}.
The injectivity follows from Remark~\ref{R:monotony-of-theta}.
The mapping ${\Psi_{\beta,\be+1}^x}$ is into $[P^x_\be]$ by Lemma~\ref{L:range-of-psi}.

It remains to prove the surjectivity. Let $p\in [P^x_\be]$.
By Proposition~\ref{P:object-P}(c-i) we have $\zeta(p)\in [T^x_{\be+1}]$.
Now $q :=\Psi^x_{\be,\be+1}(\zeta(p))$ is a well defined element of $[P^x_\be]$.
The mapping $\psi^x_{\be,\be+1}$ fulfils $\zeta(\psi^x_{\be,\be+1}(t)) = t$, $t \in T^x_{\beta+1}$,
by Definition~\ref{D:psi}(a). Therefore $\zeta(q) = \zeta\bigl(\Psi^x_{\be,\be+1}(\zeta(p))\bigr) = \zeta(p)$.
By Lemma~\ref{L:hatsurjection}(c) $p=q$, and we proved the surjectivity.

\medskip\noindent
(b) This follows from (a) and Lemma~\ref{L:hatsurjection}(b).
\end{proof}

\begin{lemma**}[recursivity of $\Psi_{\beta,\be+1}^x$]\label{L:recursivity-of-Theta}
Let $x \in \cN$.
\begin{enumerate}[\upshape (a)]
\item The mapping $(\beta,z) \mapsto \Psi_{\beta,\be+1}^x(z)$ is a partial function from $[0,\alpha] \times \cN$ to $\cP_\infty$ which is $\Sigma_1^0(\ep)$-recursive on its domain
$[0,\alpha) \times [T^x_{\be + 1}]$.
\item The mapping $(\beta,z) \mapsto \bigl({\hPsi_{\beta,\beta+1}^x(z)}\bigr)'_x$ is a partial function from $[0,\alpha] \times \cN$ to $\cN$ which is $\Sigma_1^0(\ep)$-recursive on its domain
$[0,\alpha) \times [T^x_{\be + 1}]$.
\end{enumerate}
\end{lemma**}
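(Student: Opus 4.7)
The plan is to establish (a) first by exhibiting an explicit formula for $\Psi^x_{\beta,\beta+1}(z)(n)$ in terms of the function $u$ from Lemma~\ref{L:properties-of-theta}, and then to deduce (b) from (a) via the equivalence between $(\hPsi^x_{\beta,\beta+1}(z))'_x(i) = 1$ and $\eta_i(\Psi^x_{\beta,\beta+1}(z)) \neq -$ indicated in Remark~\ref{R:basic-idea}. For (a), I first observe that $x^{(\beta+1)}$ is itself $\Sigma_1^0(\ep)$-recursively computable from $(\beta,z)$ with $\beta<\alpha$ and $z \in [T^x_{\beta+1}]$: Notation~\ref{N:tree-like}(b) yields $\pi(\beta+1,z) = \sigma^x_{\beta+1}$, so Proposition~\ref{P:jump-trees-old}(c) gives $x^{(\beta+1)} = \rright(\beta+1,\pi(\beta+1,z))$, a composition of $\Sigma_1^0(\ep)$-recursive partial functions. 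Next, unwinding Definition~\ref{D:psi} inductively produces the length identity $|\psi^x_{\beta,\beta+1}(z|l)| = 2(l-\ell(\beta))+1$ for $l \geq \ell(\beta+1)$, hence the choice $l = n + \ell(\beta+1)+1$ guarantees $|\psi^x_{\beta,\beta+1}(z|l)| > n$. Since Remark~\ref{R:monotony-of-theta} ensures that the finite sequences $\psi^x_{\beta,\beta+1}(z|l)$, $l \geq \ell(\beta+1)$, are $\preceq$-monotone with limit $\Psi^x_{\beta,\beta+1}(z)$ in $\cP_\infty$ (Definition~\ref{D:hPsi-pro-nasledniky}(1)), I obtain the identity
\[
\Psi^x_{\beta,\beta+1}(z)(n) = u\bigl(\beta,\, z|(n + \ell(\beta+1)+1),\, \rright(\beta+1,\pi(\beta+1,z))\bigr)_n
\]
valid on $[0,\alpha) \times [T^x_{\beta+1}] \times \omega$. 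The right-hand side is built from $\Sigma_1^0(\ep)$-recursive partial functions by Lemmas~\ref{L:volba-epsilon}(e),(g),(i), \ref{L:properties-of-theta}, Proposition~\ref{P:jump-trees-old}(c), and Notation~\ref{N:tree-like}(b), so the $\cP_\infty$-analogue of Fact~\ref{F:partialrecursivefunctions}\eqref{I:recursivity-to-N} (obtained from the identification $c_{\cP_\infty}$ combined with $c_{\cS^*}$) then delivers (a).

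For (b), the pivotal claim is the equivalence
\[
\bigl(\hPsi^x_{\beta,\beta+1}(z)\bigr)'_x(i) = 1 \ \Longleftrightarrow \ \eta_i\bigl(\Psi^x_{\beta,\beta+1}(z)\bigr) \neq -
\]
for $\beta < \alpha$, $z \in [T^x_{\beta+1}]$, and $i \in \omega$. For $(\Leftarrow)$, set $p = \Psi^x_{\beta,\beta+1}(z)|(2i+3) \in P^x_\beta \subset \bbJ^x$; then $i < r(p) = i+1$ combined with $\eta_i(p) \neq -$ forces $(i,\wh{p},x) \in J_*$ by Definition~\ref{D:definition-of-J}, and Lemma~\ref{L:J-star-jump}(b) applied with $\sigma = \hPsi^x_{\beta,\beta+1}(z)$ supplies the desired bit of the jump. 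For $(\Rightarrow)$, Lemma~\ref{L:J-star-jump}(a) produces $L$ with $(i,\hPsi^x_{\beta,\beta+1}(z)|L',x) \in J_*$ for all $L' \geq L$; I then pick $I > i$ with $|\wh{\Psi^x_{\beta,\beta+1}(z)|(2I+1)}| \geq L$ and apply Remark~\ref{R:property-of-J}(a),(b) to $q = \Psi^x_{\beta,\beta+1}(z)|(2I+1) \in P^x_\beta \subset \bbJ^x$ to conclude $\eta_i(q) \neq -$, hence $\eta_i(\Psi^x_{\beta,\beta+1}(z)) \neq -$ since $\eta_i$ depends only on the first $2i+3$ coordinates. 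Granted this equivalence, the computation of $(\hPsi^x_{\beta,\beta+1}(z))'_x(i)$ reduces to extracting $\eta_i(\Psi^x_{\beta,\beta+1}(z))$ (which is $\Sigma_1^0(\ep)$-recursive by Lemma~\ref{L:recursivity-of-Q}(b) composed with (a)) and $\Sigma_1^0(\ep)$-recursively testing it against $-$ (Lemma~\ref{L:volba-epsilon}(a)); Fact~\ref{F:partialrecursivefunctions}\eqref{I:composition} together with the $\cN$-analogue of \eqref{I:recursivity-to-N} then closes the argument.

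Since the substantial recursion-theoretic work was already accomplished in Lemma~\ref{L:properties-of-theta}, I do not expect serious obstacles. The only delicate point is the structural equivalence in (b): one must exploit both directions of Lemma~\ref{L:J-star-jump} to bridge between the asymptotic bit $(\hPsi^x_{\beta,\beta+1}(z))'_x(i)$ of the Turing jump of the infinite sequence and the finite combinatorial condition $\eta_i \neq -$ recorded on sufficiently long initial segments in $\bbJ^x$; a careful choice of the index $I$ ensuring both $I > i$ and $|\wh{\Psi^x_{\beta,\beta+1}(z)|(2I+1)}| \geq L$ is essential to carry out the reverse direction.
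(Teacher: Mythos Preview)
Your proof is correct and follows essentially the same strategy as the paper. For part (a) you give a slightly more direct presentation: instead of building a $\Sigma_1^0(\ep)$ set $\Delta^{\Psi}$ via existential witnesses $L_1,L_2$ and the basis sets $\cP_\infty(p)$ (as the paper does), you write down the explicit coordinate formula $\Psi^x_{\beta,\beta+1}(z)(n) = u(\beta,z|(n+\ell(\beta+1)+1),\rright(\beta+1,\pi(\beta+1,z)))_n$ and invoke the $\cP_\infty$-analogue of Fact~\ref{F:partialrecursivefunctions}\eqref{I:recursivity-to-N}; this is a cosmetic difference. For part (b), your equivalence argument and its use are essentially identical to the paper's (the paper phrases the conclusion via the direct coordinate map $(j,w)\in\omega\times\cP_\infty \mapsto w_j$ followed by the characteristic function of $\cS^*\setminus\{-\}$, which amounts to your ``extract $\eta_i$ and test against $-$''). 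One micro-remark: in the $(\Rightarrow)$ direction you do not actually need Remark~\ref{R:property-of-J}(a),(b) --- once $|\wh q|\ge L$ you have $\wh q = \hPsi^x_{\beta,\beta+1}(z)\bigl|\,|\wh q|$ directly, so $(i,\wh q,x)\in J_*$ is immediate from Lemma~\ref{L:J-star-jump}(a).
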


\begin{proof}
(a) Let $u$ be the function from Lemma~\ref{L:properties-of-theta} and
$\De^u\su [0,\al]\times\cS\times\cN\times\cP$ be a $\Sigma_1^0(\ep)$-recursive set computing $u$ on its domain which exists by Lemma~\ref{L:natural-base}(b).
Let moreover $\rright\colon [0,\al]\times\cN\rightharpoonup\cN$ be the $\Si^0_1(\ep)$-recursive on its domain function from Lemma~\ref{P:jump-trees-old}(c) and
$\pi\colon [0,\al]\times\cN\to\cN$ be the $\Si^0_1(\ep)$-recursive mapping from Notation~\ref{N:tree-like}(b).
Then applying the corresponding definitions we get for $\be\in [0,\al)$, $z\in [T^x_{\be+1}]$, and $p\in\cP$ the following equivalences
\begin{align*}
&\Psi_{\beta,\be+1}^x(z) \in \mathcal P_{\infty}(p) \\
\Leftrightarrow &\exists L_1,L_2 \in \omega\colon \psi_{\beta,\be+1}^x(z|L_1)|L_2 = p \\
\Leftrightarrow &\exists L_1,L_2 \in \omega\colon u(\beta, z|L_1, x^{(\beta+1)})|L_2 = p \\
\Leftrightarrow &\exists t \in \Seq \ \exists q \in \mathcal P \ \exists L_1,L_2 \in \omega\colon u(\beta, t, x^{(\beta+1)}) = q \land  q|L_2 = p \land z|L_1 = t \\
\Leftrightarrow &\exists t \in \Seq \ \exists q \in \mathcal P \ \exists L_1,L_2 \in \omega\colon (\beta, t, x^{(\beta+1)},q) \in \De^u \land q|L_2 = p \land z|L_1 = t\\
\Leftrightarrow &\exists t \in \Seq \ \exists q \in \mathcal P \ \exists L_1,L_2 \in \omega\colon
(\beta, t,\rright(\beta+1,\pi(\beta+1,z)) ,q) \in \De^u \land q|L_2 = p \land z|L_1 = t.
\end{align*}
Using Lemma~\ref{L:natural-base}(b), we find $\Si^0_1(\ep)$-recursive set
$\De^{\rright}\su [0,\al]\times\cN\times\cS$ computing $\rright$ on its domain.
Using Lemma~\ref{L:natural-base}(a), we find a $\Si^0_1(\ep)$-recursive set $(\De^u)^*\su [0,\al]\times\cS\times\cS\times\cP$ corresponding to $\De^u$.
Then we get that the set $\De^{\Psi}$ defined by
\begin{align*}
\De^{\Psi}=\{&(\be,z,p) \in [0,\al) \times \cN \times \cP\set
\exists t \in \Seq \ \exists q \in \mathcal P \ \exists L_1,L_2 \in \omega\ \exists w\in \cS \ \exists a, b \in \omega\colon\\
&(\beta+1,\pi(\beta+1,z),w|a) \in \De^{\rright} \land
(\beta, t, w|b ,q) \in (\De^u)^* \land q|L_2 = p \land z|L_1 = t\}
\end{align*}
computes $(\beta,z) \mapsto \Psi_{\beta,\be+1}^x(z)$ on $[0,\al)\times [T^x_{\be+1}]$.
Applying the facts that projections of $\Si^0_1(\ep)\restriction (\cZ\times \cX)$ sets along type $0$ extended space
$\cZ = \Seq \times \mathcal P \times \omega^2 \times \cS \times \om^2$ are $\Si^0_1(\ep)$ in $\cX = [0,\alpha] \times \cN \times \mathcal P$ by Fact~\ref{F:properties-of-Gamma} and Remark~\ref{R:extended-facts},
the restrictions $(n,q)\in\om\times\cP\mapsto q|n\in\cP$ and $(n,z)\in\om\times\cN\mapsto z|n$ are $\Si^0_1(\ep)$-recursive on their domains by Lemma~\ref{L:volba-epsilon}(e),(i), and $\De^{\rright}$, $(\De^U)^*$, and $\pi$ are $\Si^0_1(\ep)$-recursive,
we get that $\De^{\Psi}$ is a $\Si^0_1(\ep)$ set in $[0,\alpha] \times \cN \times \mathcal P$ which
proves $\Sigma_1^0(\ep)$-recursivity of the mapping $(\beta,z) \mapsto \Psi_{\beta,\be+1}^x(z)$
on its domain.

\medskip\noindent
(b) Let $\be\in [0,\al)$, $z \in [T_{\beta+1}^x]$, and $i \in \omega$. Denote $w = \Psi_{\beta,\beta+1}^x(z)$.
Let further $J_*$, $G$, $G^*$, and $R$ be as in Definition~\ref{D:J-jump}.
We are going to prove first that $\wh{w}'_x(i)=1$ if and only if $w_{2i+2}\ne -$.

If $\wh{w}'_x(i)=1$, then using Lemma~\ref{L:J-star-jump}(a) we find $L\in\om$ such that $(i,\wh{w}|L',x)\in J_*$ for every $L'\ge L$.
Let $l\in\om$ be chosen so that $l\ge i$ and $\wh{w}|L\preceq \wh{w|2l+3}$.
Then $(i,\wh{w|2l+3},x)\in J_*$ and, since in the same time $w|(2l+3) \in P^x_\be \su \bbJ^x$ by Lemma~\ref{L:range-of-psi},
we get that $\eta_i(w|(2l+3)) = w_{2i+2}\ne -$.

Conversely, let $w_{2i+2}\ne -$. Using Lemma~\ref{L:range-of-psi}, we know that $w|(2i+3) \in P^x_{\be} \su \bbJ^x$
and since $i < r(w|(2i+3)) = i+1$ we have $(i,\wh{w|(2i+3)},x) \in J_*$.
By Lemma~\ref{L:J-star-jump}(b) we get $\wh{w}'_x(i) = 1$, which concludes the proof of the equivalence above.

Applying the just proved equivalence, we get that the equality $(\hPsi^x_{\be,\be+1}(z))'_x(i)=1$ holds if and only if the equality $(\Psi^x_{\be,\be+1}(z))_{2i+2}\ne -$ holds for $(i,\be,z)\in \om\times[0,\al)\times [T^x_{\be+1}]$.
Thus the mapping $(i,\be,z)\mapsto (\hPsi^x_{\be,\be+1}(z))'_x(i)$ on the domain $\om\times[0,\al)\times [T^x_{\be+1}]$ is the composition of
the mappings
\[
\begin{split}
(i,\be,z)\mapsto &(2i+2,\Psi^x_{\be,\be+1}(z)) \in \om\times\cP_{\infty}, \qquad
(j,w)\in \om \times\cP_{\infty}\mapsto w_j \in \cS^*,  \\
&\text{and } t \in \cS^* \mapsto \iota(t) \in \om, \text{ where $\iota(-)=0$ and $\iota(t) = 1$ otherwise.}
\end{split}
\]
The first mapping is $\Si^0_1(\ep)$-recursive on $\om\times [0,\al)\times [T^x_{\be+1}]$ due to (a) of this lemma,
the next mapping is $\Si^0_1(\ep)$-recursive on $\om\times\cP_\infty$ by Lemma~\ref{L:volba-epsilon}(i),
and the last mapping is a characteristic function of the $\Si^0_1(\ep)$-recursive set $\{p\in\cP\set p\ne -\}$.
The application of Fact~\ref{F:partialrecursivefunctions}(\ref{I:recursivity-to-N}),(\ref{I:composition}) and Remark~\ref{R:extended-facts} concludes the proof.
\end{proof}

\subsection{Construction of \texorpdfstring{$\hPsi_{\beta,\gamma}^x$}{}}\label{SS:hPsi-beta-gamma}

We fix $x \in \cN$.
See Proposition~\ref{P:diamond-path} and Notation~\ref{N:L} for the definition of $n(L)$, $\beta^i_L, i \leq n(L)$, and $L(\be,\ga)$.
We are going to define mappings
\[
\tpsi_{\beta,\gamma}\colon T^x_{\gamma} \cap \{t \in \Seq\set |t|\ge L(\be,\ga)\} \to
T^x_{\beta} \cap \{t \in \Seq\set |t|\ge L(\be,\ga)\}, \qquad 0 \leq \beta < \gamma \leq \alpha,
\]
using the already defined family of mappings $\hpsi^x_{\be,\be+1}$ (see Notation~\ref{N:psi-hat}).
Let $t \in T^x_\ga$ be such that $|t| \geq L(\beta,\gamma)$.
Thus there exist $i, j \in \omega$  such that $0 \leq i < j \leq n(|t|)$, $\beta_{|t|}^i = \gamma$, and $\beta_{|t|}^j = \beta$.

First suppose that $j = i+1$. Then we set
\[
\tpsi_{\be,\ga}^x(t) =
\begin{cases}
t                                   &\text{if } \diamond(\be)=\ga, \\
{\hpsi_{\beta,\be+1}^x(t)}||t|      &\text{if } \be+1=\ga.
\end{cases}
\]
In the first case, we have $|t| = \ell(\beta)$ by  Proposition~\ref{P:diamond-path}. By Proposition~\ref{P:object-P}(b) we have $(t) \in P_{\beta}^x$.
Consequently, we have $\tpsi_{\be,\ga}^x(t) = t \in T^x_{\beta} \cap \{s \in \Seq\set |s|\ge L(\be,\ga)\}$.
In the other case, we have $\ell(\beta) <  \ell(\be+1) \leq |t|$. Since  $|\hpsi_{\beta,\be+1}^x(t)| \geq |t|$ by Remark~\ref{R:monotony-of-theta-2}
we have $\hpsi_{\beta,\be+1}^x(t)||t| \in T_{\beta}^x \cap \{s \in \Seq\set |s|\ge L(\be,\ga)\}$.
Thus we have that $\tpsi^x_{\be,\ga}(t)$ is a well defined element of $T^x_\be \cap
\{s \in \Seq\set |s|\ge L(\be,\ga)\}$ in this particular case of $\be$ and $\ga$.

For general $\be<\ga$ we define
\[
\tpsi^x_{\be,\ga}(t)= \tpsi^x_{\be^j_{|t|},\be^{j-1}_{|t|}}\circ\dots\circ\tpsi^x_{\be^{i+1}_{|t|},\be^{i}_{|t|}}(t).
\]
Thus $\tpsi^x_{\be,\ga}(t)$ is a  uniquely defined element of $T_{\be}^x$.
Notice that the mapping $\tpsi^x_{\be,\ga}$ preserves the length of sequences. We prove monotonicity of $\tpsi^x_{\be,\ga}$.

\begin{lemma**}\label{L:consistency}
Let $x\in\cN$, $0\le\beta<\gamma\le\al$, $s,t \in T^x_\ga$, $s \preceq t$, and $|s| \ge L(\be,\ga)$. Then
$\tpsi^x_{\be,\ga}(s) \preceq \tpsi^x_{\be,\ga}(t)$.
\end{lemma**}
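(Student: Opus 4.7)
The plan is to deduce the lemma from the stronger ``commutativity with restriction'' statement: for every $s \preceq t$ in $T^x_\ga$ with $|s| \ge L(\be,\ga)$ one has
\[
\tpsi^x_{\be,\ga}(t)\,|\,|s| = \tpsi^x_{\be,\ga}(s).
\]
Since every elementary step, and hence $\tpsi^x_{\be,\ga}$ itself, preserves length, this identity will immediately yield $\tpsi^x_{\be,\ga}(s) \preceq \tpsi^x_{\be,\ga}(t)$. Setting $L_s = |s|$ and $L_t = |t|$, I would use Lemma~\ref{L:subsequence} to write the relevant segments of the two $\diamond$-paths from $\ga$ down to $\be$ as $\ga = \nu_0 > \nu_1 > \dots > \nu_M = \be$ inside the $L_s$-path and $\ga = \mu_0 > \mu_1 > \dots > \mu_N = \be$ inside the $L_t$-path, with a strictly increasing $\iota \colon \{0,\dots,M\} \to \{0,\dots,N\}$ satisfying $\nu_j = \mu_{\iota(j)}$. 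Denote by $s_0 = s, \dots, s_M = \tpsi^x_{\be,\ga}(s)$ and $t_0 = t, \dots, t_N = \tpsi^x_{\be,\ga}(t)$ the intermediate values produced by the two compositions.

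Before the main induction I would record two basic facts about a single elementary step $\tpsi^x_{\be',\ga'}$ applied to an input $u$ of length $L \ge \ell(\ga')$. In the limit case $\diamond(\be') = \ga'$ the step is the identity, so both facts are trivial. In the successor case $\be' + 1 = \ga'$, Definition~\ref{D:psi} gives $\psi^x_{\be',\ga'}(w) = (w)$ for any $w$ of length $\ell(\ga')$, hence $\hpsi^x_{\be',\ga'}(w) = w$, and monotonicity of $\psi^x_{\be',\ga'}$ (Remark~\ref{R:monotony-of-theta}) then forces $\hpsi^x_{\be',\ga'}(u)|\ell(\ga') = u|\ell(\ga')$. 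Combined with Remark~\ref{R:monotony-of-theta-2}, this yields two useful identities used repeatedly below: (i) $\tpsi^x_{\be',\ga'}(u)|L' = u|L'$ whenever $L' \le \ell(\ga')$, and (ii) $\tpsi^x_{\be',\ga'}(u)|L' = \tpsi^x_{\be',\ga'}(u|L')$ whenever $\ell(\ga') \le L' \le L$.

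The main claim $t_{\iota(j)}|L_s = s_j$ would then be proved by induction on $j$, the base case $t|L_s = s$ being immediate. For the step $j \to j + 1$, I would distinguish the type of $\nu_j$ as an ordinal. If $\nu_j$ is isolated, Proposition~\ref{P:diamond-path}(b) applied to both paths forces $\iota(j+1) = \iota(j) + 1$ and $\mu_{\iota(j)+1} = \nu_{j+1} = \nu_j - 1$, so both paths perform the same successor-type elementary step; the inductive hypothesis together with identity (ii) applied with $L' = L_s$ yields $t_{\iota(j+1)}|L_s = s_{j+1}$. If $\nu_j$ is a limit ordinal, the $L_s$-step is the identity, so $s_{j+1} = s_j$; it then suffices to check that each $L_t$-step at indices $k \in \{\iota(j), \dots, \iota(j+1) - 1\}$ fixes the first $L_s$ coordinates. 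The step at $k = \iota(j)$ is itself the identity. Any subsequent source $\mu$ satisfies $\nu_{j+1} < \mu < \nu_j = \diamond(\nu_{j+1})$, hence by Lemma~\ref{L:diamond}(c), $\ell(\mu) > \ell(\nu_{j+1}) = L_s$, and identity (i) with $L' = L_s$ shows the step acts trivially on the prefix of length $L_s$. Composing all these steps yields $t_{\iota(j+1)}|L_s = t_{\iota(j)}|L_s = s_j = s_{j+1}$.

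I expect the main obstacle to be the limit case with $L_s < L_t$, where the $L_s$- and $L_t$-paths genuinely diverge just after $\nu_j$. What rescues the argument is the structural input from Lemma~\ref{L:diamond}(c): all ordinals strictly between two consecutive $L_s$-path points $\nu_{j+1}$ and $\nu_j = \diamond(\nu_{j+1})$ have $\ell$-value strictly greater than $L_s$. This is precisely what makes the extra elementary $L_t$-steps, corresponding to the refinement of the $L_s$-path into the $L_t$-path, invisible to the prefix of length $L_s$, and allows the finer $L_t$-computation to collapse onto the $L_s$-computation under restriction.
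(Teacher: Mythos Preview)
Your overall strategy is sound and essentially coincides with the paper's: both arguments decompose $\tpsi^x_{\be,\ga}$ along consecutive steps of the $\diamond$-path, show that successor-type steps commute with restriction (your identity (ii), the paper's first case), and show that the extra steps introduced by refining the $L_s$-path to the $L_t$-path fix the first $L_s$ coordinates because the intermediate ordinals have large $\ell$-value (your identity (i), the paper's second case). The paper reduces first to $|t|=|s|+1$ and $\be,\ga$ consecutive on the $|s|$-path, while you keep $s,t$ general and run the induction directly; the content is the same.

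There is, however, a slip in your identity (i). Definition~\ref{D:psi} gives $\psi^x_{\be',\ga'}(w)=(w)$ for $|w|=\ell(\be')$, not $|w|=\ell(\ga')$; consequently one only has $\hpsi^x_{\be',\ga'}(u)|\ell(\be')=u|\ell(\be')$, and identity (i) is valid only for $L'\le\ell(\be')$ (equivalently $L'<\ell(\ga')$ in the successor case). The claim $\hpsi^x_{\be',\ga'}(u)|\ell(\ga')=u|\ell(\ga')$ is false in general, since the coordinate at position $\ell(\be')$ of $\hpsi^x_{\be',\ga'}(u)$ is $n_0$ with $[n_0]_0=u(\ell(\be'))$, not $u(\ell(\be'))$ itself. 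Fortunately this does not damage your argument: in the limit case you apply (i) with $L'=L_s$ and source $\mu_k$ satisfying $\ell(\mu_k)>L_s$, so the target $\mu_{k+1}$ has $\ell(\mu_{k+1})\ge L_s$ (either $\mu_{k+1}\in(\nu_{j+1},\nu_j)$ gives $\ell(\mu_{k+1})>L_s$ by Lemma~\ref{L:diamond}(c), or $\mu_{k+1}=\nu_{j+1}$ gives $\ell(\mu_{k+1})=L_s$), and the corrected bound $L'\le\ell(\be')$ applies. With this correction the proof goes through and matches the paper's.
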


\begin{proof}
Due to the definition of $\tpsi^x_{\be,\ga}$ and due to Lemma~\ref{L:subsequence}
it is sufficient to prove the particular case, where $|t| = L+1$, $s = t|L$, $\be = \be^{n+1}_L < \be^n_L = \ga$, and $\be = \be^q_{L+1} < \dots < \be^p_{L+1}=\ga$
for some $n,p,q \in \omega, p < q$. Then either $\be+1=\ga$ or $\diamond(\be)=\ga$.

In the first case, we have necessarily $q=p+1$. Since ${\hpsi^x_{\be,\be+1}}$ is monotone (see Definition~\ref{D:psi}) we get
\[
\tpsi^x_{\be,\ga}(s) = \tpsi^x_{\be,\ga}(t|L) = {\hpsi^x_{\be,\be+1}(t|L)}|L \preceq {\hpsi^x_{\be,\be+1}(t)}||t| = \tpsi^x_{\be,\ga}(t).
\]

In the second case, we have $\ell(\be^r_{L+1})\ge\ell(\be)=L$
for $p < r \le q$ by Lemma~\ref{L:diamond}(c). We have $\tpsi^x_{\be,\ga}(t|L)=t|L$.
In the same time, following the definition of $\tpsi^x_{\be,\ga}$, we may see that
\[
\tpsi^x_{\be^{r+1}_{L+1},\be^{r}_{L+1}}(t')|L = t'|L
\] for every $p \leq r < q$ and $t'\in T^x_{\be^{r}_{L+1}}$ with $|t'| = L+1$.
Indeed, if $\diamond(\be^{r+1}_{L+1}) = \be^{r}_{L+1}$, then
\[
\tpsi^x_{\be^{r+1}_{L+1},\be^{r}_{L+1}}(t') = t'.
\]
If $\be^{r+1}_{L+1}+1=\be^{r}_{L+1}$, then
\[
\tpsi^x_{\be^{r+1}_{L+1},\be^r_{L+1}}(t')|L = \hpsi^x_{\be^{r+1}_{L+1},\be^{r+1}_{L+1}+1}(t')|L=t'|L
 \]
due to the property (a) from Definition~\ref{D:psi} of $\hpsi^x_{\be^{r+1}_{L+1},\be^{r+1}_{L+1}+1}$ since $\ell(\be^{r+1}_{L+1})\ge L$ as explained above.
Applying it successively to
\[
\tpsi^x_{\be^{r+1}_{L+1},\be^{r}_{L+1}}\circ\dots\circ\tpsi^x_{\be^{p+1}_{L+1},\be^p_{L+1}}(t)
\]
for $p \leq r < q$ we get $\tpsi_{\beta,\gamma}^x(t)|L = t|L$. This implies
\[
\tpsi_{\beta,\gamma}^x(s) = \tpsi_{\beta,\gamma}^x(t|L) = t|L = \tpsi_{\beta,\gamma}^x(t)|L \preceq \tpsi_{\beta,\gamma}^x(t).
\]
This finishes the proof.
\end{proof}

\begin{lemma**}\label{L:initial-segment-tpsi}
Let $x \in \cN$, $0 \leq \beta < \gamma \leq \alpha$, $l \in \omega$, $s,t \in T_{\gamma}^x$, $|t| \geq \max\{L(\beta,\gamma),l\}$, $|s| \geq \max\{L(\beta,\gamma),l\}$,
and $t|l = s|l$. Then $\tpsi^x_{\beta,\gamma}(t)|l = \tpsi^x_{\beta,\gamma}(s)|l$.
\end{lemma**}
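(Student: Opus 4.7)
The plan is to first reduce the problem to the equal-length case $|s|=|t|$ and then attack that case by induction on the number of consecutive pairs in the $\diamond$-path from $\gamma$ down to $\beta$.

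For the reduction, I will assume without loss of generality that $L_s := |s| \leq |t|$. Since $\gamma$ appears in the $L(\beta,\gamma)$-path, the construction in Lemma~\ref{L:diamond} together with Proposition~\ref{P:diamond-path} yields $\ell(\gamma) \leq L(\beta,\gamma) \leq L_s$, so by Notation~\ref{N:tree-like}(a) the truncation $t|L_s$ lies in $T_{\gamma}^x$ and satisfies $|t|L_s| \geq L(\beta,\gamma)$. Applying the monotonicity Lemma~\ref{L:consistency} to $t|L_s \preceq t$ and using that $\tpsi^x_{\beta,\gamma}$ preserves lengths, I obtain $\tpsi^x_{\beta,\gamma}(t|L_s) = \tpsi^x_{\beta,\gamma}(t)|L_s$, whence $\tpsi^x_{\beta,\gamma}(t)|l = \tpsi^x_{\beta,\gamma}(t|L_s)|l$. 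Since $t|L_s$ and $s$ have the same length $L_s$ and agree on their first $l$ coordinates, the problem is reduced to the equal-length case.

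For the equal-length case with $|s|=|t|=L$ I induct on $N := j-i$, where $\gamma = \beta^i_L$ and $\beta = \beta^j_L$. In the inductive step, setting $\delta := \beta^{i+1}_L$ produces a factorization $\tpsi^x_{\beta,\gamma} = \tpsi^x_{\beta,\delta} \circ \tpsi^x_{\delta,\gamma}$; since $\tpsi^x_{\delta,\gamma}$ preserves length and maps into $T^x_{\delta}$, the base case applied to $s$ and $t$ combined with the induction hypothesis (applied to the $(N-1)$-step subpath at the same level $L$) closes the step. The base case $N=1$ splits according to whether $\diamond(\beta) = \gamma$ or $\beta+1 = \gamma$. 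In the first situation Proposition~\ref{P:diamond-path}(c) forces $L = \ell(\beta)$ and $\tpsi^x_{\beta,\gamma}$ is the identity on sequences of length $L$, so the claim is trivial. The remaining successor situation is the heart of the proof: since $\tpsi^x_{\beta,\beta+1}(u) = \hpsi^x_{\beta,\beta+1}(u)|L$, it suffices to show $\hpsi^x_{\beta,\beta+1}(s)|l = \hpsi^x_{\beta,\beta+1}(t)|l$. Using $\ell(\beta+1) = \ell(\beta)+1$, I will separately treat $l \geq \ell(\beta+1)$ and $l \leq \ell(\beta)$. The former is straightforward: $s|l = t|l$ is itself in $T^x_{\beta+1}$ by Notation~\ref{N:tree-like}(a), so $\psi^x_{\beta,\beta+1}(s|l) = \psi^x_{\beta,\beta+1}(t|l)$, and combining the monotonicity of $\psi^x_{\beta,\beta+1}$ (Remark~\ref{R:monotony-of-theta}) with the length bound $|\hpsi^x_{\beta,\beta+1}(s|l)| \geq l$ (Remark~\ref{R:monotony-of-theta-2}) yields $\hpsi^x_{\beta,\beta+1}(s)|l = \hpsi^x_{\beta,\beta+1}(s|l)|l = \hpsi^x_{\beta,\beta+1}(t)|l$.

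The main obstacle I anticipate is the remaining subcase $l \leq \ell(\beta)$, where truncating to length $l$ leaves $T^x_{\beta+1}$ and one cannot directly compare the values of $\psi^x_{\beta,\beta+1}$. I plan to handle it by returning to the base clause of the recursive Definition~\ref{D:psi}: because $u|\ell(\beta+1) \in T^x_{\beta+1}$ has length $\ell(\beta+1)$, the definition forces $\psi^x_{\beta,\beta+1}\bigl((u|\ell(\beta+1))^-\bigr) = (u|\ell(\beta))$, and therefore $\hpsi^x_{\beta,\beta+1}(u|\ell(\beta+1))$ begins with the sequence $u|\ell(\beta)$. The monotonicity of $\psi^x_{\beta,\beta+1}$ (Remark~\ref{R:monotony-of-theta}) then propagates this initial segment to $\hpsi^x_{\beta,\beta+1}(u)$ itself, so $\hpsi^x_{\beta,\beta+1}(u)|l = u|l$ for every $l \leq \ell(\beta)$. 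Applying this separately to $u = s$ and $u = t$ and invoking $s|l = t|l$ concludes the argument.
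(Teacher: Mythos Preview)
Your proof is correct and follows essentially the same route as the paper: reduce to the elementary one-step cases $\diamond(\beta)=\gamma$ and $\beta+1=\gamma$, then in the successor case split according to whether $l\le\ell(\beta)$ (where Definition~\ref{D:psi}(a) gives $\hpsi^x_{\beta,\beta+1}(u)|l=u|l$) or $l\ge\ell(\beta+1)$ (where monotonicity lets one pass through $s|l=t|l$). Your write-up is in fact more explicit than the paper's, since you spell out both the equal-length reduction via Lemma~\ref{L:consistency} and the induction on the number of steps in the $|t|$-path that justifies the phrase ``it is sufficient to prove the assertion for two special cases''.
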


\begin{proof}
It is sufficient to prove the assertion for two special cases, namely, for the case $\diamond(\beta) = \gamma$ and $\beta + 1 = \gamma$.
In the first case we have
\[
\tpsi^x_{\beta,\gamma}(t)|l = t|l = s|l = \tpsi^x_{\beta,\gamma}(s)|l.
\]
In the other case we distinguish two possibilities. If $l \leq \ell(\beta)$, then we have by Definition~\ref{D:psi}
\[
\tpsi^x_{\beta,\gamma}(t)|l = t|l = s|l = \tpsi^x_{\beta,\gamma}(s)|l.
\]
If $l \geq \ell(\gamma) \ (= \ell(\beta) + 1)$, then using Lemma~\ref{L:consistency} we get
\[
\tpsi^x_{\beta,\gamma}(t)|l = \tpsi^x_{\beta,\gamma}(t|l) = \tpsi^x_{\beta,\gamma}(s|l) = \tpsi^x_{\beta,\gamma}(s)|l.
\]
\end{proof}

\begin{definition**}\label{D:definition-of-hPsi}
By Lemma~\ref{L:consistency} we get that for $0 \le \be < \ga \le \al$ and $z \in [T_{\gamma}^x]$ there exists a unique sequence $\hPsi_{\beta,\gamma}^x(z)$
such that $\tpsi^x_{\be,\ga}(z|L) = \hPsi_{\beta,\gamma}^x(z)|L$ for every $L \geq L(\be,\ga)$.
\end{definition**}

\begin{remark**}\label{R:hTheta}\hfil
\begin{enumerate}[(a)]
\item  By definition we have $\hPsi^x_{\be,\ga}(z) \in [T_{\be}^x]$ for every $z \in [T^x_{\gamma}]$ since
$\hPsi^x_{\be,\ga}(z)|L = \tpsi^x_{\beta,\gamma}(z|L) \in T^x_{\beta}$ for every $L \geq L(\beta,\gamma)$.

\item It can be easily checked that the former definition of $\hPsi^x_{\be,\be+1}$ in Definition~\ref{D:hPsi-pro-nasledniky}(2) agrees with the preceding definition for this particular case. Indeed, by monotonicity we have
\[
\tpsi_{\be,\be+1}^x(t) \preceq \hpsi_{\be,\be+1}^x(t) \preceq \tpsi_{\be,\be+1}^x(t')
\]
for every $t,t'\in T^x_{\be+1}$ such that $t\preceq t'$ and $|t'|\ge |\hpsi_{\be,\be+1}^x(t)|$.
\end{enumerate}
\end{remark**}

\begin{lemma**}\label{L:composition}
Let $x \in \cN$, $0 \leq \beta < \gamma < \delta \leq \alpha$. Then $\hPsi_{\beta,\delta}^x = \hPsi_{\beta,\gamma}^x \circ \hPsi_{\gamma,\delta}^x$.
\end{lemma**}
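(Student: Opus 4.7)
The plan is to reduce the statement to a finite composition identity for the auxiliary mappings $\tpsi^x_{\cdot,\cdot}$ and then use the characterization of $\hPsi^x_{\beta,\gamma}(z)$ as the unique element of $[T^x_\beta]$ whose initial segments of length $L$ agree with $\tpsi^x_{\beta,\gamma}(z|L)$ for all sufficiently large $L$.

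Fix $x\in\cN$ and $0\le\beta<\gamma<\delta\le\alpha$, and let $z\in[T^x_\delta]$. Set $L_0=\max\{L(\beta,\gamma),L(\gamma,\delta),L(\beta,\delta)\}$. First I would observe that, by Lemma~\ref{L:subsequence} together with Notation~\ref{N:L}, for every $L\ge L_0$ all three ordinals $\beta,\gamma,\delta$ belong to the $\diamond$-path $\{\beta^i_L\set i\le n(L)\}$ at level $L$; say $\delta=\beta^{i_0}_L$, $\gamma=\beta^{i_1}_L$, $\beta=\beta^{i_2}_L$ with $i_0<i_1<i_2\le n(L)$. Consequently the segment of the $\diamond$-path joining $\delta$ to $\beta$ at level $L$ decomposes as the concatenation of the segment from $\delta$ to $\gamma$ followed by the segment from $\gamma$ to $\beta$.

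Next I would use this decomposition to prove the pointwise identity
\[
\tpsi^x_{\beta,\delta}(t)=\tpsi^x_{\beta,\gamma}\bigl(\tpsi^x_{\gamma,\delta}(t)\bigr)
\]
for every $t\in T^x_\delta$ with $|t|\ge L_0$. This follows directly from the definition of $\tpsi^x_{\cdot,\cdot}$ as the composition
$\tpsi^x_{\beta^{i+1}_{|t|},\beta^i_{|t|}}$ along the $\diamond$-path, since the length of each $\tpsi^x_{\beta^{i+1}_{|t|},\beta^i_{|t|}}(s)$ equals that of $s$ (so all intermediate images still have length $|t|\ge L_0$ and live in the appropriate trees), and the composition along the whole segment from $\delta$ to $\beta$ can be broken at the point where $\gamma$ occurs.

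Finally, applying this with $t=z|L$ for $L\ge L_0$ and using Definition~\ref{D:definition-of-hPsi} I would obtain
\[
\hPsi^x_{\beta,\delta}(z)|L=\tpsi^x_{\beta,\delta}(z|L)=\tpsi^x_{\beta,\gamma}\bigl(\tpsi^x_{\gamma,\delta}(z|L)\bigr)
=\tpsi^x_{\beta,\gamma}\bigl(\hPsi^x_{\gamma,\delta}(z)|L\bigr)
=\bigl(\hPsi^x_{\beta,\gamma}\circ\hPsi^x_{\gamma,\delta}\bigr)(z)|L,
\]
where the penultimate equality uses that $\hPsi^x_{\gamma,\delta}(z)|L=\tpsi^x_{\gamma,\delta}(z|L)$ by Definition~\ref{D:definition-of-hPsi}, and the last equality uses the same definition applied to $\hPsi^x_{\beta,\gamma}$ evaluated at $\hPsi^x_{\gamma,\delta}(z)\in[T^x_\gamma]$. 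Since this holds for arbitrarily large $L$, the two elements of $[T^x_\beta]$ coincide, proving the lemma. The only mildly delicate point is the bookkeeping in the first step (ensuring $\gamma$ really appears between $\delta$ and $\beta$ in the $\diamond$-path at a common sufficiently large level), but this is immediate from Lemma~\ref{L:subsequence} once $L\ge L_0$.
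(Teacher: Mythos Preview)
Your proof is correct and follows essentially the same approach as the paper: fix $L_0=\max\{L(\beta,\gamma),L(\gamma,\delta),L(\beta,\delta)\}$, use that $\tpsi^x_{\cdot,\cdot}$ preserves length so that the same $\diamond$-path $\{\beta^i_{|t|}\}$ governs all three maps, deduce $\tpsi^x_{\beta,\delta}=\tpsi^x_{\beta,\gamma}\circ\tpsi^x_{\gamma,\delta}$ on sequences of length $\ge L_0$, and then pass to $\hPsi$ via Definition~\ref{D:definition-of-hPsi}. Your write-up is in fact a bit more explicit than the paper's about why $\gamma$ lies between $\beta$ and $\delta$ on the $\diamond$-path (invoking Lemma~\ref{L:subsequence}), which is a helpful clarification.
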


\begin{proof}
Let $L \in \omega$ be such that $L \geq \max\{L(\beta,\gamma),L(\gamma,\delta),L(\beta,\delta)\}$.
For every $t \in T_{\delta}^x$ with $|t| \geq L$
we have $|\tpsi_{\gamma,\delta}^x(t)| = |t|$ and so $\{\beta_{|t|}^i\}_{i=0}^{n(|t|)}$ is used in the definition of the values
$\tpsi_{\beta,\delta}^x(t)$, $\tpsi_{\beta,\gamma}^x\bigl(\tpsi_{\gamma,\delta}^x(t)\bigr)$, and $\tpsi_{\gamma,\delta}^x(t)$.
Then we have by definition $\tpsi_{\beta,\delta}^x(t) = \tpsi_{\beta,\gamma}^x \circ \tpsi_{\gamma,\delta}^x(t)$.
For $z \in [T^x_{\delta}]$ we have
\[
\begin{split}
\hPsi_{\beta,\delta}^x(z)|L &= \tpsi_{\beta,\delta}^x(z|L) = \tpsi_{\beta,\gamma}^x \circ \tpsi_{\gamma,\delta}^x(z|L) \\
&=\tpsi_{\beta,\gamma}^x(\hPsi_{\gamma,\delta}^x(z)|L) = \hPsi_{\beta,\gamma}^x(\hPsi_{\gamma,\delta}^x(z))|L.
\end{split}
\]
This gives $\hPsi_{\beta,\delta}^x(z) = \hPsi_{\beta,\gamma}^x \circ \hPsi_{\gamma,\delta}^x(z)$.
\end{proof}

\begin{lemma**}\label{L:Psi-copy}
Let $x \in \cN$, $0 \leq \beta < \gamma \leq \alpha$, and $L \in \omega$. If $\ell(\delta) \geq L$ for every $\delta \in [\beta,\gamma)$,
then $\hPsi^x_{\beta,\gamma}(z)|L = z|L$ for every $z \in [T_{\gamma}^x]$.
\end{lemma**}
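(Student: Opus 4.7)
The plan is to reduce the statement to the step-by-step structure of $\tpsi^x_{\beta,\gamma}$ as a composition along the $\diamond$-path. Fix $z \in [T^x_\gamma]$ and pick $L' \in \omega$ with $L' \geq \max\{L, L(\beta,\gamma)\}$ large enough so that both $\beta$ and $\gamma$ appear among $\{\beta^k_{L'}\}_{k=0}^{n(L')}$; such $L'$ exists by Notation~\ref{N:L}. By Definition~\ref{D:definition-of-hPsi} we have $\hPsi^x_{\beta,\gamma}(z)|L = \tpsi^x_{\beta,\gamma}(z|L')|L$, so it suffices to prove $\tpsi^x_{\beta,\gamma}(z|L')|L = z|L$.

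Let $i < j$ be the unique indices with $\beta^i_{L'} = \gamma$ and $\beta^j_{L'} = \beta$, and write $t = z|L'$, $s_i = t$, and $s_k = \tpsi^x_{\beta^k_{L'}, \beta^{k-1}_{L'}}(s_{k-1})$ for $i < k \leq j$. I will prove by induction on $k \in \{i,\dots,j\}$ that $s_k|L = t|L$; the case $k=j$ then gives $\tpsi^x_{\beta,\gamma}(t)|L = s_j|L = t|L = z|L$, which is the desired conclusion.

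The base case $k=i$ is trivial. For the inductive step, assume $s_{k-1}|L = t|L$ with $i < k \leq j$. Since $\beta^k_{L'}$ lies strictly between $\beta^j_{L'} = \beta$ and $\beta^i_{L'} = \gamma$ (inclusive of $\beta$, exclusive of $\gamma$), we have $\beta^k_{L'} \in [\beta,\gamma)$ and therefore $\ell(\beta^k_{L'}) \geq L$ by the hypothesis. Two cases arise from the definition of $\tpsi^x$. If $\diamond(\beta^k_{L'}) = \beta^{k-1}_{L'}$, then $\tpsi^x_{\beta^k_{L'},\beta^{k-1}_{L'}}$ is the identity on $T^x_{\beta^{k-1}_{L'}} \cap \{s \set |s|=L'\}$, so $s_k = s_{k-1}$ and we are done. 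If $\beta^k_{L'}+1 = \beta^{k-1}_{L'}$, then by definition $s_k = \hpsi^x_{\beta^k_{L'},\beta^k_{L'}+1}(s_{k-1})|L'$. By Definition~\ref{D:psi}(a), $\wh{\psi^x_{\beta^k_{L'},\beta^k_{L'}+1}(s_{k-1})}$ begins with $\tau(\psi^x_{\beta^k_{L'},\beta^k_{L'}+1}(s_{k-1})) = s_{k-1}|\ell(\beta^k_{L'})$, hence $s_k|\ell(\beta^k_{L'}) = s_{k-1}|\ell(\beta^k_{L'})$. Since $\ell(\beta^k_{L'}) \geq L$, we conclude $s_k|L = s_{k-1}|L = t|L$, completing the induction.

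The only mildly delicate point is the bookkeeping: ensuring that every $\beta^k_{L'}$ with $i < k \leq j$ actually lies in $[\beta,\gamma)$ so that the hypothesis $\ell(\beta^k_{L'}) \geq L$ applies, and that in the isolated case the formula from Definition~\ref{D:psi}(a) combined with $|s_{k-1}|=L' \geq \ell(\beta^k_{L'})$ is enough to propagate the invariant to length $L$. Both are immediate consequences of $\beta^k_{L'} < \beta^i_{L'}=\gamma$, $\beta^k_{L'} \geq \beta^j_{L'} = \beta$, and $L' \geq L$.
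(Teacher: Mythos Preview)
Your proof is correct and follows essentially the same approach as the paper: decompose along the $\diamond$-path at some level $L' \geq \max\{L,L(\beta,\gamma)\}$ and check at each step that the first $L$ coordinates are preserved, using the identity in the $\diamond$ case and $\tau(\psi(t)) = t|\ell(\beta)$ together with $\ell(\beta^k_{L'}) \geq L$ in the successor case. The only cosmetic difference is that the paper phrases the decomposition at the level of the maps $\hPsi^x_{\beta_k^{i+1},\beta_k^i}$ (invoking Lemma~\ref{L:composition}) and then computes each via $\tpsi$, whereas you work directly with the composition defining $\tpsi^x_{\beta,\gamma}$; the content is the same.
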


\begin{proof}
We find $k \in \omega$ such that $k \geq L(\beta,\gamma)$ and $k \geq L$. Consider the corresponding sequence $\{\beta_k^i\}_{i=0}^{n(k)}$ from Proposition~\ref{P:diamond-path}
It is sufficient to verify $\hPsi^x_{\beta_k^{i+1},\beta_k^i}(z)|L = z|L$ for $z \in [T_{\be^i_k}^x]$.
Set $t = z|k$. If $\beta_k^{i+1} + 1 = \beta_k^i$, then
\[
\hPsi^x_{\beta_k^{i+1},\beta_k^{i}}(z)|L = \tpsi^x_{\beta_k^{i+1},\beta_k^{i}}(z|k)|L  = \hpsi^x_{\beta_k^{i+1},\beta_k^{i}}(t)|L = t|L = z|L
\]
by Definition~\ref{D:psi} since $\ell(\beta_k^{i+1}) \geq L$.

If $\diamond(\beta_k^{i+1}) = \beta_k^i$, then
\[
\hPsi^x_{\beta_k^{i+1},\beta_k^{i}}(z)|L = \tpsi^x_{\beta_k^{i+1},\beta_k^{i}}(z|k)|L =  \tpsi^x_{\beta_k^{i+1},\beta_k^{i}}(t)|L = t|L = z|L
\]
by the definition of $\tpsi^x_{\beta,\gamma}$.
\end{proof}

\begin{lemma**}\label{L:initial-segment-hPsi}
Let $x \in \cN$, $0 \leq \beta < \gamma \leq \alpha$, $l \in \omega$, $z,w \in [T_{\gamma}^x]$ and $z|l = w|l$.
Then $\hPsi^x_{\beta,\gamma}(z)|l = \hPsi^x_{\beta,\gamma}(w)|l$.
\end{lemma**}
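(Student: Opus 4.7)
The plan is to reduce the statement to its finite analogue, Lemma~\ref{L:initial-segment-tpsi}, by truncating the infinite branches to a sufficiently long common initial segment and then applying the defining relation from Definition~\ref{D:definition-of-hPsi}.

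First I would fix an integer $L \in \omega$ with $L \geq \max\{l, L(\beta,\gamma)\}$. Since $z, w \in [T_{\gamma}^x]$, both $z|L$ and $w|L$ lie in $T_{\gamma}^x$, they have common length $L \geq L(\beta,\gamma)$, and the hypothesis $z|l = w|l$ together with $L \geq l$ gives $(z|L)|l = z|l = w|l = (w|L)|l$. Thus the assumptions of Lemma~\ref{L:initial-segment-tpsi} are satisfied for $s := z|L$ and $t := w|L$, and we conclude
\[
\tpsi^x_{\beta,\gamma}(z|L)|l = \tpsi^x_{\beta,\gamma}(w|L)|l.
\]

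Next, by Definition~\ref{D:definition-of-hPsi}, since $L \geq L(\beta,\gamma)$ we have $\hPsi^x_{\beta,\gamma}(z)|L = \tpsi^x_{\beta,\gamma}(z|L)$ and $\hPsi^x_{\beta,\gamma}(w)|L = \tpsi^x_{\beta,\gamma}(w|L)$. Taking initial segments of length $l$ on both sides of these equalities (which is legitimate because $L \geq l$) and combining with the previous display yields
\[
\hPsi^x_{\beta,\gamma}(z)|l = \tpsi^x_{\beta,\gamma}(z|L)|l = \tpsi^x_{\beta,\gamma}(w|L)|l = \hPsi^x_{\beta,\gamma}(w)|l,
\]
which is exactly the desired conclusion.

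There is essentially no obstacle here: the content of the lemma is already packaged into Lemma~\ref{L:initial-segment-tpsi}, and Definition~\ref{D:definition-of-hPsi} is tailor-made so that finite-level information about $\tpsi^x_{\beta,\gamma}$ transfers directly to $\hPsi^x_{\beta,\gamma}$. The only point requiring care is choosing $L$ large enough to satisfy simultaneously $L \geq l$ (so that restriction to length $l$ is well defined after applying $\tpsi^x_{\beta,\gamma}$) and $L \geq L(\beta,\gamma)$ (so that the defining identity from Definition~\ref{D:definition-of-hPsi} applies).
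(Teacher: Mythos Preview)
Your proof is correct and follows essentially the same approach as the paper: set $L = \max\{L(\beta,\gamma), l\}$, invoke Definition~\ref{D:definition-of-hPsi} to rewrite $\hPsi^x_{\beta,\gamma}(z)|l$ and $\hPsi^x_{\beta,\gamma}(w)|l$ in terms of $\tpsi^x_{\beta,\gamma}$, and apply Lemma~\ref{L:initial-segment-tpsi}. The paper's proof is the same chain of equalities, just written more tersely.
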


\begin{proof}
Set $L = \max\{L(\beta,\gamma),l\}$. Using Definition~\ref{D:definition-of-hPsi} and Lemma~\ref{L:initial-segment-tpsi}, we get
\[
\hPsi^x_{\beta,\gamma}(z)|l = \tpsi^x_{\beta,\gamma}(z|L)|l = \tpsi^x_{\beta,\gamma}(w|L)|l = \hPsi^x_{\beta,\gamma}(w)|l.
\]

\end{proof}

\begin{lemma**}\label{L:computation-of-small-psi}
Let $x \in \cN$. There exists a partial function $\rho\colon [0,\alpha] \times [0,\alpha] \times \mathcal S \times \cN \rightharpoonup \mathcal S$ which is $\Sigma_1^0(\ep)$-recursive on its domain and such that $\tpsi^x_{\beta,\gamma}(t) = \rho\bigl(\beta,\gamma,t,x^{(\gamma)}\bigr)$ whenever $\tpsi^x_{\beta,\gamma}(t)$ is defined.
\end{lemma**}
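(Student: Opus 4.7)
The plan is to define $\rho$ by Kleene's recursion theorem, mirroring the structure of $\tpsi^x_{\beta,\gamma}$ as a finite composition of one-step reductions along the $\diamond$-path of Proposition~\ref{P:diamond-path} from $\gamma$ down to $\beta$. The essential ingredients are: the $\Sigma_1^0(\ep)$-recursive function $u$ from Lemma~\ref{L:properties-of-theta} giving $\psi^x_{\beta,\beta+1}(t) = u(\beta,t,x^{(\beta+1)})$; the function $r_3$ from Lemma~\ref{L:jumps}(\ref{I:a^ga-from-a^be}) yielding $x^{(\gamma')} = r_3(\gamma',\gamma,x^{(\gamma)})$ for $\gamma' \le \gamma$; and the $\Sigma_1^0(\ep)$-recursivity of $\ell$, $\diamond$, $a$, $L$ from Lemma~\ref{L:volba-epsilon}.

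As a preliminary step, I would show that the ``next-step-down'' map $\mathrm{next}\colon \om \times (0,\alpha] \rightharpoonup [0,\alpha)$, defined to be $\gamma - 1$ if $\gamma$ is isolated and $a(\gamma,L - \ell(\gamma))$ if $\gamma \in \LIM(\alpha)$ (the latter formula being correct by Lemma~\ref{L:diamond}(b)), is $\Sigma_1^0(\ep)$-recursive on the recursive domain $\{(L,\gamma) \set \ell(\gamma) \le L\}$. By Proposition~\ref{P:diamond-path}, $\mathrm{next}(|t|,\gamma)$ equals $\beta^{i+1}_{|t|}$ whenever $\gamma = \beta^i_{|t|}$, and in the limit case one automatically has $\ell(\mathrm{next}(|t|,\gamma)) = |t|$.

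Next, using the universal $U = U^{[0,\alpha]^2 \times \cS \times \cN, \cS}_{\Sigma_1^0(\ep)}$ from Fact~\ref{F:recursionthms}(b) together with Remark~\ref{R:extended-facts}, I would define a partial function $g(e,\beta,\gamma,t,y)$ by first setting $\gamma' = \mathrm{next}(|t|,\gamma)$ and
\[
t' = \begin{cases} t & \text{if } \gamma \in \LIM(\alpha), \\ \wh{u(\gamma',t,y)}||t| & \text{if $\gamma$ is isolated}, \end{cases}
\]
and then returning $t'$ if $\gamma' = \beta$, or $U(e,\beta,\gamma',t',r_3(\gamma',\gamma,y))$ otherwise. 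The construction of a $\Sigma_1^0(\ep)$ set computing $g$ mimics that in the proof of Lemma~\ref{L:recursivity-of-f}, using Fact~\ref{F:partialrecursivefunctions}(\ref{I:substitution-property}) and the recursivity of all the building blocks; the recursion theorem then produces $e^*$ with $g(e^*,\cdot) \equiv U(e^*,\cdot)$ on its domain, and I put $\rho = g_{e^*}$.

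Finally, the equality $\rho(\beta,\gamma,t,x^{(\gamma)}) = \tpsi^x_{\beta,\gamma}(t)$ is proved by induction on the number of steps $j - i$ in the $\diamond$-path $\beta^i_{|t|} = \gamma, \beta^{i+1}_{|t|}, \dots, \beta^j_{|t|} = \beta$. The base case $j - i = 1$ matches the two cases of the definition of $\tpsi^x_{\beta,\gamma}$ directly via the identity $\hpsi^x_{\gamma',\gamma}(t) = \wh{u(\gamma',t,x^{(\gamma)})}$ from Notation~\ref{N:psi-hat} and Lemma~\ref{L:properties-of-theta}. For the inductive step, $t' = \tpsi^x_{\gamma',\gamma}(t)$ with $|t'| = |t|$, $r_3(\gamma',\gamma,x^{(\gamma)}) = x^{(\gamma')}$, and the remaining path has length $j - i - 1$, so the induction hypothesis combined with the associativity of the composition defining $\tpsi^x_{\beta,\gamma}$ yields the desired equality. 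The principal obstacle will be the totality argument, analogous to that of Lemma~\ref{L:Precursive}, verifying that $g(e^*,\cdot)$ is defined throughout the intended domain; this is handled by the same induction on $j-i$ using finiteness of the $\diamond$-path.
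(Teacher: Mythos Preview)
Your approach is essentially the paper's: both apply Kleene's recursion theorem to a function that steps down one level of the $\diamond$-path at a time, using $\wh{u}$ from Lemma~\ref{L:properties-of-theta} for the successor step, the identity for the limit step, and $r_3$ to descend through the jumps. The paper takes $\beta=\gamma$ as its base case and inducts on $\gamma$; you take $\gamma'=\beta$ as the base case and induct on the path length $j-i$; these are interchangeable reorganizations of the same recursion.

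One small but genuine improvement in your version: you uniformly pass $r_3(\gamma',\gamma,y)$ in the recursive call, whereas the paper's written $f$ leaves $y$ unchanged in the successor clause. Your choice is what is actually needed for the induction to go through, since after two consecutive successor steps the paper's unchanged $y$ would be $x^{(\delta+2)}$ at a point where $u(\delta,t',x^{(\delta+1)})$ is required; the paper's verification tacitly uses the induction hypothesis at $f_{e^*}(\beta,\delta,t',x^{(\delta+1)})$ rather than at $x^{(\delta)}$, which is not what was established. Your formulation repairs this cleanly.
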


\begin{proof}
Let $u$ be the partial function from Lemma~\ref{L:properties-of-theta}.
Using $\Sigma_1^0(\ep)$-recursivity of the mapping $\operatorname{\wh{\phantom{a}}}\colon \cP \to \cS$
(Lemma~\ref{L:volba-epsilon}(e)), we see that the  partial function $\wh{u}\colon t \mapsto \wh{u(t)}$ is $\Sigma_1^0(\ep)$-recursive on its domain.
Notice that we have $\hpsi_{\beta,\be+1}^x(t) = \wh{u}(\beta,t,x^{(\beta+1)})$ for every $\beta \in [0,\alpha), t \in T_{\beta+1}^x$.
Further, let $r_3$ be the partial function from Lemma~\ref{L:jumps}(3) (with $u$ replaced by $x$ and $v$ replaced by $\cl{0}$), i.e.,
$r_3$ is a $\Sigma_1^0(\ep)$-recursive function on its domain and
$x^{(\beta)}  = r_3\bigl(\beta,\gamma,x^{(\gamma)}\bigr)$ for every $0 \le \beta \le \gamma \leq \alpha$.
We define a partial function
\[
f\colon \omega \times [0,\alpha] \times [0,\alpha] \times \Seq \times \cN \rightharpoonup \cS,
\]
using the notation $U = U^{[0,\al]^2\times \cS\times\cN, \cS}_{\Si_1^0(\ep)}$, by
\[
\begin{split}
f(e,&\beta,\gamma,t,y) \\
&=
\begin{cases}
t                                                                                          &\text{if } \be=\ga,  \\
U\bigl(e,\beta,\delta, \wh{u}(\delta,t,y)||t|,y\bigr)                                      &\text{if } \gamma = \delta+1, |t| \geq L(\be,\gamma), \\
U\bigl(e,\beta,a(\gamma,|t|-\ell(\gamma)),t,r_3\bigl(a(\gamma,|t|-\ell(\gamma)),\gamma,y\bigr)\bigr) &\text{if } \gamma \text{ is limit and } |t|\ge L(\be,\ga).
\end{cases}
\end{split}
\]
Using Lemma~\ref{L:volba-epsilon}(g),(e) for $a$, $\ell$, $t \mapsto |t|$ on $\cS^*$,  we get that the partial function $f$ is $\Sigma_1^0(\ep)$-recursive on its domain. By Fact~\ref{F:recursionthms}(b) and Remark~\ref{R:extended-facts} there exists $e^* \in \omega$
such that
$f(e^*,\beta,\gamma,t,y) = U(e^*,\beta,\gamma,t,y)$ whenever $f(e^*,\beta,\gamma,t,y)$ is defined. We set $\rho = f_{e^*}$.
By induction on $1 \leq \gamma \leq \alpha$ we verify that $\tpsi_{\beta,\gamma}^x(t) = f_{e^*}(\beta,\gamma,t,x^{(\gamma)})$
for every $\beta < \gamma, t \in T_{\gamma}^x, |t| \geq L(\beta,\gamma)$.

Suppose that $1 \leq \gamma < \alpha, \gamma = \delta +1$.
Let $\beta < \gamma$ and $t \in T_{\gamma}^x, |t| \geq L(\beta,\gamma)$. Then
there exist $i,j \in \omega$ such that $\beta^i_{|t|} = \gamma = \delta +1$ and $\beta_{|t|}^j = \beta$.
By Proposition~\ref{P:diamond-path} we have $\beta_{|t|}^{i+1} = \delta$ and $L(\beta,\delta+1) \geq L(\delta,\delta+1)$.
Then we have
\[
\begin{split}
\rho(\beta,\gamma,t,x^{(\gamma)}) &= U\bigl(e^*,\beta,\delta,\wh{u}(\delta,t,x^{(\delta+1)})||t|,x^{(\delta+1)}\bigr) = U\bigl(e^*,\beta,\delta,\tpsi_{\delta,\de+1}^x(t),x^{(\delta+1)}\bigr) \\
&= f(e^*,\beta,\delta,\tpsi_{\delta,\de+1}^x(t),x^{(\delta+1)}) \\
&=
\begin{cases}
\tpsi_{\beta,\gamma}^x(t)                                                           &\text{if } \beta = \delta, \\
\tpsi_{\beta,\delta}^x \circ \tpsi_{\delta,\de+1}^x(t) = \tpsi_{\beta^j_{|t|},\beta^{j-1}_{|t|}}^x\circ \cdots \circ \tpsi_{\beta^{i+1}_{|t|},\beta^i_{|t|}}^x(t)=  \tpsi_{\beta,\gamma}^x(t) &\text{if } \beta < \delta.
\end{cases}
\end{split}
\]

Suppose that $\gamma \leq \alpha$ is a limit ordinal.
Let $\beta < \gamma$ and $t \in T_{\gamma}^x, |t| \geq L(\beta,\gamma)$. Then
there exist $i,j \in \omega$ such that $\beta^i_{|t|} = \gamma$ and $\beta_{|t|}^j = \beta$.
We denote $\mu = a(\gamma,|t|-\ell(\gamma))$. Then we have $\ell(\mu) = |t|$ and $\diamond(\mu) = \gamma$  by Lemma~\ref{L:diamond}(b).
Consequently, $\beta_{|t|}^{i+1} = \mu$ and $|t| \geq L(\mu,\gamma)$.
This gives $\tpsi_{\mu,\gamma}^x(t) = t$ by definition of $\tpsi_{\mu,\gamma}^x$. Then we have
\[
\begin{split}
\rho(\beta,\gamma,t,x^{(\gamma)}) &= U\bigl(e^*,\beta,a(\gamma,|t|-\ell(\gamma)),t,r_3\bigl(a(\gamma,|t|-\ell(\gamma)),\gamma,x^{(\gamma)}\bigr)\bigr) \\
&= U\bigl(e^*,\beta,\mu,t,x^{(\mu)}\bigr) = f(e^*,\beta,\mu,t,x^{(\mu)}) = \tpsi_{\beta,\mu}^x(t) = \tpsi_{\beta,\mu}^x \circ \tpsi_{\mu,\gamma}^x(t) \\
&= \tpsi_{\beta^j_{|t|},\beta^{j-1}_{|t|}}^x\circ \cdots \circ \tpsi_{\beta^{i+1}_{|t|},\beta^i_{|t|}}^x(t) = \tpsi_{\beta,\gamma}^x(t).
\end{split}
\]
\end{proof}

\begin{lemma**}\label{L:hPsi-recursivity}
Let $x \in \cN$.
The function $\hPsi^x\colon (\beta,\gamma,z) \mapsto \hPsi_{\beta,\gamma}^x(z)$ is a partial function which is $\Sigma_1^0(\ep)$-recursive on its domain and
defined for $0 \leq \beta < \gamma \leq \alpha$ and $z \in [T_{\ga}^x]$.
\end{lemma**}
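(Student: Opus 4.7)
The plan is to reduce the statement to $\Sigma_1^0(\ep)$-recursivity of a partial function from an extended type $0$ product space to $\om$ via Fact~\ref{F:partialrecursivefunctions}\eqref{I:recursivity-to-N}, and then to express the value $\hPsi^x_{\be,\ga}(z)(n)$ as a composition of previously established $\Sigma_1^0(\ep)$-recursive functions. Concretely, first I would observe that, by Definition~\ref{D:definition-of-hPsi} and the fact that $\tpsi^x_{\be,\ga}$ preserves the length of sequences, for any $L \geq \max\{L(\be,\ga), n+1\}$ we have
\[
\hPsi^x_{\be,\ga}(z)(n) \ = \ \tpsi^x_{\be,\ga}(z|L)(n),
\]
whenever $0 \leq \be < \ga \leq \al$ and $z \in [T^x_\ga]$, and this value is independent of the choice of $L$ in this range (this independence was already implicitly used in defining $\hPsi^x_{\be,\ga}$, and it follows also from Lemma~\ref{L:initial-segment-tpsi}).

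Next I would pick the canonical value $L(\be,\ga,n) := \max\{L(\be,\ga),\, n+1\}$, which is $\Sigma_1^0(\ep)$-recursive in $(\be,\ga,n)$ by Lemma~\ref{L:volba-epsilon}(g) together with $\Sigma_1^0(\ep)$-recursivity of $\max$ on $\om^2$, and then apply Lemma~\ref{L:computation-of-small-psi} to rewrite
\[
\tpsi^x_{\be,\ga}\bigl(z\,\bigl|\,L(\be,\ga,n)\bigr)
\ = \ \rho\bigl(\be,\ga,\,z\,|\,L(\be,\ga,n),\,x^{(\ga)}\bigr).
\]
To eliminate the explicit occurrence of $x^{(\ga)}$, I would use the fact that for $z \in [T^x_\ga]$ we have $\pi(\ga,z) = \si^x_\ga$ by Notation~\ref{N:tree-like}(b) and hence $x^{(\ga)} = \rright(\ga,\pi(\ga,z))$ by Proposition~\ref{P:jump-trees-old}(c). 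Substituting this yields
\[
\hPsi^x_{\be,\ga}(z)(n) \ = \ \rho\Bigl(\be,\ga,\ z\,|\,L(\be,\ga,n),\ \rright\bigl(\ga,\pi(\ga,z)\bigr)\Bigr)(n),
\]
valid for every $(\be,\ga,z)$ in the domain of $\hPsi^x$ and every $n \in \om$.

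Finally, I would apply Fact~\ref{F:partialrecursivefunctions}\eqref{I:composition},\eqref{I:coordinates} (combined with Remark~\ref{R:extended-facts}): each ingredient in the right-hand side above is $\Sigma_1^0(\ep)$-recursive on its domain, namely $L$ by Lemma~\ref{L:volba-epsilon}(g), the restriction $(L,z) \mapsto z|L$ by Lemma~\ref{L:volba-epsilon}(i), the partial function $\rho$ by Lemma~\ref{L:computation-of-small-psi}, the function $\rright$ by Proposition~\ref{P:jump-trees-old}(c), the function $\pi$ by Notation~\ref{N:tree-like}(b), and the evaluation map $(n,v) \mapsto v(n)$ on $\om \times \cS$ by Lemma~\ref{L:volba-epsilon}(e). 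Composition then shows that the map $(\be,\ga,z,n) \mapsto \hPsi^x_{\be,\ga}(z)(n)$ is $\Sigma_1^0(\ep)$-recursive on its domain, and Fact~\ref{F:partialrecursivefunctions}\eqref{I:recursivity-to-N} gives the desired conclusion for $\hPsi^x$. Since all required technical machinery (in particular the recursive substitution of $x^{(\ga)}$ and the uniform handling of $L(\be,\ga)$) has already been developed in the preceding lemmas, there is no real obstacle here; the only point that needs verification with some care is the length-preservation of $\tpsi^x_{\be,\ga}$ used in the very first step, which however is immediate from its definition as a composition of the maps $\hpsi^x_{\de,\de+1}(\cdot)||t|$ and the identity.
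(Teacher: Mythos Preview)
Your proof is correct and follows essentially the same strategy as the paper: both arguments eliminate the hidden dependence on $x^{(\gamma)}$ via $x^{(\gamma)} = \rright(\gamma,\pi(\gamma,z))$ for $z \in [T^x_\gamma]$ (Notation~\ref{N:tree-like}(b) and Proposition~\ref{P:jump-trees-old}(c)) and then invoke Lemma~\ref{L:computation-of-small-psi} to compute $\tpsi^x_{\be,\ga}$ through $\rho$. The only difference is in presentation: the paper builds the computing set $\Delta \subset [0,\alpha]^2 \times \cN \times \cS$ directly from the equivalence
\[
\hPsi^x_{\be,\ga}(z) \in \cN(t) \ \Leftrightarrow \ \exists L \ge L(\be,\ga)\colon t \preceq \rho\bigl(\be,\ga,z|L,\rright(\ga,\pi(\ga,z))\bigr),
\]
whereas you fix the particular value $L = \max\{L(\be,\ga),n+1\}$ and appeal to Fact~\ref{F:partialrecursivefunctions}\eqref{I:recursivity-to-N} to reduce to the $\omega$-valued coordinate map. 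Your route is marginally more concise because it avoids the explicit description of $\Delta$, while the paper's route makes the computing set visible; substantively they are the same argument.
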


\begin{proof}
Let $\pi$ be the $\Si^0_1(\ep)$-recursive mapping from Notation~\ref{N:tree-like}(b) and
$\rright\colon [0,\alpha] \times \cN \rightharpoonup \cN$ be the partial $\Sigma_1^0(\ep)$-recursive function from Proposition~\ref{P:jump-trees-old}(c).
Then $r_*(\ga,z)=\rright(\ga,\pi(\ga,z))$ is a partial $\Sigma_1^0(\ep)$-recursive function.
For every $\gamma \in [0,\alpha]$ and $z \in [T_{\gamma}^x]$ we have $\pi(\gamma,z) = \sigma_{\gamma}^x$ (see Notation~\ref{N:tree-like}(b)) and, consequently,
$r_*(\ga,z)=\rright(\ga,\si^x_\ga)=x^{(\gamma)}$.
Let $\rho$ be the $\Sigma_1^0(\ep)$-recursive function from Lemma~\ref{L:computation-of-small-psi}. Then, for every $0 \leq \beta < \gamma \leq \alpha$, $z \in [T_{\gamma}^x]$, $t \in \cS$, we have
\begin{equation}\label{E:computation-of-Psi}
\begin{split}
\hPsi^x_{\beta,\gamma}(z) \in \cN(t)
&\Leftrightarrow \exists L \in \omega \colon L \geq L(\beta,\gamma) \land t \preceq \tpsi^x_{\beta,\gamma}(z|L) \\
&\Leftrightarrow \exists L \in \omega \colon L \geq L(\beta,\gamma) \land t \preceq \rho(\beta,\gamma,z|L,x^{(\gamma)}) \\
&\Leftrightarrow \exists L \in \omega \colon L \geq L(\beta,\gamma) \land t \preceq \rho\bigl(\beta,\ga,z|L, r_*(\ga,z)\bigr).
\end{split}
\end{equation}
Using Lemma~\ref{L:volba-epsilon}(d),(i),(g),
we get that there exists a $\Si_1^0(\ep)$-set $\Delta \subset [0,\alpha]^2 \times \cN \times \cS$ such that
\[
\begin{split}
\Delta \cap \{(\beta,\gamma,z,t) &\in [0,\alpha]^2 \times \cN \times \cS \set z \in [T^x_{\gamma}]\}  \\
&= \bigl\{(\beta,\gamma,z,t) \in [0,\alpha]^2 \times \cN \times \cS \set z \in [T^x_{\gamma}] \land 0 \leq \beta < \gamma \leq \alpha \\
&\hskip 130pt \land  \exists L \geq L(\beta,\gamma)\colon
t \preceq \rho\bigl(\beta,\ga,z|L, r_*(\ga,z)\bigr)\bigr\}.
\end{split}
\]
By \eqref{E:computation-of-Psi} the set $\Delta$ computes the partial function $\hPsi^x$ and we are done, cf. Lemma~\ref{L:natural-base}.
\end{proof}

\subsection{Properties of \texorpdfstring{$\Xi^x$}{} -- proof of Proposition~\ref{P:hPsi-properties}}\label{SS:properties-of-Xi}

\begin{notation**}
We denote $\Xi^x = \hPsi^x_{0,\al}$ for $x \in \cN$.
\end{notation**}

By Proposition~\ref{P:uzavrenost-F} we have a $\Pi^0_1(\ep)$ set $F\su\cN^2$.
By Remark~\ref{R:hTheta}(a) $\Xi^x\colon E_x\to F_x$.
We are going to prove that $F$ and $\Xi^x$, $x\in\cN$, are the objects having the properties (a) and (b) from Proposition~\ref{P:hPsi-properties}.
Proposition~\ref{P:hPsi-properties}(a) is a particular case of the following lemma.

\begin{lemma**}\label{L:psi-properties}
Let $x \in \cN$.
The function $(\beta,z) \mapsto \bigl(\hPsi_{0,\beta}^x(z)\bigr)^{(\beta)}_x$ is a partial function which is $\Sigma_1^0(\ep)$-recursive on its domain $(0,\alpha]\times [T_{\beta}^x]$.
\end{lemma**}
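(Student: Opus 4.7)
The plan is to apply the Kleene recursion theorem, following the scheme used in the proofs of Lemmas~\ref{L:jumps}(3) and \ref{L:jump-in-x-and-in-0}. Set $U = U^{[0,\al]\times\cN,\cN}_{\Si^0_1(\ep)}$ and apply Lemma~\ref{L:jumps}(2) with the type-zero extended product space $\om\times[0,\al]$ and $q = U$ to obtain a $\Si^0_1(\ep)$-recursive function $r_U$ satisfying $U(e,\ga,u)'_v = r_U(e,\ga,u'_v)$ whenever $U(e,\ga,u)$ is defined and $v \in \cN$. Define a partial function $f\colon \om \times [0,\al] \times \cN \rightharpoonup \cN$ by
\[
f(e,\be,z) =
\begin{cases}
z & \text{if } \be = 0, \\
r_U\bigl(e,\ga,(\hPsi^x_{\ga,\ga+1}(z))'_x\bigr) & \text{if } \be = \ga+1, \\
\bigvee_{k \in \om} U\bigl(e, a(\la,k), \hPsi^x_{a(\la,k),\la}(z)\bigr) & \text{if } \be = \la \text{ is a limit ordinal.}
\end{cases}
\]
Using Lemma~\ref{L:recursivity-of-Theta}(b) for the $\Si^0_1(\ep)$-recursiveness of $(\ga,z) \mapsto (\hPsi^x_{\ga,\ga+1}(z))'_x$, Lemma~\ref{L:hPsi-recursivity} for $\hPsi^x_{a(\la,k),\la}(z)$, Lemma~\ref{L:bigvee} for the $\bigvee$-operator, and the $\Si^0_1(\ep)$-recursiveness of $a$, $\LIM(\al)$, and the successor function on $[0,\al)$ from Lemma~\ref{L:volba-epsilon}, one verifies that $f$ is $\Si^0_1(\ep)$-recursive on its domain.

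Apply the recursion theorem (Fact~\ref{F:recursionthms}(b) with Remark~\ref{R:extended-facts}) to obtain $e^* \in \om$ such that $f(e^*,\be,z) = U(e^*,\be,z)$ whenever $f(e^*,\be,z)$ is defined, and set $H = f_{e^*}$. A transfinite induction on $\be \in [0,\al]$ in the style of the proof of Lemma~\ref{L:Precursive} shows that $H(\be,z)$ is defined for every $z \in [T^x_\be]$; a second transfinite induction then verifies the equality $H(\be,z) = (\hPsi^x_{0,\be}(z))^{(\be)}_x$ on that domain. The case $\be = 0$ reduces to $z^{(0)}_x = z$. For the successor step $\be = \ga+1$, the composition identity $\hPsi^x_{0,\ga+1} = \hPsi^x_{0,\ga} \circ \hPsi^x_{\ga,\ga+1}$ from Lemma~\ref{L:composition}, the inductive hypothesis, and the defining property of $r_U$ combine to give
\[
H(\ga+1,z) = r_U\bigl(e^*,\ga,(\hPsi^x_{\ga,\ga+1}(z))'_x\bigr) = U(e^*,\ga,\hPsi^x_{\ga,\ga+1}(z))'_x = \bigl((\hPsi^x_{0,\ga+1}(z))^{(\ga)}_x\bigr)'_x = (\hPsi^x_{0,\ga+1}(z))^{(\ga+1)}_x.
\]
The limit step $\be = \la$ follows analogously by decomposing $y^{(\la)}_x = \bigvee_k y^{(a(\la,k))}_x$ via Definition~\ref{D:Turing-jump} and combining the inductive hypothesis with Lemma~\ref{L:composition}.

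Extending the definition of $f$ to $\be = 0$ is only a convenience to furnish a clean base case for the recursion; the statement of the lemma concerns $\be \in (0,\al]$. The main conceptual step is the successor case, where Lemma~\ref{L:jumps}(2) is the precise tool bridging the required $x$-Turing jump of $U(e^*,\ga,\hPsi^x_{\ga,\ga+1}(z))$ with the already $\Si^0_1(\ep)$-recursive quantity $(\hPsi^x_{\ga,\ga+1}(z))'_x$ provided by Lemma~\ref{L:recursivity-of-Theta}(b), so that the inductive structure of the construction mirrors the inductive definition of higher Turing jumps.
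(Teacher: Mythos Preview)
Your proof is correct and follows essentially the same approach as the paper: the same recursion-theorem scheme with $U = U^{[0,\al]\times\cN,\cN}_{\Si^0_1(\ep)}$, the same three-case definition of $f$ via $r_U$ in the successor case and $\bigvee_k U(e,a(\be,k),\hPsi^x_{a(\be,k),\be}(z))$ in the limit case, and the same inductive verification invoking Lemmas~\ref{L:recursivity-of-Theta}(b), \ref{L:hPsi-recursivity}, \ref{L:bigvee}, and \ref{L:composition}. The only cosmetic difference is that you separate out an explicit definedness induction, which the paper folds into the verification.
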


\begin{proof}
Denote $U = U^{[0,\al]\times\cN,\cN}_{\Si_1^0(\ep)}$.
We use the mapping $r_U$ from Lemma~\ref{L:jumps}(2), where $W = \omega \times [0,\al]$ and $q, v$ are replaced by $U, x$.
Let us define a partial mapping $f \colon \omega \times [0,\alpha] \times \cN \rightharpoonup \cN$ by
\[
f(e,\beta,z) =
\begin{cases}
z                                                                                     &\text{if } \beta = 0, \\
r_U\bigl(e,\ga,({\hPsi_{\gamma,\ga+1}^x}(z))'_x\bigr)                                 &\text{if } \beta = \gamma + 1 < \alpha, \\
\bigvee_{k \in \omega} U\bigl(e,a(\beta,k),\hPsi^x_{a(\be,k),\be}(z)\bigr)            &\text{if $\beta \leq \alpha$ is a limit ordinal.}
\end{cases}
\]

We may deduce that the partial function $f$ is $\Sigma_1^0(\ep)$-recursive on its domain.
By Lemma~\ref{L:volba-epsilon}, we get the $\Si^0_1(\ep)$-recursivity of the relation $\le$ on $\om$, of the partial functions $a$ and  $\ga + 1 \mapsto \ga$, and of projections of product spaces to their factors used in the definition of $f$.
Using Lemmas~\ref{L:bigvee}, \ref{L:recursivity-of-Theta}(b), and \ref{L:hPsi-recursivity}, we get also the $\Si^0_1(\ep)$-recursivity of the mappings $\bigvee_{k\in\om}$, $(\ga,z)\mapsto (\hPsi^x_{\ga,\ga+1}(z))'_x$, and $(\ga,\be,z)\mapsto\hPsi^x_{\ga,\be}(z)$.

By Fact~\ref{F:recursionthms}(b) and Remark~\ref{R:extended-facts} there exists $e^* \in \omega$ such that  $f(e^*,\beta,z) = U(e^*,\beta,z)$  whenever $f(e^*,\beta,z)$ is defined.
So $f_{e^*}$ is a partial function which is $\Sigma_1^0(\ep)$-recursive on its domain.

We put $\hPsi^x_{0,0}(z)=z$ for $z\in [T^x_0]$ and, by induction on $\beta$, we verify that $\bigl(\hPsi_{0,\beta}^x(z)\bigr)^{(\beta)}_x = f_{e^*}(\beta,z)$
for every $\beta \in [0,\alpha]$ and $z \in [T_{\beta}^x]$.
For $\beta = 0$ and $z \in [T_0^x]$ we have
\[
\hPsi_{0,0}^x(z)^{(0)}_x = z = f(e^*,0,z)
\]
by definition of $f$ and Definition~\ref{D:Turing-jump} of the jump.

Let us assume that $\be < \al$ is fixed and the claim holds for all $0\le\ga < \be$.
If $\beta=\ga+1 > 1$ and $z \in [T_\be^x]$, we have
\[
(\hPsi_{0,\ga+1}^x(z))^{(\ga)}_x = \bigl(\hPsi_{0,\ga}^x(\hPsi_{\ga,\ga+1}^x(z))\bigr)^{(\ga)}_x
= f(e^*,\ga,\hPsi_{\ga,\ga+1}^x(z))
= U(e^*,\ga,\hPsi_{\ga,\ga+1}^x(z))
\]
by Lemma~\ref{L:composition}, by the induction hypothesis (and Remark~\ref{R:hTheta}(a)), and by the choice of $e^*$.
Thus we have
\[
\begin{split}
\bigl(\hPsi_{0,\be}^x(z)\bigr)^{(\be)}_x &= \bigl((\hPsi_{0,\gamma+1}^x(z))^{(\gamma)}_x\bigr)_x' = U\bigl(e^*,\gamma,\hPsi_{\gamma,\gamma+1}^x(z)\bigr)'_x \\
&= r_U\bigl(e^*,\gamma,({\hPsi_{\gamma,\ga+1}^x}(z))'_x\bigr) = f_{e^*}(\beta,z)
\end{split}
\]
by the preceding equalities, the choice of $r_U$, and the definition of $f$.

Finally assume that $\be$ is a limit ordinal and $z \in [T_{\beta}^x]$.
Using Definition~\ref{D:Turing-jump}, Lemma~\ref{L:composition}, the induction hypothesis, the choice of $e^*$, and the definition of $f$, we get
\[
\begin{split}
\bigl(\hPsi_{0,\beta}^x(z)\bigr)^{(\beta)}_x &=  \bigvee_{k \in \omega}\bigl(\hPsi_{0,\beta}^x(z)\bigr)^{(a(\beta,k))}_x =
\bigvee_{k \in \omega} \bigl(\hPsi_{0,a(\beta,k)}^x (\hPsi_{a(\beta,k),\beta}^x(z))\bigr)^{(a(\beta,k))}_x \\
&= \bigvee_{k \in \omega} f\bigl(e^*,a(\beta,k),\hPsi_{a(\beta,k),\beta}^x(z)\bigr) =
   \bigvee_{k \in \omega} U\bigl(e^*,a(\beta,k),\hPsi_{a(\beta,k),\beta}^x(z)\bigr) \\
&=  f_{e^*}(\beta,z).
\end{split}
\]
\end{proof}

To prove Proposition~\ref{P:hPsi-properties}(b) let us point out that the continuity of $\Xi^x$ follows from the $\Si^0_1(\ep)$-recursivity on its domain proved in Lemma~\ref{L:hPsi-recursivity}.
In fact, we used it in the preceding proof of (a) and we do not need it elsewhere.

Now we prove bijectivity of the mappings $\Xi^x_k$ from Proposition~\ref{P:hPsi-properties}(b).
The mapping $\Xi^x_k$ maps $[T_{\al}^x]_k$ into $[T_{0}^x]_k$ due to Lemma~\ref{L:Psi-copy} since $\ell(\de)\ge 1$ for every $\de\in [0,\alpha)$.
Thus it is sufficient to prove that each $\Xi^x$ is a bijection. This is a particular case of the following lemma.

\begin{lemma**}\label{P:hPsi-bijekce}
Let $0\le\ga<\be\le\al$ and $x \in \cN$.
Then the mapping $\hPsi_{\ga,\be}^x$ is a bijection of $[T_{\be}^x]$ onto $[T_{\ga}^x]$.
\end{lemma**}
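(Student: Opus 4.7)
I intend to prove the statement by transfinite induction on $\beta\in(0,\al]$, with the induction hypothesis that $\hPsi^x_{\delta',\delta}$ is a bijection of $[T^x_\delta]$ onto $[T^x_{\delta'}]$ for every $0\le\delta'<\delta<\beta$. The base $\beta=1$ is contained in Lemma~\ref{L:bijectivity-of-Theta}(b). Fix $x\in\cN$ and $\gamma<\beta$; the goal is to show $\hPsi^x_{\gamma,\beta}$ is a bijection.

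If $\beta=\delta+1$ is a successor, then for $\gamma=\delta$ the conclusion is exactly Lemma~\ref{L:bijectivity-of-Theta}(b); for $\gamma<\delta$, Lemma~\ref{L:composition} yields $\hPsi^x_{\gamma,\beta}=\hPsi^x_{\gamma,\delta}\circ\hPsi^x_{\delta,\beta}$, where the first factor is a bijection by the induction hypothesis and the second by Lemma~\ref{L:bijectivity-of-Theta}(b); hence so is the composition.

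If $\beta$ is a limit ordinal, Lemma~\ref{L:sequence} produces a finite chain $\gamma=\delta_0<\delta_1<\dots<\delta_n=\beta$ with each $\delta_{j+1}\in\{\delta_j+1,\diamond(\delta_j)\}$. Since $\beta$ is limit, the last transition cannot be of $+1$ type, so $\diamond(\delta_{n-1})=\beta$. Iterating Lemma~\ref{L:composition} gives
$$\hPsi^x_{\gamma,\beta}=\hPsi^x_{\delta_0,\delta_1}\circ\hPsi^x_{\delta_1,\delta_2}\circ\dots\circ\hPsi^x_{\delta_{n-2},\delta_{n-1}}\circ\hPsi^x_{\delta_{n-1},\beta}.$$
For every $j<n-1$ the codomain-ordinal $\delta_{j+1}$ is strictly below $\beta$, so the $j$-th factor is bijective by the induction hypothesis. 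The whole argument therefore reduces to establishing bijectivity of the single atomic $\diamond$-step $\hPsi^x_{\mu,\beta}$ with $\mu:=\delta_{n-1}$ and $\diamond(\mu)=\beta$.

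The main obstacle is this $\diamond$-step, and I would attack it by analyzing $\tpsi^x_{\mu,\beta}$ level by level. Note that $L(\mu,\beta)=\ell(\mu)$, since Lemma~\ref{L:diamond}(c) forces $\ell(\nu)>\ell(\mu)$ for every $\nu\in(\mu,\beta)$, and at level $L=\ell(\mu)$ the ordinals $\beta$ and $\mu$ are consecutive in the path $\{\beta^i_L\}$ by Proposition~\ref{P:diamond-path}(c). For $L\ge\ell(\mu)$, the map $\tpsi^x_{\mu,\beta}$ decomposes along the sub-path $\{\beta^i_L\}$ between $\beta$ and $\mu$ as a finite composition of atomic pieces: each $+1$-transition contributes a restriction of some $\hpsi^x_{\cdot,\cdot+1}$, which is a length-preserving injection with explicitly describable image thanks to the analysis in Lemma~\ref{L:range-of-psi} and the bijectivity part of Lemma~\ref{L:bijectivity-of-Theta}(b), whereas each $\diamond$-transition $\tpsi^x_{\nu,\diamond(\nu)}$ is literally the identity on the first $L$ letters. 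The crux is to verify that this identity actually lands in the right tree: using the copying clause of Proposition~\ref{P:object-P}(b) together with the bijection $\hat{\phantom{a}}\colon P^x_\nu\to T^x_\nu$ from Lemma~\ref{L:hatsurjection}(a), one checks that $T^x_\nu$ and $T^x_{\diamond(\nu)}$ coincide at level $\ell(\nu)$, and the recursive clause Proposition~\ref{P:object-P}(c) propagates this agreement to all larger levels traversed by the path between $\mu$ and $\beta$. Combining the resulting level-wise bijectivity of $\tpsi^x_{\mu,\beta}$ with the monotonicity in Lemma~\ref{L:consistency} and the continuity coming from Lemma~\ref{L:hPsi-recursivity}, any $w\in[T^x_\mu]$ has compatible finite preimages $\tpsi^{x,-1}_{\mu,\beta}(w|L)\in T^x_\beta$ whose union is the unique branch $z\in[T^x_\beta]$ with $\hPsi^x_{\mu,\beta}(z)=w$, completing the induction.
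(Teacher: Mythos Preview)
Your induction scheme and the successor case are fine, and the reduction at a limit $\beta$ to a single $\diamond$-step $\hPsi^x_{\mu,\beta}$ with $\diamond(\mu)=\beta$ is correct. The genuine gap is the claimed level-wise bijectivity of $\tpsi^x_{\mu,\beta}$, and in particular of each $+1$-piece $t\mapsto \hpsi^x_{\nu,\nu+1}(t)\big||t|$. This restriction need not be injective: if $t,t'\in T^x_{\nu+1}$ have length $L$ and first differ at position $k\ge\ell(\nu)$, then by Definition~\ref{D:psi} the sequences $\hpsi^x_{\nu,\nu+1}(t)$ and $\hpsi^x_{\nu,\nu+1}(t')$ first differ at position $|\hpsi^x_{\nu,\nu+1}(t|k)|=\ell(\nu)+(k-\ell(\nu))+\sum_{j<k-\ell(\nu)}|\eta_j(\psi(t))|$, which exceeds $L-1$ as soon as the $\eta_j$'s are long enough (nothing in the construction bounds their length). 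Thus $\hpsi^x_{\nu,\nu+1}(t)|L=\hpsi^x_{\nu,\nu+1}(t')|L$ is possible with $t\ne t'$. Without level-wise injectivity your ``compatible finite preimages'' argument collapses: the preimages $t_L$ of $w|L$ need not satisfy $t_{L+1}|L=t_L$, and since the trees are not finitely branching, no K\"onig-type argument rescues you.

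The paper's proof avoids finite levels entirely. For injectivity it uses Lemma~\ref{L:Psi-copy}: for large $k$ one has $\hPsi^x_{a(\beta,k),\beta}(z^i)|\ell(a(\beta,k))=z^i|\ell(a(\beta,k))$, so distinct $z^1,z^2$ stay distinct after $\hPsi^x_{a(\beta,k),\beta}$, and then the induction hypothesis at $a(\beta,k)<\beta$ separates them in $[T^x_\gamma]$. For surjectivity the paper pulls a given $w\in[T^x_\gamma]$ back along the cofinal sequence $a(\beta,k)\nearrow\beta$ using the induction hypothesis to get $z^k\in[T^x_{a(\beta,k)}]$, shows via Lemma~\ref{L:Psi-copy} that these are compatible on growing initial segments, and lets $z^*$ be their limit in $[T^x_\beta]$. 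The essential point you are missing is that the induction hypothesis must be invoked at ordinals \emph{strictly below} $\beta$ (namely the $a(\beta,k)$'s), not replaced by a finite-level analysis of maps landing at $\beta$.
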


\begin{proof}
We put $\hPsi_{\be,\be}^x(z)=z$ for $z\in [T^x_\be]$ and $\be \in [0,\alpha]$.
By transfinite induction on $\be\in [0,\al]$ we prove the statement $S(\be)$ claiming that for every $\ga$ such that $0\le\ga\le\be$ the mapping $\hPsi_{\ga,\be}^x$ is a bijection of $[T_{\be}^x]$ onto $[T_{\ga}^x]$.
The statement $S(0)$ holds due to the definition of $\hPsi_{0,0}^x$.
Let us assume that $\be<\al$ and the statement $S(\de)$ holds for all $\de\in [0,\be)$.

Suppose first that $\be$ is an isolated ordinal of the form $\be = \de + 1$.
By induction hypothesis $\hPsi_{\ga,\delta}^x$ is a bijection if $0\le\ga\le\de$.
This implies that $\hPsi_{\ga,\be}^x$ is also a bijection since $\hPsi_{\ga,\be}^x = \hPsi_{\ga,\delta+1}^x =
\hPsi_{\ga,\delta}^x \circ \hPsi_{\delta,\delta+1}^x$ is a composition of two bijections by Lemma~\ref{L:composition}, the induction hypothesis, and the statement of Lemma~\ref{L:bijectivity-of-Theta}(b).

Assume now that  $\be \leq \alpha$ is a limit ordinal.
We prove first the injectivity of $\hPsi^x_{\ga,\be}$ for all $\ga\in [0,\be)$.
Suppose that $z^1,z^2 \in [T_{\be}^x]$ and $z^1 \neq z^2$. We fix $\ga<\be$ and find $k \geq L(\ga,\be)$ such that
$z^1|(\ell(\be)+k) \neq z^2|(\ell(\be)+k)$ and $a(\be,k)>\ga$.
Using Lemma~\ref{L:Psi-copy} and Lemma~\ref{L:diamond}(b),(c), we get
\[
\hPsi^x_{a(\be,k),\be}(z^1)|\ell(a(\be,k)) = z^1|\ell(a(\be,k)) \neq z^2|\ell(a(\be,k)) = \hPsi^x_{a(\be,k),\be}(z^2)|\ell(a(\be,k)).
\]
This gives $\hPsi^x_{a(\be,k),\be}(z^1) \neq \hPsi^x_{a(\be,k),\be}(z^2)$.
Using induction hypothesis and Lemma~\ref{L:composition}, we get
\[
\hPsi^x_{\ga,\be}(z^1) = \hPsi^x_{\ga,a(\be,k)}\bigl(\hPsi^x_{a(\be,k),\be}(z^1)\bigr) \neq \hPsi^x_{\ga,a(\be,k)}\bigl(\hPsi^x_{a(\be,k),\be}(z^2)\bigr) =
\hPsi^x_{\ga,\be}(z^2).
\]

It remains to prove surjectivity of $\hPsi_{\ga,\be}^x$ for the limit ordinal $\be$ and any $\ga\in [0,\be)$.
Suppose $w \in [T_{\ga}^x]$.
Using Lemma~\ref{L:diamond}(b), we find $k_0 \in \omega$ such that $\ga < a(\be,k_0)$.
Using that $S(a(\be,k_0))$ holds by the induction hypothesis, we find $w' \in [T_{a(\be,k_0)}^x]$ with $\hPsi_{\ga,a(\be,k_0)}(w') = w$.
Since also $S(a(\be,k))$ holds for every $k > k_0$, we get that the mapping $\hPsi_{a(\be,k_0),a(\be,k)}^x$ is a bijection
and we find $z^k \in [T_{a(\be,k)}^x]$ such that $\hPsi_{a(\be,k_0),a(\be,k)}(z^k) = w'$.
Applying this and Lemma~\ref{L:composition}, we get for every $j > k > k_0$ the equalities
\[
\begin{split}
\hPsi_{a(\be,k),a(\be,j)}^x(z^{j}) &=
\bigl(\hPsi_{a(\be,k_0),a(\be,k)}^x\bigr)^{-1} \circ \hPsi_{a(\be,k_0),a(\be,j)}^x(z^{j}) \\
&= \bigl(\hPsi_{a(\be,k_0),a(\be,k)}^x\bigr)^{-1}(w') = z^k.
\end{split}
\]
Further $\ell(\delta) \geq \ell(a(\be,k))$ for every $\delta \in [a(\be,k),\be)$ by Lemma~\ref{L:diamond}(c).
Using Lemma~\ref{L:Psi-copy}, we infer
$z^j|\ell(a(\be,k)) = z^k|\ell(a(\be,k))$ for every $j \geq k$.
This shows that there is a unique $z^* \in \cN$ such that
\begin{equation}\label{E:z-star}
z^*|\ell(a(\be,k)) = z^k|\ell(a(\be,k)) \text{ for every } k > k_0.
\end{equation}
Since $z^k|\ell(a(\be,k)) \in T_{a(\be,k)}^x$ and $\diamond(a(\be,k)) = \be$, using Lemma~\ref{P:object-P}(b) we obtain that $z^k|\ell(a(\be,k)) \in T_{\be}^x$.
This implies that $z^*|\ell(a(\be,k)) \in T^x_{\be}$ for every $k > k_0$. Consequently, we have $z^* \in [T_{\be}^x]$.

For every $k > k_0$ we have
\begin{equation*}
\begin{split}
\hPsi_{a(\be,k_0),\be}^x(z^*)|l(a(\be,k))
&= \hPsi_{a(\be,k_0),\be}^x(z^k)|l(a(\be,k)) \qquad \text{(\eqref{E:z-star} and Lemma~\ref{L:initial-segment-hPsi})}  \\
&= \hPsi_{a(\be,k_0),a(\be,k)}^x (\hPsi_{a(\be,k),\be}^x (z^k))|l(a(\be,k))   \qquad \text{(Lemma~\ref{L:composition}})  \\
&= \hPsi_{a(\be,k_0),a(\be,k)}^x (z^k)|l(a(\be,k)) \qquad \text{(Lemma~\ref{L:Psi-copy} and \ref{L:initial-segment-hPsi})} \\
&= w'|l(a(\be,k)).
\end{split}
\end{equation*}
This gives that $\hPsi_{a(\be,k_0),\be}^x(z^*) = w'$
since $\{\ell(a(\be,k))\}_{k=k_0}^{\infty}$ converges to infinity by Lemma~\ref{L:diamond}(b) and, consequently,
$\hPsi_{\ga,\be}^x(z^*) = \hPsi_{\ga,a(\be,k_0)}^x(w') = w$.
\end{proof}

Finally, we prove the continuity of the inverse mappings to $\Xi^x$, $x\in\cN$, and thus also of the inverse mappings to $\Xi^x_k$ for $x\in\cN$ and $k\in\om$ from Proposition~\ref{P:hPsi-properties}(b).

\begin{lemma**}\label{L:hPsi-inverse-continuity}
Let $x \in \cN$.
\begin{enumerate}[\upshape (a)]
\item For every $s\in\cS$ there is $i^s\in\om$ and an injective mapping $\io^s\colon \om \to \om$ such that if $y\in E_x=[T^x_\al]$ is of the form $y = s \w j \w z$,
where $j \in \omega$ and $z \in \cN$, then $\Xi^x(y)_{i^s} = \io^s(j)$.

\item The inverse mapping to $\Xi^x$ is continuous.
\end{enumerate}
\end{lemma**}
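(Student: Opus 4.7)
The plan is to prove (a) first by transfinite induction on a generalized statement $\mathcal{S}(\gamma)$ for $\gamma \in (0, \al]$, then to deduce (b) from (a) by a standard inductive continuity argument. Specifically, $\mathcal{S}(\gamma)$ will assert: for every $\be \in [0, \gamma)$ and every $s \in T^x_\gamma$ with $|s| \geq \ell(\gamma)$, there exist $i^{s, \be, \gamma} \in \om$ and an injective map $\io^{s, \be, \gamma}\colon \om \to \om$ such that $\hPsi^x_{\be, \gamma}(y)_{i^{s, \be, \gamma}} = \io^{s, \be, \gamma}(j)$ for every $y = s \w j \w z \in [T^x_\gamma]$. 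Assertion (a) then follows by taking $\gamma = \al$ and $\be = 0$ (with the $s \notin T^x_\al$ case vacuous and the edge case $s = \emptyset$ handled separately via Lemma~\ref{L:Psi-copy} with $L = 1$, which gives $\Xi^x(y)_0 = y(0)$, so $i^\emptyset = 0$ and $\io^\emptyset = \id$).

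For the deduction (a) $\Rightarrow$ (b), I take $y^{(n)} \in E_x$ with $\Xi^x(y^{(n)}) \to \Xi^x(y)$ coordinatewise and show by induction on $k$ that $y^{(n)}|k = y|k$ for all sufficiently large $n$. The base $k = 0$ is trivial. For the inductive step, set $s = y|k$; by the inductive hypothesis there is $N$ with $y^{(n)}|k = s$ for all $n \geq N$, so each such $y^{(n)}$ has the form $s \w y^{(n)}(k) \w \tilde z^{(n)}$. Part (a) gives $\Xi^x(y)_{i^s} = \io^s(y(k))$ and $\Xi^x(y^{(n)})_{i^s} = \io^s(y^{(n)}(k))$ for $n \geq N$; convergence at coordinate $i^s$ and injectivity of $\io^s$ force $y^{(n)}(k) = y(k)$ eventually, yielding $y^{(n)}|(k+1) = y|(k+1)$ eventually.

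For the inductive proof of $\mathcal{S}(\gamma)$: the base case is $\gamma = \delta + 1$ with $\be = \delta$. By Definition~\ref{D:psi}, $\psi^x_{\delta, \delta+1}(s \w j) = \psi^x_{\delta, \delta+1}(s) \w (n, \eta)$ with $[n]_0 = j$ and $[n]_1 = \sigma^x_\delta(|\xi(\psi^x_{\delta, \delta+1}(s))|)$; by Remark~\ref{R:monotony-of-theta}, $\psi^x_{\delta, \delta+1}(s \w j) \preceq \Psi^x_{\delta, \delta+1}(y)$, so $\hPsi^x_{\delta, \delta+1}(y) = \wh{\Psi^x_{\delta, \delta+1}(y)}$ begins with $\wh{\psi^x_{\delta, \delta+1}(s)} \w (n) \w \eta$. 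Setting $i^{s, \delta, \delta+1} = |\wh{\psi^x_{\delta, \delta+1}(s)}|$ and $\io^{s, \delta, \delta+1}(j) = [j, \sigma^x_\delta(|\xi(\psi^x_{\delta, \delta+1}(s))|)]^{-1}$ gives the encoding, injective in $j$ since the inverse pairing preserves its first coordinate. For the step $\gamma = \delta + 1$ with $\be < \delta$, I use Lemma~\ref{L:composition} to write $\hPsi^x_{\be, \delta+1} = \hPsi^x_{\be, \delta} \circ \hPsi^x_{\delta, \delta+1}$; the critical fact is that $\hPsi^x_{\delta, \delta+1}(y)|i^{s, \delta, \delta+1} = \wh{\psi^x_{\delta, \delta+1}(s)} =: s''$ depends only on $s$, so applying $\mathcal{S}(\delta)$ with prefix $s''$ and coordinate $j'' = \io^{s, \delta, \delta+1}(j)$ supplies $i^{s'', \be, \delta}$ and $\io^{s'', \be, \delta}$, from which I set $i^{s, \be, \delta+1} = i^{s'', \be, \delta}$ and $\io^{s, \be, \delta+1} = \io^{s'', \be, \delta} \circ \io^{s, \delta, \delta+1}$. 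For a limit $\gamma$, choose $k \leq |s| - \ell(\gamma)$ with $a(\gamma, k) > \be$; since $\tpsi^x_{a(\gamma, k), \gamma}$ is the identity on finite sequences (as $\diamond(a(\gamma, k)) = \gamma$), one has $\hPsi^x_{a(\gamma, k), \gamma}(y) = y$, so $\hPsi^x_{\be, \gamma}(y) = \hPsi^x_{\be, a(\gamma, k)}(y)$ by Lemma~\ref{L:composition}, and $\mathcal{S}(a(\gamma, k))$ applies directly.

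The main obstacle I anticipate is preserving independence of the encoding position from $j$ and $z$ across compositions: naively, after one step $\hPsi^x_{\delta, \delta+1}$, the prefix of the resulting sequence up to the position where $j$ is encoded could depend on $y$ beyond $s$, which would let the inductively supplied position at the next stage depend on $j$, breaking the statement of (a). The resolving observation is that this intermediate prefix actually equals $\wh{\psi^x_{\delta, \delta+1}(s)}$ and hence depends on $s$ alone---an immediate consequence of Remark~\ref{R:monotony-of-theta}, which gives $\psi^x_{\delta, \delta+1}(s \w j) \succeq \psi^x_{\delta, \delta+1}(s)$ and hence $\hPsi^x_{\delta, \delta+1}(y) \succeq \wh{\psi^x_{\delta, \delta+1}(s)}$ in the natural concatenation. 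This isolates all $j$-dependence to the single coordinate at position $i^{s, \delta, \delta+1}$ and allows the compositional induction to close cleanly.
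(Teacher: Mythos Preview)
Your deduction of (b) from (a) is correct and matches the paper's argument. The successor step in your induction is also essentially right. However, the limit case of your transfinite induction has a genuine gap.

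The claim that $\hPsi^x_{a(\gamma,k),\gamma}(y) = y$ is false. The equality $\tpsi^x_{a(\gamma,k),\gamma}(t) = t$ from the definition of $\tpsi$ applies only when $a(\gamma,k)$ and $\gamma$ are \emph{adjacent} in the path $\{\beta^i_{|t|}\}$, which by Proposition~\ref{P:diamond-path}(c) forces $|t| = \ell(a(\gamma,k))$; for longer $t$ the map $\tpsi^x_{a(\gamma,k),\gamma}$ is a nontrivial composition through intermediate ordinals. What Lemma~\ref{L:Psi-copy} actually gives (via Lemma~\ref{L:diamond}(c)) is only $\hPsi^x_{a(\gamma,k),\gamma}(y)\,|\,\ell(a(\gamma,k)) = y\,|\,\ell(a(\gamma,k))$. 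With your choice $k \leq |s|-\ell(\gamma)$ you get $\ell(a(\gamma,k)) \leq |s|$, so only the first $|s|$ (or fewer) coordinates are preserved---not the coordinate carrying $j$. Moreover, your constraint $k \leq |s|-\ell(\gamma)$ together with $a(\gamma,k) > \beta$ may be unsatisfiable when $|s|$ is close to $\ell(\gamma)$ and $\beta$ is close to $\gamma$.

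The fix is to choose $k$ large enough that both $a(\gamma,k) > \beta$ and $\ell(a(\gamma,k)) \geq |s|+1$; then Lemma~\ref{L:Psi-copy} does preserve $s \w j$. But now $|s| < \ell(a(\gamma,k))$, so you must drop the hypothesis $|s| \geq \ell(\gamma)$ from $\mathcal{S}(\gamma)$ altogether. This is harmless: in the successor base case with $|s| < \ell(\delta)$, the identity $\hPsi^x_{\delta,\delta+1}(y)|\ell(\delta) = y|\ell(\delta)$ (from $\tau(\psi(t)) = t|\ell(\delta)$ in Definition~\ref{D:psi}(a)) already gives $i = |s|$ and $\iota = \id$. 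With these modifications your induction closes.

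The paper sidesteps this issue by not doing transfinite induction at all: for a fixed $s$ it builds a finite descending chain $\alpha = \alpha_0 > \dots > \alpha_N = 0$ adaptively, at each limit $\alpha_n$ choosing $\alpha_{n+1} = a(\alpha_n,k)$ with $\ell(\alpha_{n+1}) \geq |s_n|+1$ \emph{after} $s_n$ is known, so Lemma~\ref{L:Psi-copy} applies directly and the identity $\iota_{n+1}$ suffices there. This direct descent avoids having to formulate and maintain an inductive hypothesis valid for arbitrary prefix lengths.
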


\begin{proof}
(a) Let $s\in\cS$ be fixed. We find $N \in \om$, $\al_n \in [0,\al]$ and $s_n \in \cS$ for $n\in\{0,\dots,N\}$, and $\io_n \colon \om \to \om$ for $n \in \{1,\dots,N\}$ such that
\begin{enumerate}
\item $\al_0 = \al > \dots > \al_N = 0$,

\item $s_0 = s$,

\item $\io_n$ is injective for every $n \in \{1,\dots,N\}$, and

\item for every $n \in \{0,\dots,N-1\}$ and $s_n \w j \preceq y$, where $j \in \om$ and $y \in [T^x_{\al_n}]$, we have
\[
s_{n+1}\w \bigl(\io_{n+1}(j)\bigr) = \hPsi^x_{\al_{n+1},\al_n}(y) | (|s_{n+1}|+1).
\]
\end{enumerate}

We will proceed inductively on $n$. Setting $\al_0=\al$ and $s_0=s$ we have all from the conditions (1)--(4) concerning just $n=0$ satisfied.
Suppose that $\al_n$ and $s_n$ for $n\ge 0$ have been already chosen such that all what (1)--(4) state about them is satisfied
and that $\al_n>0$.

If $\al_n$ is a limit ordinal, we find $\al_{n+1}\in [0,\al]$ such that $\diamond(\al_{n+1})=\al_n$ and $\ell(\al_{n+1}) \ge |s_n|+1$.
If $\al_n$ is an isolated ordinal, we choose $\al_{n+1}$ such that $\al_n=\al_{n+1}+1$.

If $\ell(\al_{n+1}) \ge |s_n|+1$, we put $s_{n+1} = s_n$ and $\io_{n+1}$ is the identity mapping on $\om$.
Then Lemma~\ref{L:Psi-copy} applied to $y\in [T^x_{\al_n}]$ with $s_n\w j\preceq y$ gives
\[
\hPsi^x_{\al_{n+1},\al_n}(y)|(|s_{n+1}|+1) = s_n \w j = s_{n+1} \w \bigl(\io_{n+1}(j)\bigr).
\]

Suppose now that $\ell(\al_{n+1})<|s_n|+1$. Then we have $\alpha_n = \alpha_{n+1}+1$. Let $s_n\w j\preceq y\in [T^x_{\al_n}]$.
We observe that $\psi^x_{\al_{n+1},\al_n}(s_n\w j)$ is defined by Definition~\ref{D:psi} since $\ell(\al_n)=\ell(\al_{n+1})+1\le |s_n|+1$ by Lemma~\ref{L:diamond}(d) and our assumption.
We set $s_{n+1}=\hpsi^x_{\al_{n+1},\al_n}(s_n)$ which is also defined by Definition~\ref{D:psi} since $s_n=(s_n\w j)^-$.
Using Definition~\ref{D:psi}(a),(b), we get $l\in\om$ and $\eta\in\cS^*$ such that
$\psi^x_{\al_{n+1},\al_n}(s_n\w j)=\psi^x_{\al_{n+1},\al_n}(s_n)\w (l,\eta)$, where $[l] = \bigl(j,\si^x_{\al_{n+1}}\bigl(|s_{n+1}|-\ell(\al_{n+1})\bigr)\bigr)$.
Thus we have
\[
s_{n+1}\w l\preceq\hpsi^x_{\al_{n+1},\al_n}(s_n\w j)\preceq\hPsi^x_{\al_{n+1},\al_n}(y).
\]
The above $l$ is determined uniquely by $j$ for given $n$ and $s$ since the mapping $[\cdot]$ is a bijection (see Notation~\ref{N:identification}).
Thus $\io_{n+1}(j) = l$ defines an injective mapping $\io_{n+1}\colon \om \to \om$ such that
$s_{n+1}\w \bigl(\io_{n+1}(j)\bigr)=\hPsi^x_{\al_{n+1},\al_n}(y)|(|s_{n+1}|+1)$.

Since the sequence of $\al_n$'s is strictly decreasing, the induction necessarily ends by finding $\al_N=0$, $s_N$, and $\io_N$ for some $N\in\om$
such that (1)--(4) is satisfied for $\al_n$, $s_n$, and $\io_n$, where $n \in \{0,\dots,N\}$.

Let $y \in [T^x_\al]=E_x$ and $s\w j\preceq y$. Using Lemma~\ref{L:composition}, we may use the condition (4) inductively.
We get $( y_n := )\hPsi^x_{\al_n,\al}(y)\in [T^x_{\al_n}]$ by Remark~\ref{R:hTheta}(a)
and due to (4) we also get
\[
y_n|(|s_n|+1)=\bigl(\hPsi^x_{\al_n,\al_{n-1}}\circ\dots\circ\hPsi^x_{\al_1,\al}\bigr)(y)|(|s_n|+1)=s_n\w \bigl(\io_n\circ\dots\circ\io_1(j)\bigr)
\]
for $n =1,\dots,N$. Thus using the notation $\io^s=\io_N\circ\dots\circ\io_1$ and $i^s=|s_N|$,
we get in the particular case $n=N$ that $\Xi^x(y)=\hPsi^x_{0,\al}(y)\succeq s_N\w \bigl(\io^s(j)\bigr)$ and so
$\Xi^x(y)_{i^s}=\io^s(j)$.

\medskip\noindent
(b) Let $z \in F_x$. Denote $y = (\Xi^x)^{-1}(z)$.
It follows from (a) used for $s = y|k$ and $j=y_k$ that $y_k = (\iota^{y|k})^{-1}\bigl(z(i^{y|k})\bigr)$
for $k \in \om$.
Let $n \in \omega$. We set $M = \{i^{y|0},i^{y|1},\dots,i^{y|n}\}$.
Then for every $z' \in F_x$ with $z'|M = z|M$ we get
$(\Xi^x)^{-1}(z)|n = (\Xi^x)^{-1}(z')|n$. Thus $(\Xi^x)^{-1}$ is continuous.
\end{proof}

This conludes the proof of Proposition~\ref{P:hPsi-properties}.

\bigskip

\textbf{Acknowledgement.} We thank to Alexander Kechris for drawing our attention to the manuscripts \cite{Harrington, Harrington2}
and to Ond\v{r}ej Kurka, Dominique Lecomte, and Alain Louveau for helpful discussions.

\end{document}